\newtheorem{theorem}{Theorem}
\newtheorem{definition}{Definition}
\newcommand{\ssize}{\text{size}\,}
\newcommand{\eenergy}{\text{energy}\,}
\newtheorem{lemma}[theorem]{Lemma}
\newtheorem{corollary}[theorem]{Corollary}
\newtheorem{proposition}[theorem]{Proposition}
\newtheorem*{remark}{Remark:}
\newtheorem*{notation}{Notation:}
\newcommand{\sssize}{\widetilde{\text{size}\,}}
\newcommand{\ave}{\text{ave\,}}
\newcommand{\one}{\mathbf{1}}
\newcommand{\dist}{\text{ dist }}
\newcommand{\rr}{\mathbb}
\newcommand{\bbf}{\mathbf}
\newcommand{\ii}{\mathscr}
\newcommand{\ci}{\tilde{\chi}}
\newcommand{\loc}{\text{loc}}
\newcommand{\ds}{\displaystyle}
\newtheorem{question*}{Question}
\newtheorem*{main*}{\underline{Induction statement}}
\newcommand{\n}{\mathfrak{n}}
\newcommand{\ic}{\mathcal}
\newcommand{\g}{\mathfrak{g}}
\def\Xint#1{\mathchoice
   {\XXint\displaystyle\textstyle{#1}}%
   {\XXint\textstyle\scriptstyle{#1}}%
   {\XXint\scriptstyle\scriptscriptstyle{#1}}%
   {\XXint\scriptscriptstyle\scriptscriptstyle{#1}}%
   \!\int}
\def\XXint#1#2#3{{\setbox0=\hbox{$#1{#2#3}{\int}$}
     \vcenter{\hbox{$#2#3$}}\kern-.5\wd0}}
\def\aver#1{\Xint-_{#1}}
\author{Cristina Benea}
\address{Cristina Benea, Universit\'{e} de Nantes, Laboratoire Jean Leray, Nantes 44322, France}
\email{cristina.benea@univ-nantes.fr}
\author[Camil Muscalu]{Camil Muscalu*}
\thanks{$^*$The author is also a Member of the ``Simion Stoilow" Institute of Mathematics of the Romanian Academy}
\address{Camil Muscalu, Department of Mathematics, Cornell University, Ithaca, NY 14853, USA}
\email{camil@math.cornell.edu}
 \title{Sparse domination via the helicoidal method
} 
\begin{document}

\begin{abstract}
Using exclusively the localized estimates upon which the helicoidal method was built in \cite{vv_BHT}, we show how sparse estimates can also be obtained. This approach yields a sparse domination for multiple vector-valued extensions of operators as well. We illustrate these ideas for an $n$-linear Fourier multiplier whose symbol is singular along a $k$-dimensional subspace of $\Gamma=\lbrace \xi_1+\ldots+\xi_{n+1}=0  \rbrace$, where $k<\dfrac{n+1}{2}$, and for the variational Carleson operator.
\end{abstract}

\maketitle

\section{Introduction}

The helicoidal method, which we previously developed in \cite{vv_BHT}, \cite{quasiBanachHelicoid}, represents a new iterative method founded upon very precise local estimates for (quasi)linear and (quasi)multilinear operators and their multiple vector-valued extensions. It allowed us to give positive answers to several questions in harmonic analysis that had been open previously. In particular, we proved that in the local $L^2$ range, all the multiple vector-valued extensions of the bilinear Hilbert transform $BHT$ from \cite{initial_BHT_paper} have the same range as the scalar operator itself. Before that, it was not known whether there exist any vector-valued extensions which behave in a similar way to the scalar bilinear Hilbert transform. For more details, see \cite{vv_BHT}.

In short, the efficiency of the method relies in the sharp localization result, which is rendered global (in a multiple vector-valued form) by an additional stopping time, i.e. an additional decomposition of each of the functiones involved. Through the local estimates and the control of the local operator norm (which are themselves obtained through a stopping time) we can access any triple of Lebesgue exponents from the domain of the bilinear Hilbert transform (including those outside the local $L^2$ range), obtain the localized vector-valued estimates while keeping track of the operator norm, and eventually convert all these into global estimates. Thus, we are making use of iterated stopping times, and this is one of the main differences between the helicoidal method and the previous approaches to vector-valued extensions for $BHT$, which had only produced partial results (see \cite{vv_bht-Prabath}, \cite{vv_quartile}).

In the present work, we will show how the helicoidal method can be naturally adjusted for proving sparse estimates, both for the operator $T$ in question and for any of its multiple vector-valued extension $\vec T$, which takes values in the vector space $X$ (for us, $X$ will consist of iterated Lebesgue spaces). That is, we prove sparse estimates for $\ds \| \vec T \|_{L^q(\rr R; X)(v)}^q$, for any $0<q<\infty$, and any positive, locally integrable function $v$.

An interesting consequence of such sparse estimates are Fefferman-Stein inequalities for $T$ and its vector-valued extensions, which state that the operator $T$ is controlled by a certain maximal operator in $\| \cdot\|_{L^q(v)}$ (quasi)-norms. Consequently, vector-valued weighted estimates can be deduced.

Our method applies to many operators in harmonic analysis. We will illustrate these ideas for the natural generalization of the bilinear Hilbert transform, which is $T_k$, an $n$-linear Fourier multiplier whose symbol is singular along a $k$-dimensional subspace of $\Gamma=\lbrace \xi_1+\ldots+\xi_{n+1}=0  \rbrace$, where $k<\dfrac{n+1}{2}$. For the time being, no other strategy of proving sparse estimates applies to these operators, while the localization principle and the helicoidal method apply straightaway. We also give a special consideration to $T_0$, which corresponds to a multilinear Calder\'on-Zygmund operator. Specifically, we prove multiple vector-valued (and sparse) inequalities even in the situation when $L^\infty$ spaces are involved, which was not known before. Estimates involving $L^\infty$ played an important role in proving mixed-norm estimates for multi-parameter Calder\'on-Zygmund operators and for deducing Leibniz rules in mixed-norm spaces in \cite{vv_BHT}.

Another case study is the variational Carleson operator from \cite{variational_Carleson}, for which the localization principle produces an extra simplification in the proof: more precise local estimates (which are themselves obtained through stopping times) allow for a clean-cut exceptional set and a simpler stopping time which produces the global estimate. See also the expository paper \cite{expository-hel}.

We will see that the \emph{local} property of an operator (or of its bilinear/ multilinear form) is at the heart of ``sharp" vector-valued and weighted extensions. The two types of extensions are similar in spirit, in the sense that they both amount to changing the measure space and the Lebesgue exponents in the domain of the operator. For linear operators, it is known that there is a deeper connection between vector-valued and weighted extensions (see \cite{RFCuervaBook}). 

In the multilinear case, especially for operators which are not of Calder\'on-Zygmund type, weighted estimates and the appropriate extrapolation theory were not known until very recently, hence the search for vector-valued extensions was calling for a new approach. For the bilinear Hilbert transform, which should be thought of as the least complex of such operators, partial results were obtained in \cite{vv_quartile} combining time-frequency analysis and UMD spaces techniques, and also in \cite{vv_bht-Prabath}, where ideas regarding vector-valued estimates from \cite{BatemanThiele2011} were incorporated in the time-frequency analysis method.

In \cite{myphdthesis}, the interest in vector-valued extensions for the bilinear Hilbert transform was prompted by a \emph{bilinear Rubio de Francia operator for iterated Fourier integrals}:
\[
RF_r(f, g)(x)= \Big(  \sum_k \Big|  \int\limits_{a_k < \xi_1 <\xi_2 < b_k} \hat{f}(\xi_1) \hat{g}(\xi_2) e^{2 \pi i x \left( \xi_1+\xi_2\right)} d \xi_1 d \xi_2   \Big|^r   \Big)^\frac{1}{r},
\]
which is associated to an arbitrary collection $\lbrace [a_k, b_k]  \rbrace_k$ of intervals with bounded overlap, and a Lebesgue exponent $1 \leq r \leq \infty$. Given that in its multiplier form $BHT$ can be written as
\[
BHT(f, g)(x):=  \int\limits_{ \xi_1 <\xi_2 } \hat{f}(\xi_1) \hat{g}(\xi_2) e^{2 \pi i x \left( \xi_1+\xi_2\right)} d \xi_1 d \xi_2,
\] 
the \emph{non-linear} operator $RF_r$ can be regarded as a \emph{linear} $\ell^r$-valued extension of the bilinear Hilbert transform. The natural restrictions for $r$, in the bilinear setting (and which proved to be accurate), seemed to be $1 \leq r \leq \infty$. This should the compared to the linear Rubio de Francia $\ell^r$-valued operator associated to arbitrary Fourier projections onto a family of intervals of bounded overlap, where the condition $2 \leq r \leq \infty$ is necessary. For the linear operator, $\ell^\infty$ estimates for the sharp Fourier projections are deduced from the boundedness of Carleson operator; for the bilinear operator $RF_r$, the case $r=\infty$ can be studied by using the bi-Carleson operator of \cite{bi-Carleson} (and again, the Carleson operator).

In dealing directly with the operator $RF_r$, we developed a proof based on \emph{spatial localizations}, the principle being that in order to control arbitrary frequency scales which are dictated by a quantitative size (a vector-valued square function), a local maximal operator which only retains the spatial information is needed. Stopping times (i.e. a decomposition procedure of the operator according to varying averages) and the local control of the operator by local maximal averages imply the boundedness of the operator.  

The same method, i.e. a local control of the operator by localized ``sizes" (or local maximal averages), produced in \cite{vv_BHT} and \cite{quasiBanachHelicoid} multiple vector-valued extensions for $BHT$. With an alteration of the stopping time procedure, we will show that it also yields sparse estimates even in the multiple vector-valued setting. More generally, this will apply to operators that admit a decomposition into wave packets indexed after a certain collection $\rr P$, in the sense that they can be represented as
\begin{equation}
\label{eq:representation}
T_{\rr P}(f_1, \ldots, f_{n})(x)=\sum_{P \in \rr P} \vert I_P \vert^{-\frac{n-1}{2}} \langle f_1, \phi_P^1 \rangle \cdot \ldots \langle f_{n}, \phi_P^{n} \rangle \phi_{P}^{n+1}(x).
\end{equation}
We claim that sparse and vector-valued estimates are implied by a suitable local estimate for the spatial localization $T_{\rr P(I_0)}$ of the operator (we only consider tiles $P \in \rr P$ with the spatial interval $I_P$ contained inside $I_0$), which can be written as
{\fontsize{9}{10}\begin{equation}
\label{eq:local-est-impo}
\big\| T_{\rr P(I_0)} (f_1, \ldots, f_n)  \big\|_{L^q(v^q)}^q \lesssim \prod_{j=1}^n  \Big( \sup_{P \in \rr P \left( I_0 \right)^+} \frac{1}{|I_P|} \int_{\rr R}\vert f_j (x)\vert^{s_j}  \cdot \ci_{I_P}^{M} dx \Big)^\frac{q}{s_j} \cdot \Big( \sup_{P \in \rr P \left( I_0 \right)^+} \frac{1}{|I_P|} \int_{\rr R}\vert v (x)\vert^{s_{n+1}}  \cdot \ci_{I_P}^{M} dx \Big)^\frac{q}{s_{n+1}} \cdot |I_0|.
\end{equation}} 
Above, $0 < q \leq 1$ (on subadditivity grounds), $1 \leq s_1, \ldots, s_{n+1} <\infty$ are certain Lebesgue exponents which reflect the operator's properties, and $v^q$ is a positive, locally integrable function. The case $q=1$ corresponds to the study of the $(n+1)$-linear form, and such local estimates were used in \cite{vv_BHT}. For $q<1$, the local estimate appeared in \cite{quasiBanachHelicoid}, for particular functions that are bounded above by characteristic functions of sets of finite measure. In fact, if the range of exponents $(s_1, \ldots, s_{n+1})$ for which we want to prove the sparse domination is open, local estimates for restricted type functions are sufficient (see Section \ref{sec:restricted-type-to-general}). 

We note that our approach yields sparse estimates by making use solely of the local character of the operator (which is an innate property): this is distinct from Lerner's use of \emph{local mean oscillation} from \cite{Lerner-simplerA_2}, or from Lacey's or Lerner's reiterative slicing from \cite{Lacey-sparse} or \cite{Lerner-max-op-sparse}. 

What is more, we obtain a global Fefferman-Stein inequality: provided $v^q$ is a weight satisfying a \emph{reverse H\"older}-$\frac{s_{n+1}}{q}$ inequality, we have that
\begin{equation}
\label{eq:fefferman0stein-intro}\tag{FS}
\big\|  T( f_1, \ldots, \vec f_n) \big\|_{L^q(v^q)} \lesssim  \big\|  \vec {\ic M}_{s_1, \ldots, s_n} (f_1, \ldots, f_n)  \big\|_{L^q(v^q)},
\end{equation}
where $\vec {\ic M}_{s_1, \ldots, s_n}$ is the multi(-sub)linear maximal operator 
\begin{equation}
\label{eq:def-func-max-multilin}
\vec {\ic M}_{s_1, \ldots, s_n}(f_1, \ldots, f_n)(x):=\sup_{Q \ni x} \prod_{j=1}^n  \big(  \frac{1}{|Q|}  \int_{Q}  |f_j(y)|^{s_j} dy  \big)^\frac{1}{s_j}.
\end{equation}
That is, $\vec {\ic M}_{s_1, \ldots, s_n}$ (and as a result the product of $n$ Hardy-Littlewood maximal functions $\ic M_{s_1} \cdot \ldots \cdot \ic M_{s_n}$), controls the operator $T$ in any $L^q(v^q)$ norm, under certain conditions on $s_1, \ldots , s_n$.

The first instance of a \eqref{eq:fefferman0stein-intro} inequality available for all $0<q<\infty$  appeared in \cite{FeffStein-RealHpSpaces}, and the weighted case in \cite{coifman-fefferman}; sometimes they are referred to as Coifman-Fefferman inequalities in the literature. Commonly, the Fefferman-Stein inequalities are obtained as a consequence of \emph{good lambda} inequalities, which also yield weighted estimates. In the context of Calder\'on-Zygmund theory, the good lambda inequalities assert that level sets of the maximal function control level sets of Calder\'on-Zygmund operators (or of their oscillations/ maximal truncations).

Broadly speaking, the philosophy behind our approach for Fefferman-Stein inequalities or for sparse domination is somewhat similar: first, we know that locally $T$ is controlled by the product of $n+1$ maximal averages. A stopping time will identify the intervals (and associated collections of tiles) where \emph{all} of the maximal averages are uniformly controlled by averages. Since all these intervals chosen through the stopping time correspond to level sets of localized maximal functions, they are relatively ``sparse". This last property allows to recover the global $L^q$ norm of $\vec {\ic M}_{s_1, \ldots, s_n, s_{n+1}}(f_1, \ldots, f_n, v)$.

On the other hand, for vector-valued estimates of the type $T: L^{p_1}( \ell^{r_1} ) \times \ldots \times L^{p_n}( \ell^{r_n} ) \to L^{p}( \ell^{r} )$, i.e. an inequality such as
\[
\big\| \big( \sum_k \vert  T(f_1^k, \ldots, f_n^k)  \vert^r  \big)^\frac{1}{r} \big\|_p \lesssim \prod_{j=1}^n \big\| \big( \sum_k \vert f_j^k \vert^{r_j}  \big)^\frac{1}{r_j} \big\|_{p_j},
\]
each of the the maximal $L^{s_j}$ averages of $f_j^k$ will be converted through a stopping time into $L^{r_j}$ averages, while making sure to save the ``spare" information, which is set aside as an operatorial norm. The $L^{r_j}$ averages are summed up via H\"older, and transformed again into $L^{s_j}$ maximal averages of $\big( \sum_k \vert f_j^k \vert^{r_j}  \big)^\frac{1}{r_j}$, eventually localized. The stopping times, as the Calder\'on-Zygmund decomposition, play an important role since they allow to transform maximal averages into averages.

Hence, all of the above apply equally to multiple vector-valued extensions of the operator $T$: in this setting too, we can obtain sparse domination and Fefferman-Stein inequalities. All of these will be made precise shortly.

We recall the linear theory first. If $T$ is a linear operator so that $T: L^p(dx) \to L^p(dx)$ for $p \in Range(T)$, and $(\ii W, \mu)$ is a $\sigma$-finite measure space, we want to find $Range(\vec T_r)$, the range of exponents $r$ so that 
\begin{equation}
\label{eq:VVE} \tag{VVE}
T: L^p(\rr R; L^r(\ii W, \mu)) \to L^p(\rr R; L^r(\ii W, \mu)).
\end{equation}
In the weighted theory, we want to characterize the class of \emph{weights} $w$ (that is, positive, locally integrable functions) for which 
\begin{equation}
\label{eq:WE}\tag{WE}
T: L^p(w \,dx) \to L^p(w \, dx).
\end{equation}

For Calder\'on-Zygmund operators, the collection of weights for which \eqref{eq:WE} holds is the $A_p$ class containing weights $w$ that satisfy
\begin{equation}
\label{eq:A_p-class}
\sup_{I \text{ interval }} \aver{I} w(x) dx  \cdot \Big(  \aver{I} w(x)^{1-p'} dx \Big)^{p-1} <\infty.
\end{equation}
The quantity in \eqref{eq:A_p-class} represents the \emph{$A_p$ characteristic} of $w$ and is denoted $[w]_{A_p}$. Alternatively, one can describe these weights as being exactly those for which the Hardy-Littlewood maximal function $\ic M$ is a bounded operator on $L^p(w\, dx)$, for $1<p<\infty$. 

The class $A_p$ turned out to be especially suited for extrapolation: more exactly, if we know that \emph{$T: L^p(w \,dx) \to L^p(w \,dx)$ for all $w \in A_p$}, then 
\[
T: L^s(w\, dx) \to L^s(w\, dx) \quad \text{for all} \quad  w \in A_s,
\]
and moreover, we can also obtain the vector-valued extension:
\[
T: L^s(\rr R; L^r(w\, dx)) \to L^s(\rr R; L^r(w\, dx)) \quad \text{for all} \quad  w \in A_s \text{   and any   } 1<r<\infty.
\]
As a consequence, vector-valued estimates can be obtained through extrapolation, once weighted estimates in a suitable class are known. 


A well-known question in harmonic analysis had been the $A_2$ conjecture, which predicted a linear dependency of $[w]_{A_2}$ in \eqref{eq:WE}, for Calder\'on-Zygmund operators, if $p=2$. Although this was answered positively in \cite{Hytonen-A2}, simpler proofs were provided (\cite{Lerner-simplerA_2}, \cite{Lacey-sparse}, \cite{Lerner-max-op-sparse}), based on an underlying sparse structure that had appeared previously in \cite{Lernerfirst}, \cite{Hytonen-A2} and \cite{CUMP}.

As already announced, we will illustrate how the \emph{helicoidal method}, a technique developed in \cite{myphdthesis}, \cite{vv_BHT} for showing vector-valued estimates for the bilinear Hilbert transform operator $BHT$, can be used also for obtaining sparse estimates and multiple vector-valued sparse estimates. The singularity of the multiplier of $BHT$, which is defined by
\[
(f, g) \mapsto  \int\limits_{ \xi_1 <\xi_2 } \hat{f}(\xi_1) \hat{g}(\xi_2) e^{2 \pi i x \left( \xi_1+\xi_2\right)} d \xi_1 d \xi_2,
\]
differentiates it from general Calder\'on-Zygmund operators. A weighted theory for operators similar to $BHT$ was only recently investigated in \cite{weighted_BHT}, \cite{extrap-BHT}, \cite{martell-kangwei-mulilinear-weights-extrapolation}.

The multiple vector-valued spaces considered here are mixed norm $L^p$ spaces. Let $m \geq 1$ be a natural number, $\lbrace(\ii W_j, \Sigma_j, \mu_j )\rbrace_{1 \leq j \leq m} $ are totally $\sigma$-finite measure spaces and $P=(p_1, \ldots, p_m)$ is an $m$-tuple. Then on the product space $\ds (\ii W, \Sigma, \mu):= (\prod_{j=1}^m \ii W_j, \prod_{j=1}^m \Sigma_j, \prod_{j=1}^n \mu_j)$ the mixed norm is defined by
\[
\|  f \|_{P}:=\big( \int_{\ii W_1}\ldots  \big( \int_{\ii W_m } \vert f(w_1, \ldots, w_m)      \vert^{p_m} d \mu_m(w_m)  \big)^{\frac{p_{m-1}}{p_m}} \ldots   d \mu_1(w_1) \big)^{\frac{1}{p_1}}
\]

In \cite{vv_BHT}, estimates for $BHT$ on such spaces were necessary in the study of multi-parameter operators, such as $BHT \otimes \Pi \otimes \ldots \otimes \Pi$ or $\Pi\otimes \Pi \otimes \ldots \otimes \Pi$, where $\Pi$ is a paraproduct \textbackslash bilinear Calder\'on-Zygmund operator.

As mentioned before, one of our case studies is the $n$-linear operator $T_k$ given by a multiplier singular along a $k$-dimensional subspace of $\Gamma:=\lbrace \xi_1+\ldots+\xi_{n+1}=0   \rbrace$, where $k<\dfrac{n+1}{2}$. Such operators were studied in \cite{multilinearMTT}. We note that the case $n=2, k=1$ corresponds to the $BHT$ operator of \cite{LaceyThieleBHTp>2}, \cite{initial_BHT_paper}, while the case $n=3, k=2$, which does not satisfy our sufficient condition $k<\dfrac{n+1}{2}$, corresponds to the trilinear Hilbert transform and its boundedness is still an open question. More recently, in \cite{Robert-unbdd}, it was proved that no $L^p$ estimates can hold for general multipliers if $k \geq \dfrac{n+3}{2}$, and consequently, the only undecided case is $\ds k= \big\lceil   \frac{n+1}{2} \big\rceil$.

Another situation of particular interest is that when $k=0$; then the multilinear operator, denoted $T_0$, is given by a multiplier that is singular at the origin. Hence we recover multilinear Fourier multipliers (sometimes called ``paraproducts"), which are particular cases of multilinear Calder\'on-Zygmund operators. Sparse estimates for these were studied in \cite{Nazarov-Lerner-DyadicCalculus}, and in Section \ref{sec:k=0} we discuss multiple vector-valued generalizations.

For an operator as above, we prove the following vector-valued and sparse estimates:
\begin{theorem}
\label{thm:main-thm}
Let $\Gamma'$ be a subspace of $\Gamma:=\lbrace \xi \in \rr R^{n+1}: \xi_1+\ldots+\xi_{n+1}=0 \rbrace$ of dimension $k$ where $1 \leq k < \frac{n+1}{2}$. Assume that $\Gamma'$ is non-degenerate in the sense of \cite{multilinearMTT}, and that $m$ satisfies the estimate
\[
\vert \partial_\xi^\alpha m(\xi)  \vert \lesssim \dist (\xi, \Gamma')^{-\vert \alpha \vert}
\]
for all partial derivatives $\partial_\xi^\alpha$ on $\Gamma$ up to some finite order.
Let $T_k$ be an $n$-linear operator associated to the $(n+1)$-linear form
\begin{equation}
\label{eq:def-op-T_m}
\Lambda(f_1, \ldots, f_{n+1})=\int_{\xi_1+\ldots+\xi_{n+1}=0} m(\xi_1, \ldots, \xi_{n+1}) \hat{f}_1(\xi_1) \cdot \ldots \hat{f}_{n+1}(\xi_{n+1}) d \xi_1 \ldots d \xi_{n+1}.
\end{equation}
Then the operator $T_k$ admits multiple vector-valued extensions of depth $m$
\begin{equation*}
T_k: L^{p_1}\big(  \rr R; L^{R_1}(\ii W, \mu) \big) \times \ldots \times L^{p_n}\big(  \rr R; L^{R_n}(\ii W, \mu) \big)  \to L^{p_{n+1}'}\big(  \rr R; L^{{R'}_{n+1}}(\ii W, \mu) \big) 
\end{equation*}
for $(p_1, \ldots, p_n, p_{n+1})$ and $m$-tuples $(R_1, \ldots, R_n, R_{n+1})$ for which there exist a tuple $(\alpha_1, \ldots , \alpha_{n+1})$ with $\alpha_j \in (0, \frac{1}{2})$ as in \eqref{def:exp-alpha_j}, so that the following conditions are satisfied:
\begin{equation}
\label{eq:Holder-tuples-for-multilinear-op}
1 < p_j , {r_j^l} \leq \infty \quad \forall 1 \leq j \leq n, \quad \frac{1}{n} < p'_{n+1}, \big( r^l_{n+1} \big)' <\infty \quad \forall 1 \leq l \leq m,
\end{equation}
\begin{equation}
\label{eq:Holder-again}
\frac{1}{p_1}+\ldots +\frac{1}{p_n}+\frac{1}{p_{n+1}}=1, \qquad \frac{1}{r_1^l}+\ldots + \frac{1}{r_{n}^l}+\frac{1}{r_{n+1}^l}=1, \quad \forall 1 \leq l \leq m,
\end{equation}
\begin{equation}
\label{eq:cond-Leb-exp-vv-T}
\frac{1}{p_j}<1 -\alpha_j, \quad \frac{1}{r_j^l} <1 -\alpha_j \quad \text{for all     } 1\leq j \leq n+1, 1 \leq l \leq m.
\end{equation}

Moreover, $T_k$ admits a multiple vector-valued sparse domination in any $L^q$ space, for any $0< q< \infty$. If $X_j$ denotes the space $ L^{R_j}(\ii W, \mu)$ for all $1 \leq j \leq n$, $X'_{n+1}=L^{R'_{n+1}}(\ii W, \mu)$, and if there exists a tuple $(\alpha_1, \ldots , \alpha_{n+1})$ with $\alpha_j \in (0, \frac{1}{2})$ as in \eqref{def:exp-alpha_j} for which 
\begin{equation}
\label{eq:cond-mvvs}
\frac{1}{r_j^l} <1 -\alpha_j \quad \text{for all     } 1\leq j \leq n+1, 1 \leq l \leq m,
\end{equation}
then, given Lebesgue exponents $s_j$ with $\frac{1}{s_j}<1-\alpha_j$ for all $1 \leq j \leq n$ and $\frac{1}{s_{n+1}}<\frac{1}{q}-\alpha_{n+1}$, any vector-valued functions $\vec f_1, \ldots, \vec f_{n}$ so that $\| \vec f_j(x, \cdot) \|_{X_j}$ are locally integrable, and $v$ a locally $q$-integrable function, we can construct a sparse collection $\ic S$ depending on the functions $\vec f_j$ and $v$ and the exponents $s_j$ and $q$ for which
{\fontsize{10}{10}\begin{equation}\label{eq:sparse-T_k-all-tau}
\big\| \big\|T_k(\vec f_1, \ldots, \vec f_n)\big\|_{X'_{n+1}}  \cdot v \big\|_q^q \lesssim \sum_{Q \in \ic S} \prod_{j=1}^{n}\big( \frac{1}{|Q|} \int_{\rr R} \| \vec f_j(x, \cdot) \|_{X_j}^{s_j} \cdot \ci_{Q}^M dx \big)^{q /{s_j}} \big( \frac{1}{|Q|} \int_{\rr R} |v(x)|^{s_{n+1}} \cdot \ci_{Q}^M dx \big)^{q /{s_{n+1}}} |Q|.
\end{equation}}
\end{theorem}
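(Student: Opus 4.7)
The plan is to deduce both halves of the theorem from the single master local inequality \eqref{eq:local-est-impo} combined with an iterative stopping time of helicoidal type. First, following \cite{multilinearMTT}, I would Whitney-decompose the multiplier $m$ around $\Gamma'$ and discretize each frequency scale, representing $T_k$ as a model sum \eqref{eq:representation} indexed by a family $\rr P$ of rank-$k$ multi-tiles with adapted bumps $\phi_P^1, \ldots, \phi_P^{n+1}$. Standard orthogonality across frequency scales and cones at infinity reduces the analysis to one such model sum, and ultimately to its spatial restriction $T_{\rr P(I_0)}$.

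Second, I would establish \eqref{eq:local-est-impo} for $T_{\rr P(I_0)}$ via the size/energy/tree-counting machinery adapted to a $k$-dimensional singular set: for each index $j$ and each $s_j$ with $1/s_j<1-\alpha_j$, the corresponding size of $f_j$ over tiles in $\rr P(I_0)$ is majorized by
\begin{equation*}
\sup_{P \in \rr P(I_0)^+} \Bigl( \frac{1}{|I_P|} \int_{\rr R} |f_j(x)|^{s_j}\, \ci_{I_P}^{M}\, dx \Bigr)^{1/s_j},
\end{equation*}
where $\alpha_j \in (0,1/2)$ encodes the geometric loss forced by $k<(n+1)/2$. Pairing these size bounds with the dual energy estimates through the standard tree-counting lemma produces \eqref{eq:local-est-impo} with $q=1$, and the subadditivity trick of \cite{quasiBanachHelicoid} extends it to every $0<q\leq 1$. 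Exponents on the boundary of the stated range are handled by the restricted-type reduction of Section \ref{sec:restricted-type-to-general}.

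Third, for the sparse bound \eqref{eq:sparse-T_k-all-tau} I would run the following recursion. Fix $I_0$ large enough to contain the relevant supports; inside $I_0$ select $\ic S_1(I_0)$ as the maximal dyadic $J \subsetneq I_0$ on which at least one of the $s_j$- or $s_{n+1}$-maximal averages appearing on the right of \eqref{eq:sparse-T_k-all-tau} exceeds a large constant $C$ times its $I_0$-average. For $C$ large the weak-type $(1,1)$ bound for $\ic M_{s_1}, \ldots, \ic M_{s_n}, \ic M_{s_{n+1}}$ applied after normalization gives $\sum_{J}|J| \leq \eta |I_0|$, which is the sparsity condition. Split the tiles as $\rr P(I_0) = \rr P_{\textrm{good}} \sqcup \bigsqcup_J \rr P(J)$: on $\rr P_{\textrm{good}}$ every supremum in \eqref{eq:local-est-impo} is controlled by the $I_0$-average, producing exactly the $I_0$-term of \eqref{eq:sparse-T_k-all-tau}; the pieces on $\rr P(J)$ are recursed, generating $\ic S$ layer by layer. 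The multiple vector-valued extension is obtained by inserting, for each of the $m$ inner mixed-norm coordinates, an analogous stopping time that converts maximal $L^{s_j}$ averages of $\|\vec f_j(x,\cdot)\|_{X_j}$ into $L^{r_j^l}$ averages of the next fiber, exactly as in \cite{vv_BHT}.

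The step I expect to be the main obstacle is the coordination of these $m+1$ nested stopping times, one spatial and $m$ along the coordinates of $(\ii W,\mu)$: each layer must feed stopping averages that are compatible with the next, each $\alpha_j$-loss from the $\Gamma'$-singularity has to be absorbed by the slack $1/r_j^l<1-\alpha_j$ without accumulating, and the rapidly decaying tails $\ci_{I_P}^{M}$ must survive the successive conversions without degrading into a bare characteristic function. Once this bookkeeping is performed, both the vector-valued range \eqref{eq:Holder-tuples-for-multilinear-op}--\eqref{eq:cond-Leb-exp-vv-T} and the sparse domination \eqref{eq:sparse-T_k-all-tau} drop out of the same local estimate \eqref{eq:local-est-impo} applied at the deepest level of the iteration.
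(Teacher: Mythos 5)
Your route coincides with the paper's: reduce to a rank-$k$ model form as in \cite{multilinearMTT}, prove the scalar local estimate by decomposing $\rr P(I_0)$ into trees with localized energies and taking the geometric mean of the $\binom{n+1}{k}$ tree-top counts (which is exactly where $k<\frac{n+1}{2}$ and the exponents $\alpha_j$ of \eqref{def:exp-alpha_j} enter), pass from restricted-type to general functions by the level-set argument of Proposition \ref{prop:averages-restr-gen}, run the stopping time of Theorem \ref{thm:local->sparse} for the sparse collection, and iterate over the $m$ fiber coordinates as in \cite{vv_BHT} for the vector-valued statements. Up to the bookkeeping you flag, this is the proof.

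The genuine gap is the range of $q$ in \eqref{eq:sparse-T_k-all-tau}. Your recursion splits $\rr P=\bigcup_{Q}\rr P_Q$ and sums the local contributions, which requires $\|T_{\rr P}(\cdot)\|_{X'_{n+1}}^q$ to be subadditive over the tile decomposition, i.e.\ $q\le r^{j_0}:=\min(1,\min_l (r^l_{n+1})')$; you concede as much by invoking subadditivity only for $0<q\le 1$. The theorem, however, claims sparse domination for all $0<q<\infty$, and the hypothesis $\frac{1}{s_{n+1}}<\frac{1}{q}-\alpha_{n+1}$ is precisely the footprint of the missing step: for $q>r^{j_0}$ one must dualize $L^{q/r^{j_0}}$ against a function $u$ of unit norm, apply the already-established sparse bound at exponent $r^{j_0}$ to the weight $v\cdot u^{1/r^{j_0}}$, split the $s_{n+1}$-average by H\"older, and then use H\"older on the sparse sum together with the boundedness of $\ic M_{p_2/r^{j_0}}$ on $L^{(q/r^{j_0})'}$ to eliminate $u$ --- this is Proposition \ref{prop:no-subadd}, and it is what converts $s_{n+1}$ into the $q$-dependent exponent. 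Without it your recursion does not reach $q$ above the subadditivity threshold. A secondary omission in the same spirit: when some $(r^l_{n+1})'<1$ the fiber spaces are themselves quasi-Banach, so the depth induction cannot be run through the $(n+1)$-linear form and must instead be phrased through the localized quasi-norm estimates of Section \ref{sec:quasi-Banach}; deferring to ``exactly as in \cite{vv_BHT}'' only covers Banach fibers.
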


\begin{remark}
\begin{itemize}
\item[(i)] In the Banach case (i.e. when $1< r_j^l \leq \infty$ for all $1 \leq j \leq n+1, 1 \leq l \leq m$), if $q=1$, the sparse estimate above is equivalent to a multiple vector-valued sparse domination for the $n+1$-linear form.
\item[(ii)] We call the estimate in Theorem \ref{thm:main-thm} a ``depth\,-$m$" inequality, meaning that the vector spaces correspond to $m$-tuples. We refer to the classical scalar-valued inequality as a`` $0$-depth" inequality. 
\item[(iii)] Both the multiple vector-valued and the sparse multiple vector-valued estimates associated to $(R_1, \ldots, R_{n+1})$ are conditioned by the existence of a tuple $(\alpha_1, \ldots, \alpha_{n+1})$ given by \eqref{def:exp-alpha_j}, for which condition \eqref{eq:cond-mvvs} holds. Then the Lebesgue exponents $p_j$ in the case of the vector-valued extension (which satisfy \eqref{eq:Holder-again}), and the Lebesgue exponents $s_j$ in the case of the sparse domination (which we are trying to minimize), must also verify $\frac{1}{p_j} <1 -\alpha_j$ and $\frac{1}{s_j} <1 -\alpha_j$, respectively (with the exception of $s_{n+1}$, which should satisfy $\frac{1}{s_{n+1}}< \frac{1}{q}-\alpha_{n+1}$).
\end{itemize}
\end{remark}

As a consequence of the sparse estimate \eqref{eq:sparse-T_k-all-tau}, we deduce the following
\begin{corollary}[Fefferman-Stein inequality for $T_k$]
\label{cor:fefferman-stein-general}
Let $(\alpha_1, \ldots, \alpha_{n+1})\in (0, \frac{1}{2})^{n+1}$ be a tuple defined by \eqref{def:exp-alpha_j}, and $0<p<\infty$. For any $ s_1, \ldots, s_n, s_{n+1}$ satisfying $\ds \frac{1}{s_j} <1-\alpha_j$ for all $1 \leq j \leq n$, $\frac{1}{s_{n+1}} <\frac{1}{p}-\alpha_{n+1}$ and any $m$-tuples $(R_1, \ldots, R_n, R'_{n+1})$ satisfying \eqref{eq:cond-mvvs}, we have
\[
\big\|  \| T_k( \vec f_1, \ldots, \vec f_n)  \|_{L^{R'_{n+1}}(\ii W, \mu)} \big\|_p \lesssim \| \prod_{j=1}^n \ic M_{s_j} ( \| \vec f_j (x,\cdot)  \|_{L^{R_j}(\ii W, \mu)} )   \|_p.
\]

Moreover, if $w$ is a weight in $RH_{\frac{s_{n+1}}{p}}$, we have that 
\begin{equation}
\label{eq:maximal-bht-weights}
\big\|  \| T_k( \vec f_1, \ldots, \vec f_n)  \|_{L^{R'_{n+1}}} \big\|_{L^p(w)} \lesssim  \big\|  \vec {\ic M}_{s_1, \ldots, s_n} ( \| \vec f_1 (x,\cdot)  \|_{L^{R_1}}, \ldots, \| \vec f_n (x,\cdot)  \|_{L^{R_n}} )  \big\|_{L^p(w)},
\end{equation}
where $\vec {\ic M}_{s_1, \ldots, s_n}$ is the multi(sub-)linear maximal operator defined in \eqref{eq:def-func-max-multilin}. The implicit constant in \eqref{eq:maximal-bht-weights} depends on the weight $w$.
\end{corollary}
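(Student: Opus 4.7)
The plan is to derive both inequalities of the corollary directly from the sparse domination \eqref{eq:sparse-T_k-all-tau} by a careful specialization of parameters. I would apply \eqref{eq:sparse-T_k-all-tau} with $q = p$ and $v := w^{1/p}$ (taking $w \equiv 1$ for the first, unweighted statement). The condition $\frac{1}{s_{n+1}} < \frac{1}{p} - \alpha_{n+1}$ in the hypothesis is exactly the admissibility condition that activates \eqref{eq:sparse-T_k-all-tau} at the exponent $q = p$, and since $\alpha_{n+1} > 0$ it also forces $s_{n+1}/p > 1$, making the reverse H\"older class $RH_{s_{n+1}/p}$ meaningful. After the substitution the left-hand side becomes $\bigl\| \| T_k(\vec f_1, \ldots, \vec f_n) \|_{L^{R'_{n+1}}} \bigr\|_{L^p(w)}^p$, and the task reduces to identifying the resulting sparse sum with the quantities in \eqref{eq:maximal-bht-weights}.

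The second step is to absorb the tail cutoffs $\ci_Q^M$. These are essentially $L^1$-normalized bumps on $Q$ with rapid polynomial decay, so $\bigl( \frac{1}{|Q|} \int |g|^s \, \ci_Q^M \, dx \bigr)^{1/s} \lesssim \ic M_s g(x)$ for every $x \in Q$. Applied factorwise to the $\| \vec f_j(x, \cdot) \|_{X_j}$ this yields
\[
\prod_{j=1}^n \Bigl( \frac{1}{|Q|} \int \| \vec f_j \|_{X_j}^{s_j} \, \ci_Q^M \, dx \Bigr)^{p/s_j} \;\leq\; \inf_{x \in Q} \vec{\ic M}_{s_1, \ldots, s_n}\bigl( \| \vec f_1 \|_{X_1}, \ldots, \| \vec f_n \|_{X_n} \bigr)(x)^p.
\]
For the weight factor, the reverse H\"older hypothesis $w \in RH_{s_{n+1}/p}$ self-improves (over a fixed dilate of $Q$, via the rapid decay of $\ci_Q^M$) to $\bigl( \frac{1}{|Q|} \int w^{s_{n+1}/p} \, \ci_Q^M \, dx \bigr)^{p/s_{n+1}} \lesssim \langle w \rangle_Q$, with the implicit constant depending on the $RH_{s_{n+1}/p}$ characteristic of $w$.

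The final step is to exploit the defining property of the sparse collection $\ic S$: there is a pairwise disjoint family $\{E_Q\}_{Q \in \ic S}$ with $E_Q \subset Q$ and $|E_Q| \geq \eta |Q|$. Since $RH_{s_{n+1}/p} \subset A_\infty$, the standard $A_\infty$ comparison gives $\langle w \rangle_Q \cdot |Q| \lesssim w(E_Q)$. Assembling the three ingredients,
\[
\sum_{Q \in \ic S} \inf_{x \in Q} \vec{\ic M}_{s_1, \ldots, s_n}(\ldots)(x)^p \, \langle w \rangle_Q \, |Q| \;\lesssim\; \sum_{Q \in \ic S} \int_{E_Q} \vec{\ic M}_{s_1, \ldots, s_n}(\ldots)^p \, w \, dx \;\leq\; \int_{\rr R} \vec{\ic M}_{s_1, \ldots, s_n}(\ldots)^p \, w \, dx,
\]
which is precisely \eqref{eq:maximal-bht-weights}. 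The unweighted statement follows by taking $w \equiv 1$ and using the pointwise bound $\vec{\ic M}_{s_1, \ldots, s_n} \leq \prod_{j=1}^n \ic M_{s_j}$. The main obstacle I anticipate is bookkeeping rather than structural: one must rigorously compare the $\ci_Q^M$-weighted averages with ordinary Hardy-Littlewood averages over dilates of $Q$, and verify that the reverse H\"older inequality transfers to these slightly fattened averages---both standard $A_\infty$ computations whose cost is merely a weight-dependent implicit constant, consistent with the corollary's statement.
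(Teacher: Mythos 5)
Your proposal is correct and follows essentially the same route as the paper: the authors also specialize the sparse estimate \eqref{eq:sparse-T_k-all-tau} to $q=p$, $v=w^{1/p}$, use the reverse H\"older hypothesis to reduce the $L^{s_{n+1}/p}$ average of $w$ to $\langle w\rangle_Q$, and then sum over the sparse collection via the disjoint sets $E_Q$ and the $A_\infty$ comparison $w(Q)\lesssim w(E_Q)$ (this is carried out explicitly for $T_0$ at the end of Section \ref{sec:k=0} and invoked for $T_k$ by analogy). Your extra care with the $\ci_Q^M$ tails is a harmless refinement of the same argument.
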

For multilinear Calder\'on-Zygmund operators, a similar result is due to Coifman and Meyer \cite{CoifmanMeyer-Commutators}. A weighted estimate involving products of maximal functions appeared in \cite{GrafTorres-maxop-multilin}, and in \cite{NewMaxFnMultipleWeights} an inequality as above using the multilinear maximal function was obtained; in both cases, the condition on the weight is that $w \in A_\infty$. We sketch the proof of this result in Section \ref{sec:carleson_op-only}.

Together with the weighted estimates that we will prove for $\vec {\ic M}_{s_1, \ldots, s_n}$ in Proposition \ref{prop-strong-est-weighted-multi-max}, we deduce the following weighted estimates for $T_k$:
\begin{corollary}
\label{cor:main-weights-vector-weights}
Let $(\alpha_1, \ldots, \alpha_{n+1})\in (0, \frac{1}{2})^{n+1}$ be a tuple defined by \eqref{def:exp-alpha_j}, $1 < s_1, \ldots, s_{n+1} <\infty$ are exponents satisfying $\ds \frac{1}{s_j} <1-\alpha_j$ for all $1 \leq j \leq n$, $\frac{1}{s_{n+1}} <\frac{1}{q}-\alpha_{n+1}$, while the $m$-tuples $(R_1, \ldots, R_n, R'_{n+1})$ comply with condition \eqref{eq:cond-mvvs} of Theorem \ref{thm:main-thm}. Then for any $q_1,\ldots, q_n, q$ so that $s_j < q_j \leq \infty$ for all $1 \leq j \leq n$ and $\frac{1}{q_1}+ \ldots+\frac{1}{q_{n}}=\frac{1}{q}$, the operator $T_k$ satisfies the following vector-valued weighted estimate:
{\fontsize{10}{10}\begin{equation*}
T_k: L^{q_1}\big(  \rr R; L^{R_1}(\ii W, \mu) \big)(w_1^{q_1}) \times \ldots \times L^{q_n}\big(  \rr R; L^{R_n}(\ii W, \mu) \big)(w_n^{q_n})  \to L^{q}\big(  \rr R; L^{{R'}_{n+1}}(\ii W, \mu) \big)(w^{q}),
\end{equation*}}
where $w=w_1 \cdot \ldots \cdot w_n$ and the vector weight $\vec w=(w_1^{q_1}, \ldots, w_n^{q_n})$ satisfies the condition
\begin{equation}
\label{eq:joint-weight-condition}
\sup_{Q} \big( \aver{Q} w^{s_{n+1}} \big)^\frac{1}{s_{n+1}} \, \prod_{j=1}^n \Big(  \aver{Q} w_j^{- \frac{s_j \, q_j}{q_j-s_j}} \Big)^{\frac{1}{s_j}-\frac{1}{q_j}}<+\infty.
\end{equation}
Note that we allow $q_j=\infty$ for some of the $j$, but in that case $w_j \equiv 1$.
\end{corollary}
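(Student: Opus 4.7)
The plan is to derive this vector-valued weighted estimate by composing two ingredients already in place in the paper: the vector-valued Fefferman--Stein inequality of Corollary \ref{cor:fefferman-stein-general}, which dominates $T_k$ in $L^q(w^q)$ by the multilinear maximal operator $\vec{\ic M}_{s_1,\ldots,s_n}$ whenever $w^q$ satisfies a reverse H\"older condition of the right order, together with the strong-type weighted bound for $\vec{\ic M}_{s_1,\ldots,s_n}$ to be established in Proposition \ref{prop-strong-est-weighted-multi-max}. Setting $F_j(x) := \|\vec f_j(x,\cdot)\|_{L^{R_j}(\ii W,\mu)}$, the target reduces to the scalar inequality
\[
\big\| \| T_k(\vec f_1, \ldots, \vec f_n) \|_{L^{R'_{n+1}}} \big\|_{L^q(w^q)} \lesssim \prod_{j=1}^{n} \| F_j \|_{L^{q_j}(w_j^{q_j})},
\]
after which unpacking the iterated Lebesgue norms produces the full statement.

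First, I would verify that the joint weight condition \eqref{eq:joint-weight-condition} implies $w^q \in RH_{s_{n+1}/q}$, which is precisely the hypothesis needed to invoke Corollary \ref{cor:fefferman-stein-general} with $p=q$ and underlying weight $w^q$. The shape of \eqref{eq:joint-weight-condition} has been arranged so that this check is natural: using $w = \prod_j w_j$ and $1/q = \sum 1/q_j$, and applying H\"older on each cube with conjugate exponents $q_j/s_j$ and $q_j/(q_j-s_j)$, one isolates the dual weights $w_j^{-\tau_j}$ with $\tau_j = s_jq_j/(q_j-s_j)$, whose averages are controlled through \eqref{eq:joint-weight-condition} itself. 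Once the reverse H\"older bound is in hand, Corollary \ref{cor:fefferman-stein-general} delivers
\[
\big\| \| T_k(\vec f_1, \ldots, \vec f_n) \|_{L^{R'_{n+1}}} \big\|_{L^q(w^q)} \lesssim \big\| \vec{\ic M}_{s_1, \ldots, s_n}(F_1, \ldots, F_n) \big\|_{L^q(w^q)}.
\]

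Second, I would invoke Proposition \ref{prop-strong-est-weighted-multi-max}, whose hypothesis is exactly the joint condition \eqref{eq:joint-weight-condition}, to obtain
\[
\big\| \vec{\ic M}_{s_1, \ldots, s_n}(F_1, \ldots, F_n) \big\|_{L^q(w^q)} \lesssim \prod_{j=1}^n \| F_j \|_{L^{q_j}(w_j^{q_j})},
\]
and composition with the previous inequality closes the argument. The main obstacle will be the reverse H\"older verification: \eqref{eq:joint-weight-condition} is a joint multilinear $A_{\vec P,\vec s}$-type condition, and extracting a scalar reverse H\"older inequality for the composite weight $w^q$ requires a careful H\"older argument exploiting both $w = \prod_j w_j$ and $1/q = \sum 1/q_j$. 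Should that extraction prove too delicate, an alternative is to bypass Corollary \ref{cor:fefferman-stein-general} and work directly from the sparse domination \eqref{eq:sparse-T_k-all-tau} with $v = w$, using \eqref{eq:joint-weight-condition} to rewrite the weight factor on each $Q \in \ic S$ in terms of the dual weights $w_j^{-\tau_j}$ and then applying the standard sparse-to-weighted mechanism (H\"older inside each sparse cube, disjointness of the sparse cores, and the $A_{\vec P, \vec s}$-type weight analysis already implicit in Proposition \ref{prop-strong-est-weighted-multi-max}) to sum the resulting cube-by-cube contributions into the product $\prod_j \|F_j\|_{L^{q_j}(w_j^{q_j})}$.
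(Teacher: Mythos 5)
Your two-step architecture --- Corollary \ref{cor:fefferman-stein-general} followed by Proposition \ref{prop-strong-est-weighted-multi-max} --- is exactly the route the paper takes, and the reduction to the scalar inequality for $F_j(x)=\|\vec f_j(x,\cdot)\|_{L^{R_j}}$ is correct. Two remarks on the weight verifications. First, a harmless imprecision: the hypothesis of Proposition \ref{prop-strong-est-weighted-multi-max} is not literally \eqref{eq:joint-weight-condition} but the $A_{\vec q,\vec s}$ condition \eqref{eq:def-vec-Aps-condition} for $\vec w=(w_1^{q_1},\ldots,w_n^{q_n})$, whose first factor is $\big(\aver{Q}w^q\big)^{1/q}$ rather than $\big(\aver{Q}w^{s_{n+1}}\big)^{1/s_{n+1}}$; since $q<s_{n+1}$, Jensen gives $\big(\aver{Q}w^q\big)^{1/q}\le\big(\aver{Q}w^{s_{n+1}}\big)^{1/s_{n+1}}$, so \eqref{eq:joint-weight-condition} does imply it and this part of your argument goes through.

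The genuine gap is the step you yourself flag: extracting $w^q\in RH_{s_{n+1}/q}$ from \eqref{eq:joint-weight-condition}. The mechanism you propose --- a single application of H\"older on each cube with exponents $q_j/s_j$ and $q_j/(q_j-s_j)$ --- cannot work, because H\"older only yields the trivial direction $\aver{Q}w^q\le\big(\aver{Q}w^{s_{n+1}}\big)^{q/s_{n+1}}$, whereas a reverse H\"older inequality asserts the opposite and is a genuine self-improvement property requiring $A_\infty$-type structure. The missing ingredient is Lemma \ref{lemma:RH-vector-weights}: applied with $\beta=s_{n+1}$ and $\beta_j=s_jq_j/(q_j-s_j)$, the joint condition \eqref{eq:joint-weight-condition} forces $(w^q)^{s_{n+1}/q}\in A_t$ with $t=1+\frac{s_{n+1}}{q}\big(\sum_i\frac{q}{\beta_i}\big)$, and then the classical equivalence $v\in A_p\cap RH_s\Leftrightarrow v^s\in A_{s(p-1)+1}$ delivers $w^q\in RH_{s_{n+1}/q}$ (together with $w^q\in A_{1+q\sum_i 1/\beta_i}$, which the paper reuses to confirm $\vec w\in A_{\vec q,\vec s}$). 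Without this lemma, or an equivalent $A_p$-theoretic argument, the reverse H\"older hypothesis of Corollary \ref{cor:fefferman-stein-general} is unverified and the proof is incomplete; your fallback of arguing directly from the sparse form \eqref{eq:sparse-T_k-all-tau} does not avoid the issue, since summing the sparse contributions still requires controlling the $L^{s_{n+1}}$ average of $w$ on each cube by its $L^q$ average, which is the same reverse H\"older statement.
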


Recently, weighted estimates were obtained through extrapolation in \cite{extrap-BHT}, for the bilinear Hilbert transform operator (though the classes of weights involved are smaller, and $L^\infty$ spaces do not appear). The method of the proof doesn't allow to recover all the vector-valued estimates from \cite{vv_BHT} and \cite{quasiBanachHelicoid}. Even more recently, an extrapolation theory for vector weights was developed in \cite{martell-kangwei-mulilinear-weights-extrapolation}, which, together with the weighted estimates from \cite{weighted_BHT}, allow to recover all the vector-valued estimates from \cite{vv_BHT} and \cite{quasiBanachHelicoid} (except for the case when $L^\infty$ spaces are involved). A more comprehensive discussion and a proof of Corollary \ref{cor:main-weights-vector-weights} will follow in Section \ref{sec:weighted-est}.

\begin{remark}
The condition \eqref{eq:joint-weight-condition} on the vector weight is the same as that in \cite{martell-kangwei-mulilinear-weights-extrapolation} or \cite{weighted_BHT}, but written somehow differently. If $q \geq 1$, the condition on $s_{n+1}$ is that $\ds \frac{1}{s_{n+1}} <\frac{1}{q}-\alpha_{n+1}$, which is the same as $\ds \frac{1}{s_{n+1}}< \big( 1-\alpha_{n+1} \big) -\frac{1}{q'}$.

In Corollary \ref{cor:main-weights-vector-weights}, we don't keep track of the ``qualitative" dependency on the vector weight, but it can be traced directly from the sparse form \eqref{eq:sparse-T_k-all-tau}.
\end{remark}

The exact range of exponents for the multiple vector-valued inequalities depends closely on the values of the tuple $(\alpha_1, \ldots, \alpha_{n+1}) \in (0, \frac{1}{2})^{n+1}$. However, if all the exponents $(r^l_1, \ldots, r^l_{n+1})$ for $1 \leq l \leq m$, are ``local $L^2$" exponents (that is, $2 \leq r_j^l \leq \infty$ for all $1 \leq j \leq n+1, 1 \leq l \leq m$), then the range of the depth-$m$ vector-valued extension of $T_k$ corresponding to the tuple $(R_1, \ldots, R_{n+1})$ coincides with the range of the operator in the scalar case. This is because the constraint in \eqref{eq:cond-Leb-exp-vv-T} reduces to a constraint on the $p_j$ only: $\frac{1}{r_j^l} < 1-\alpha_j$ is automatically satisfied. An immediate consequence, which can be obtained by using the tools developed in \cite{vv_BHT}), is the following result:
\begin{corollary}
For any $n \geq 1$ and any $0 \leq k <\dfrac{n+1}{2}$, the multi-parameter operator $T_k \otimes T_0 \otimes \ldots \otimes T_0$ satisfies the same $L^p$ estimates as the operator $T_k$ itself:
\[
T_k \otimes T_0 \otimes \ldots \otimes T_0 : L^{p_1}(\rr R^m) \times \ldots L^{p_n}(\rr R^m) \to L^{p_{n+1}'}(\rr R^m),
\]
for any $(p_1, \ldots, p_n, p_{n+1}') \in Range(T_k)$.
\end{corollary}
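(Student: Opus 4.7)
The plan is to realize the multi-parameter operator $T_k \otimes T_0 \otimes \ldots \otimes T_0$ as a multiple vector-valued extension of $T_k$ whose ``inner'' mixed-norm vector spaces are themselves iterated $L^{p_j}$ spaces on $\rr R^{m-1}$, and then to invoke the observation made just before the statement: when the inner exponents satisfy $1/r_j^l < 1-\alpha_j$ (which is automatic in the local-$L^2$ range), the depth-$m$ vector-valued extension of $T_k$ has the same Lebesgue range as the scalar operator. Concretely, for $(p_1,\ldots,p_n,p_{n+1}') \in \mathrm{Range}(T_k)$ I would take the vector spaces $X_j = L^{p_j}(\rr R^{m-1})$, i.e.\ the mixed-norm $L^{R_j}$-space with $R_j = (p_j,\ldots,p_j)$, so that $L^{p_j}(\rr R; X_j)$ is canonically identified with $L^{p_j}(\rr R^m)$.

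The first step is to check that the tuple coming from $\mathrm{Range}(T_k)$ automatically satisfies the hypotheses of Theorem~\ref{thm:main-thm}: there exists $(\alpha_1,\ldots,\alpha_{n+1})$ as in \eqref{def:exp-alpha_j} with $1/p_j < 1-\alpha_j$ for every $j$, and since both the outer Lebesgue exponent and every inner exponent equal $p_j$, the condition \eqref{eq:cond-Leb-exp-vv-T} with $r_j^l = p_j$ is trivially met. Theorem~\ref{thm:main-thm} then delivers the desired depth-$(m-1)$ vector-valued bound for $T_k$ into the $X_j$'s. The second step is to recognize that, at the wave-packet level, this vector-valued extension is precisely the tensor product $T_k \otimes (T_0 \otimes \ldots \otimes T_0)$ once the pointwise multiplication inside each $X_j$ is replaced by the $(m-1)$-fold iterated paraproduct action. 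Since $T_0$ is a multilinear Calder\'on-Zygmund operator, the helicoidal machinery of Section~\ref{sec:k=0} provides its multiple vector-valued extensions in the full Banach range $1 < r_j^l \leq \infty$, so an inductive application of the localized estimate \eqref{eq:local-est-impo} in the remaining $m-1$ parameters takes care of the paraproduct action inside $X_j$ for the exponent tuple $(p_1,\ldots,p_{n+1}')$.

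The main obstacle I expect is the combinatorial book-keeping required to identify the tensor-product wave-packet representation with the multiple vector-valued $T_k$ in which scalar multiplication has been replaced by iterated $T_0$'s. What makes this manageable in the helicoidal framework is that the local estimate \eqref{eq:local-est-impo} is phrased through sizes and $L^{s_j}$-maximal averages, which tensorize naturally with CZ-type operators; consequently the outer stopping time on $x_1$ can be carried out independently of the inner stopping times on $x_2,\ldots,x_m$, and the induction on $m$ (with base case $m=1$ being Theorem~\ref{thm:main-thm} itself) closes without any loss in the Lebesgue range.
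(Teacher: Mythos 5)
There is a genuine gap, and it sits exactly at your ``second step.'' The multiple vector-valued extension of $T_k$ with inner spaces $X_j=L^{R_j}(\ii W,\mu)$ acts \emph{diagonally} in the inner variables: $\vec T_k(\vec f_1,\ldots,\vec f_n)(x,w)=T_k(f_1(\cdot,w),\ldots,f_n(\cdot,w))(x)$ for each fixed $w$, followed by the mixed norm. With your choice $R_j=(p_j,\ldots,p_j)$ and the identification $L^{p_j}(\rr R;X_j)\cong L^{p_j}(\rr R^m)$, this diagonal extension is simply $T_k\otimes \mathrm{Id}\otimes\ldots\otimes\mathrm{Id}$ (its boundedness follows from the scalar bound and Fubini), and it is a different operator from $T_k\otimes T_0\otimes\ldots\otimes T_0$, in which a genuinely non-diagonal multilinear operator acts in the inner variables. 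The passage you describe --- ``replacing pointwise multiplication inside each $X_j$ by the iterated paraproduct action'' --- is not a cosmetic re-indexing of wave packets; it changes the operator, and Theorem \ref{thm:main-thm} applied to the diagonal extension gives no information about the tensor product. There is no factorization of the multi-parameter form as (vector-valued $T_k$) composed with (inner $T_0$'s) that would let the two bounds be combined.

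The reduction the paper actually has in mind (the tools of \cite{vv_BHT} for $BHT\otimes\Pi\otimes\ldots\otimes\Pi$) is different: one expands the inner $T_0$'s into their wave packets, writes the multi-parameter form as a sum over the inner tiles $Q$ of one-parameter forms $\Lambda_{T_k}(\langle f_1,\phi_Q^1\rangle,\ldots)$, and applies Cauchy--Schwarz over $Q$ using that at least two of the inner families are lacunary. This produces square functions in the lacunary directions and maximal functions in the overlapping one, so the inner spaces that arise are iterated $\ell^2$ and $\ell^\infty$ spaces indexed by the inner dyadic intervals --- not $L^{p_j}(\rr R^{m-1})$. The whole point of the paragraph preceding the corollary is precisely this: since the inner exponents are $2$ and $\infty$, they lie in the local $L^2$ range, $\frac{1}{r_j^l}<\frac12<1-\alpha_j$ holds automatically, and condition \eqref{eq:cond-Leb-exp-vv-T} collapses to the scalar constraint on the outer $p_j$ alone; the inner square and maximal functions are then controlled by vector-valued Fefferman--Stein inequalities. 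Your verification of the hypotheses with $r_j^l=p_j$ bypasses the mechanism that makes the corollary true and applies the theorem to the wrong operator.
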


This extends our previous results from \cite{vv_BHT}, corresponding to the multi-parameter operator $BHT \otimes \Pi \otimes \ldots \otimes \Pi$.

Also, for $T_0$ (i.e. a multilinear H\"ormander-Mikhlin multiplier) we obtain the following sparse domination:
\begin{theorem}
\label{thm:k=0case}
Let $(R_1, \ldots, R_n, R'_{n+1})$ be $m$-tuples so that $1<r_j^l \leq \infty$ for all $1 \leq l \leq m, 1 \leq j \leq n$, $\frac{1}{2} <  (r^l_{n+1})' <\infty$. Let $\epsilon>0$. For any such tuples $R_j$, any vector-valued functions $\vec f_1, \ldots, \vec f_{n}$ so that $\| \vec f_j(x, \cdot) \|_{R_j}$ are locally integrable, and any $v$ a locally $q$-integrable function, we can construct a sparse collection $\ic S$ depending on the functions $\vec f_j$ and $v$ and the exponents $q$ and $\epsilon$ for which
{\fontsize{9}{10}\begin{equation*}
\big\|  \big\|T_0(\vec f_1, \ldots, \vec f_n)\big\|_{L^{R'_{n+1}}} \cdot v \big\|_q^q \lesssim \sum_{Q \in \ic S} \prod_{j=1}^{n} \big( \frac{1}{\vert Q \vert} \int_{\rr R} \big\|  \vec f_j(x, \cdot) \big\|_{\tilde X_j}^{(1+\delta_j)} \cdot \ci_{Q}^{M} dx  \big)^\frac{q}{1+\delta_j} \big( \frac{1}{\vert Q \vert} \int_{\rr R} \big| v(x) \big|^{q+\epsilon_q} \cdot \ci_{Q}^{M} dx  \big)^\frac{q}{q+\epsilon_q}  \cdot  \vert Q \vert.
\end{equation*}}
Above, $\delta_j, \epsilon_q \in \lbrace 0, \epsilon  \rbrace$ with at most $m$ of the $j \neq 0$. If $ \|  \cdot \|_{L^{R'_{n+1}}}^q$ is subadditive, we can take $\epsilon_q=0$.
\end{theorem}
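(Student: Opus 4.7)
The plan is to adapt the general helicoidal/sparse argument from Theorem \ref{thm:main-thm} to the $k=0$ case, and then exploit the fact that the singularity of $T_0$ is confined to the origin (i.e.\ $T_0$ is a multilinear Calder\'on-Zygmund operator) to push the ``size exponents'' $s_j$ all the way down to $1$ in all but finitely many coordinates. Concretely, I would first prove the $T_0$-analogue of the local inequality \eqref{eq:local-est-impo} on a spatial interval $I_0$, where the exponents $s_j$ on the right-hand side become $1+\delta_j$ with $\delta_j$ equal to $0$ whenever the multilinear CZ theory permits, and equal to $\epsilon>0$ precisely in those coordinates where a strictly sub-$L^1$ endpoint must be avoided. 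For a scalar paraproduct the local estimate already holds at $\delta_j=0$; the source of the $\epsilon$-losses is the iteration in the $m$ vector-valued layers of $L^{R_j}(\ii W,\mu)$.

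Once the local estimate is established, I would run the same helicoidal stopping-time procedure used in the proof of Theorem \ref{thm:main-thm}. Given $\vec f_1,\ldots,\vec f_n$ and $v$, choose maximal dyadic intervals $Q$ on which the localized $L^{1+\delta_j}$-average of $\|\vec f_j(\cdot)\|_{\tilde X_j}$ (respectively the localized $L^{q+\epsilon_q}$-average of $v$) first exceeds a large multiple of the $Q$-average. Weak-type $(1,1)$ for the relevant maximal operators guarantees that the selected collection $\ic S$ is sparse in the standard sense. Decompose the sum defining $T_0(\vec f_1,\ldots,\vec f_n)$ according to the ``top'' tiles associated with each $Q\in\ic S$, apply the local estimate on each such $Q$, and sum. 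Subadditivity of $\|\cdot\|_q^q$ (recall $q\leq 1$ in the relevant range, or handle $q>1$ by duality/subadditivity once $\|\cdot\|_{L^{R'_{n+1}}}^q$ is subadditive, which is exactly what allows $\epsilon_q=0$ in the statement) turns the sum over tiles into a sum over $Q\in\ic S$ of exactly the expression on the right-hand side.

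The bookkeeping on the $\delta_j$'s (``at most $m$ of the $j$ with $\delta_j\neq 0$'') comes from tracking, layer by layer, when the iterated vector-valued extension must step off the $L^1$ endpoint. In the Banach range, each of the $m$ iterated $L^{r_j^l}(\mu_l)$-steps can kill the $L^1$ endpoint in at most one outer coordinate (after H\"older in that layer), and an analogous accounting gives the $\epsilon_q$ on the weight $v$ when $\|\cdot\|_{L^{R'_{n+1}}}^q$ is not subadditive. When it is subadditive the $v$-factor can be treated scalarly and $\epsilon_q=0$ is available.

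The main obstacle is establishing the local endpoint inequality for $T_0$ on the iterated mixed-norm spaces with the optimal $\delta_j=0$ choices in all coordinates where it is allowed; in particular this requires handling the $L^\infty$-endpoints in the inner vector-valued spaces, which is precisely the kind of estimate that was delicate in \cite{vv_BHT}. The route is to combine the wave-packet decomposition \eqref{eq:representation} with the Bony-type paraproduct structure of $T_0$, reducing matters to known $L^\infty$-endpoint vector-valued estimates for paraproducts, and carrying the size/energy stopping time through each of the $m$ vector-valued layers separately. Once this local estimate is in place, the sparse construction and the accounting of the $\delta_j$'s and $\epsilon_q$ are mechanical, and the remaining argument proceeds exactly as for Theorem \ref{thm:main-thm}.
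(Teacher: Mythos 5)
Your overall architecture --- a localized estimate with $L^{1+\delta_j}$ sizes, followed by the stopping time of Section \ref{sec:local->sparse} and the dualization device of Proposition \ref{prop:no-subadd} to produce the $\epsilon_q$ on $v$ when $\|\cdot\|_{L^{R'_{n+1}}}^q$ fails to be subadditive --- is the same as the paper's. The gap sits exactly at the step you yourself flag as the main obstacle. You propose to obtain the local vector-valued estimate by ``reducing matters to known $L^\infty$-endpoint vector-valued estimates for paraproducts,'' but multiple vector-valued estimates for $T_0$ in which $L^\infty$ spaces appear were precisely \emph{not} known before this work (the introduction states this as one of the paper's contributions), so there is nothing established to reduce to; the reduction is the theorem.

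What the paper actually supplies, and what your sketch is missing, are three concrete devices. First, Lemma \ref{lemma:dec-lemma-k=0}: for a rank-$0$ collection the energy decomposition satisfies $\sum_{T}|I_T|\lesssim\lambda^{-1}\|f\|_1$ rather than $\lambda^{-2}\|f\|_2^2$, which is what makes the scalar local estimate hold with genuine $L^1$ sizes (your $\delta_j=0$ claim in the scalar case). Second, Proposition \ref{prop:paraprod-UMD}: when all $r_j^l$ lie strictly between $1$ and $\infty$, the $L^1$ vector-valued local estimate is proved directly by splitting the form into one maximal-function factor and at least two square-function factors and invoking the weak-type $(1,1)$ Fefferman--Stein inequality together with a vector-valued John--Nirenberg inequality. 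Third --- and this is what actually yields the ``at most $m$ of the $\delta_j$ nonzero'' bookkeeping in the general case where $L^\infty$ components are allowed --- the multiplicative splitting $g_k=g_k^1\cdot g_k^2$ combined with a double stopping time in two different $L^p$ averages, imported from the variational Carleson analysis (Propositions \ref{prop:local-est-var-C-L^q-size} and \ref{prop:local-ell^s-aver-L^1}). Your heuristic that each of the $m$ layers ``kills the $L^1$ endpoint in at most one outer coordinate after H\"older'' gives the right count, but H\"older alone in each layer degrades \emph{every} coordinate to $L^{1+\epsilon}$; without the factorization trick you cannot keep $\delta_j=0$ in the remaining coordinates, so the stated conclusion does not follow from the argument as proposed.
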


In \cite{Nazarov-Lerner-DyadicCalculus}, a pointwise sparse domination was proved for $T_0$ in the scalar case. The method seems to extend to the case of multiple vector-valued spaces, provided $L^\infty$ spaces are not involved.

The natural Fefferman-Stein inequality, which in the non-weighted scalar case is due to \cite{CoifmanMeyer-Commutators}, and to \cite{multiple-weights-multilinear-op} in the weighted scalar case, has the following formulation:  

\begin{corollary}
\label{cor:Fefferman-Stein-k=0case}
Let $(R_1, \ldots, R_n, R'_{n+1})$ be $m$-tuples so that $1<r_j^l \leq \infty$ for all $1 \leq l \leq m, 1 \leq j \leq n$, $\frac{1}{2} <  (r^l_{n+1})' <\infty$. Let $\epsilon>0$ and $\delta_j \in \lbrace 0, \epsilon  \rbrace$ with at most $m$ of the $j \neq 0$. If $w$ is an $A_\infty$ weight and $0  <q <\infty$, then
{\fontsize{10}{10}\begin{align*}
\big\| \big\|T_0(\vec f_1, \ldots, \vec f_n)\big\|_{L^{R'_{n+1}}}^q \big \|_{L^q(w)} \lesssim \big\| \vec {\ic M}_{1+\delta_1, \ldots, 1+\delta_n}(\big\|  \vec f_1(x, \cdot) \big\|_{L^{R_1}}, \ldots, \big\|  \vec f_n(x, \cdot) \big\|_{L^{R_n}})(x) \big\|_{L^q(w)}.
\end{align*}}
\end{corollary}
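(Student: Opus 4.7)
The plan is to derive this Fefferman-Stein inequality directly from the multiple vector-valued sparse domination of Theorem \ref{thm:k=0case}, by taking the sparse testing function $v$ to be a suitable power of the weight and then invoking the $A_\infty$ properties of $w$.

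First I would apply Theorem \ref{thm:k=0case} with $v:=w^{1/q}$, which is locally $q$-integrable since $w$ is locally integrable. With this choice the left hand side is exactly $\big\|\|T_0(\vec f_1,\ldots,\vec f_n)\|_{L^{R'_{n+1}}}^q\big\|_{L^q(w)}$, while the right hand side becomes
\begin{equation*}
\sum_{Q\in\ic S}\prod_{j=1}^n\Big(\frac{1}{|Q|}\int_{\rr R}\|\vec f_j(x,\cdot)\|_{L^{R_j}}^{1+\delta_j}\ci_Q^M\,dx\Big)^{q/(1+\delta_j)}\Big(\frac{1}{|Q|}\int_{\rr R} w^{1+\epsilon_q/q}\ci_Q^M\,dx\Big)^{q/(q+\epsilon_q)}|Q|.
\end{equation*}

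Next, the hypothesis $w\in A_\infty$ provides two ingredients. First, a reverse H\"older exponent $s>1$ with $\langle w^s\rangle_Q^{1/s}\lesssim \langle w\rangle_Q$: choosing $\epsilon_q$ small enough that $1+\epsilon_q/q\le s$ (or $\epsilon_q=0$ when $\|\cdot\|_{L^{R'_{n+1}}}^q$ is already subadditive) and absorbing the $\ci_Q^M$ decay by a fixed dyadic enlargement of $Q$ controls the $w$-factor by $\langle w\rangle_Q$. Second, the equivalent formulation of $A_\infty$ furnishes pairwise disjoint major subsets $E_Q\subset Q$ attached to the sparse collection $\ic S$ with $|E_Q|\gtrsim |Q|$ and with $w(Q)\lesssim w(E_Q)$.

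Finally, the definition \eqref{eq:def-func-max-multilin} gives the pointwise bound
\begin{equation*}
\prod_{j=1}^n\langle \|\vec f_j\|_{L^{R_j}}^{1+\delta_j}\rangle_Q^{1/(1+\delta_j)}\le \vec{\ic M}_{1+\delta_1,\ldots,1+\delta_n}\bigl(\|\vec f_1\|_{L^{R_1}},\ldots,\|\vec f_n\|_{L^{R_n}}\bigr)(x)
\end{equation*}
for every $x\in Q$, hence for every $x\in E_Q$. Writing $\langle w\rangle_Q|Q|=w(Q)\lesssim w(E_Q)=\int_{E_Q}w\,dx$ turns each summand into an integral of $\vec{\ic M}_{1+\delta_1,\ldots,1+\delta_n}(\cdots)^q$ against $w$ on $E_Q$, and pairwise disjointness of the $E_Q$'s collapses the resulting sum over $\ic S$ into a single integral, yielding the stated inequality. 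The only subtle point, and the reason the implicit constant is allowed to depend on $w$, is the matching between the $\epsilon$ fixed in the hypothesis and the reverse H\"older exponent dictated by $[w]_{A_\infty}$.
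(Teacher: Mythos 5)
Your proposal is correct and follows essentially the same route as the paper's own proof: apply the sparse domination with $v=w^{1/q}$, use the reverse H\"older property of $w\in A_\infty$ to reduce the weight factor to $\langle w\rangle_Q$, and then pass to the disjoint major subsets $E_Q$ (using $A_\infty$ again to get $w(Q)\lesssim w(E_Q)$) to collapse the sparse sum into an integral of $\vec{\ic M}_{1+\delta_1,\ldots,1+\delta_n}(\cdots)^q$ against $w$. The paper even highlights the same two uses of the $A_\infty$ hypothesis that you identify.
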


As a second case study, we obtain (multiple) vector-valued and sparse estimates for the variational Carleson operator:
\[
\ic C^{var, r} (f)(x):= \sup_{K} \sup_{n_0< \ldots < n_k} \big( \sum_{\ell=1}^K \vert  \int_{a_{n_{\ell-1}}}^{a_{n_\ell}} \hat{f}(\xi) e^{2 \pi i x \xi}  d \xi  \vert^r \big)^{\frac{1}{r}},
\]
where $r>2$.

\begin{theorem}
\label{thm:multi-var-Carleson}
If $R=(r^1, \ldots, r^m)$ is an $m$-tuple with $r'< r^j<\infty$, then we have
\[
\ic C^{var, r}: L^p(\rr R; L^R(\ii W, \mu)) \to L^p(\rr R; L^R(\ii W, \mu))
\]
for all $r' <p<\infty$.

Further, given $0< q < \infty $, $\epsilon, \epsilon_q >0$, $\vec f$ a multiple vector-valued function with $\| f(x, \cdot) \|_{L^R(\ii W, \mu)}$ locally integrable and $v$ a locally $q$-integrable function, there exists a sparse collection $\ic S$ of dyadic intervals (depending on the preceding parameters) so that
{\fontsize{10}{10}\[
\big\| \|\ic C^{var, r}\|_{L^R(\ii W, \mu)} \cdot v  \big\|_q^q \lesssim \sum_{Q \in \ic S} \big( \frac{1}{\vert Q \vert} \int_{\rr R} \| f(x, \cdot) \|_{L^R(\ii W, \mu)}^{r'+\epsilon} \cdot \ci_{Q}^M(x) dx \big)^{\frac{q}{r'+\epsilon}} \cdot \big( \frac{1}{\vert Q \vert} \int_{\rr R} |v(x)|^{q+\epsilon_q} \cdot \ci_{Q}^M(x) dx \big)^{\frac{q}{q+\epsilon_q}} \cdot \vert Q \vert.
\]}

If $q \leq 1$, we can allow for $\epsilon_q=0$.
\end{theorem}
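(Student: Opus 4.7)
The strategy mirrors the template established earlier in the paper: prove a local estimate of the shape \eqref{eq:local-est-impo} for the spatially localized variational Carleson operator, then extract the sparse collection via a global stopping time on the spatial variable, and finally upgrade to the multiple vector-valued setting by an additional inner stopping time on the measure-theoretic variables. First I would linearize $\ic C^{var, r}$ by measurable selection of the partition $n_0(x) < \ldots < n_K(x)$ and decompose it, using the tile model of \cite{variational_Carleson}, as a wave-packet sum indexed by a collection $\rr P$. For the spatially restricted operator $\ic C^{var, r}_{\rr P(I_0)}$ (only tiles $P$ with $I_P \subset I_0$) I would establish
\[
\bigl\| \ic C^{var, r}_{\rr P(I_0)}(f) \cdot v \bigr\|_q^q \lesssim \Bigl(\sup_{P \in \rr P(I_0)^+} \tfrac{1}{|I_P|}\int_{\rr R} |f|^{r'+\epsilon}\,\ci_{I_P}^M\,dx\Bigr)^{\tfrac{q}{r'+\epsilon}} \Bigl(\sup_{P \in \rr P(I_0)^+} \tfrac{1}{|I_P|}\int_{\rr R} |v|^{q+\epsilon_q}\,\ci_{I_P}^M\,dx\Bigr)^{\tfrac{q}{q+\epsilon_q}} |I_0|,
\]
where the exponent $r'+\epsilon$ on $f$ reflects the weak endpoint $L^{r',\infty}$ of $\ic C^{var, r}$ together with the John--Nirenberg slack needed to trade maximal averages for averages via a tile stopping time, and the exponent $q+\epsilon_q$ on $v$ plays the analogous role of an extra ``tile'' carrying $v$. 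When $q \leq 1$ the outer $L^q$ quasi-norm is subadditive and a single-tree argument recovers $\epsilon_q=0$, which is exactly the simplification mentioned in the introduction.

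The second step is the global stopping time. With $\vec f$ and $v$ fixed, build $\ic S$ iteratively: at each generation select the maximal dyadic intervals $Q$ on which either $\ic M_{r'+\epsilon}\bigl(\| \vec f(\cdot,\cdot)\|_{L^R(\ii W,\mu)}\bigr)$ or $\ic M_{q+\epsilon_q}(v)$ exceeds a fixed large multiple of its average on the parent cube. Sparsity of $\ic S$ follows from the weak-type bounds for these maximal operators. On each stopping region apply the local estimate above with $I_0 = Q$; by construction the two suprema inside are comparable to $\aver{Q}\| \vec f\|_{L^R}^{r'+\epsilon}$ and $\aver{Q}|v|^{q+\epsilon_q}$, and summing over $Q \in \ic S$ yields the sparse bound in the statement. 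The strong-type vector-valued bound $\ic C^{var, r}: L^p(\rr R; L^R) \to L^p(\rr R; L^R)$ for $r' < p < \infty$ then falls out of the sparse form by specializing $v \equiv 1$ (after duality) and using boundedness of $\vec {\ic M}_{r'+\epsilon}$ on $L^p$ for $p > r'+\epsilon$; the full open range is recovered either by letting $\epsilon \to 0$ or, at the endpoint, by a restricted-type argument as in Section \ref{sec:restricted-type-to-general}.

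The multiple vector-valued upgrade is handled fiber-by-fiber by induction on the depth $m$ of $L^R(\ii W,\mu)$: for each tile $P$ in the decomposition I convert $|f(x)|^{r'+\epsilon}$ into $\| \vec f(x,\cdot)\|_{L^R}^{r'+\epsilon}$ through an inner stopping time on $(\ii W_m,\mu_m), \ldots, (\ii W_1,\mu_1)$, each step being a H\"older inequality on the corresponding fiber, legitimate precisely because $r^j > r'$. The main obstacle is the first step: a clean proof of the local estimate with the sharp exponent $r'+\epsilon$ on $f$ requires revisiting the size/energy decomposition of \cite{variational_Carleson}, inserting the extra wave packet that carries $v$, and controlling the resulting BHF-type tree sum uniformly in the linearizing partition $\{n_\ell(x)\}$. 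Once that local inequality is in place, the global sparse extraction and the vector-valued upgrade follow the helicoidal stopping-time template already calibrated earlier in the paper for $T_k$.
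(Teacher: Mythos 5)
Your architecture is the same as the paper's: linearize, pass to the tile model of \cite{variational_Carleson}, prove a localized estimate for $\Lambda_{\ic C^{var,r};\rr P(I_0)}$, extract the sparse collection by the stopping time of Section \ref{sec:local->sparse}, and run the helicoidal induction on the depth $m$ for the vector-valued upgrade. You also correctly identify that all the real work sits in the local estimate. So the route is right.

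There is, however, one concrete gap in your justification of the endpoint claim ``$q\le 1$ allows $\epsilon_q=0$.'' Subadditivity of $\|\cdot\|_q^q$ is only what lets you sum the local estimates over the sparse collection (Proposition \ref{prop:local->sparse->subadditive}); it does nothing to lower the exponent carried by $v$ (equivalently, by the dual function $g$) inside the local estimate itself. For the variational Carleson operator the density size of $g$ is \emph{intrinsically} an $L^{r'}$ quantity: the counting lemma for tree tops reads $\sum_T|I_T|\lesssim \lambda^{-r'}\|g\|_{r'}^{r'}$ (Proposition \ref{prop:dec-top-trees-g-better}), so the naive argument produces an $L^{r'}$ average of $g$, not an $L^1$ average, and hence would force $\epsilon_q$ to be large rather than $0$. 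The paper's mechanism is the multiplicative factorization $g=g_1\cdot g_2$ with $g_1=|g|^{1/r}\in L^r_{\loc}$ and $g_2\in L^{r'}_{\loc}$ (Proposition \ref{prop:local-C-var}): the density stopping time is run on $g_2$, where it is legitimately an $L^{r'}$ decomposition, while $g_1$ is absorbed by H\"older on each tree so that the two factors reassemble into $\frac{1}{|I_T|}\|g\cdot\ci_{I_T}\|_1\lesssim \sssize^1_{\rr P(I_0)}(g)$. In the vector-valued case the analogous splitting is $g_k=\tilde g_k\cdot\g$ with $\g=\|\vec g\|_{\ell^{s'}}$, combined with a Lorentz-space interpolation in $q'$ near $s$ (Propositions \ref{prop:local-ell^s-aver-L^1} and \ref{prop:local-est-var-C-L^q-size}) to retain the $L^1$ average of $\g$. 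Without this factorization your ``single-tree argument'' does not close at $\epsilon_q=0$; for the non-endpoint statement with $\epsilon_q>0$ your outline is fine. One further small point: the summability condition in the local estimate forces $\frac{1}{2}>\theta_1>\frac{1}{r}$, which is exactly where $r>2$ enters and where the exponent $r'+\epsilon$ on $f$ (from $\theta_1=\frac1r+\epsilon$) comes from; it is worth making this explicit rather than attributing it only to the weak $L^{r'}$ endpoint.
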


We note that the scalar sparse domination result of Theorem \ref{thm:multi-var-Carleson}, when $q=1$, was already obtained in \cite{sparse-var-Carleson}. Also, the scalar case for $n=2, k=1, q=1$ of Theorem \ref{thm:main-thm} was proved in \cite{weighted_BHT}. The ``localized outer $L^p$ embeddings", formulated in the language of outer measures of \cite{outer_measures}, sit at the core of the two papers, and are similar in spirit to the localization principle presented herein. Strictly speaking, this localization principle was first published in \cite{myphdthesis}; it was developed in C.B.'s PhD thesis at Cornell University, and afterwards refined in \cite{vv_BHT}.

As usual, the sparse estimates of Theorem \ref{thm:multi-var-Carleson} imply the following weighted inequalities:
\begin{corollary}
For any $\delta>0$, and any $r'+\delta < p < \infty$, we have that
\[
\ic C^{var, r} :L^p(w) \to L^p(w),
\]
for any weight $w \in A_{\frac{p}{r'+\delta}}$. Moreover, the operatorial norm is bounded by
\begin{equation}
\label{eq:op-norm-weights}
\| \ic C^{var, r}\|_{L^p(w) \to L^p(w)} \lesssim \Big(  [w]_{A_{\frac{p}{r'+\delta}}}  \Big)^{\max \big( \frac{1}{p-r' -\delta},1 \big)}.
\end{equation}

Similar estimates hold in the vector-valued case:
\[
\big\| C^{var, r} f(x, \cdot) \big\|_{L^p(\rr R; w dx; L^R(\ii W, \mu))} \lesssim \Big(  [w]_{A_{\frac{p}{r'+\delta}}}  \Big)^{\max \big( \frac{1}{p-r' -\delta},1 \big)} \big\|f(x, \cdot) \big\|_{L^p(\rr R; w dx; L^R(\ii W, \mu))}.
\]
\end{corollary}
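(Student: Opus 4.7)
The plan is to derive both the scalar and the vector-valued weighted estimates directly from the sparse domination of Theorem \ref{thm:multi-var-Carleson} via the standard sparse-implies-weighted machinery. I would first apply that theorem with $q=p$, $v=w^{1/p}$, and small auxiliary parameters $\epsilon,\epsilon_q>0$ chosen with $r'+\epsilon\leq r'+\delta$. Since the left-hand side of the sparse bound then becomes $\bigl\|\,\|\ic C^{var,r}f(x,\cdot)\|_{L^R(\ii W,\mu)}\,\bigr\|_{L^p(w)}^p$, the task reduces to bounding a sparse form of the type
\[
\sum_{Q\in\ic S}\Bigl\langle\|f(x,\cdot)\|_{L^R}^{r'+\epsilon}\Bigr\rangle_Q^{p/(r'+\epsilon)}\,\bigl\langle w^{(p+\epsilon_q)/p}\bigr\rangle_Q^{p/(p+\epsilon_q)}\,|Q|,
\]
where $\langle\cdot\rangle_Q$ denotes average over $Q$; the scalar statement is recovered by taking the trivial measure space $(\ii W,\mu)$.

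Second, since $w\in A_{p/(r'+\delta)}\subset A_\infty$, the weight satisfies a reverse H\"older inequality with some exponent $\tau>1$ depending only on $[w]_{A_\infty}$, so that $\langle w^\tau\rangle_Q^{1/\tau}\lesssim\langle w\rangle_Q$ uniformly in $Q$. Choosing $\epsilon_q$ so that $(p+\epsilon_q)/p\leq\tau$, and absorbing the polynomial $\ci_Q^M$-tails via $A_\infty$-doubling (dilates of $Q$ carry comparable weighted measure), the second factor on each $Q$ collapses to $\langle w\rangle_Q$, and the sparse bound takes the form
\[
\bigl\|\,\|\ic C^{var,r}f\|_{L^R}\,\bigr\|_{L^p(w)}^p\lesssim\sum_{Q\in\ic S}\Bigl\langle\|f(x,\cdot)\|_{L^R}^{r'+\epsilon}\Bigr\rangle_Q^{p/(r'+\epsilon)}\,w(Q).
\]
Choosing $\epsilon<\delta$ gives $w\in A_{p/(r'+\epsilon)}$ with $[w]_{A_{p/(r'+\epsilon)}}\leq[w]_{A_{p/(r'+\delta)}}$, and I would then invoke the sharp weighted estimate for sparse operators with gauge $s=r'+\epsilon$ (in the spirit of Lerner--Moen and Cruz-Uribe--Martell--P\'erez), namely, for $p>s$ and $w\in A_{p/s}$,
\[
\Bigl(\sum_{Q\in\ic S}\langle F\rangle_{Q,s}^{p}\,w(Q)\Bigr)^{1/p}\lesssim [w]_{A_{p/s}}^{\max(1/(p-s),\,1)}\,\|F\|_{L^p(w)},
\]
applied with $F(x)=\|f(x,\cdot)\|_{L^R}$. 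Taking $p$-th roots yields the claimed operator bound with exponent $\max(1/(p-r'-\delta),1)$, simultaneously in the scalar and vector-valued cases.

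The most delicate point is the coupled choice of $\epsilon$ and $\epsilon_q$: they depend on $[w]_{A_\infty}$ through the reverse H\"older exponent $\tau$ and must be small enough for the reverse H\"older step, the inclusion $A_{p/(r'+\delta)}\subset A_{p/(r'+\epsilon)}$, and the tail absorption to cooperate without degrading the quantitative dependence beyond $\max(1/(p-r'-\delta),1)$ on $[w]_{A_{p/(r'+\delta)}}$. Since $[w]_{A_\infty}\lesssim[w]_{A_{p/(r'+\delta)}}$, this accounting is standard but requires some care in the bookkeeping of constants.
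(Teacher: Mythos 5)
Your overall strategy --- pass from sparse domination to weighted bounds --- is the right one, but your route diverges from the paper's and, as written, does not recover the quantitative bound \eqref{eq:op-norm-weights}. The paper obtains the scalar case by dualizing the $q=1$ bilinear sparse form, whose second slot carries an \emph{$L^1$} average of the dual function (this is precisely why Section \ref{sec:$L^1$ sizes for the multiple vector-valued sparse domination} works so hard to get an $L^1$ size of $g$), and then feeding the resulting form $\sum_Q \langle f\rangle_{Q,r'+\epsilon}\langle gw\rangle_{Q,1}|Q|$ into Proposition 6.4 of \cite{BernicotFreyPetermichl}; there the reverse H\"older condition on $w$ is vacuous and the exponent $\max\big(\frac{1}{p-r'-\delta},1\big)$ comes out with no hidden dependence on $w$. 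The vector-valued case is then deduced by limited-range extrapolation from the scalar weighted estimates, not from the vector-valued sparse form.

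The concrete gap in your argument is the step where you take $q=p>1$ and $v=w^{1/p}$, which forces $\epsilon_q>0$ in Theorem \ref{thm:multi-var-Carleson}. To collapse $\big\langle w^{(p+\epsilon_q)/p}\big\rangle_Q^{p/(p+\epsilon_q)}$ to $\langle w\rangle_Q$ you must choose $\epsilon_q$ below the reverse H\"older gap of $w$, i.e. $\epsilon_q \lesssim 1/[w]_{A_\infty}$. But the implicit constant in the sparse domination is \emph{not} uniform in $\epsilon_q$: it degenerates as $\epsilon_q\to 0$ (this is visible in Proposition \ref{prop:no-subadd}, where the passage to $q>r^{j_0}$ costs the norm of $\ic M_{p_2/r^{j_0}}$ on $L^{(\tau/r^{j_0})'}$, which blows up as the auxiliary $\epsilon\to0$). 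Coupling $\epsilon_q$ to $[w]_{A_\infty}$ therefore injects an uncontrolled extra power of $[w]$ into the final estimate; the paper itself warns that ``the weighted estimates that follow are not optimal if the $L^1$ size is replaced by an $L^{1+\epsilon}$ average.'' So your proposal proves the qualitative boundedness $\ic C^{var,r}:L^p(w)\to L^p(w)$ for $w\in A_{\frac{p}{r'+\delta}}$, and your reduction of the remaining sparse form $\sum_Q\langle F\rangle_{Q,r'+\epsilon}^p\,w(Q)$ to the sharp bound for $L^s$-sparse operators is fine, but the stated operator-norm dependence requires either the $q=1$ bilinear route with the genuine $L^1$ average (then dualize), or the paper's extrapolation argument for the vector-valued extension.
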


The scalar case was already obtained in \cite{sparse-var-Carleson}, as a consequence of similar sparse estimates. The vector-valued weighted estimates follow through extrapolation.

\begin{remark}
If $p>r'$ is fixed, we obtain weighted estimates for all weights
\[
w \in \bigcup_{r'<p_0 <p}  A_{\frac{p}{p_0}} ,
\]
which coincides with the class $\ds A_{\frac{p}{r'}}$.
\end{remark}

A Fefferman-Stein inequality is available, in the multiple vector-valued setting, for the variational Carleson as well:
\begin{corollary}
For any $0<p<\infty$, $\epsilon>0$, and any $m$-tuple $R=(r^1, \ldots, r^m)$ with $r'< r^j<\infty$, we have 
\[
\big\| \big\| \ic C^{var, r} f(x, \cdot) \big\|_{ L^R(\ii W, \mu)}  \big\|_p \lesssim \big\|  \ic M_{r'+\epsilon} \big( \| f(x, \cdot)\|_{L^R(\ii W, \mu)} \big)\big\|_p.
\]

Moreover, if $w \in A_{\infty}$, we have that 
\[
\big\| \big\| \ic C^{var, r} f(x, \cdot) \big\|_{ L^R(\ii W, \mu)}  \big\|_{L^p(w)} \lesssim \big\|  \ic M_{r'+\epsilon} \big( \| f(x, \cdot)\|_{L^R(\ii W, \mu)} \big)\big\|_{L^p(w)}.
\]
\end{corollary}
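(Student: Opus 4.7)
The plan is to deduce both inequalities directly from the sparse domination of Theorem \ref{thm:multi-var-Carleson}, following the same sparse-to-Fefferman--Stein route that underlies Corollary \ref{cor:fefferman-stein-general}. The two facts from weighted theory I would invoke are the reverse H\"older property of $A_\infty$ weights ($w\in A_\infty$ implies $w\in RH_{1+\delta}$ for some $\delta>0$) and the $A_\infty$ substitution $w(E_Q)\gtrsim_{[w]_{A_\infty}} w(Q)$ valid for any sparse subset $E_Q\subset Q$ with $|E_Q|\geq\eta|Q|$.

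For the weighted estimate, I would apply Theorem \ref{thm:multi-var-Carleson} with $q=p$ and $v=w^{1/p}$, so that $\int\|\ic C^{var,r} f\|_{L^R}^p\,w\,dx$ is dominated by a sparse sum over a collection $\ic S$ depending on $(f,w,p,\epsilon,\epsilon_p)$. Choosing $\epsilon_p>0$ small enough that $(p+\epsilon_p)/p\leq 1+\delta$, the reverse H\"older inequality together with the doubling of $A_\infty$ weights (used to absorb the tails of the smoothed cutoff $\ci_Q^M$) yields
\[
\Big(\frac{1}{|Q|}\int_{\rr R} v^{p+\epsilon_p}\,\ci_Q^M\,dx\Big)^{p/(p+\epsilon_p)} \lesssim \frac{w(Q)}{|Q|},
\]
while the $\|f\|_{L^R}$-factor is pointwise bounded on $Q$ by $\inf_{y\in Q}\ic M_{r'+\epsilon}(\|f(\cdot,\cdot)\|_{L^R})(y)^{p}$, again thanks to the decay of $\ci_Q^M$. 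Substituting $w(Q)\lesssim w(E_Q)$ and using the disjointness of $\{E_Q\}_{Q\in\ic S}$ collapses the sparse sum into
\[
\sum_{Q\in\ic S}w(E_Q)\,\inf_{y\in E_Q}\ic M_{r'+\epsilon}(\|f\|_{L^R})^p \leq \int_{\rr R}\ic M_{r'+\epsilon}(\|f\|_{L^R})^p\,w\,dx,
\]
which is exactly the weighted Fefferman--Stein bound.

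The unweighted case is the specialization $w\equiv 1\in A_\infty$: take $v\equiv 1$ (with $\epsilon_p=0$ allowed when $p\leq 1$ and any small $\epsilon_p>0$ otherwise), so the $v$-factor in each sparse term is $O(1)$ and the same telescoping over $\{E_Q\}$ yields $\|\ic M_{r'+\epsilon}(\|f\|_{L^R})\|_p^p$. The only genuine technical point is the systematic handling of $\ci_Q^M$: since it carries enough polynomial decay, $\tfrac{1}{|Q|}\int g\,\ci_Q^M\,dx$ is controlled by $\ic M g(y)$ for any $y\in Q$, and the analogous statement holds for the $A_\infty$ weight on dilates of $Q$. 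Beyond this routine bookkeeping I do not expect any conceptual obstacle: once Theorem \ref{thm:multi-var-Carleson} is in hand, the corollary follows by the now-standard sparse-to-Fefferman--Stein machinery.
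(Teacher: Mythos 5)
Your proposal is correct and follows essentially the same route as the paper: the authors prove the analogous Fefferman--Stein bound for $T_0$ by feeding $v=w^{1/p}$ into the sparse estimate, invoking the reverse H\"older property to absorb the $L^{p+\epsilon_p}$ average of $v$, and using $w(E_Q)\gtrsim w(Q)$ together with the disjointness of the sets $E_Q$, and they explicitly state that the variational Carleson case is handled identically from Theorem \ref{thm:multi-var-Carleson}. Your treatment of the $\ci_Q^M$ tails is the standard dyadic-shell bookkeeping the paper leaves implicit.
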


\begin{remark}
The implicit constant in the inequality above depends on the weight $w$ or on its ${A_\infty}$ characteristic. We don't track that dependence here.
\end{remark}

As mentioned previously, the sparse domination of $\| T \cdot v \|_q^q$, where $T$ allows for a wave packet decomposition as in \eqref{eq:representation}, is implied by a local property of the operator. We present the statement for $T_k$ when $n=2$ and $q=1$, as the hypothesis had already appeared in \cite{vv_BHT}, for a special class of functions.
\begin{proposition}
\label{prop:local->sparse}
Let $\rr P$ be a collection of tri-tiles and $\Lambda_{\rr P}$ the trilinear form associated to $T$. Assume that 
{\fontsize{10}{10}
\begin{align*}
\vert \Lambda_{\rr{P}(I_0)}(f, g, h) \vert & \lesssim  \big(\sup_{P \in \rr P \left( I_0 \right)^+} \frac{1}{|I_P|} \int_{\rr R}\vert f (x)\vert^{s_1}  \cdot \ci_{I_P}^{M} dx\big)^{1/{s_1}} \cdot \big(\sup_{P \in \rr P \left( I_0 \right)^+} \frac{1}{|I_P|}  \int_{\rr R}\vert g (x)\vert^{s_2}  \cdot \ci_{I_P}^{M} dx\big)^{1/{s_2}} \\
& \cdot \big(\sup_{P \in \rr P \left( I_0 \right)^+} \frac{1}{|I_P|}  \int_{\rr R}\vert h (x)\vert^{s_3}  \cdot \ci_{I_P}^{M} dx\big)^{1/{s_3}} \cdot \vert I_0 \vert
\end{align*}}holds for any dyadic interval $I_0$ and any locally integrable functions $f, g$ and $h$. Then there exists $\ic S$ a sparse family of intervals depending on the functions $f, g, h$ and on the Lebesgue exponents $s_1, s_2, s_3$, so that 
{\fontsize{10}{10}\begin{align*}
\label{eq:sparse-BHT-non-restr}
\big| \Lambda_{\rr P}(f, g, h) \big|\lesssim \sum_{Q \in \ic S} \big( \frac{1}{|Q|} \int_{\rr R} \vert f \vert^{s_1} \cdot \ci_{Q}^{M-1} dx \big)^{1/{s_1}}  \big( \frac{1}{|Q|} \int_{\rr R} \vert g \vert^{s_2} \cdot \ci_{Q}^{M-1} dx \big)^{\frac{1}{s_2}}\big( \frac{1}{|Q|} \int_{\rr R} \vert h \vert^{s_3} \cdot \ci_{Q}^{M-1} dx \big)^{1/{s_3}} \cdot |Q|.
\end{align*}}
\end{proposition}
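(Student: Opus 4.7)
The plan is to mirror the Calder\'on--Zygmund-type stopping-time machinery familiar from modern sparse-domination proofs, with the tile-localized maximal averages in the hypothesis playing the role of the classical Hardy-Littlewood averages. Throughout I fix a top dyadic interval $I_0$ containing every spatial interval $I_P$, $P \in \rr P$ (if no such $I_0$ exists, one decomposes $\rr P$ into countably many pieces supported over disjoint top intervals and sums at the end). Writing $f_1 = f$, $f_2 = g$, $f_3 = h$ and
\[
a_j(I_0) := \sup_{P \in \rr P(I_0)^+} \frac{1}{|I_P|} \int_{\rr R} |f_j|^{s_j} \ci_{I_P}^{M} \, dx,
\]
the hypothesis bounds $|\Lambda_{\rr P(I_0)}(f,g,h)|$ by the product $a_1(I_0)^{1/s_1} a_2(I_0)^{1/s_2} a_3(I_0)^{1/s_3} |I_0|$. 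This already has the shape of a single sparse contribution indexed by $I_0$, once the suprema $a_j(I_0)$ are replaced by the $\ci_{I_0}^{M-1}$-weighted averages appearing in the conclusion; enforcing that replacement is the purpose of the stopping time.

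Next I would let $\mathcal J_1(I_0)$ consist of the maximal dyadic intervals $J \subsetneq I_0$ for which
\[
\sup_{\substack{P \in \rr P(I_0)^+ \\ I_P \subseteq J}} \frac{1}{|I_P|} \int_{\rr R} |f_j|^{s_j} \ci_{I_P}^{M} \, dx > C \cdot \frac{1}{|I_0|} \int_{\rr R} |f_j|^{s_j} \ci_{I_0}^{M-1} \, dx
\]
for at least one index $j \in \lbrace 1,2,3 \rbrace$, with $C$ a large absolute constant. The sparseness bound $\sum_{J \in \mathcal J_1(I_0)} |J| \leq \frac{1}{2} |I_0|$ then follows because the tile-maximal operator on the left side is pointwise majorized on $I_0$ by the Hardy-Littlewood maximal function $\ic M(|f_j|^{s_j} \ci_{I_0}^{M-1})$, up to an acceptable tail absorbed by the slight drop from $M$ to $M-1$ in the decay of $\ci$. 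A weak-$(1,1)$ inequality for $\ic M$ and the choice of $C$ large enough then yield the halving.

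On the complementary tile family $\rr P^{\text{good}}(I_0) := \lbrace P \in \rr P(I_0)^+ : I_P \not\subseteq J \text{ for any } J \in \mathcal J_1(I_0) \rbrace$, maximality of the stopping children forces $a_j \lesssim \frac{1}{|I_0|} \int |f_j|^{s_j} \ci_{I_0}^{M-1} \, dx$ for each $j$, so a single application of the hypothesis to $\rr P^{\text{good}}(I_0)$ produces exactly the sparse term indexed by $I_0$. I then adjoin every $J \in \mathcal J_1(I_0)$ to $\ic S$ and iterate the construction on each collection $\rr P(J)^+$ with $J$ in place of $I_0$. The contributions telescope, while the halving at each generation of the resulting tree controls the total sum and produces the asserted sparse domination of $|\Lambda_{\rr P}(f,g,h)|$.

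The principal technical obstacle is the very first comparison: showing that once the stopping children are removed, the tile-supremum $\sup_P \frac{1}{|I_P|} \int |f_j|^{s_j} \ci_{I_P}^M dx$ over $\rr P^{\text{good}}(I_0)$ really is pointwise dominated on $I_0$ by $\ic M(|f_j|^{s_j} \ci_{I_0}^{M-1})$. Two regimes must be handled in tandem --- tiles $P$ with $I_P$ comparable to $I_0$ versus tiles with $I_P$ strictly nested but outside every selected $J$ --- and the Schwartz decay of $\ci$ is precisely what allows the one-power loss ($M \to M-1$) to pay for the transition between the $I_P$-localized and the $I_0$-localized weights. Once this comparison is in hand, the sparseness estimate and the recursive iteration are routine bookkeeping.
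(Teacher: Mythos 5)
Your overall strategy---an iterated stopping time selecting ``bad'' descendants, a weak-$(1,1)$ bound for a maximal function to get sparseness, and an application of the local hypothesis to the leftover tiles on which the sizes are under control---is exactly the strategy of the paper. However, the selection criterion you wrote down does not work. The condition
\[
\sup_{\substack{P \in \rr P(I_0)^+ \\ I_P \subseteq J}} \frac{1}{|I_P|}\int_{\rr R} |f_j|^{s_j}\,\ci_{I_P}^M\,dx \;>\; C\cdot \frac{1}{|I_0|}\int_{\rr R}|f_j|^{s_j}\,\ci_{I_0}^{M-1}\,dx
\]
is monotone increasing in $J$: if it holds for some $J$, it holds for every dyadic $J'$ with $J\subseteq J'\subsetneq I_0$. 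Hence the \emph{maximal} $J\subsetneq I_0$ satisfying it are simply the dyadic children of $I_0$ that contain a bad tile, and in general $\sum_{J\in\mathcal J_1(I_0)}|J|$ can equal $|I_0|$ rather than $\tfrac12|I_0|$. Moreover a selected $J$ need not be contained in the level set $\lbrace \ic M_{s_j}(f_j\,\ci_{I_0}^{(M-1)/s_j})>C(\cdots)\rbrace$---only the small subinterval $I_P$ of the offending tile is---so the weak-$(1,1)$ argument you invoke does not control $\sum_J|J|$. Iterating this degenerate selection can produce essentially the whole dyadic grid below $I_0$, which is not a Carleson (hence not a sparse) family.

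The fix, and what the paper does, is to test the average over the candidate interval itself: declare $Q\subseteq Q_0$ a stopping child if $Q$ is a \emph{maximal} dyadic interval containing the spatial interval of at least one tile and satisfying $\big(\frac1{|Q|}\int_{\rr R} |f_j|^{s_j}\ci_Q^M\,dx\big)^{1/s_j}>C\big(\frac1{|Q_0|}\int_{\rr R}|f_j|^{s_j}\ci_{Q_0}^{M-1}\,dx\big)^{1/s_j}$ for some $j$. This condition is not monotone under enlargement, the maximal such $Q$ are pairwise disjoint and genuinely contained in the level set of $\ic M_{s_j}\big(f_j\,\ci_{Q_0}^{(M-1)/s_j}\big)$, so the weak-type inequality gives $\sum_Q|Q|\lesssim C^{-s_j}|Q_0|$; and any tile $P$ with $I_P$ not swallowed by a stopping child automatically satisfies $\frac1{|I_P|}\int|f_j|^{s_j}\ci_{I_P}^M\,dx\le C^{s_j}\,\frac1{|Q_0|}\int|f_j|^{s_j}\ci_{Q_0}^{M-1}\,dx$, since otherwise $I_P$ itself would meet the selection condition and be contained in a stopping child. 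That is precisely the control needed before applying the hypothesis to $\rr P_{Q_0}$. With this correction, the rest of your argument---the $M\to M-1$ loss to pass between the adapted weights, the recursion on each child, and the final summation over $\ic S$---goes through as in the paper.
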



The proof of this result will be presented in Section \ref{sec:local->sparse}.

We now briefly describe the helicoidal method (as used in \cite{vv_BHT} and in \cite{quasiBanachHelicoid}). When proving vector-valued inequalities, the sparse property of the three collections produced through the stopping times is built-in, and it doesn't play a role in itself. The study of $BHT$ involves a coupling of $L^2$ and $L^1$ information, and this motivated the restriction to functions that are bounded above by characteristic functions of sets of finite measure.

An immediate application of Proposition 6.12 in \cite{multilinear_harmonic} is the following estimate for the trilinear form associated to the $BHT$ operator:
\begin{equation}
\label{eq:BHT-size-en}
\vert \Lambda_{BHT(\rr P)}(f_1, f_2, f_3)  \vert \lesssim \prod_{j=1}^3  \big( \sup_{P \in \rr P} \frac{1}{\vert I_P \vert} \int_{\rr R} \vert f_j(x) \vert \cdot \ci_{I_P}^M(x)  dx  \big)^{\theta_j} \cdot \| f_j \|_2^{1-\theta_j},
\end{equation}
where $0 \leq \theta_1, \theta_2, \theta_3<1$ and $\theta_1+\theta_2+\theta_3=1$. The first part is an $L^1$ quantity, similar to a maximal operator, while the second part is just the $L^2$ norm. We want to show that $BHT: L^p \times L^q \to L^s$, where $\dfrac{1}{p}+\dfrac{1}{q}=\dfrac{1}{s}$. Invoking multilinear interpolation, we can assume that $\vert f_j(x) \vert \leq \one_{E_j}$ for $1 \leq j \leq 3$, and it remains to verify that
\[
\vert \Lambda_{BHT(\rr P)}(f_1, f_2, f_3)  \vert \lesssim \vert E_1 \vert^{\alpha_1} \cdot \vert E_2 \vert^{\alpha_2} \cdot  \vert E_3 \vert^{\alpha_3},
\]
for $(\alpha_1, \alpha_2, \alpha_3)$ arbitrarily close to $(\frac{1}{p}, \frac{1}{q}, \frac{1}{s'})$, with $\alpha_1+\alpha_2+\alpha_3=1$ (if we consider $s<1$, which is natural for bilinear operators, the interpolation statement becomes more involved and will not be presented in the introduction).

This reduces the problem to finding the good combination of $\theta_1, \theta_2$ and $\theta_3$ in \eqref{eq:BHT-size-en} for which we can obtain an expression similar to $\ds \vert E_1 \vert^{\frac{1}{p}} \cdot \vert E_2 \vert^{\frac{1}{q}} \cdot  \vert E_3 \vert^{\frac{1}{s'}}$. A careful inspection reveals a constraint on the Lebesgue exponents: for the proof strategy to work, we require that
\begin{equation}
\label{eq:L2-restriction}
\big\vert \frac{1}{p}-\frac{1}{q}  \big\vert \leq \frac{1}{2},
\end{equation}
and in consequence, the adjoint operators need to be considered as well. In particular, estimates close to $L^1 \times L^\infty \to L^1$ cannot be obtained directly, and multilinear interpolation between adjoint operators is needed.

Similarly, the vector-valued inequality reduces to proving
\begin{equation}
\label{eq:BHT-tril-form}
\big| \sum_k \Lambda_{BHT(\rr P)}(f_k, g_k, h_k)  \big| \lesssim |F|^{\frac{1}{p}} \, |G|^{\frac{1}{q}} \, |H|^{\frac{1}{s'}},
\end{equation}
whenever $\vec f= \lbrace f_k \rbrace, \vec g= \lbrace g_k \rbrace, \vec h= \lbrace h_k \rbrace$ are vector-valued functions satisfying $\big( \sum\limits_k | f_k |^{r_1}   \big)^{\frac{1}{r_1}} \leq \one_F$, $\big( \sum\limits_k |  g_k |^{r_2}   \big)^{\frac{1}{r_2}} \leq \one_G$, and $\big( \sum\limits_k |  h_k |^{r'}   \big)^{\frac{1}{r'}} \leq \one_{H}$, with $\ds \frac{1}{r_1}+\frac{1}{r_2}=\frac{1}{r}$.

This is achieved through \emph{localization}: the estimate in \eqref{eq:BHT-size-en} is remodeled and localized in order to obtain the sharp inequality
\[
\big|  \Lambda_{\rr P \left( I_0 \right)}(f_k \cdot \one_F, g_k \cdot \one_G, h_k \cdot \one_{H'})  \big| \lesssim \big\| \Lambda^{F, G, H'}_{\rr P \left( I_0 \right)} \big\|\|f_k \cdot \ci_{I_0}^M \|_{r_1} \|g_k \cdot \ci_{I_0}^M \|_{r_2} \|h_k \cdot \ci_{I_0}^M \|_{r'}, 
\]
where $\big\| \Lambda^{F, G, H'}_{\rr P \left( I_0 \right)} \big\| $ represents the operatorial norm that will be described later.

Then we can sum in $k$ via H\"older's inequality the expressions $\|f_k \cdot \ci_{I_0}\|_{r_1}$, which  are further transformed into $L^p$ norms (thus the ``change of measure space") of the function $\one_F$. In addition, in order to obtain the largest possible range of exponents for the vector-valued extensions, the constraint \eqref{eq:L2-restriction} had to be removed and specific \emph{sharp} estimates were needed. 

As a byproduct of the localization (the $0$-depth inequality), the constraint \eqref{eq:L2-restriction} which confined us to the region $\vert \frac{1}{p}-\frac{1}{q}  \vert <\frac{1}{2}$ can be removed by using the local estimate
\begin{equation}
\label{eq:bht-local-max-op}
\Lambda_{BHT;\rr P(I_0)}(f_1, f_2, f_3) \lesssim \prod_{j=1}^3 \big( \sup_{P \in \rr P} \frac{1}{\vert I_P \vert} \int_{\rr R} \vert \one_{E_j}(x) \vert \cdot \ci_{I_P}^M(x)  dx  \big)^{\frac{1+\theta_j}{2}-\epsilon}   \cdot |I_0|,
\end{equation}
and an additional stopping time. In this way, we can obtain all the known $L^p$ estimates for $BHT$ without using interpolation of adjoint trilinear forms. This was already contained in \cite{myphdthesis} and \cite{vv_BHT}.

Note that \eqref{eq:bht-local-max-op} is precisely the estimate needed for deducing sparse domination in Proposition \ref{prop:local->sparse}. Although it initially appeared in the context of restricted-type functions (we assume $|f_j(x)| \leq \one_{E_j}(x)$), it extends to general functions through an argument resembling interpolation (see Proposition \ref{prop:averages-restr-gen}).

The paper is organized as follows: after introducing some basic notions in Section \ref{sec:preliminaries}, we show in Section \ref{sec:local->sparse} how the local estimate implies almost immediately the sparse domination of the multilinear form and how to remove the restricted-type assumption on the functions. The stopping time that yields the sparse estimates is compared to the one used in \cite{vv_BHT} for obtaining vector-valued estimates in Section \ref{sec:review-hel-mthd}. In Sections \ref{sec:rank-k} and \ref{sec:proof_MainThm} we present the details of the proof of Theorem \ref{thm:main-thm}, proving inductively the multiple vector-valued sparse estimates. Weighted estimates that can be deduced directly from the sparse domination are discussed in Section \ref{sec:weighted-est}. Finally, in Section \ref{sec:Carleson-op} we illustrate our method for the Carleson and variational Carleson operators.

\subsection*{Acknowledgments}

C. B. wishes to express her gratitude to Fr\'ed\'eric Bernicot and Teresa Luque for several discussions on sparse domination and weighted theory.

C. B. was partially supported by NSF grant DMS 1500262  and ERC project FAnFArE no. 637510. C. M. was partially supported by NSF grant DMS 1500262; he also acknowledges partial support from a grant from the Ministry of Research and Innovation of Romania, CNCS--UEFISCDI, project PN--III--P4--ID--PCE--2016--0823 within PNCDI--III.

During the Spring Semester of 2017, C. M. was a member of the MSRI in Berkeley, as part of the Program in Harmonic Analysis, and during the Fall semester of 2017, he was visiting the Mathematics Department of the Universit\'e Paris-Sud Orsay, as a Simons Fellow. He is very grateful to both institutions for their hospitality, and to the Simons Foundation for their generous support.

The authors are equally grateful to Jos\'e Mar\'ia Martell for many clarifying discussions on extrapolation and weighted theory.

\section{Preliminaries}
\label{sec:preliminaries}
\subsection{Multi-tiles and sizes}
We recall some notions pertaining to time-frequency analysis, yet trying to avoid the prominent technicalities associated to the field. The essential step in our approach to sparse or vector-valued inequalities consists in proving a local estimate, such as the one mentioned earlier in Proposition \ref{prop:local->sparse}. This will be carried out in Section \ref{sec:rank-k}, and more definitions will be introduced at that point.

\begin{definition}
\label{def:Holder-tuple}
We call a \emph{H\"older tuple} any tuple $(p_1, \ldots, p_n, p_{n+1})$ of exponents satisfying
\begin{equation}
\label{eq:Holder-tuple}
\frac{1}{p_1}+\ldots +\frac{1}{p_n}=\frac{1}{p_{n+1}}, \quad \text{where     } 1<p_1, \ldots, p_n \leq \infty, \frac{1}{n}<p_{n+1}<\infty.
\end{equation}
\end{definition}

The operators fitting our approach are those that allow for a wave packet decomposition, that is, a decomposition in the time-frequency plane.

\begin{definition}
A \emph{tile} is a rectangle $P=I_P \times \omega_P$ of area one, with the property that $I_P$ is a dyadic interval, and $\omega_P$ is contained in a certain translate of the dyadic grid. A \emph{multi-tile} is a tuple $P=(P_1, \ldots, P_{n+1})$ where each $P_j$ is a tile, and so that $I_{P_j}=I_P$ for all $1 \leq j \leq n+1$ (that is, all the tiles share the same spatial interval $I_P$).
\end{definition}

\begin{definition}
A \emph{wave packet} associated to a tile $P=I_P \times \omega_P$ is a smooth function $\phi_P$ so that $ supp  \,\hat{\phi}_P \subseteq \dfrac{9}{10} \omega_P$ and $\phi_P$ is \emph{$L^2$-adapted} to $I_P$ in the sense that
\begin{equation}
\label{eq:L^2-adapted}
\vert  \phi_P^{\left( k \right)} (x) \vert \leq C_{k, M} \frac{1}{\vert I_P  \vert^{\frac{1}{2}+k}} \big(  1+ \frac{\dist (x, I_P) }{\vert I_P \vert} \big)^{-M}
\end{equation}
for sufficiently many derivatives $k$ and any $M>0$.
\end{definition}

With the above notation, we can study $n$-linear operators that allow for a model $(n+1)$-linear form representable as
\begin{equation}
\label{eq:gen-model-op}
\sum_{P \in \rr P} \vert I_P \vert^{-\frac{n-1}{2}} a_{P_1} \cdot \ldots \cdot a_{P_{n+1}}.
\end{equation}

For the operator introduced in Theorem \ref{thm:main-thm}, the coefficients are given by $\ds a_{P_j}=\langle f_j, \phi_{P_j}  \rangle$, for $1 \leq j \leq n+1$. The Carleson operator, defined by
\begin{equation*}
\ic C f(x):=\sup_{N} \big\vert \int_{\xi < N} \hat{f}(\xi) e^{2 \pi i x \xi} \, d \xi  \big\vert,
\end{equation*}
brings about a measurable function $N(x)$ which attains the supremum in the expression above, hence $a_{P_1}=\langle f_1, \phi_{P_1}\rangle$, $a_{P_2}=\langle f_2, \phi_{P_2} \cdot \one_{ \lbrace N(x) \in \omega_{P_2} \rbrace} \rangle$. In the same way, the variational Carleson operator of Theorem \ref{thm:multi-var-Carleson} involves several functions $\xi_1(x), \ldots, \xi_K(x)$, and in that case we have $n=1$ and $a_{P_1}=\langle f_1, \phi_{P_1}\rangle$, $a_{P_2}=\langle f_2, \phi_{P_2} \cdot \one_{\omega_{P_2}}(\xi_k(\cdot))  \rangle$ for some $1 \leq k \leq K$.

There is no typical way of treating directly these operators, and their study varies greatly upon the properties of their invariants. However, local estimates for these operators can be obtained by looking carefully at the classical boundedness proof and adjusting it accordingly: the goal is now to gather as much information as possible, although at a local level. When passing to the global estimate, much consideration is given to keeping track of the \emph{localized operatorial norm}.

Now we introduce some notations that will be used throughout the paper. In particular, we keep on using the notion of \emph{``size"} in spite of it representing henceforth just a maximal average over dyadic intervals (as opposed to its common meaning of a square function over some subcollection of multi-tiles: see Definition \ref{def:size-m-k}, and its use in \cite{initial_BHT_paper}, \cite{multilinearMTT}, \cite{multilinear_harmonic}, \cite{biest}).

\begin{notation}
Given $I \subseteq \rr R$ an interval, $\one_I$ denotes its characteristic function, while $\ci_I$ is a function $L^\infty$-adapted to $I$. For example, we can set
\[
\ci_I(x):= \big( 1+\frac{\dist(x, I)}{\vert I \vert}  \big)^{-100}.
\] 

Then we define the \emph{weighted average} of a locally integrable function $f$ on $I$ as
\begin{equation}
\label{eq:def-average}
\ave_{I}(f):= \frac{1}{|I|} \int_{\rr R} \vert f(x)  \vert \cdot \ci_I(x) dx.
\end{equation}

\end{notation}

\begin{notation}
A collection of multi-tiles will be usually denoted by $\rr P$. If $I_0$ is a fixed dyadic interval and $\rr P$ a collection of multi-tiles, we use the notations 
\begin{equation}
\label{eq:def-local-coll}
\rr P(I_0):=\lbrace P \in \rr P : I_P \subseteq I_0 \rbrace \qquad \text{and} \quad \rr P (I_0)^+:= \rr P (I_0) \cup P_{I_0}, 
\end{equation}
where $P_{I_0}$ is some multi-tile (not necessarily contained in $\rr P$)  with spatial interval $I_0$.
\end{notation}

Finally, the most important notion is that of \emph{size}:
\begin{definition}
\label{def:size}
If $\rr P$ is a collection of multi-tiles, then we define its \emph{$\sssize_{\rr P}$} with respect to the function $f$ by
\begin{equation}
\label{eq:def-zise-eq}
\sssize_{\rr P}(f):= \sup_{P \in \rr P} \frac{1}{\vert I_P \vert} \int_{\rr R} \vert f(x) \vert \cdot \ci_{I_P}^M(x) dx:= \sup_{P \in \rr P} \ave_{I_P} (f).
\end{equation}

Then $\sssize_{\rr P(I_0)}$ and $\sssize_{\rr P(I_0)^+}$ denote the sizes associated to the collections $\rr P(I_0)$ and $\rr P(I_0)^+$ respectively. Given an interval $I_0$, if the collections $\rr P(I_0)$ is implicitly defined in an unambiguous way, we simply use the notation
\[
\sssize_{I_0}(f)=\sssize_{\rr P\left(I_0\right)^+}(f)=\max \Big( \frac{1}{\vert I_0\vert} \int_{\rr R} \vert f(x) \vert \cdot \ci_{I_0}^M(x) dx,  \,\sssize_{\rr P \left( I_0 \right)} (f) \Big).
\]
\end{definition}

\begin{notation}
We will also use the notations 
\begin{equation}
\label{eq:def-L^p-size}
\sssize_{\rr P\left( I_0 \right)}^{p}(f):= \sup_{P \in \rr P \left( I_0 \right)^+}\big( \frac{1}{|I_P|} \int_{\rr R} | f |^{p} \cdot \ci_{I_P}  dx  \big)^{\frac{1}{p}}, \qquad \ave_{I_0}^{p}(f):= \big( \frac{1}{|I_0|} \int_{\rr R} | f |^{p} \cdot \ci_{I_0}  dx  \big)^{\frac{1}{p}}.
\end{equation}
\end{notation}

\subsection{Sparse collections}
\label{sec:sparse}

Even though the notion of ``sparseness" had appeared in various contexts, with various meanings, in the field of harmonic analysis, here it represents the geometric property of a collections of intervals which doesn't allow for too much stacking. Somehow this is a very good property for obtaining sharp weighted estimates.

\begin{definition}
\label{def-sparse-disj}
Let $0< \eta <1$. A collection $\ic S$ of dyadic intervals is called \emph{$\eta$-sparse} if one can choose pairwise disjoint measurable sets $E_Q \subseteq Q$ with $\vert E_Q \vert \geq \eta \vert Q \vert$ for all $Q \in \ic S$. 
 \end{definition}

\begin{definition}
\label{def-Carleson}
Let $\Lambda >1$. A family $\ic S$ is called \emph{$\Lambda$-Carleson} if for any dyadic interval $Q$ we have
\[
\sum_{\substack{P \in \ic S \\ P \subseteq Q}} \vert P \vert \leq \Lambda \vert Q \vert.
\]
\end{definition}

As it turns out, the two definitions are equivalent:

\begin{lemma}[Lemma 6.3 \cite{Nazarov-Lerner-DyadicCalculus}]
A collection $\ic S$ is $\eta$-sparse if and only if it is $\dfrac{1}{\eta}$-Carleson.
\end{lemma}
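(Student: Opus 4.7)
For the sparse-implies-Carleson direction, suppose $\ic S$ is $\eta$-sparse with witnesses $E_Q \subseteq Q$. Fix any dyadic interval $Q$. The sets $\{E_P : P \in \ic S, P \subseteq Q\}$ are pairwise disjoint subsets of $Q$, so
\[
\sum_{\substack{P \in \ic S \\ P \subseteq Q}} |P| \;\leq\; \frac{1}{\eta} \sum_{\substack{P \in \ic S \\ P \subseteq Q}} |E_P| \;\leq\; \frac{1}{\eta}|Q|,
\]
yielding the $\frac{1}{\eta}$-Carleson property.

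For the converse, assume $\ic S$ is $\frac{1}{\eta}$-Carleson. Morally, the witnesses $E_Q$ should be produced by a bottom-up greedy construction: order $\ic S$ so that every $P \in \ic S$ strictly contained in $Q$ is processed before $Q$, and when we reach $Q$ set $E_Q$ to be an arbitrary measurable subset of $Q \setminus \bigcup_{P \in \ic S,\, P \subsetneq Q} E_P$ of measure $\eta|Q|$. This step is feasible because, by induction, the already-occupied portion of $Q$ has measure
\[
\sum_{\substack{P \in \ic S \\ P \subsetneq Q}} |E_P| \;=\; \eta \sum_{\substack{P \in \ic S \\ P \subsetneq Q}} |P| \;\leq\; \eta\Bigl(\tfrac{1}{\eta}-1\Bigr)|Q| \;=\; (1-\eta)|Q|,
\]
leaving at least $\eta|Q|$ of free room inside $Q$.

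The main obstacle is that $\ic S$ may admit infinite descending chains, so no well-ordering of $\ic S$ is compatible with strict containment and the greedy algorithm cannot be run literally. My plan is to abstract the combinatorial content of the argument as a Hall-type inequality: for every finite $\ic F \subseteq \ic S$,
\[
\eta \sum_{Q \in \ic F} |Q| \;\leq\; \Bigl|\bigcup_{Q \in \ic F} Q\Bigr|.
\]
This follows from the Carleson hypothesis by grouping $\ic F$ under its (finitely many, pairwise disjoint) maximal elements $\{R_j\}$ and applying the $\frac{1}{\eta}$-Carleson bound on each $R_j$, giving $\sum_{Q \in \ic F} |Q| \leq \frac{1}{\eta}\sum_j |R_j| = \frac{1}{\eta}\bigl|\bigcup_{Q \in \ic F} Q\bigr|$. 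A standard measure-theoretic marriage/matching theorem then produces pairwise disjoint measurable $E_Q \subseteq Q$ with $|E_Q| \geq \eta|Q|$, completing the proof.
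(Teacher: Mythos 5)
Your proof is correct, and it is essentially the argument of the cited source: the paper itself gives no proof (it simply quotes Lemma 6.3 of Lerner--Nazarov), and their proof of the Carleson $\Rightarrow$ sparse direction proceeds exactly as you propose, by verifying the finite Hall-type condition $\eta\sum_{Q\in\ic F}|Q|\le|\bigcup_{Q\in\ic F}Q|$ via the pairwise disjoint maximal elements and then invoking the Bollob\'as--Varopoulos measurable marriage theorem. Your identification of why the naive greedy construction fails (infinite descending chains) and your reduction to the marriage theorem are both sound.
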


Another notion of sparse, which implies the one in Definition \ref{def-sparse-disj}, is the following:
\begin{definition}
\label{def-sparse-desc}
Let $0< \eta<1$. A collection of dyadic intervals $\ic S$ is said to be \emph{$\eta$-sparse} if for each $Q \in \ic S$ we have
\[
\sum_{P \in ch_{\ic S}(Q)} \vert P \vert \leq (1-\eta) \vert Q \vert,
\]
where $ch_{\ic S}(Q)$ is the collection of direct descendants of $Q$ in $\ic S$: the maximal elements of $\ic S$ that are strictly contained in $Q$.
\end{definition}

It is this last notion that is most commonly verified in practice, for example in \cite{Lacey-sparse}, \cite{BernicotFreyPetermichl}, \cite{Lerner-max-op-sparse}.

\section{Sparse Domination via the helicoidal method : local estimates imply sparse domination}
\label{sec:local->sparse}

In this section we show how the local estimate given by the scalar $\ic P(0)$ statement (equation \eqref{eq:localBHT} below) of the helicoidal method from \cite{vv_BHT} implies the expected scalar sparse domination of the multilinear form (corresponding to the case $q=1$). This procedure is quite general, as can be seen from the proof of Theorem \ref{thm:local->sparse} below, but will be illustrated first for trilinear forms. 

Later on, provided we have subadditivity, we prove in Proposition \ref{prop:local->sparse->subadditive} how a local estimate in (quasi-) norm (such as $\ic P(0)$ of the helicoidal method applied to quasi-Banach spaces in \cite{quasiBanachHelicoid}) implies a sparse domination in (quasi-) norm (estimate \eqref{eq:sparse-T_k-all-tau} of Theorem \ref{thm:main-thm}).

\subsection{The stopping time for the sparse collection}
\label{sec:stopping-time-local->sparse}
Let $I_0$ be a fixed dyadic interval, $\rr P$ a rank-1 collection of tri-tiles, and $0 \leq \theta_1, \theta_2, \theta_3 <1$ with $\theta_1+\theta_2+\theta_3=1$. It was proved in \cite{vv_BHT} that 
\begin{equation}
\label{eq:localBHT}
\vert \Lambda_{BHT;\rr{P}(I_0)}(f, g, h) \vert \lesssim  \left(\sssize_{I_0} \one_F \right)^{\frac{1+\theta_1}{2}} \cdot \left(\sssize_{I_0} \one_G \right)^{\frac{1+\theta_2}{2}} \cdot \left(\sssize_{I_0} \one_{H'} \right)^{\frac{1+\theta_3}{2}} \cdot \vert I_0 \vert,
\end{equation}
whenever $f, g$ and $h$ are functions having the property that $\vert f(x) \vert \leq \one_F(x), \vert g(x) \vert \leq \one_G(x)$ and $\vert h(x) \vert \leq \one_{H'}(x)$, where $F, G$ and $H'$ are subsets of $\rr R$ of finite measure.

Using exclusively this local estimate, we obtain a sparse domination for the globally-defined trilinear form $\Lambda_{BHT; \rr P}$. More exactly, we will construct a sparse family $\ic S$ of intervals (depending of course on the functions $\one_F, \one_G, \one_{H'}$ and the exponents $\theta_1, \theta_2$ and $\theta_3$) so that 
{\fontsize{10}{10}
\begin{align*}
\label{eq:sparse-BHT}
\big| \Lambda_{BHT; \rr P}(f, g, h) \big|\lesssim \sum_{Q \in \ic S} \big( \frac{1}{|Q|} \int_{\rr R} \one_F\cdot \ci_{Q}^M dx \big)^{\frac{1+\theta_1}{2}}  \big( \frac{1}{|Q|} \int_{\rr R} \one_G \cdot \ci_{Q}^M dx \big)^{\frac{1+\theta_2}{2}}\big( \frac{1}{|Q|} \int_{\rr R} \one_{H'} \cdot \ci_{Q}^M dx \big)^{\frac{1+\theta_3}{2}} \cdot |Q|,
\end{align*}}whenever $f, g, h$ are as above. Since the \emph{restricted weak-type condition} on the functions can be removed (see Proposition \ref{prop:averages-restr-gen} below), we will show in the general context, and for an arbitrary family $\rr P$ of tiles, how to obtain the sparse domination result from the local estimate.  

\begin{theorem}
\label{thm:local->sparse}
Let $\rr P$ be a collection of tiles and assume that 
\begin{equation}
\label{eq:localBHT-non-restr}
\vert \Lambda_{\rr{P}(I_0)}(f, g, h) \vert \lesssim  \left(\sssize_{I_0} \,\vert f \vert^{s_1} \right)^{1/{s_1}} \cdot \left(\sssize_{I_0} \,\vert g \vert^{s_2} \right)^{1/{s_2}} \cdot \left(\sssize_{I_0} \, \vert h\vert^{s_3} \right)^{1/{s_3}} \cdot \vert I_0 \vert,
\end{equation}
holds for any dyadic interval $I_0$ and any locally integrable functions $f, g$ and $h$. Then there exists a sparse family $\ic S$ of intervals so that 
\begin{align*}
\label{eq:sparse-BHT-non-restr}
\big| \Lambda_{\rr P}(f, g, h) \big|\lesssim \sum_{Q \in \ic S} \big( \frac{1}{|Q|} \int_{\rr R} \vert f \vert^{s_1} \cdot \ci_{Q}^{M-1} dx \big)^{\frac{1}{s_1}}  \big( \frac{1}{|Q|} \int_{\rr R} \vert g \vert^{s_2} \cdot \ci_{Q}^{M-1} dx \big)^{\frac{1}{s_2}}\big( \frac{1}{|Q|} \int_{\rr R} \vert h \vert^{s_3} \cdot \ci_{Q}^{M-1} dx \big)^{\frac{1}{s_3}} \cdot |Q|.
\end{align*}

\begin{remark}\begin{itemize}
\item[1.] We note that the scalar-valued functions can be replaced by more general vector-valued norms: a local estimate
{\fontsize{10}{10}
\begin{equation*}
\label{eq:localBHT-non-restr-vv}
\vert \Lambda_{\rr{P}(I_0)}(\vec f, \vec g, \vec h) \vert \lesssim  \left(\sssize_{I_0} \| \vec f(x, \cdot) \|_{X_1}^{s_1} \right)^{1/{s_1}} \cdot \left(\sssize_{I_0} \| \vec g(x, \cdot) \|_{X_2}^{s_2}\right)^{1/{s_2}} \cdot \left(\sssize_{I_0} \|\vec h(x, \cdot) \|_{X_3}^{s_3} \right)^{1/{s_3}} \cdot \vert I_0 \vert
\end{equation*}}
will imply in an analogous way the vector-valued sparse domination
{\fontsize{9}{10}
\begin{align*}
\label{eq:sparse-BHT-non-restr-vv}
\big| \Lambda_{\rr P}(\vec f, \vec g, \vec h) \big|\lesssim \sum_{Q \in \ic S} \big( \frac{1}{|Q|} \int_{\rr R} \| \vec f(x, \cdot) \|_{X_1}^{s_1} \cdot \ci_{Q}^M dx \big)^{1/{s_1}}  \big( \frac{1}{|Q|} \int_{\rr R} \| \vec g(x, \cdot) \|_{X_2}^{s_2} \cdot \ci_{Q}^M dx \big)^{1/{s_2}}\big( \frac{1}{|Q|} \int_{\rr R} \| \vec h(x, \cdot) \|_{X_3}^{s_3} \cdot \ci_{Q}^M dx \big)^{1/{s_3}} \cdot |Q|.
\end{align*}}
\item[2.] Similarly, a local estimate for a (vector-valued) $( n+1)$-linear form will imply a (vector-valued) sparse domination of the $\left( n+1 \right)$-linear form.
\end{itemize}
\end{remark}
\begin{proof}

The sparse estimation does not only involve a localization in space onto a certain interval $I_0$, but also a restriction to a specific subset of tiles $\rr P_{I_0} \subseteq\rr P(I_0)$. We will construct $\ic S$ a sparse family of intervals, where $\ds \ic S =\bigcup_{k \geq 0} \ic S_k$ and for every $k \geq 0$ we have
\[
\ic S_{k+1}=\bigcup_{Q_0 \in \ic S_{k}} ch_{\ic S}(Q_0);
\]
that is, the intervals from the $k+1$ level $\ic S_{k+1}$ are precisely the descendants of the intervals of the $k$-th level $\ic S_k$.

We start by defining $\ic S_0$ as the collection of maximal dyadic intervals $I$ so that $I=I_P$ for some tri-tile $P \in \rr P$.  Then we show how to construct $\ic S_{k+1}$, assuming that $\ic S_k$ has already been constructed: for every $Q_0 \in \ic S_{k}$, the descendants $ch_{\ic S}(Q_0)$ are maximal dyadic intervals $Q \subseteq Q_0$ so that there exists at least one tri-tile $P \in \rr P$ with $I_P \subset Q$ and so that one of the following holds:
\begin{equation}
\label{eq:large-ave-f}
\big( \frac{1}{\vert Q \vert} \int_{\rr R} \vert f(x) \vert^{s_1} \cdot \ci_{Q}^M(x) dx  \big)^{1/s_1} > C \cdot \big( \frac{1}{\vert Q_0 \vert} \int_{\rr R} \vert f(x) \vert^{s_1} \cdot \ci_{Q_0}^{M-1}(x) dx  \big)^{1/s_1} \qquad \text{or}
\end{equation} 
\begin{equation}
\label{eq:large-ave-g}
\big( \frac{1}{\vert Q \vert} \int_{\rr R} \vert g(x) \vert^{s_2} \cdot \ci_{Q}^M(x) dx  \big)^{1/s_2} > C \cdot \big( \frac{1}{\vert Q_0 \vert} \int_{\rr R} \vert g(x) \vert^{s_2} \cdot \ci_{Q_0}^{M-1}(x) dx  \big)^{1/s_2} \qquad \text{or}
\end{equation}  
\begin{equation}
\label{eq:large-ave-h}
\big( \frac{1}{\vert Q \vert} \int_{\rr R} \vert h(x) \vert^{s_3} \cdot \ci_{Q}^M(x) dx  \big)^{1/s_3} > C \cdot \big( \frac{1}{\vert Q_0 \vert} \int_{\rr R} \vert h(x) \vert^{s_3} \cdot \ci_{Q_0}^{M-1}(x) dx  \big)^{1/s_3}.
\end{equation} 

The collection $ch_{\ic S}(Q_0)$ of such intervals is disjoint due to the maximality condition, and it is not difficult to check that
\[
\sum_{Q \in ch_{\ic S}(Q_0)} \vert Q \vert \leq \frac{1}{2}\vert Q_0 \vert,
\]
provided the constant $C$ is chosen to be large enough. Indeed, we can see that all the intervals satisfying \eqref{eq:large-ave-f} are mutually disjoint and they are contained inside the set
\[
\big\lbrace \ic M_{s_1} \big( f \cdot \ci_{Q_0}^{\frac{M-1}{s_1}} \big)(x) > C \cdot \big( \frac{1}{\vert Q_0 \vert} \int_{\rr R} \vert f(x) \vert^{s_1} \cdot \ci_{Q_0}^{M-1}(x) dx  \big)^{1/s_1}  \big\rbrace.
\]

Using the $L^{s_1} \mapsto L^{s_1, \infty}$ boundedness of $\ic M_{s_1}$, we can estimate the measure of the set above by
\begin{align*}
& \vert \big\lbrace \ic M_{s_1} \Big( f \cdot \ci_{Q_0}^{\frac{M-1}{s_1}} \Big)(x) > C \cdot \big( \frac{1}{\vert Q_0 \vert} \int_{\rr R} \vert f(x) \vert^{s_1} \cdot \ci_{Q_0}^{M-1}(x) dx  \big)^{1/s_1}  \big\rbrace \vert \lesssim \\
&\lesssim  C^{-s_1} \cdot \big( \frac{1}{\vert Q_0 \vert} \int_{\rr R} \vert f(x) \vert^{s_1} \cdot \ci_{Q_0}^{M-1}(x) dx \big)^{-1} \cdot \big\|  \ic M_{s_1} \big( f \cdot \ci_{Q_0}^{\frac{M-1}{s_1}}\big) \big\|_{s_1, \infty}^{s_1} \lesssim C^{-s_1} \vert Q_0 \vert.
\end{align*}
This proves the sparse property of the collection $\ic S$; now we need to prove the sparse domination estimate. To this end, for every $Q \in \ic S$ we define $\rr P_Q$ to be the collection of tri-tiles $P \in \rr P(Q)$ so that $I_P \subseteq Q$, but $I_P$ is not contained in any other descendant of $Q$ in $\ic S$. This implies in particular that every such tri-tile satisfies 
\[
\big( \frac{1}{\vert I_P \vert} \int_{\rr R} \vert f(x) \vert^{s_1} \cdot \ci_{I_P}^M(x) dx  \big)^{1/s_1} \leq  C \cdot \big( \frac{1}{\vert Q \vert} \int_{\rr R} \vert f(x) \vert^{s_1} \cdot \ci_{Q}^{M-1}(x) dx  \big)^{1/s_1},
\]
and similarly for $g$ and $h$. We obtain in this way
\[
\big(\sssize_{\rr P_{Q}} \vert f \vert^{s_1} \big)^{1/{s_1}} \lesssim C \cdot \big( \frac{1}{\vert Q \vert} \int_{\rr R} \vert f(x) \vert^{s_1} \cdot \ci_{Q}^{M-1}(x) dx  \big)^{1/s_1},
\]
and likewise, 
{\fontsize{9}{9}
\[
\big(\sssize_{\rr P_{Q}} \vert g \vert^{s_2} \big)^{1/{s_2}} \lesssim C \cdot \big( \frac{1}{\vert Q \vert} \int_{\rr R} \vert g(x) \vert^{s_2} \cdot \ci_{Q}^{M-1}(x) dx  \big)^{1/s_2}, \quad \big(\sssize_{\rr P_{Q}} \vert h \vert^{s_3} \big)^{1/{s_3}} \lesssim C \cdot \big( \frac{1}{\vert Q \vert} \int_{\rr R} \vert h(x) \vert^{s_3} \cdot \ci_{Q}^{M-1}(x) dx  \big)^{1/s_3}.
\]}

Noting that $\ds \rr P=\bigcup_{Q \in \ic S} \rr P_Q$, we deduce the sparse domination of the trilinear form.

\end{proof}
\end{theorem}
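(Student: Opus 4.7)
The plan is to build the sparse family $\ic S$ via an iterative Calder\'on--Zygmund-type stopping time driven by the three maximal averages appearing in the hypothesis, and then to partition $\rr P$ according to $\ic S$ in a way that lets the local estimate be applied on each piece and summed.

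First I would initialize $\ic S_0$ as the collection of maximal elements (under inclusion) of $\{I_P : P \in \rr P\}$. Recursively, given $\ic S_k$, for each $Q_0 \in \ic S_k$ I select as children the maximal dyadic intervals $Q \subsetneq Q_0$ which contain some $I_P$ with $P \in \rr P$ and for which at least one of
\[
\Big( \frac{1}{|Q|} \int_{\rr R} |f|^{s_1} \ci_Q^{M} \, dx \Big)^{1/s_1} > C \Big( \frac{1}{|Q_0|} \int_{\rr R} |f|^{s_1} \ci_{Q_0}^{M-1} \, dx \Big)^{1/s_1},
\]
or the analogous conditions with $(g, s_2)$ and $(h, s_3)$, fires. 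Setting $\ic S_{k+1}$ to be the union of these children over $Q_0 \in \ic S_k$ and $\ic S := \bigcup_{k\ge 0} \ic S_k$ gives the candidate family. Sparsity follows from the weak-type $(s_j, s_j)$ bound for $\ic M_{s_j}$: the children of $Q_0$ triggered by the $j$-th condition are pairwise disjoint and contained in the level set $\{\ic M_{s_j}(f_j \ci_{Q_0}^{(M-1)/s_j}) > C \cdot (\cdots)^{1/s_j}\}$, which has measure $\lesssim C^{-s_j}|Q_0|$. Summing over $j = 1, 2, 3$ and choosing $C$ large enough forces $\sum_{Q \in ch_{\ic S}(Q_0)} |Q| \leq \tfrac{1}{2}|Q_0|$, so that $\ic S$ is $\tfrac{1}{2}$-sparse (equivalently $2$-Carleson).

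Next I would partition the tiles by setting $\rr P_Q := \{P \in \rr P : I_P \subseteq Q \text{ but } I_P \not\subseteq Q' \text{ for any descendant } Q' \text{ of } Q \text{ in } \ic S\}$, so that $\rr P = \bigsqcup_{Q \in \ic S} \rr P_Q$. By the maximality of the children, every $P \in \rr P_Q$ satisfies
\[
\Big( \frac{1}{|I_P|} \int_{\rr R} |f|^{s_1} \ci_{I_P}^{M} \, dx \Big)^{1/s_1} \leq C \Big( \frac{1}{|Q|} \int_{\rr R} |f|^{s_1} \ci_{Q}^{M-1} \, dx \Big)^{1/s_1},
\]
together with the analogues for $g$ and $h$. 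Hence the three relevant sizes of the subcollection $\rr P_Q$ are uniformly controlled by the corresponding averages over $Q$ with $\ci_Q^{M-1}$ decay. Applying the local hypothesis to $\rr P_Q \subseteq \rr P(Q)$ (its conclusion depends only on sizes and therefore transfers verbatim to any subcollection) then yields
\[
|\Lambda_{\rr P_Q}(f, g, h)| \lesssim \prod_{j=1}^{3} \Big( \frac{1}{|Q|} \int_{\rr R} |f_j|^{s_j} \ci_Q^{M-1} \, dx \Big)^{1/s_j} \cdot |Q|,
\]
and summing over $Q \in \ic S$ delivers the desired sparse domination.

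The main subtlety is the transfer of the hypothesis from the full local collection $\rr P(Q)$ to the stopping-time remainder $\rr P_Q$: this is legitimate precisely because the local bound is size-based and the stopping construction was designed so that the sizes of $\rr P_Q$ are dominated by regular averages over $Q$. A secondary bookkeeping point is the one-power discrepancy between the $\ci^{M}$ decay in the size on the left of the stopping condition and the $\ci^{M-1}$ decay on the right (and in the final sparse estimate): this cushion is exactly what allows the weak-type $\ic M_{s_j}$ argument to close the sparsity count while preserving a clean sparse bound in the conclusion.
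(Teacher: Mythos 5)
Your proposal is correct and follows essentially the same route as the paper's proof: the same iterated stopping time with the three $\ci_Q^{M}$-versus-$\ci_{Q_0}^{M-1}$ average conditions, the same weak-type maximal function argument for the Carleson packing of the children, and the same partition $\rr P=\bigcup_{Q}\rr P_Q$ with the local size-based estimate applied to each piece. Your explicit remark on why the hypothesis transfers to the subcollection $\rr P_Q$ is a point the paper leaves implicit, but it is the same argument.
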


Eventually, the sparse domination we want to obtain doesn't concern the multilinear form, but rather an expression of the form $\ds \big\|  \|T( \vec f_1, \ldots, \vec f_n)\|_{X'_{n+1}} \cdot v \big\|_q^q$ (estimate \eqref{eq:sparse-T_k-all-tau} of Theorem \ref{thm:main-thm}), where $T$ is an $n$-(sub)linear operator, $\vec f_1, \ldots, \vec f_n$ are vector-valued functions and $v$ is a locally $q$-integrable function, and $0< q< \infty$. In this case too, the local estimates imply a sparse domination, and \emph{subadditivity} is essential.

\begin{proposition}
\label{prop:local->sparse->subadditive}
Let $\| \cdot \|_{X_{n+1}}$ be a (quasi-)norm so that $ \| \cdot \|_{X_{n+1}}^q$ is subadditive for some $q>0$. Let $T$ be a $n$-(sub)linear operator determined by a collection $\rr P$ of multi-tiles, which satisfies the multiple vector-valued local estimate: there exist $s_1, \ldots, s_{n+1} \in (0, \infty)$ so that for any dyadic interval $I_0$,
\[
\big\|  \|T_{\rr P(I_0)}( \vec f_1, \ldots, \vec f_n)\|_{X'_{n+1}} \cdot v \big\|_{q}^q \lesssim \prod_{j=1}^n  \big( \sssize_{\rr P(I_0)}^{s_j} \| \vec f_j (x, \cdot) \|_{X_j}\big)^q \cdot \big( \sssize_{\rr P(I_0)}^{s_{n+1}} v  \big)^q \cdot |I_0|.
\]
Then there exists a sparse family $\ic S$ of dyadic intervals, depending on the functions $\vec f_1, \ldots, \vec f_n, v$ and the Lebesgue exponents $s_1, \ldots , s_{n+1}, q$ so that 
{\fontsize{10}{10}\begin{align*}
\big\|  \|T_{\rr P}( \vec f_1, \ldots, \vec f_n)\|_{X'_{n+1}} \cdot v \big\|_{q}^q \lesssim \sum_{Q \in \ic S} \prod_{j=1}^n \big( \frac{1}{|Q|} \int_{\rr R} \| \vec f_j (x, \cdot) \|^{s_j}_{X_j} \cdot \ci_{Q}^{M-1} dx \big)^{\frac{q}{s_j}} \cdot  \big( \frac{1}{|Q|} \int_{\rr R} \vert v \vert^{s_{n+1}} \cdot \ci_{Q}^{M-1} dx \big)^{\frac{q}{s_{n+1}}} \cdot |Q|.
\end{align*}}
\begin{proof}
The stopping time is identical to the one in the proof of Theorem \ref{thm:local->sparse}. Once the collection $\ic S$ is defined in an analogous way, and  once we have defined for every $Q \in \ic S$ the collection of descendants $ch_{\ic S}(Q)$ and the collection of tiles $\rr P_{Q}$, we only need to notice that
\begin{align*}
&\big\|  \|T_{\rr P}( \vec f_1, \ldots, \vec f_n)\|_{X'_{n+1}} \cdot v \big\|_{q}^q \lesssim \sum_{Q \in \ic S} \big\|  \|T_{\rr P_Q}( \vec f_1, \ldots, \vec f_n)\|_{X'_{n+1}} \cdot v \big\|_{q}^q \\
& \lesssim \prod_{j=1}^n  \big( \sssize_{\rr P_Q}^{s_j} \| \vec f_j (x, \cdot) \|_{X_j}\big)^q \cdot \big( \sssize_{\rr P_Q}^{s_{n+1}} v  \big)^q \cdot |Q| \\
&\lesssim \sum_{Q \in \ic S} \prod_{j=1}^n \big( \frac{1}{|Q|} \int_{\rr R} \| \vec f_j (x, \cdot) \|^{s_j}_{X_j} \cdot \ci_{Q}^{M-1} dx \big)^{\frac{q}{s_j}} \cdot  \big( \frac{1}{|Q|} \int_{\rr R} \vert v \vert^{s_{n+1}} \cdot \ci_{Q}^{M-1} dx \big)^{\frac{q}{s_{n+1}}} \cdot |Q|.
\end{align*}

\end{proof}
\end{proposition}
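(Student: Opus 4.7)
The plan is to mirror the construction from the proof of Theorem \ref{thm:local->sparse}, generalizing it to accommodate the auxiliary weight $v$ and the $n+1$ stopping conditions inside a single sparse family. First, set $\ic S_0$ to be the collection of maximal dyadic intervals $I$ such that $I = I_P$ for at least one multi-tile $P \in \rr P$. Assuming $\ic S_k$ has been built, for each $Q_0 \in \ic S_k$ declare $ch_{\ic S}(Q_0)$ to consist of the maximal dyadic intervals $Q \subsetneq Q_0$ containing $I_P$ for some $P \in \rr P$ and for which at least one of $n+1$ stopping conditions fails: for some $1 \leq j \leq n$,
\[
\big(\frac{1}{|Q|}\int_{\rr R}\|\vec f_j(x,\cdot)\|_{X_j}^{s_j}\,\ci_Q^M dx\big)^{1/s_j} > C\,\big(\frac{1}{|Q_0|}\int_{\rr R}\|\vec f_j(x,\cdot)\|_{X_j}^{s_j}\,\ci_{Q_0}^{M-1}dx\big)^{1/s_j},
\]
or the analogous condition with $v$ in place of $\|\vec f_j\|_{X_j}$ and $s_{n+1}$ in place of $s_j$. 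Set $\ic S := \bigcup_{k \geq 0}\ic S_k$.

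Second, I would verify the sparseness (equivalently Carleson) property of $\ic S$. Each $Q \in ch_{\ic S}(Q_0)$ lies in a level set of one of the $n+1$ localized maximal operators $\ic M_{s_j}\!\big(\|\vec f_j\|_{X_j}\cdot\ci_{Q_0}^{(M-1)/s_j}\big)$, respectively $\ic M_{s_{n+1}}\!\big(v\cdot\ci_{Q_0}^{(M-1)/s_{n+1}}\big)$. The weak $L^{s_j}$ bound for $\ic M_{s_j}$ yields a contribution at most $C^{-s_j}|Q_0|$ from each condition, so choosing $C$ large enough produces $\sum_{Q \in ch_{\ic S}(Q_0)}|Q|\leq \tfrac{1}{2}|Q_0|$, the desired sparse estimate.

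Third, associate to each $Q \in \ic S$ the collection $\rr P_Q$ of multi-tiles $P \in \rr P$ with $I_P \subseteq Q$ but $I_P$ not contained in any element of $\ic S$ strictly below $Q$. The construction of $\ic S_0$ ensures $\rr P = \bigsqcup_{Q \in \ic S}\rr P_Q$, and the stopping rule guarantees
\[
\sssize_{\rr P_Q}^{s_j}\big(\|\vec f_j(x,\cdot)\|_{X_j}\big) \lesssim \big(\frac{1}{|Q|}\int_{\rr R}\|\vec f_j\|_{X_j}^{s_j}\,\ci_Q^{M-1}dx\big)^{1/s_j}
\]
for $1 \leq j \leq n$, together with the analogous bound for $v$ with exponent $s_{n+1}$. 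Now I would invoke the subadditivity of $\|\cdot\|_{X_{n+1}}^q$ (passing through $T$, which is $n$-sublinear) to split
\[
\big\|\|T_{\rr P}(\vec f_1,\ldots,\vec f_n)\|_{X'_{n+1}}\cdot v\big\|_q^q \leq \sum_{Q \in \ic S}\big\|\|T_{\rr P_Q}(\vec f_1,\ldots,\vec f_n)\|_{X'_{n+1}}\cdot v\big\|_q^q,
\]
apply the local hypothesis to each $\rr P_Q$ with $I_0 = Q$, and insert the stopping-time size bounds above to obtain exactly the claimed sparse domination.

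The main obstacle is conceptually mild but deserves attention: one must check that the subadditivity of $\|\cdot\|_{X_{n+1}}^q$ genuinely transfers through the $n$-sublinear operator $T$ to yield the additive splitting across the countable partition $\{\rr P_Q\}_{Q \in \ic S}$, and that every $P \in \rr P$ belongs to a unique $\rr P_Q$ (which is ensured by starting the stopping time from the maximal intervals of $\ic S_0$ that exhaust the spatial intervals of $\rr P$). Apart from this verification, the argument is a direct adaptation of the stopping time used in Theorem \ref{thm:local->sparse}, with one extra slot carved out for the weight $v$.
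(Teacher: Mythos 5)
Your proposal is correct and follows essentially the same route as the paper: the paper's proof simply declares the stopping time to be identical to that of Theorem \ref{thm:local->sparse} (with the extra slot for $v$), partitions $\rr P$ into the collections $\rr P_Q$, uses subadditivity of $\|\cdot\|_{X'_{n+1}}^q$ to split the quasi-norm over this partition, and applies the local hypothesis on each piece — exactly the steps you spell out. Your added verifications (the weak-type bound for the sparseness of $ch_{\ic S}(Q_0)$ and the disjoint exhaustion $\rr P=\bigsqcup_Q \rr P_Q$) are precisely the details the paper leaves implicit by reference to the earlier theorem.
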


\subsection{From restricted-type to general functions}
\label{sec:restricted-type-to-general}
Here we present a technical lemma, which allows to pass from local estimates for restricted-type functions to local estimates for general functions.

Let $I_0$ be a fixed dyadic interval and $\rr P$ a finite collection of tritiles; recall the notations 
\[
\rr P(I_0):=\lbrace P \in \rr P : I_P \subseteq I_0 \rbrace \quad \text{and} \quad \rr P (I_0)^+:= \rr P (I_0) \cup P_{I_0}, 
\]
where $P_{I_0}$ is some tritile (not necessarily contained in $\rr P$)  with spatial interval $I_0$.

\begin{proposition}[Mock interpolation]
\label{prop:averages-restr-gen}
Assume $\Lambda$ is a trilinear form associated to the collection $\rr P$, satisfying
{\fontsize{10}{10}\begin{align*}
&\big| \Lambda_{\rr P\left( I_0 \right)}(f, g, h)\big| \lesssim \\
&\sup_{P \in \rr P \left( I_0 \right)^+}\Big( \frac{1}{|I_P|} \int_{\rr R} \one_F \cdot \ci_{I_P} dx  \Big)^{\alpha_1} \cdot  \sup_{P \in \rr P \left( I_0 \right)^+}\Big( \frac{1}{|I_P|} \int_{\rr R} \one_G \cdot \ci_{I_P} dx  \Big)^{\alpha_2} \cdot  \sup_{P \in \rr P \left( I_0 \right)^+}\Big( \frac{1}{|I_P|} \int_{\rr R} \one_{H'} \cdot \ci_{I_P} dx  \Big)^{\alpha_3} \cdot |I_0| \nonumber,
\end{align*}}for any  $F, G$ and $H'$ sets of finite measure and any functions $f, g, h$ such that $|f(x)| \leq \one_F(x), |g(x)| \leq \one_G(x)$ and $|h(x)| \leq \one_{H'}(x)$. Then for any triple $\left( p_1, p_2, p_3 \right)$ satisfying $p_i \alpha_i >1$, and any functions $f, g$ and $h$ locally integrable, we have{\fontsize{10}{10}
\begin{align*}
&\big| \Lambda_{\rr P\left( I_0 \right)}(f, g, h)\big| \lesssim \\
&\sup_{P \in \rr P \left( I_0 \right)^+}\Big( \frac{1}{|I_P|} \int_{\rr R} \big|f \cdot \ci_{I_P}\big|^{p_1} dx  \Big)^{\frac{1}{p_1}} \cdot  \sup_{P \in \rr P \left( I_0 \right)^+}\Big( \frac{1}{|I_P|} \int_{\rr R} \big| g \cdot \ci_{I_P} \big|^{p_2} dx  \Big)^{\frac{1}{p_2}} \cdot  \sup_{P \in \rr P \left( I_0 \right)^+}\Big( \frac{1}{|I_P|} \int_{\rr R} \big| h \cdot \ci_{I_P}\big|^{p_3} dx  \Big)^{\frac{1}{p_3}} \cdot |I_0| \nonumber.
\end{align*}}
The implicit constants in both inequalities above may depend on the $\alpha_i$ and $p_i$, but are independent of the collection of tritiles and of the interval $I_0$.
\begin{proof}

Any function can be decomposed according to its level sets; more exactly, we have
\begin{equation}
\label{eq:dec-f}
f=\sum_{k_1} f \cdot \one_{\left\lbrace 2^{k_1-1} \leq \vert f \vert \leq 2^{k_1}\right\rbrace}:=\sum_{k_1} 2^{k_1} f_{k_1}:=\sum_{k_1} 2^{k_1} f_{k_1} \cdot \one_{F_{k_1}}.
\end{equation}
The above notation means that $F_{k_1}=\lbrace 2^{k_1-1} \leq \vert f \vert \leq 2^{k_1} \rbrace$, and we note that the functions $f_{k_1}$ satisfy $\vert f_{k_1}(x)\vert \leq   \one_{F_{k_1}}(x)$.

Likewise, $g(x)=\sum_{k_2} 2^{k_2} g_{k_2} \cdot \one_{G_{k_2}}$ and $h(x)=\sum_{k_3} 2^{k_3} h_{k_2} \cdot \one_{H_{k_3}}$.

We estimate the trilinear form, using the multilinearity property and the hypotheses:
\begin{align*}
&\big| \Lambda_{\rr P \left( I_0 \right)}(f, g, h) \big| \leq \sum_{k_1, k_2, k_3}2^{k_1}2^{k_2}2^{k_3} \big|  \Lambda_{\rr P \left( I_0 \right)} (f_{k_1}, g_{k_2}, h_{k_3}) \big|  \\
&\lesssim \sum_{k_1, k_2, k_3}2^{k_1}2^{k_2}2^{k_3} \sup_{P \in \rr P \left( I_0 \right)^+}\big( \frac{1}{|I_P|} \int_{\rr R} \one_{F_{k_1}} \cdot \ci_{I_P} dx  \big)^{\alpha_1} \\
& \cdot  \sup_{P \in \rr P \left( I_0 \right)^+}\big( \frac{1}{|I_P|} \int_{\rr R} \one_{G_{k_2}} \cdot \ci_{I_P} dx  \big)^{\alpha_2} \cdot  \sup_{P \in \rr P \left( I_0 \right)^+}\big( \frac{1}{|I_P|} \int_{\rr R} \one_{H_{k_3}} \cdot \ci_{I_P} dx  \big)^{\alpha_3} \cdot |I_0|.
\end{align*}

Now we are left with proving
\[
\sum_{k} 2^k  \sup_{P \in \rr P \left( I_0 \right)^+}\big( \frac{1}{|I_P|} \int_{\rr R} \one_{F_{k}} \cdot \ci_{I_P} dx  \big)^{\alpha_1} \lesssim \sup_{P \in \rr P \left( I_0 \right)^+}\big( \frac{1}{|I_P|} \int_{\rr R} \big| f \cdot \ci_{I_P} \big|^{p_1} dx  \big)^{\frac{1}{p_1}}.
\]

Each of the expressions on the left hand side are $L^{\frac{1}{\alpha_1}}$ maximal averages, and they are summable at the expense of loosing some information; in the end, we will have an $L^{p_1}$ maximal average, with $p_1>\frac{1}{\alpha_1}$.

Let $k_0$ be so that 
\[
\sup_{P \in \rr P \left( I_0 \right)^+}\big( \frac{1}{|I_P|} \int_{\rr R} \big| f \cdot \ci_{I_P} \big|^{p_1} dx  \big)^{\frac{1}{p_1}} \sim 2^{k_0}.
\]

We note the following: since $ 2^{k-1} \cdot \one_{F_k} \leq |f(x)|  \cdot \one_{F_k}(x) \leq  2^{k}\cdot \one_{F_k} $, we have $\one_{F_k}(x) \lesssim 2^{-kp_1} |f(x)|^{p_1}$ and hence
\begin{equation}
\label{eq:ineq-restr-sup}
\frac{1}{\vert I_P \vert} \int_{\rr R }\one_{F_k} \cdot \ci_{I_P} dx  \lesssim 2^{-k p_1} \frac{1}{\vert I_P \vert} \int_{\rr R }\big| f(x) \cdot \ci_{I_P}\big|^{p_1} dx \lesssim 2^{\left( k_0 -k \right)p_1}. 
\end{equation}

Here we need to be more cautious; the adapted weights $\ci_{I_P}$ appearing on the left and right hand side above are not exactly the same, but one of them is a power of the other. Nevertheless, the wave packets associated to the tritiles in $\rr P$ can have arbitrary decay, and for that reason we will not worry about the possibly different $\ci_{I_P}$.

On the other hand, we have the trivial inequality 
\[
\frac{1}{\vert I_P \vert} \int_{\rr R }\one_{F_k} \cdot \ci_{I_P} dx \lesssim 1.
\]
Taking the sup over $P \in \rr P (I_0)^+$ in the expression above, as well as in \eqref{eq:ineq-restr-sup}, we obtain that 
\[
\sup_{P \in \rr P \left( I_0 \right)^+}\big( \frac{1}{|I_P|} \int_{\rr R} \one_{F_{k}} \cdot \ci_{I_P} dx  \big)^{\alpha_1} \lesssim \min \big( 1, 2^{\left( k_0 -k \right)p_1\alpha_1} \big).
\]
Then we are ready for the final estimate:
\begin{align*}
&\sum_{k} 2^k  \sup_{P \in \rr P \left( I_0 \right)^+}\big( \frac{1}{|I_P|} \int_{\rr R} \one_{F_{k}} \cdot \ci_{I_P} dx  \big)^{\alpha_1}\\
&  \lesssim \sum_{k \leq k_0} 2^k  \sup_{P \in \rr P \left( I_0 \right)^+}\big( \frac{1}{|I_P|} \int_{\rr R} \one_{F_{k}} \cdot \ci_{I_P} dx  \big)^{\alpha_1} + \sum_{k >k_0} 2^k  \sup_{P \in \rr P \left( I_0 \right)^+}\big( \frac{1}{|I_P|} \int_{\rr R} \one_{F_{k}} \cdot \ci_{I_P} dx  \big)^{\alpha_1}\\
&\lesssim \sum_{k \leq k_0} 2^k + \sum_{k >k_0} 2^k 2^{\left( k_0 -k \right)p_1\alpha_1} \lesssim 2^{k_0}+2^{k_0} \sum_{k>k_0} 2^{\left(k-k_0\right)\left( 1- p_1 \alpha_1 \right)} \lesssim 2^{k_0}.
\end{align*}
provided $1-p_1\alpha_1 <0$.
\end{proof}
\end{proposition}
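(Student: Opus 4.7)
The plan is to reduce the general-function local estimate to the restricted-type hypothesis through a dyadic level-set decomposition, in the spirit of real interpolation. I would write each input as a sum of bounded pieces: $f = \sum_{k_1} 2^{k_1} f_{k_1}$, where $f_{k_1}$ is supported on $F_{k_1} = \{2^{k_1-1} \le |f| < 2^{k_1}\}$ and satisfies $|f_{k_1}| \le \one_{F_{k_1}}$, and analogously $g = \sum_{k_2} 2^{k_2} g_{k_2} \one_{G_{k_2}}$ and $h = \sum_{k_3} 2^{k_3} h_{k_3} \one_{H_{k_3}}$. The multilinearity of $\Lambda_{\rr P(I_0)}$ and the restricted-type hypothesis then produce
\[
|\Lambda_{\rr P(I_0)}(f,g,h)| \lesssim \sum_{k_1,k_2,k_3} 2^{k_1+k_2+k_3} \prod_{i=1}^{3} \sup_{P \in \rr P(I_0)^+} \Big( \tfrac{1}{|I_P|} \int \one_{E_{k_i}} \ci_{I_P} \Big)^{\alpha_i} \cdot |I_0|,
\]
where $E_{k_i}$ denotes the relevant level set. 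Since the right-hand side factors as a product of three single sums, the task reduces to proving, for each input separately, the single-variable inequality
\[
\sum_k 2^k \sup_{P \in \rr P(I_0)^+} \Big(\tfrac{1}{|I_P|} \int \one_{F_k} \ci_{I_P}\Big)^{\alpha_1} \lesssim \sup_{P \in \rr P(I_0)^+} \Big(\tfrac{1}{|I_P|} \int |f \cdot \ci_{I_P}|^{p_1}\Big)^{1/p_1}.
\]

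To prove this single-variable bound, I would choose $k_0$ so that its right-hand side is comparable to $2^{k_0}$, and play off two competing estimates on each summand: the trivial bound $\sup_P \ave_{I_P} \one_{F_k} \le 1$, and a Chebyshev-type bound derived from $\one_{F_k} \le 2^{(1-k)p_1} |f|^{p_1}$ which yields $\sup_P \ave_{I_P} \one_{F_k} \lesssim 2^{(k_0 - k)p_1}$. Splitting the sum at $k_0$, the piece with $k \le k_0$ contributes $\sum_{k \le k_0} 2^k \lesssim 2^{k_0}$ via the trivial bound, while the piece with $k > k_0$ contributes a geometric tail $2^{k_0} \sum_{j > 0} 2^{j(1 - p_1 \alpha_1)}$, which converges precisely under the hypothesis $p_1 \alpha_1 > 1$. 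This is the sole role of the constraint $p_i \alpha_i > 1$, and it is also what forbids pushing the exponents down to $p_i = 1/\alpha_i$.

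The main technical subtlety I anticipate lies in the adapted weights: the Chebyshev passage changes a $\ci_{I_P}$ into $\ci_{I_P}^{p_1}$, so the two bounds being compared use formally different weights. Because the wave packets are $L^2$-adapted to arbitrary polynomial order, the family of adapted cut-offs is essentially closed under raising to a fixed positive power, and any such power change can be absorbed into absolute constants. Once this bookkeeping is in place, feeding the single-variable estimate into each of the three factors and using the identity $\sssize^{p_i}_{I_0}(f) = \sup_{P \in \rr P(I_0)^+} (\ave_{I_P} |f|^{p_i})^{1/p_i}$ gives the general-function local bound, with constants depending only on the $\alpha_i$ and $p_i$ and uniform in $I_0$ and in $\rr P$.
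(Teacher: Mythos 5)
Your proposal is correct and follows essentially the same route as the paper's proof: the dyadic level-set decomposition, reduction to a single-variable sum via multilinearity, the choice of $k_0$ calibrated to the $L^{p_1}$ maximal average, the interplay of the trivial bound with the Chebyshev-type bound, and the geometric summation requiring $p_1\alpha_1>1$. You also correctly flag and dispose of the only real subtlety (the mismatch of powers of the adapted cut-offs), exactly as the paper does.
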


In our application to $BHT$, we will have $\alpha_j=\frac{1+\theta_j}{2}$ and $\frac{1}{p_j}=\frac{1+\theta_j}{2} -\epsilon <\alpha_j$, hence the conditions in Proposition \ref{prop:averages-restr-gen} are satisfied.

Using the notation in \eqref{eq:def-L^p-size}, the above proposition and \eqref{eq:localBHT} imply the following local estimate for the model operator of the bilinear Hilbert transform:
\begin{proposition}
\label{prop:sizes-fns-gen}
If $I_0$ is a fixed dyadic interval and $\Lambda_{\rr P(I_0)}$ is a model trilinear form associated to the $BHT$ operator, we have
\[
\big| \Lambda_{\rr P \left( I_0 \right)} (f, g, h) \big| \lesssim  \left(\sssize_{\rr P\left( I_0 \right)}^{p_1} f  \right) \cdot \left(\sssize_{\rr P\left( I_0 \right)}^{p_2} g  \right) \cdot \left(\sssize_{\rr P\left( I_0 \right)}^{p_3} h  \right) \cdot |I_0|,
\]
where $\frac{1}{p_j}=\frac{1+\theta_j}{2} -\epsilon$, for any $0 \leq \theta_1, \theta_2, \theta_3 <1$ with $ \theta_1+ \theta_2+\theta_3 =1$ and any $\epsilon >0$ small enough.
\end{proposition}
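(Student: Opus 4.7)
The plan is to derive Proposition \ref{prop:sizes-fns-gen} as an essentially immediate consequence of the restricted-type local estimate \eqref{eq:localBHT} combined with the mock interpolation lemma of Proposition \ref{prop:averages-restr-gen}. Both ingredients are already in place, so the task is mainly to verify that the exponents match up and that the hypothesis of the mock interpolation is verified.

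First, I would recall that \eqref{eq:localBHT} gives precisely the input needed for Proposition \ref{prop:averages-restr-gen}: for all $F, G, H'$ of finite measure and all $f, g, h$ bounded above in absolute value by $\one_F, \one_G, \one_{H'}$ respectively, we have
\[
|\Lambda_{\rr P(I_0)}(f,g,h)| \lesssim \prod_{j=1}^3 \Bigl(\sup_{P \in \rr P(I_0)^+} \frac{1}{|I_P|} \int_{\rr R} \one_{E_j} \cdot \ci_{I_P}\, dx\Bigr)^{\alpha_j} \cdot |I_0|,
\]
with $\alpha_j := \frac{1+\theta_j}{2}$ and $(E_1, E_2, E_3) = (F, G, H')$. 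Since $0 \leq \theta_j < 1$, each $\alpha_j \in [1/2, 1)$; in particular $\alpha_j > 0$, so the hypothesis of Proposition \ref{prop:averages-restr-gen} is met for this collection $\rr P$.

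Next, I would choose the exponents $p_j$ dictated by the statement, namely $\frac{1}{p_j} := \frac{1+\theta_j}{2} - \epsilon = \alpha_j - \epsilon$. The key arithmetic check is that $p_j \alpha_j > 1$: this amounts to $\alpha_j > \alpha_j - \epsilon$, which holds for any $\epsilon > 0$, and moreover $\epsilon$ can be taken small enough to guarantee that $p_j$ stays finite (which it does as long as $\epsilon < \alpha_j$, and this is automatic for $\epsilon$ small since $\alpha_j \geq 1/2$). Thus the admissibility condition in Proposition \ref{prop:averages-restr-gen} holds for all three indices simultaneously.

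Finally, I would apply Proposition \ref{prop:averages-restr-gen} directly with these exponents to arbitrary locally integrable $f, g, h$, producing
\[
|\Lambda_{\rr P(I_0)}(f,g,h)| \lesssim \prod_{j=1}^3 \sup_{P \in \rr P(I_0)^+} \Bigl(\frac{1}{|I_P|} \int_{\rr R} |f_j \cdot \ci_{I_P}|^{p_j}\, dx\Bigr)^{1/p_j} \cdot |I_0|,
\]
which, in the notation \eqref{eq:def-L^p-size}, is exactly $\bigl(\sssize_{\rr P(I_0)}^{p_1} f\bigr)\bigl(\sssize_{\rr P(I_0)}^{p_2} g\bigr)\bigl(\sssize_{\rr P(I_0)}^{p_3} h\bigr) \cdot |I_0|$. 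There is no genuine obstacle here: all of the analytic work has been absorbed either into \eqref{eq:localBHT} (the sharp restricted-type local estimate from \cite{vv_BHT}) or into the layer-cake/level-set decomposition argument of Proposition \ref{prop:averages-restr-gen}. The only thing to verify is the quantitative admissibility $p_j \alpha_j > 1$, which is the content of choosing $\epsilon > 0$ small enough.
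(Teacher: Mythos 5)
Your proposal is correct and is exactly the argument the paper intends: the text immediately preceding Proposition \ref{prop:sizes-fns-gen} states that one applies Proposition \ref{prop:averages-restr-gen} with $\alpha_j=\frac{1+\theta_j}{2}$ and $\frac{1}{p_j}=\alpha_j-\epsilon$, the admissibility $p_j\alpha_j>1$ being automatic, with \eqref{eq:localBHT} supplying the restricted-type hypothesis. Nothing is missing.
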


\begin{remark}
The result of Proposition \ref{prop:averages-restr-gen} can be easily extended to a more general setting: if $T$ is a vector-valued operator so that $\| \cdot  \|_{L^{R'_{n+1}}}^q$ and $\| \cdot \|_q^q$ are subadditive, for which we have
\begin{align*}
 \big\| \| T_{\rr P(I_0)}(\vec f_1, \ldots, \vec f_{n}) \|_{L^{R'_{n+1}}} \cdot \one_{\tilde E_{n+1}} \big\|_q^q  \lesssim \prod_{j=1}^{n} \big( \sssize_{I_0} \one_{E_j}    \big)^{\alpha_j} \cdot \big( \sssize_{I_0} \one_{\tilde E_{n+1}}    \big)^{\alpha_{n+1}} \cdot \vert I_0 \vert
\end{align*}
for any sets $E_1, \ldots, E_n, \tilde E_{n+1}$ of finite measure, any vector-valued functions $\vec f_1, \ldots, \vec f_{n}$ such that $\|\vec f_j (x)\|_{X_j} \leq \one_{E_j}$ for all $1 \leq j \leq n$, and $v$ a locally $q$-integrable function so that $|v(x)| \leq \one_{\tilde E_{n+1}}$ then also
{\fontsize{10}{10}\begin{align*}
\big\|  \| T_{\rr P(I_0)}(\vec f_1, \ldots, \vec f_{n})\|_{L^{R'_{n+1}}} \cdot v\|_q^q  \lesssim \prod_{j=1}^{n} \big( \sssize_{I_0}^{p_j} \| \vec f_j  \|_{X_j}  \big)^q \cdot  \big( \sssize_{I_0}^{p_{n+1}} (v)  \big)^q  \cdot \vert I_0 \vert,
\end{align*}}
for any vector-valued functions $\vec f_1, \ldots, \vec f_{n}$, any locally $q$-integrable function $v$, and any Lebesgue exponents $p_1, \ldots, p_{n+1}$ satisfying for all $1 \leq j \leq n+1$,  $p_j > \dfrac{q}{\alpha_j}$.

\end{remark}

In order to obtain the localized estimates for multiple vector-valued extensions, we make use of the helicoidal method, as presented in the following Section \ref{sec:review-hel-mthd}.

\subsection{Additional details on sparse domination via the helicoidal method}
\label{subsec:additional:details}

With the purpose of making clear and comprehensible the ideas behind our method, we take another look at the $BHT$ example. We've seen in Section \ref{sec:stopping-time-local->sparse} that sparse domination for restricted-type functions is implied by the fundamental local estimate from \cite{vv_BHT}:
\begin{equation}
\label{eq:fundam-local-est}
\vert \Lambda_{\rr P(I_0)} (f_1, f_2, f_3)  \vert \lesssim \prod_{j=1}^3 \big( \sssize_{I_0} \, \one_{E_j} \big)^\frac{1+\theta_j}{2} \cdot |I_0|
\end{equation}
which holds for all fixed dyadic intervals $I_0 \subseteq \rr R$ and all functions satisfying $\vert f_j  \vert \leq \one_{E_j}$ with $E_j$ measurable subsets of the real line having finite measure.

This yields a sparse collection $\ic S$ of dyadic intervals (depending on the functions $f_1, f_2$ and $f_3$), for which
\begin{equation}
\label{eq:fundam-sparse}
\vert  \Lambda (f_1, f_2, f_3)  \vert \lesssim \sum_{Q \in \ic S} \prod_{j=1}^3 \big( ave_{Q}^{p_j} \one_{E_j} \big) \cdot |Q|,
\end{equation}
where $\frac{1}{p_j}=\frac{1+\theta_j}{2}$ for some $0 \leq \theta_1, \theta_2, \theta_3<1$ with $\theta_1+\theta_2+\theta_3=1$. 

Conversely, assuming \eqref{eq:fundam-sparse}, one can observe that 
\begin{align*}
\vert \Lambda_{\rr P (I_0)} (f_1, f_2, f_3) \vert \lesssim \sum_{\substack{Q \in \ic S \\ Q \subseteq I_0}} \prod_{j=1}^3 \big( ave_{Q}^{p_j} \one_{E_j} \big)\cdot |Q| \lesssim  \prod_{j=1}^3 \big( \sssize_{I_0} \, \one_{E_j} \big)^\frac{1+\theta_j}{2} \cdot |I_0|.
\end{align*}

This is an easy consequence of the sparseness property of the collection $\ic S$: $\ds  \sum_{\substack{Q \in \ic S \\ Q \subseteq I_0}}|Q| \lesssim |I_0|$.

In other words, the two estimates \eqref{eq:fundam-local-est} and \eqref{eq:fundam-sparse} are essentially equivalent to each other. In particular, in order to prove \eqref{eq:fundam-sparse} for arbitrary functions, all one has to do is prove the local inequality \eqref{eq:fundam-local-est} for arbitrary functions. Namely, it suffices to prove
\begin{equation}
\label{eq:fundam-local-gen-fnc}
\vert \Lambda_{\rr P(I_0)} (f_1, f_2, f_3)  \vert \lesssim \prod_{j=1}^3 \big( \sssize^{p_j}_{I_0} \, f_j \big) \cdot |I_0|.
\end{equation}

Let us recall now how \eqref{eq:fundam-local-est} has been proved in \cite{vv_BHT}: start with the generic ``size and energy" estimate from \cite{biest}
\begin{equation}
\label{eq:fund-gen-size-energy}
\vert \Lambda_{\rr P} (f_1, f_2, f_3) \vert \lesssim \prod_{j=1}^3 \big( \ssize_{\rr P} \, f_j \big)^{\theta_j} \cdot \big( \eenergy_{\rr P}\, f_j  \big)^{1-\theta_j} ,
\end{equation}
valid for all rank-$1$ collections of tiles $\rr P$ and $0 \leq \theta_1, \theta_2, \theta_3 <1$ so that $\theta_1+\theta_2+\theta_3=1$. Above, $\ds \ssize_{\rr P} \, f$ is a maximal square function which satisfies $\ds \ssize_{\rr P} \, f \lesssim \sssize_{\rr P} \, f$, while $\eenergy_{\rr P} f \lesssim \|f\|_2$ for any function $f$ nice enough (see Definition 6.1 in \cite{biest}).

Subsequently, localize \eqref{eq:fund-gen-size-energy} to a fixed dyadic interval $I_0$ to get 
\begin{equation}
\label{eq:fund-gen-size-energy-local}
\vert \Lambda_{\rr P (I_0)} (f_1, f_2, f_3) \vert \lesssim \prod_{j=1}^3 \big( \sssize_{\rr P(I_0)} \, f_j \big)^{\theta_j} \cdot \big( \eenergy_{\rr P(I_0)}\, f_j  \big)^{1-\theta_j}.
\end{equation}

Hence the estimate \eqref{eq:fundam-local-est}, which is central to proving vector-valued and sparse estimates, follows from the localized ``size and energy" inequality \eqref{eq:fund-gen-size-energy-local} above, once we prove for all $1 \leq j \leq 3$ and all functions $f_j$ satisfying $|f_j| \leq \one_{E_j}$ that 
\begin{equation}
\label{eq:fund-eq-six}
\big( \sssize_{\rr P(I_0)} \, f_j \big)^{\theta_j} \cdot \big( \eenergy_{\rr P(I_0)}\, f_j  \big)^{1-\theta_j} \lesssim \big( \sssize^{p_j}_{I_0} \, \one_{E_j} \big)^\frac{1+\theta_j}{2} \cdot |I_0|^\frac{1-\theta_j}{2}.
\end{equation}

This follows from the fact that the $\eenergy _{\rr P}$ localizes well:
\[
\eenergy_{\rr P(I_0)} \, f_j \lesssim \|f_j \cdot \ci_{I_0}\|_{L^2} \lesssim \| \one_{E_j} \cdot \ci_{I_0}\|_{L^2} = \big( \ave_{I_0} \one_{E_j}  \big)^\frac{1}{2} \cdot \vert I_0 \vert^\frac{1}{2},  
\]
an estimate that had already appeared in a somehow different form in \cite{multilinearMTT}, and which we prove later on in Lemma \ref{lemma:dec-lemma}.

Returning now to \eqref{eq:fundam-local-gen-fnc}, the local estimate for general functions, two comments are in order. First, if \eqref{eq:fundam-local-gen-fnc} is true for the indices $\ds p_j:= \frac{2}{1+\theta_j}$, then it also holds for $\ds p_j > \frac{2}{1+\theta_j}$, as an application of H\"older's inequality. And second, in order to prove sparse domination, one does not need \eqref{eq:fundam-local-gen-fnc} in its full generality, but only in the particular case when 
\begin{equation}
\label{eq:fundam-maximizer-size}
\ave_{I_P}^{p_j} f_j \lesssim \ave_{I_0}^{p_j} f_j, \quad \text{for all } P \in \rr P(I_0), \quad \text{i.e.} \quad \sssize_{I_0}^{p_j} f_j \lesssim \ave_{I_0}^{p_j} f_j
\end{equation}
for all $1 \leq j \leq 3$. This is a simple consequence of the earlier stopping time argument. The reader familiar with the terminology of outer measures of \cite{outer_measures} would recognize that 
\[
\ssize_{\rr P} f_j \simeq \| F_j  \|_{\ic L^\infty}, \quad \text{while} \quad \eenergy_{\rr P} \, f_j \simeq \| F_j  \|_{\ic L^{2,\infty}}
\]
when $F_j : \rr P \to \rr C$ is defined by $\ds F_j(P):= \langle f_j, P \rangle$ for all $P \in \rr P$. The outer measure spaces $\ic L^q$ (for $2<q<\infty$) of \cite{outer_measures} are defined precisely so as to generalize this correspondence.

In particular, the quantity $\ds \big( \ssize_{\rr P(I_0)} \, f_j \big)^{\theta_j} \cdot \big( \eenergy_{\rr P(I_0)}\, f_j  \big)^{1-\theta_j}$ of \eqref{eq:fund-gen-size-energy} interpolates naturally between these two spaces and because of this we will denote it by $\ds \| F_j \|_{\ic L^{q_j}_{mock}}$, where $\frac{1}{q_j}:=\frac{1-\theta_j}{2}+\frac{\theta_j}{\infty}$. Then the original size and energy estimate \eqref{eq:fund-gen-size-energy} can be written as 
\begin{equation}
\label{eq:fund-gen-size-energy-outer-mock}
\vert \Lambda_{\rr P} (f_1, f_2, f_3) \vert \lesssim \prod_{j=1}^3  \| F_j \|_{\ic L^{q_j}_{mock}},
\end{equation}
while \eqref{eq:fund-eq-six} becomes, for all $1 \leq j \leq 3$ and all $p_j > \frac{2}{1+\theta_j}$,
\begin{equation}
\label{eq:localized-mock-norms}
\| F_j \|_{\ic L^{q_j}_{mock}; I_0} \lesssim \big( \sssize_{\rr P(I_0)}^{p_j} \one_{E_j}  \big)  \cdot \vert I_0 \vert^\frac{1}{q_j}.
\end{equation} 

Equivalently, to prove \eqref{eq:fundam-local-est}, one could have used the estimate
\begin{equation*}
\vert \Lambda_{\rr P}(f_1, f_2, f_3)   \vert \lesssim \prod_{j=1}^3 \| F_j \|_{\ic L^{q_j}}
\end{equation*}
from \cite{outer_measures} instead of \eqref{eq:fund-gen-size-energy-outer-mock}, and also the analogue of \eqref{eq:localized-mock-norms}
\begin{equation}
\label{eq:localized-outer-norms}
\| F_j \|_{\ic L^{q_j}; \rr P( I_0)} \lesssim \big( \sssize_{\rr P(I_0)}^{p_j} \one_{E_j}  \big)  \cdot \vert I_0 \vert^\frac{1}{q_j}
\end{equation}
for $1 \leq j \leq 3$.

Note that \eqref{eq:localized-outer-norms} is a consequence of \eqref{eq:localized-mock-norms} from \cite{vv_BHT}, since 
\[
\| F_j \|_{\ic L^{q_j}; \rr P} \lesssim \| F_j \|_{\ic L^{q_j}_{mock} ; \rr P}.
\]
A brief proof of this inequality can be found in Proposition 4 of \cite{expository-hel}.

To sum up, sparse domination for arbitrary functions would follow from inequality \eqref{eq:localized-outer-norms} under the assumption \eqref{eq:fundam-maximizer-size}, namely that
\begin{equation}
\label{eq:fundam-stopping-fit}
\| F_j \|_{\ic L^{q_j}; \rr P( I_0)} \lesssim \big( \ave_{\rr P(I_0)}^{p_j} f_j  \big)  \cdot \vert I_0 \vert^\frac{1}{q_j}.
\end{equation} 

The proof of \eqref{eq:fundam-stopping-fit} for general functions has been worked out in \cite{weighted_BHT}, Proposition 4.1. As explained before, this implies sparse domination as in \eqref{eq:fundam-sparse} for indices $p_j$ that satisfy $p_j > \frac{2}{1+\theta_j}$ for all $1 \leq j \leq 3$.
Our main observation is that \eqref{eq:fundam-local-gen-fnc} follows from the original one \eqref{eq:fundam-local-est} via a very short interpolation argument. This fact has very important consequences; in particular, it allows us to prove sparse domination also in the multiple vector-valued case essentially without any extra effort, since it is known form \cite{vv_BHT} that \eqref{eq:fundam-local-est} is true in this case as well.  For more connections with the theory of outer measures, the reader is referred to Section 2.3 of \cite{expository-hel}.

\section{Vector-valued estimates via the helicoidal method: a review}
\label{sec:review-hel-mthd}

Although we present the helicoidal method in the special context of the bilinear Hilbert transform operator, it generalizes to many other operators whose multilinear form can be represented as in \eqref{eq:representation}.

\subsection{The stopping time for vector-valued estimates}
\label{sec:stopping-time-helicoidal-method}
Here we recall some technical aspects from \cite{vv_BHT}, drawing attention to the stopping times involved in proving the multiple vector-valued estimates, which will be later compared to the stopping times used for proving sparse domination (see the following Section \ref{sec:conclusions-comp}). The goal is to show that $BHT:L^p(\ell^{r_1}) \times L^q(\ell^{r_2}) \to L^s(\ell^{r})$, where $(p, q, s)$ and $(r_1, r_2, r)$ are H\"older tuples, with $r \geq 1$. Since we want to illustrate how the local estimate implies the vector-valued result, we turn away form the slightly more technical cases $r_1=\infty, r_2=\infty$ or $r<1$. Hence, we want to show 
\[
\big\| \big( \sum_k | BHT(f_k, g_k) |^r \big)^{\frac{1}{r}}\big\|_s \lesssim \big\| \big( \sum_k |  f_k |^{r_1}   \big)^{\frac{1}{r_1}} \big\|_p \, \big\| \big( \sum_k | g_k |^{r_2}   \big)^{\frac{1}{r_2}} \big\|_q.
\]

By vector-valued restricted weak type interpolation (see, for example \cite{vv_BHT}), it is sufficient to prove the following statement: 

\emph{ for any given $F, G$ and $H$ sets of finite measure, there exists $H' \subseteq H$ major subset such that for any vector-valued functions $\vec f =\lbrace f_k  \rbrace, \vec g =\lbrace g_k  \rbrace$ and $\vec h =\lbrace h_k  \rbrace$ satisfying 
\[
\big(  \sum_k \vert f_k(x)\vert^{r_1}\big)^{\frac{1}{r_1}} \leq \one_F, \quad \big(  \sum_k \vert g_k(x)\vert^{r_1}\big)^{\frac{1}{r_2}} \leq \one_G, \quad  \big(  \sum_k \vert h_k(x)\vert^{r'}\big)^{\frac{1}{r'}} \leq \one_{H'}, \quad 
\]
we have 
\begin{equation}
\label{eq:restr-weak-type-1}
\vert \sum_k \Lambda_{BHT; \rr P}(f_k, g_k, h_k)   \vert \lesssim \vert F \vert^{\alpha_1} \cdot \vert G \vert^{\alpha_2} \cdot \vert H \vert^{\alpha_3},
\end{equation}
for $(\alpha_1, \alpha_1, \alpha_3)$ a tuple satisfying $\alpha_1+\alpha_2+\alpha_3=1$, arbitrarily close to $(\frac{1}{p}, \frac{1}{q}, \frac{1}{s'})$.
}

Most often, the major subset $H'$ is the part of $H$ where $\vec f, \vec g$ are under control: if 
\[
\Omega:= \big\lbrace x: \ic M \one_F(x) > C \,\frac{\vert F \vert}{ \vert H \vert} , \, \ic M \one_G(x) > C\,\frac{\vert G \vert}{ \vert H \vert}  \big\rbrace,
\]
then we set $H':=H \setminus \Omega$.

Upon obtaining a proper local estimate, a triple stopping time will be performed, according to the sizes of the functions. For this reason we assume that the tiles satisfy, for $d \geq 0$,
\[
1+\frac{\dist(I_P, \Omega^c)}{\vert I_P \vert} \sim 2^d,
\]
and we need to obtain a certain decay $2^{-10 d}$ in the restricted-type inequality \eqref{eq:restr-weak-type-1}.

The next step consists in proving, for $1<r_1, r_2, r'< \infty$, the local estimate
\begin{equation}
\label{eq:bht-local-fns}
\big|  \Lambda_{\rr P \left( I_0 \right)}(f_k \cdot \one_F, g_k \cdot \one_G, h_k \cdot \one_{H'})  \big| \lesssim \big\| \Lambda^{F, G, H'}_{\rr P \left( I_0 \right)} \big\|\|f_k \cdot \ci_{I_0}^M \|_{r_1} \|g_k \cdot \ci_{I_0}^M \|_{r_2} \|h_k \cdot \ci_{I_0}^M \|_{r'}, 
\end{equation}
where $\big\| \Lambda^{F, G, H'}_{\rr P \left( I_0 \right)} \big\| $ represents the operatorial norm of the localized trilinear form where the extra information is retained, and it equals
\[
 \big\| \Lambda^{F, G, H'}_{\rr P \left( I_0 \right)} \big\|=\left( \sssize_{\rr P \left(I_0\right)} \one_F  \right)^{\frac{1+\theta_1}{2}-\frac{1}{r_1}-\epsilon} \left( \sssize_{\rr P \left(I_0\right)}  \one_G \right)^{\frac{1+\theta_2}{2}-\frac{1}{r_2}-\epsilon} \left( \sssize_{\rr P \left(I_0\right)} \one_{H'}  \right)^{\frac{1+\theta_3}{2}-\frac{1}{r'}-\epsilon} .
\]
Then we use H\"older to recover the Lebesgue norms of $\one_F, \one_G, \one_{H'}$. If we ignored the operatorial norm $\big\| \Lambda^{F, G, H'}_{\rr P \left( I_0 \right)} \big\| $,  we could only retrieve $|F|^{\frac{1}{r_1}} \cdot |G|^{\frac{1}{r_2}} \cdot |H|^{\frac{1}{r'}}$, which can be very different from the desired expression $ |F|^{\frac{1}{p}} |G|^{\frac{1}{q}} |H|^{\frac{1}{s'}}$. So in order to obtain all the possible vector-valued estimates, we had to take into account $\big\| \Lambda^{F, G, H'}_{\rr P \left( I_0 \right)} \big\| $, and moreover, to secure largest possible exponents for the sizes.

Overall, the constraint we obtained for $r_1, r_2, r'$ and $p, q, s'$, reduces to the existence of $0\leq\theta_1, \theta_2, \theta_3<1$ with $\theta_1+ \theta_2+\theta_3=1$, so that we have simultaneously
\[
\frac{1+\theta_1}{2}>\max \left( \frac{1}{r_1}, \frac{1}{p} \right), \qquad \frac{1+\theta_2}{2}>\max \left( \frac{1}{r_2}, \frac{1}{q} \right) \quad \text{and} \quad \frac{1+\theta_3}{2}>\max \left( \frac{1}{r'}, \frac{1}{s'} \right).
\]

In order to prove \eqref{eq:bht-local-fns}, we use restricted-type interpolation and the local estimate \eqref{eq:bht-local-max-op} that has already appeared in the introduction. Assuming that $\vert f_k(x) \vert \leq \one_{E_1}, \vert g_k(x) \vert \leq \one_{E_2}$ and $\vert h_k(x) \vert \leq \one_{E'_3}$ and that $I \subseteq I_0$ is a fixed dyadic interval, we have
{\fontsize{10}{10}\[
\vert \Lambda_{\rr{P}(I)}(f_k \cdot \one_F, g_k \cdot \one_G, h_k \cdot \one_{H'}) \vert \lesssim  \left(\sssize_{I_0} \one_F \cdot \one_{E_1} \right)^{\frac{1+\theta_1}{2}-\epsilon} \cdot \left(\sssize_{I_0} \one_G \cdot \one_{E_2} \right)^{\frac{1+\theta_2}{2} -\epsilon} \cdot \left(\sssize_{I_0} \one_{H'} \cdot \one_{E'_3} \right)^{\frac{1+\theta_3}{2}-\epsilon} \cdot \vert I \vert.
\]}

Through a triple stopping time, we can recover the $L^{r_1}, L^{r_2}$ and $L^{r'}$ norms of the functions $f_k, g_k$ and $h_k$ respectively, while $\big\| \Lambda_{\rr P(I_0)}^{F,G, H'}  \big\|$ represents a ``surplus" that will be used later on, when deciding the range of exponents $p, q, s$. This last part in which we retrieve the $L^{r_1}$ norm of $f_k$ and $L^{r_2}$ norm of $g_k$ uses the same argument as the proof of boundedness of $BHT$ from \cite{biest}: the amelioration, which will allow us to obtain the vector-valued estimate, relies in the localization of the $\eenergy$. 

Applying H\"older in \eqref{eq:bht-local-fns}, we obtain a vector-valued local estimate corresponding to a vector-valued version of the fundamental local result \eqref{eq:fundam-local-est}:
\begin{equation}
\label{eq:bht-local-fns-restr}
\big| \sum_k \Lambda_{\rr P \left( I_0 \right)}(f_k , g_k , h_k )  \big| \lesssim \left(\sssize_{I_0} \one_F  \right)^{\frac{1+\theta_1}{2}-\epsilon} \cdot \left(\sssize_{I_0} \one_G \right)^{\frac{1+\theta_2}{2} -\epsilon} \cdot \left(\sssize_{I_0} \one_{H'} \right)^{\frac{1+\theta_3}{2}-\epsilon} \cdot \vert I_0 \vert.
\end{equation}
This will be used, together with a stopping time that will be described shortly, in order to obtain the general vector-valued inequality. The stopping times are, in some sense, reversing the localization procedure: we need to find the good intervals that allow us to sum up the sizes, so that $\sssize_{I_0} \one_F$ should be related somehow to $|F|$, and the same for the functions $\one_G$ and $\one_{H'}$.

The triple stopping time used in \cite{vv_BHT} yields three collections of intervals $\ii I_j^{n_j}$ (one for each function $\one_F, \one_G$ and $\one_{H'}$), and associated to each of these, a subcollection $\rr P_{I_j} \subseteq \rr P$ of tritiles, where $1\leq j \leq 3$. Once we have these, for every $I_0=I_1\cap I_2 \cap I_3$, where  $I_j \in \ii I_j^{n_j}$, we need to consider $\Lambda_{\rr P_{I_0}}(f, g, h)$. Here $\ds \rr P_{I_0}:= \bigcap_j \rr P_{I_j}$.

In fact, the trilinear form $\Lambda_{\rr P}(f_k, g_k, h_k)$ is bounded above by
\[
\big| \Lambda_{\rr P}(f_k, g_k, h_k)  \big| \lesssim \sum_{n_1, n_2, n_3} \sum_{\substack{I_0=I_1\cap I_2 \cap I_3 \\ I_j \in \ii I_j^{n_j}}} \big| \Lambda_{\rr P_{I_0}}(f_k, g_k, h_k)  \big|.
\]

The intervals $I_1 \in \ii I_1^{n_1}$ and the collections $\rr P_{I_1} \subseteq \rr P(I_1)$ of tritiles are chosen so that $\ds \sssize_{\rr P_{I_1}} \one_F \sim 2^{-n_1} \sim \ave_{I_1} \one_F $, and similarly for the sizes of $\one_G$ and $\one_{H'}$ respectively. Since we will be considering the intersection $\ds \rr P_{I_0}:= \bigcap_j \rr P_{I_j}$, and the $\sssize$ is a maximal function at the level of the tritiles, we want to make sure that
\[
\sssize_{\rr P'_{I_1}} \one_F \lesssim \sssize_{\rr P_{I_1}} \one_F  \sim \ave_{I_1} \one_F 
\] 
whenever $\rr P'_{I_1}$ is a subcollection of $\rr P_{I_1}$.

For this reason, we  set $\rr P_{Stock}:=\rr P$, and start with the maximal possible $\sssize$ for $\one_F$ (which is going to be bounded nevertheless by $\min(1, 2^d C |F|)$), say $2^{-\bar n_1}$. Then set
$$\rr P_{\bar n_1}:=\lbrace  P \in \rr P_{Stock} : \frac{1}{| I_P|} \int_{\rr R} \one_F \cdot \ci_{I_P}^Mdx  \sim 2^{-\bar n_1}  \rbrace.$$

The family $\ii I _1^{\bar n_1}$ will consist of maximal dyadic intervals $I_1$ so that there exists $P \in \rr P_{\bar n_1}$ with $I_P \subseteq I_1$, and moreover, we require that 
\[
2^{-\bar n_1-1} \leq \frac{1}{| I_1|} \int_{\rr R} \one_F \cdot \ci_{I_1}^M dx \leq 2^{-\bar n_1}.
\] 
Clearly, $\ii I_{\bar n_1} \neq \emptyset$ unless $\rr P_{\bar n_1} =\emptyset$, and all the intervals $I_1 \in \ii I_{1}^{\bar n_1}$ are mutually disjoint.

Then we set $\rr P_{I_1}:=\lbrace P \in \rr P_{Stock}: I_P \subseteq I_1   \rbrace$, and we note that, for any subset $\rr P' \subseteq \rr P_{I_1}$,
\[
\sssize_{\rr P'} \one_F \lesssim 2^{-\bar n_1}.
\]
Before repeating the algorithm, we set $\ds \rr P_{Stock}:=\rr P_{Stock} \setminus \bigcup_{I_1 \in \ii I_1^{\bar n_1}}  \rr P_{I_1}$. As a consequence, the maximal possible $\sssize \one_F $ decreases. 

We continue the construction of $\ii I_1^{\bar n_1 +1}, \ii I_1^{\bar n_1 +2}, \ldots$  (which could be empty), until $\rr P_{Stock}=\emptyset$. Two properties are especially important:
\begin{itemize}
\item[(1)] for every $n_1$ so that $\ii I^{n_1}_1 \neq \emptyset$, we have $\ds \sum_{I \in \ii I^{n_1}_1}|I| \lesssim 2^{n_1} |F|$.
\item[(2)] $\ds \sssize_{\rr P'_{I_1}} \one_F \lesssim \sssize_{\rr P_{I_1}} \one_F \lesssim 2^{-n_1} \lesssim \min(1, 2^d |F|/|H|)$, whenever $\rr P'_{I_1}$ is a subcollection of $\rr P_{I_1}$.
\end{itemize}
The stopping times for $\sssize \one_G$ and $\sssize \one_{H'}$ are very similar, with the exception that $\sssize_{\rr P _{I_3}} \one_{H'} \lesssim 2^{-n_3} \lesssim 2^{-Md}$ . We end by recalling how to deduce \eqref{eq:restr-weak-type-1} from \eqref{eq:bht-local-fns-restr}:
\begin{align*}
&\big| \sum_k \Lambda_{\rr P}(f_k, g_k, h_k)  \big| \lesssim \sum_{n_1, n_2, n_3} \sum_{\substack{I_0=I_1\cap I_2 \cap I_3 \\ I_j \in \ii I_j^{n_j}}}  \sum_k \big| \Lambda_{\rr P_{I_0}}(f_k, g_k, h_k)  \big| \\
&\lesssim \sum_{n_1, n_2, n_3} \sum_{\substack{I_0=I_1\cap I_2 \cap I_3 \\ I_j \in \ii I_j^{n_j}}} \left(\sssize_{I_0} \one_F  \right)^{\frac{1+\theta_1}{2}-\epsilon} \cdot \left(\sssize_{I_0} \one_G \right)^{\frac{1+\theta_2}{2} -\epsilon} \cdot \left(\sssize_{I_0} \one_{H'} \right)^{\frac{1+\theta_3}{2}-\epsilon} \cdot \vert I_0 \vert \\
&\lesssim \sum_{n_1, n_2, n_3} 2^{-n_1 \left( \frac{1+\theta_1}{2} -\epsilon- \gamma_1  \right)} \cdot 2^{-n_2 \left( \frac{1+\theta_2}{2} -\epsilon- \gamma_2  \right)} \cdot 2^{-n_3 \left( \frac{1+\theta_3}{2} -\epsilon- \gamma_3 \right)}\cdot |F|^{\gamma_1} \cdot  |G|^{\gamma_2} \cdot |H|^{\gamma_3},
\end{align*}
where $0 \leq \gamma_1, \gamma_2, \gamma_3 \leq 1$, with $\gamma_1+\gamma_2+\gamma_3=1$. Since 
\[
2^{-n_1} \lesssim \min(1, 2^d |F|/|H|), \quad 2^{-n_2} \lesssim \min(1, 2^d |G|/|H|), \quad 2^{-n_3} \lesssim 2^{-Md},
\]
we obtain \eqref{eq:restr-weak-type-1} for $\ds 0 < \alpha_1 \leq  \frac{1+\theta_1}{2} -\epsilon,  0 < \alpha_2 \leq  \frac{1+\theta_2}{2} -\epsilon, 0< \alpha_3 \leq \frac{1+\theta_3}{2}-\epsilon$, and $\alpha_1+\alpha_2 +\alpha_3=1$.

Lastly, we present a geometric property of the collection of intervals obtained through the stopping times above.

\begin{proposition}
The collection $\ii I_1:=\bigcup_{n_1} \ii I_1^{n_1} $ of dyadic intervals satisfies a $\kappa$-Carleson condition, i.e. there exists $\kappa \geq 1$ so that for every $I \in \ii I_1$, we have
\begin{equation}
\label{eq:Carleson}
\sum_{\substack{I' \in \ii I_1 \\ I' \subseteq I }} |I'| \leq \kappa |I|.
\end{equation}
\begin{proof}
If $I \in \ii{I}_1^n$ and $I' \in \ii{I}_1^m$ are so that $I' \subset I$, then $m >n$ and moreover, $I'$ was selected first. This is due to the maximality of the intervals and because, in this situation, 
$$ \frac{1}{| I|} \int_{\rr R} \one_F \cdot \ci_{I}^M dx \sim 2^{n}, \qquad  \frac{1}{| I'|} \int_{\rr R} \one_F \cdot \ci_{I'}^M dx \sim 2^{m}.$$

The above condition can be reinterpreted, so that in fact,
\[
C \, 2^m \cdot \one_{I'}(x)\leq \mathcal{M} \left( \one_F \cdot \ci_I  \right)(x) \cdot \one_{I'}(x).
\]
Then, since all the intervals in $\ii{I}_1^m$ are disjoint,
\begin{equation}
\sum_{I' \in \ii{I}_1^m} \vert I' \vert \leq \vert \lbrace x : \mathcal{M}(\one_F \cdot \ci_I)(x) > C \cdot 2^m  \rbrace \vert \leq \tilde{C} \, C^{-1} \, 2^{-m} \| \one_F \cdot \ci_I \|_1 \leq \tilde{C} \, 2^{n+1-m} \vert I \vert.
\end{equation}
Summing in $m>n$, we get the desired conclusion:
\[ 
\sum_{\substack{I' \subset I \\ I' \in \ii I_1}} \vert I' \vert=\sum_{m >n} \sum_{I' \in \ii{I}_1^m} \vert I' \vert \leq \sum_{m >n} \tilde{C} \, 2^{n+1-m} \vert I \vert \leq \kappa |I|.
\]

Note that the constant $\kappa$ depends only on the norm of the maximal operator.
\end{proof}
\end{proposition}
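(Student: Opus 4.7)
My plan is to exploit the stopping-time construction together with the weak-$(1,1)$ bound for the Hardy–Littlewood maximal operator to control the total length of nested descendants.

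First, I would fix $I \in \ii I_1^n$ and analyze which intervals $I' \in \ii I_1^m$ can satisfy $I' \subsetneq I$. By construction of the stopping time (larger averages are selected first, and once an interval is selected every tile it contains is removed from $\rr P_{Stock}$), such an $I'$ must have been chosen at an earlier stage, when the active size level was strictly larger than the one that produced $I$; in our indexing this forces $m>n$, and moreover
\[
\frac{1}{|I|}\int_{\rr R}\one_F\cdot\ci_{I}^M\,dx\sim 2^n,\qquad \frac{1}{|I'|}\int_{\rr R}\one_F\cdot\ci_{I'}^M\,dx\sim 2^m.
\]

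Second, for each fixed $m>n$, I would estimate $\sum_{I'\in\ii I_1^m,\,I'\subseteq I}|I'|$. Since $\ci_I(x)\sim 1$ for $x\in I'\subseteq I$, the average of $\one_F\cdot\ci_I$ over $I'$ is $\gtrsim 2^m$, so that
\[
C\cdot 2^m\cdot\one_{I'}(x)\leq \ic M\bigl(\one_F\cdot\ci_I\bigr)(x)\cdot\one_{I'}(x).
\]
The intervals in $\ii I_1^m$ are pairwise disjoint (maximality), hence
\[
\sum_{\substack{I'\in\ii I_1^m\\ I'\subseteq I}}|I'|\;\leq\;\bigl|\{x:\ic M(\one_F\cdot\ci_I)(x)>C\cdot 2^m\}\bigr|\;\lesssim\;\frac{1}{2^m}\,\|\one_F\cdot\ci_I\|_1\;\lesssim\;2^{n-m}|I|,
\]
where the last inequality uses that $\|\one_F\cdot\ci_I\|_1\lesssim 2^n|I|$ by the stage-$n$ selection condition for $I$.

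Third, I would sum the resulting geometric series in $m>n$ to obtain
\[
\sum_{\substack{I'\in\ii I_1\\ I'\subseteq I}}|I'|\;=\;\sum_{m>n}\sum_{\substack{I'\in\ii I_1^m\\ I'\subseteq I}}|I'|\;\lesssim\;|I|\sum_{m>n}2^{n-m}\;\leq\;\kappa|I|,
\]
with $\kappa$ depending only on the $L^1\to L^{1,\infty}$ norm of $\ic M$ and on the constants implicit in the adapted weight $\ci_I$. There is no serious obstacle here; the only points requiring a little care are verifying that $\ci_I\sim\ci_{I'}$ on $I'$ (trivial, since $\dist(x,I)=0$ for $x\in I'\subseteq I$) and translating the weighted average condition of the stopping time into a pointwise lower bound for the maximal function, which is immediate from the definition.
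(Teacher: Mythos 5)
Your proposal is correct and follows essentially the same route as the paper: the ordering $m>n$ from the stopping time, the pointwise lower bound $C\,2^m\,\one_{I'}\leq \ic M(\one_F\cdot\ci_I)\,\one_{I'}$, the weak-$(1,1)$ bound combined with disjointness of the intervals in $\ii I_1^m$, and the geometric summation in $m>n$. No further comment is needed.
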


\subsection{Conclusions}~\\
\label{sec:conclusions-comp}
As mentioned earlier, Carleson and sparse collections represent the same concept: following \cite{Nazarov-Lerner-DyadicCalculus}, a $\kappa$-Carleson collection of intervals is in fact $\frac{1}{\kappa}$-sparse, and vice versa.  

The stopping time used for proving vector-valued estimates yields three Carleson collections, one for every function. The algorithm starts with the largest possible $\sssize_{\rr P_{Stock}} \one_F$, selects intervals $I_1$ with
\[
\sssize_{\rr P_{Stock}} \one_F \sim \frac{1}{|I_1|} \int_{\rr R} \one_F \cdot \ci_{I_1}^M dx,
\]
and all the tiles in $\rr P_{Stock} \cap \rr P(I_1)$, with spatial interval contained in $I_1$. Then the procedure resumes, the maximal size decreases, while the intervals in $\ii I_1$ become larger and larger.

On the other hand, the stopping time for the sparse domination produces one sparse collection $\ic S$ of dyadic intervals. For each $Q \in \ic S$, we also have a subcollection $\rr P_Q $ of tritiles, which we want to satisfy simultaneously
{\fontsize{10}{10}\[
\sssize_{\rr P_Q} \one_F \leq C \frac{1}{|Q|} \int_{\rr R} \one_F \cdot \ci_{Q}^M dx, \quad \sssize_{\rr P_Q} \one_G \leq C \frac{1}{|Q|} \int_{\rr R} \one_G \cdot \ci_{Q}^M dx,\quad \sssize_{\rr P_Q} \one_{H'} \leq C \frac{1}{|Q|} \int_{\rr R} \one_{H'} \cdot \ci_{Q}^M dx.
\]}

We start with the largest possible spatial intervals, and in the selection process we make sure that the above conditions hold. In this case, the spatial intervals are becoming smaller and smaller, and at the same time, the size (which should be regarded as a maximal average) is increasing.

\section{A rank $k$ collection of multi-tiles}
\label{sec:rank-k}
Now we prove sparse and multi-vectorial estimates for the operator $T_k$ of Theorem \ref{thm:main-thm}, the $n$-linear multiplier whose symbol is singular along a $k$-dimensional subspace, with $\ds k<\frac{n+1}{2}$.

\subsection{A few definitions}
Here we consider $\Gamma$ to be the $n$-dimensional vector space 
\[
\Gamma:=\lbrace \xi \in \rr R^{n+1}:  \xi_1+\ldots +\xi_{n+1} =0 \rbrace,
\]
and $\Gamma' \subset \Gamma$ a non-degenerate subspace of $\Gamma$ of dimension $0 \leq k <\dfrac{n+1}{2}$.

The $(n+1)$-linear form of an $n$-linear operator which is singular along $\Gamma'$ is given by 
\begin{equation}
\label{def:op-sing-k}
\Lambda(f_1, \ldots, f_{n+1})=\int_{\rr R^{n+1}} \delta(\xi_1+\ldots +\xi_{n+1}) m(\xi_1, \ldots , \xi_{n+1}) \hat{f}_1(\xi_1) \cdot \ldots \cdot \hat{f}_{n+1}(\xi_{n+1}) d \xi_1 \ldots d \xi_{n+1},
\end{equation}
where $m$ is a multiplier satisfying
\[
\vert \partial_\xi^\alpha m(\xi) \vert \lesssim \dist(\xi, \Gamma')^{-\vert \alpha \vert}
\]
for all partial derivatives $\partial_\xi^\alpha$ on $\Gamma$ up to some finite order. 

The model for the multilinear form is given by
\begin{equation}
\label{def-model-op-k}
\Lambda_{\rr P}(f_1, \ldots, f_{n+1}):=\sum_{P \in \rr P} \vert I_P\vert^{-\frac{n-1}{2}} \langle f_1, \phi_P^1  \rangle \cdot \ldots \cdot \langle f_{n+1}, \phi_P^{n+1} \rangle, 
\end{equation}
where $\rr P$ is a rank $k$ family of $(n+1)$-tiles. This notion will be specified shortly, but it essentially means that there are $k$ independent parameters in frequency.

We recall the order relation on tiles:
\begin{definition}
If $P, P'$ are tiles, then $P'< P$ if $I_{P'}\subsetneq I_P$ and $\omega_P \subseteq 3 \omega_{P'}$. Also, $P' \lesssim P$ if $I_{P'} \subseteq I_P$ and $\omega_P \subseteq 100 \omega_{P'}$. We write $P' \leq P$ if $P'<P$ or $P'=P$, and $P' \lesssim' P $ if $P' \lesssim P$ but $P' \nleq P$.
\end{definition}

Our notion of a rank $k$ collection of multi-tiles is somewhat different from the one in \cite{multilinearMTT}: in order to simplify the presentation, we include in the definition certain properties that were deduced in \cite{multilinearMTT}.

\begin{definition}
\label{def:rank}
A collection $\rr P$ of multi-tiles is said to have \emph{rank $k$} if for any $P, P' \in \rr P$ the following conditions are satisfied:
\begin{itemize}
\item \emph{any $k$ components determine the remaining ones:} if $1 \leq i_1< \ldots <i_k \leq n+1$ and if $\omega_{P_{i_s}}=\omega_{P'_{i_s}}$ for all $1 \leq s \leq k$, then $\omega_{P_i} = \omega_{ P'_i}$ for all $1 \leq i \leq n$.
\item \emph{any two multi-tiles which are overlapping in $k$ components have (frequency only) dilates that are overlapping in the remaining components:} if $1 \leq i_1< \ldots <i_k \leq n+1$ and if $P'_{i_s} \leq P_{i_s}$ for all $1 \leq s \leq k$, then $P'_i \lesssim P_i$ for all $1 \leq i \leq n$.
\item \emph{if the two multi-tiles correspond to different scales and they are overlapping in $k$ components, there will be at least two components which are not overlapping, though their frequency dilates are:} if we further assume that $\vert I_{P'} \vert \ll \vert I_P\vert$, then we have $P'_i \lesssim ' P_i$ for at least two choices of $i$.
\end{itemize}
\end{definition}

It is not difficult to prove that the discretization of the multilinear form from $\eqref{def-model-op-k}$ admits a rank $k$ model form as in \eqref{def:op-sing-k}. This is detailed in \cite{multilinearMTT}.  The most well-known example corresponds to $n=2$ and $k=1$: the bilinear Hilbert transform $BHT$ is a bilinear operator given by the multiplier $ \text{sgn} (\xi_1-\xi_2)$, which is singular along a line.

The handling of the more general case of a rank $k$ model operator is similar: the multi-tiles are grouped in subcollections called \emph{trees} according to their \emph{size}. In order to make this statement precise, we need to introduce a few definitions.

\begin{definition}
\label{def:tree}
For any $1 \leq j \leq n+1$, a \emph{$j$-tree} with \emph{top} $P_T=(I_T \times \omega_{T_1}, \ldots, I_T \times \omega_{T_{n+1}})$ is a subcollection $T$ of $\rr P$ so that $P_j \lesssim P_{T, j}$ for all $P \in T$.

A tree is called \emph{$j$-overlapping} if $P_j \leq P_{T, j}$ for all $P \in T$, and \emph{$j$-lacunary} if $P_j \lesssim ' P_{T, j}$ for all $P \in T$.

Finally, if $1 \leq i_1< \ldots <i_k \leq n+1$,  a subcollection $T$ of $\rr P$ is called an $(i_1, \ldots, i_k)$-tree if $T$ is an $i_s$-tree for all $1 \leq s \leq k$. 
\end{definition}

In the case of the bilinear Hilbert transform operator, which corresponds to a rank-1 family of tri-tiles, the convention used in \cite{multilinear_harmonic} or \cite{biest} is that a $j$-tree is $j$-overlapping and lacunary in the other two directions. For the general case, things are slightly more complicated since there are $k$ degrees of freedom. However, the notion of \emph{rank $k$} from Definition \ref{def:rank} ensures that once $k$ directions are fixed, one obtains a $j$-tree for all $1 \leq j \leq n+1$, and moreover, this tree will have at least two lacunary directions. In some sense, for a rank $k$ collection of multi-tiles, the $(i_1, \ldots, i_k)$-trees represent the ``fundamental constituents".

\begin{definition}
\label{def:size-m-k}
Given $\rr P$ a rank-$k$ collection of multi-tiles and $1 \leq j \leq n+1$, we define the \emph{size} of the sequence $\ds \big( \langle f, \phi_P^j \rangle \big)_{P \in  \rr P}$ by
\begin{equation}
\label{eq:def-size}
\ssize_{\rr P} \big(\langle f, \phi_P^j \rangle_{P \in \rr P} \big) :=\sup_{T \subseteq \rr P} \big( \frac{1}{\vert I_T \vert} \sum_{P \in T} \vert \langle f, \phi_P^j \rangle \vert^2  \big)^{\frac{1}{2}}
\end{equation}
where $T$ ranges over all $j$-lacunary trees in $\rr P$ (including one multi-tile trees).
\end{definition}

It was proved in \cite{biest} that 
\[
\ssize_{\rr P} \big(\langle f, \phi_P^j \rangle_{P \in \rr P} \big) \lesssim_M \sup_{P \in \rr P} \frac{1}{\vert I_P \vert} \int_{\rr R} \vert f(x) \vert \cdot \ci_{I_P}^M(x) dx,
\]
and for this reason we use the notation from Definition \ref{def:size}:
\begin{equation}
\label{eq:def-ssize}
\sssize_{\rr P}(f):=\sup_{P \in \rr P} \frac{1}{\vert I_P \vert} \int_{\rr R} \vert f(x) \vert \cdot \ci_{I_P}^M(x) dx.
\end{equation}


\begin{lemma}
\label{lemma-tree-lemma}
If $T$ is an $(i_1, \ldots, i_k)$-tree for some $1 \leq i_1<\ldots  <i_k \leq n+1$, then 
\[
\vert \sum_{P \in T} \vert I_P\vert^{-\frac{n-1}{2}} \langle f_1, \phi_P^1  \rangle \cdot \ldots \cdot \langle f_{n+1}, \phi_P^{n+1} \rangle \vert \lesssim \prod_{j=1}^{n+1} \ssize_{T} \big(\langle f_j, \phi_P^j \rangle \big) \cdot \vert I_T \vert.
\]
\end{lemma}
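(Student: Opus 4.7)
The idea is a classical tree-lemma style argument: identify two directions in which $T$ behaves lacunarily, pair them using Cauchy--Schwarz, and treat the other $n-1$ factors via the trivial pointwise single-tile bound. The rank-$k$ structure is exactly what supplies the two lacunary directions.

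\textbf{Step 1: locate two lacunary directions.} First decompose $T$ according to whether it is $i_s$-overlapping or $i_s$-lacunary, for each $s=1,\ldots,k$; this produces at most $2^k$ disjoint sub-trees, each of which is still a sub-tree of $T$ with top $P_T$. If $T$ is $i_s$-lacunary for some $s$, one of the two required lacunary directions is already at hand; in any case, focus on the sub-tree on which $T$ is $i_s$-\emph{overlapping} (i.e.\ $P_{i_s}\leq P_{T,i_s}$) for all $s$. Then the third bullet of Definition~\ref{def:rank}, applied to each pair $(P,P_T)$ with $|I_P|\ll |I_T|$, guarantees at least two coordinates $i\in\{1,\ldots,n+1\}$ for which $P_i\lesssim' P_{T,i}$. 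Since there are only $\binom{n+1}{2}$ such pairs, a further dyadic-type decomposition writes $T = \bigsqcup_{(a,b)} T_{(a,b)}$, where $T_{(a,b)}$ is both $a$-lacunary and $b$-lacunary with top $P_T$. (The finitely many multi-tiles with $I_P=I_T$, of which there are only $O(1)$ by the frequency constraint, are absorbed into a trivial tail estimate.)

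\textbf{Step 2: the Cauchy--Schwarz estimate on each piece.} Rewrite
\[
|I_P|^{-\frac{n-1}{2}}\prod_{j=1}^{n+1}\langle f_j,\phi_P^j\rangle \;=\; |I_P|\,\prod_{j=1}^{n+1}\frac{\langle f_j,\phi_P^j\rangle}{|I_P|^{1/2}}.
\]
For every $j\notin\{a,b\}$, applying the definition of $\ssize_T$ to the one-multi-tile lacunary subtree $\{P\}$ yields the pointwise inequality $|I_P|^{-1/2}|\langle f_j,\phi_P^j\rangle|\leq S_j$, where $S_j:=\ssize_T(\langle f_j,\phi_P^j\rangle_{P\in T})$. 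Pulling the resulting product $\prod_{j\neq a,b}S_j$ out of the sum and applying the triangle inequality reduces us to bounding $\sum_{P\in T_{(a,b)}}|\langle f_a,\phi_P^a\rangle|\,|\langle f_b,\phi_P^b\rangle|$. Cauchy--Schwarz gives
\[
\sum_{P\in T_{(a,b)}}|\langle f_a,\phi_P^a\rangle||\langle f_b,\phi_P^b\rangle|\;\leq\;\Bigl(\sum_{P\in T_{(a,b)}}|\langle f_a,\phi_P^a\rangle|^2\Bigr)^{1/2}\Bigl(\sum_{P\in T_{(a,b)}}|\langle f_b,\phi_P^b\rangle|^2\Bigr)^{1/2},
\]
and because $T_{(a,b)}$ is a lacunary subtree of $T$ in each of the directions $a,b$, the very definition of size supplies the bounds $\sum_{P\in T_{(a,b)}}|\langle f_j,\phi_P^j\rangle|^2\leq S_j^2\,|I_T|$ for $j\in\{a,b\}$. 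Multiplying yields the estimate $\prod_{j=1}^{n+1}S_j\cdot|I_T|$ for each $T_{(a,b)}$, and summing over the finitely many decomposition pieces concludes the argument.

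\textbf{Main obstacle.} The one nontrivial point is the bookkeeping in Step~1: one must make sure that after the successive decompositions of $T$, every resulting piece $T_{(a,b)}$ is a \emph{genuine lacunary sub-tree of $T$ with the same top $P_T$}, so that the definition of $\ssize_T$ (a supremum over all such sub-trees) directly controls the $\ell^2$ sums in Step~2. This is precisely the content of the rank-$k$ structure, whose three bullets are stable under restricting to sub-collections with a common top, and which is exactly the mechanism that forces at least two lacunary directions once $k$ directions of overlap are pinned down.
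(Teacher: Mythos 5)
The paper itself does not prove this lemma: it states it and immediately remarks that the estimate rests on ``the existence of at least two lacunary directions in an $(i_1,\ldots,i_k)$-tree''. Your Step~2 is exactly the intended (and standard) mechanism, and it is correct as written: the identity $|I_P|^{-\frac{n-1}{2}}\prod_j\langle f_j,\phi_P^j\rangle=|I_P|\prod_j |I_P|^{-1/2}\langle f_j,\phi_P^j\rangle$, the pointwise single-tile bound for the $n-1$ non-lacunary factors (admissible because Definition~\ref{def:size-m-k} explicitly includes one-multi-tile trees in the supremum), and Cauchy--Schwarz in the two lacunary directions, using that $T_{(a,b)}\subseteq T$ with top $P_T$ is itself admissible in the supremum defining $\ssize_T$. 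The assignment of each $P$ to one of the $\binom{n+1}{2}$ pairs of lacunary directions is also fine.

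The gap is in Step~1, and it is exactly at the point you flag as the ``main obstacle'' but then do not resolve. After splitting $T$ into at most $2^k$ pieces according to overlap/lacunarity in the distinguished directions, you only run the argument on the piece that is $i_s$-overlapping for \emph{all} $s$; for that piece the second and third bullets of Definition~\ref{def:rank} apply and everything works. For the remaining pieces you say ``one of the two required lacunary directions is already at hand'', but one lacunary direction is not enough: with a single lacunary direction the Cauchy--Schwarz step degenerates to a bound involving $\big(\sum_{P\in T}|I_P|\big)^{1/2}$, which is not $O(|I_T|^{1/2})$ since tiles stack across scales. Moreover, the hypotheses of both the second and third bullets of Definition~\ref{def:rank} require $P_{i_s}\leq P_{T,i_s}$ for \emph{all} $k$ distinguished components, so for a piece that is $i_1$-lacunary you can conclude neither that the non-distinguished components satisfy $P_j\lesssim P_{T,j}$ (so the $\ell^2$ sums in those directions need not be controlled by $\ssize_T$ at all) nor that a second lacunary direction exists. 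To close the argument you must either restrict the lemma to the all-overlapping constituents actually produced by Lemma~\ref{lemma:dec-lemma}, or supply an additional combinatorial step for the mixed pieces (e.g.\ iterating the overlapping/lacunary dichotomy in the remaining coordinates and re-applying the third bullet with a different choice of $k$ overlapping indices, which requires knowing those coordinates are comparable in the first place). As written, the proof covers only one of the $2^k$ pieces.
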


Because of the existence of at least two lacunary directions in a $(i_1, \ldots, i_k)$-tree, we have a way of estimating how the functions act on such ``elementary blocks". Further, an important step in the proof is the \emph{decomposition lemma} below, which allows us to organize a collection of tiles into trees of similar size:

\begin{lemma}
\label{lemma:dec-lemma}
If $\rr P_j$ is a collection of $j$-tiles with $\ssize_{\rr P_j}\big(\langle f_j, \phi_P^j \rangle \big) \leq \lambda$, then there exists a decomposition 
$\rr P_j =\rr P'_j \cup \rr P''_j$ so that $\ssize_{\rr P'_j}\big(\langle f_j, \phi_P^j \rangle \big) \leq \frac{\lambda}{2}$ and $\rr P''_j$ is a union $\ds \rr T=\bigcup_{T \in \rr T} T$ of disjoint trees so that 
\begin{equation}
\label{eq:trees-j-projection}
\sum_{T \in \rr T} \vert I_T \vert \lesssim \lambda^{-2} \|  f_j \|_2^2.
\end{equation}
Furthermore, if all the tiles in $\rr P_j$ have their spatial support contained inside a fixed dyadic interval $I_0$, then \eqref{eq:trees-j-projection} can be improved to 
\begin{equation}
\label{eq:improved-local-sum-tree-top}
\sum_{T \in \rr T} \vert I_T \vert \lesssim \lambda^{-2} \|  f_j \cdot \ci_{I_0} \|_2^2.
\end{equation}
\begin{proof}
The decomposition of the collection $\rr P_j$ into trees according to their size is a classical result in time-frequency analysis and it corresponds to a Calder\'on-Zygmund decomposition at the level of the tiles. The selection of the trees in $\rr T$ is the conventional one: we start by choosing the maximal $j$-lacunary ones which have size greater than $\lambda/2$, and in order to insure orthogonality, some $j$-overlapping trees are removed as well. Here, we will only elaborate on the localized version, i.e. the estimate \eqref{eq:improved-local-sum-tree-top}, which follows by proving
\[
\sum_{T \in \rr T} \sum_{P \in T} \vert \langle f, \phi_P  \rangle \vert^2 \lesssim \lambda \big( \sum_{T \in \rr T} |I_T|  \big)^\frac{1}{2} \, \| f \cdot \ci_{I_0} \|_2,
\]
or equivalently, by a $TT^*$ argument,
\[
\big\|   \big(  \sum_{T \in \rr T} \sum_{P \in T} \langle f, \phi_P  \rangle \phi_P \,   \big) \ci_{I_0}^{-\frac{N}{2}}  \big\|_2   \lesssim \lambda \big( \sum_{T \in \rr T} |I_T|  \big)^\frac{1}{2}.
\]
We perform a decomposition of $\rr R$ into dyadic shells around $I_0$, which reduces the above estimate to 
\[
\big\|   \big(  \sum_{T \in \rr T} \sum_{P \in T} \langle f, \phi_P  \rangle \phi_P \,   \big) \cdot \one_{10 I_0}  \big\|_2   \lesssim \lambda \,\big( \sum_{T \in \rr T} |I_T|  \big)^\frac{1}{2}, \quad \big\|   \big(  \sum_{T \in \rr T} \sum_{P \in T} \langle f, \phi_P  \rangle \phi_P \,   \big) \ci_{I_0, \kappa} \big\|_2   \lesssim 2^{-\frac{\kappa \, N}{2}} \lambda \, \big( \sum_{T \in \rr T} |I_T|  \big)^\frac{1}{2},
\]
where for every $\kappa \geq 2$, $\ci_{I_0, \kappa}$ is a smooth bump function adapted to the region $2^{\kappa+1}I_0 \setminus 2^\kappa I_0$, supported outside $2^{\kappa-1}I_0$ for $\kappa$ large enough.
 
In the first term, we ignore the truncation and instead deal with
\[
\big\|  \sum_{T \in \rr T} \sum_{P \in T} \langle f, \phi_P  \rangle \phi_P \,  \big\|_2   \lesssim \lambda \,\big( \sum_{T \in \rr T} |I_T|  \big)^\frac{1}{2}.
\] 
This is a standard estimate and it heavily relies on the orthogonality of the selected trees. It remains to prove, for any $\kappa \geq 2$, and any interval $I \subseteq I_0$:
\begin{equation}
\label{eq:trees-energy-equal-length}
\big\|   \big(  \sum_{T \in \rr T} \sum_{\substack{P \in T \\ I_P =I}} \langle f, \phi_P  \rangle \phi_P \,   \big) \ci_{I_0, \kappa} \big\|_2   \lesssim 2^{-\frac{\kappa \, N}{2}} \frac{|I|^\frac{3}{2}}{|I_0|^\frac{3}{2}} \lambda\, \big( \sum_{T \in \rr T} |I_T|  \big)^\frac{1}{2},
\end{equation}  
which implies the desired result upon summation in $\kappa$ and in $I \subseteq I_0$, details which are left to the interested reader. On the right hand side, any power strictly greater than 2 would be sufficient: it allows us to sum with respect to the scale $|I|/ |I_0|$ and the number of intervals of fixed length. 

Relying again on a $TT^*$ argument, the left hand side of \eqref{eq:trees-energy-equal-length} squared can be written as
\[
\sum_{T \in \rr T} \sum_{\substack{P \in T\\ I_P=I}} \sum_{T' \in \rr T} \sum_{\substack{ P' \in T' \\ I_{P'}=I}} \langle f, \phi_P \rangle \, \overline{\langle f, \phi_{P'} \rangle} \int_{\rr R} \phi_P(x) \, \overline{\phi_{P'}(x)} \ci^2_{I_0, \kappa}(x) dx.
\]

Since $\vert \langle f, \phi_P \rangle \, \overline{\langle f, \phi_{P'} \rangle}  \vert \lesssim |\langle f, \phi_P \rangle |^2 +\vert \langle f, \phi_{P'} \rangle \vert ^2$, it will be enough to show, for any fixed tree $T \in \rr T$ and any fixed multi-tile $P \in T$ with $I_P =I$, that
\begin{equation}
\label{eq:tree-reduced-form-local}
\sum_{T' \in \rr T} \sum_{P' \in T'}\big \vert \int_{\rr R} \phi_P(x) \, \overline{\phi_{P'}(x)} \ci^2_{I_0, \kappa}(x) dx \big \vert \lesssim 2^{-\kappa N} \, \frac{|I|^3}{|I_0|^3}.
\end{equation}
This is because the size of every tree is controlled by $\lambda$ and in consequence
\[
\sum_{T \in \rr T} \sum_{P \in T} |\langle f, \phi_P \rangle |^2 \lesssim \lambda^2 \sum_{T \in \rr T} |I_T|.
\]

To deal with \eqref{eq:tree-reduced-form-local}, we note that since $I_P=I=I_P'$, every $P'$ must come from a different tree, and that all the frequency intervals $\omega_{P'}$ (actually $\omega_{P'_j}$, but we ignore the $j$ index here) are mutually disjoint as $P'$ varies in $\ds \cup_{T' \in \rr T} T'$. Moreover, they all have the same length and they are equally spaced. For simplicity, we assume $\omega_P$ is centered at $0$, so $\omega_P +\omega_{P'}$, which contains the Fourier support of $\phi_P \, \overline{\phi_{P'}}$, is a subset of $[\text{dist\,}(\omega_{P}, \omega_{P'}), \text{dist\,}(\omega_{P}, \omega_{P'}) +2 |\omega_P|]$. Hence, if $P' \neq P$, and $\Phi^{[M]}(x)$ is a function so that 
\[
\frac{d^M}{d \, x^M} \Phi^{[M]}(x) = \phi_P(x) \, \overline{\phi_{P'}}(x),
\]
then we have 
\[
\vert \Phi^{[M]}(x) \vert \lesssim |I|^{-1} \, \text{dist }(\omega_P, \omega_{P'})^{-M}.
\]
Integrating by parts $M$ times, we obtain
\begin{align*}
& \big \vert \int_{\rr R} \phi_P(x) \, \overline{\phi_{P'}(x)} \ci^2_{I_0, \kappa}(x) dx \big \vert \lesssim \big \vert \int_{\rr R} \Phi^{[M]}(x) \frac{d ^M}{d\, x^M} \ci^2_{I_0, \kappa}(x) dx \big \vert \\\
&\lesssim  \big(\frac{ \text{dist }(\omega_P, \omega_{P'}) }{|\omega_P|} \big)^{-M} \, 2^{- \kappa M} \Big( \frac{|I|}{|I_0|} \Big)^M.
\end{align*}
Summing over tiles $P' \neq P$ with $I_{P'}=I=I_P$ is now possible and we get \eqref{eq:tree-reduced-form-local}. To deal with the case $P'=P$, we use the fast decay of the $L^2$ normalized bump function $\phi_P$, which is adapted to $I_P=I$, and the fact that $\ci_{I_0, \kappa}$ is supported outside $2^{\kappa-1}I_0$.
\end{proof}
\end{lemma}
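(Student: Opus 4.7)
The plan is the standard greedy tree extraction from time-frequency analysis, combined with a $TT^*$ orthogonality estimate, but paying attention to the spatial cut-off in the localized version. Concretely, while $\ssize_{\rr P_j}\big(\langle f_j, \phi_P^j\rangle\big) > \lambda/2$, I pick a $j$-lacunary tree $T_0$ witnessing the size, extend $T_0$ to a maximal tree $T$ (adding the corresponding $j$-overlapping tree to secure orthogonality between distinct selected trees), set $\rr P''_j \supseteq T$, remove $T$ from the stock, and iterate. What remains at the end is declared $\rr P'_j$ and automatically has size at most $\lambda/2$; the selected collection $\rr T$ consists of pairwise-disjoint trees whose tops have $j$-lacunary sub-trees of size $\gtrsim \lambda$.

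For the unrestricted bound \eqref{eq:trees-j-projection}, by definition of size on the selected lacunary parts,
\[
\lambda^2 \sum_{T\in \rr T} |I_T| \;\lesssim\; \sum_{T\in\rr T}\sum_{P\in T}|\langle f_j,\phi_P^j\rangle|^2 \;=\; \Big\langle f_j, \sum_{T\in\rr T}\sum_{P\in T}\langle f_j,\phi_P^j\rangle\,\phi_P^j\Big\rangle,
\]
and Cauchy--Schwarz reduces the claim to the orthogonality estimate $\|\sum_T\sum_{P\in T}\langle f_j,\phi_P^j\rangle\phi_P^j\|_2^2\lesssim \sum_T\sum_{P\in T}|\langle f_j,\phi_P^j\rangle|^2$. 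This last inequality is standard: cross-terms within a tree are handled by the fact that $j$-lacunary wave packets at distinct scales along a tree are essentially orthogonal (their frequency supports are disjoint up to a controlled dilation), while cross-terms between distinct trees are zero or rapidly decaying because our selection procedure forces the frequency data of two different tree tops to be separated.

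For the localized improvement \eqref{eq:improved-local-sum-tree-top}, the idea is to prove
\[
\Big\|\Big(\sum_{T\in\rr T}\sum_{P\in T}\langle f_j,\phi_P^j\rangle\,\phi_P^j\Big)\,\ci_{I_0}^{-N/2}\Big\|_2 \;\lesssim\; \lambda\,\Big(\sum_{T\in\rr T}|I_T|\Big)^{1/2},
\]
which, paired against $f_j$ and using the adaptation of $\phi_P$ to $I_P\subseteq I_0$, yields the improved bound. The plan is to decompose $\rr R = 10 I_0 \cup \bigcup_{\kappa\geq 2}(2^{\kappa+1}I_0\setminus 2^\kappa I_0)$ and bound each piece. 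The near piece $\one_{10I_0}$ is handled by the non-localized orthogonality estimate above. For the shell $\ci_{I_0,\kappa}$ supported on $2^{\kappa+1}I_0\setminus 2^{\kappa-1}I_0$, I organize tiles by common spatial length $|I|$ and aim at the scale-separated bound $\lesssim 2^{-\kappa N/2}(|I|/|I_0|)^{3/2}\lambda(\sum_T|I_T|)^{1/2}$ uniformly in $\kappa$ and $I\subseteq I_0$, which sums.

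The main obstacle is precisely this last bound in the far shells. The idea is a $TT^*$ expansion: the $L^2$-norm squared becomes
\[
\sum_{T,T'}\sum_{\substack{P\in T,\,P'\in T'\\ I_P=I_{P'}=I}}\langle f_j,\phi_P^j\rangle\overline{\langle f_j,\phi_{P'}^j\rangle}\int \phi_P^j(x)\overline{\phi_{P'}^j(x)}\,\ci_{I_0,\kappa}^{\,2}(x)\,dx,
\]
and the key geometric input is that once $I_P = I_{P'}$, the frequency intervals $\omega_{P_j}$ coming from distinct selected trees form a disjoint equally-spaced family of intervals of length $|I|^{-1}$. Consequently the Fourier support of $\phi_P^j\,\overline{\phi_{P'}^j}$ sits at distance $\mathrm{dist}(\omega_P,\omega_{P'})$ from the origin, so repeated integration by parts against the smooth shell cutoff $\ci_{I_0,\kappa}^{\,2}$ (whose $M$-th derivative costs $(2^\kappa|I_0|)^{-M}$) gives a factor $(|I|/(2^\kappa|I_0|\cdot\mathrm{dist}(\omega_P,\omega_{P'})))^M$ per pair, which is summable over $P'\neq P$ and yields the required $2^{-\kappa N}(|I|/|I_0|)^3$ bound on the diagonal-in-scale operator. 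The diagonal term $P=P'$ is controlled directly by the $L^2$-adaptation of $\phi_P^j$ to $I_P\subseteq I_0$ together with the distance between $\mathrm{supp}\,\ci_{I_0,\kappa}$ and $I_0$. Summing in $\kappa$ and in $I\subseteq I_0$ and combining with the near-shell estimate completes the plan.
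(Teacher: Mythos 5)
Your proposal is correct and follows essentially the same route as the paper's proof: the standard greedy extraction of maximal $j$-lacunary trees (with the accompanying overlapping trees removed for orthogonality), the $TT^*$ reduction of the localized counting estimate, the decomposition into dyadic shells $\ci_{I_0,\kappa}$ around $I_0$, the fixed-scale $TT^*$ expansion with the $(|I|/|I_0|)^{3/2}$ gain, and the integration by parts exploiting the disjoint, equally spaced frequency intervals of tiles from distinct trees sharing the spatial interval $I$. The only differences are minor bookkeeping in the exponents produced by the integration by parts, which is harmless since $M$ and $N$ are taken large.
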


For a rank-1 family of multi-tiles, the decomposition above can be performed directly on the collection $\rr P$ of multi-tiles (this is Lemma 7.7 of \cite{multilinearMTT}): if $\ssize_{\rr P}\big(\langle f_j, \phi_P^j \rangle \big) \leq \lambda$, then there exists a decomposition $\rr P =\rr P' \cup \rr P''$ so that $\ssize_{\rr P'}\big(\langle f_j, \phi_P^j \rangle \big) \leq \frac{\lambda}{2}$ and $\rr P''$ is a union $\rr T$ of disjoint $j$-trees ($j$-lacunary or $j$-overlapping) so that 
\[
\sum_{T \in \rr T} \vert I_T \vert \lesssim \lambda^{-2} \|  f_j \|_2^2.
\]

As explained in Section \ref{sec:local->sparse}, the following localization result plays a key role in the proof of the sparse and vector-valued results of Theorem \ref{thm:main-thm}.

\begin{theorem}
\label{thm:localization-rank-k}
Let $I_0$ be a fixed dyadic interval and $\rr P$ a rank $k$ collection of multi-tiles, $E_1, \ldots, E_{n+1}$ sets of finite measure, and $f_1, \ldots, f_{n+1}$ functions with the property that $\vert  f_j(x) \vert \leq \one_{E_j}(x), \forall 1 \leq j \leq n+1$. Then we have 
\begin{equation}
\label{eq:local-est-k-thm}
\vert \Lambda_{\rr P \left(I_0 \right)}(f_1, \ldots, f_{n+1}) \vert \lesssim \prod_{j=1}^{n+1} \big( \sssize_{I_0} \one_{E_j}    \big)^{1-\alpha_j} \cdot \vert I_0  \vert,
\end{equation}
where the exponents $\alpha_j \in (0,1/2)$. Moreover, they are defined by
\begin{equation}
\label{def:exp-alpha_j}
\alpha_j:=\sum_{\substack{1 \leq i_1<\ldots < i_k \leq n+1 \\ i_s =j \text{  for some   } 1\leq  s \leq k}} \theta_{i_1, \ldots, i_k}
\end{equation}
where $ 0 \leq \theta_{i_1, \ldots, i_k} \leq 1$ are any positive numbers indexed by ordered $k$-tuples such that 
\[
\sum_{1 \leq i_1 < \ldots < i_k \leq n+1} \theta_{i_1, \ldots, i_k}=1.
\]
\end{theorem}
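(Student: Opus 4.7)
The plan is to combine the size-and-energy strategy of \cite{multilinearMTT} for rank-$k$ collections with the improved localization of the $L^2$-energy provided by Lemma \ref{lemma:dec-lemma}, and then take a convex combination over the $\binom{n+1}{k}$ choices of ``tree directions'' weighted by the parameters $\theta_{i_1,\ldots,i_k}$.

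First I would record the two localized controls available under the hypothesis $|f_j|\leq \one_{E_j}$:
\[
\ssize_{\rr P(I_0)}\bigl(\langle f_j,\phi_P^j\rangle\bigr)\,\lesssim\,\sssize_{I_0}(\one_{E_j})=:S_j,\qquad \|f_j\,\ci_{I_0}^M\|_{L^2}^2\,\lesssim\,|I_0|\cdot\ave_{I_0}(\one_{E_j})\,\leq\,S_j\,|I_0|,
\]
the second inequality being immediate from $|f_j|\leq \one_{E_j}$. Combined with the improved estimate \eqref{eq:improved-local-sum-tree-top} of Lemma \ref{lemma:dec-lemma}, this gives
\[
\sum_T |I_T|\,\lesssim\,2^{2n_j}\,S_j\,|I_0|
\]
for any disjoint family $\{T\}$ of $j$-lacunary trees contained in $\rr P(I_0)$ with $j$-size of size $\sim 2^{-n_j}$.

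For each ordered $k$-tuple $\sigma=(i_1,\ldots,i_k)$, the argument would organize $\rr P(I_0)$ into $\sigma$-trees by iterating Lemma \ref{lemma:dec-lemma} in each direction, producing sub-collections indexed by scale vectors $\vec n=(n_1,\ldots,n_{n+1})$ on which the $j$-size is $\sim 2^{-n_j}$. Lemma \ref{lemma-tree-lemma} bounds the contribution of each $\sigma$-tree by $\bigl(\prod_j 2^{-n_j}\bigr)|I_T|$, and the rank-$k$ definition guarantees at least two strictly lacunary transverse directions in $\{1,\ldots,n+1\}\setminus \sigma$, which supply the orthogonality needed to sum $\sum_T |I_T|$ against the energy bound above. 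Summing the resulting geometric series in $\vec n$ produces a per-tuple estimate of the form
\[
|\Lambda_{\rr P(I_0)}(f_1,\ldots,f_{n+1})|\,\lesssim\,\prod_{j=1}^{n+1}S_j^{\beta_j^{(\sigma)}}\cdot |I_0|^{\gamma^{(\sigma)}},
\]
with exponents $\beta_j^{(\sigma)},\gamma^{(\sigma)}$ reflecting whether direction $j$ lies in $\sigma$ or not.

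Taking the geometric mean of these $\binom{n+1}{k}$ estimates with weights $\theta_\sigma$, and invoking the defining identity $\alpha_j=\sum_{\sigma\ni j}\theta_\sigma$, the direction-$j$ size exponent should collapse to $1-\alpha_j$ and the overall $|I_0|$-exponent to $1$, yielding \eqref{eq:local-est-k-thm}. The main obstacle is the combinatorial bookkeeping: the per-tuple estimates are anisotropic in $j$, so one must verify that the contributions from the $k$-tuples containing $j$ and those not containing $j$ combine correctly to produce the uniform exponent $1-\alpha_j$. The hypothesis $k<(n+1)/2$, equivalently $\alpha_j<\tfrac{1}{2}$ for every $j$, is precisely what makes this exponent balance admissible with the available orthogonality in the at least two lacunary transverse directions demanded by the rank-$k$ structure.
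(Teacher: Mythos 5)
Your overall architecture is the one the paper uses: decompose $\rr P(I_0)$ into $j$-trees of comparable $j$-size in every direction via Lemma \ref{lemma:dec-lemma}, intersect, apply the tree estimate of Lemma \ref{lemma-tree-lemma}, count tree tops through the localized energy $\|f_j\ci_{I_0}\|_2^2\lesssim S_j|I_0|$, and convexify over the $\binom{n+1}{k}$ tuples with weights $\theta_{i_1,\ldots,i_k}$. However, the order of operations you propose breaks down at a specific point: you cannot first ``sum the resulting geometric series in $\vec n$'' to obtain a finished per-tuple estimate for each $\sigma=(i_1,\ldots,i_k)$ and only afterwards take the weighted geometric mean. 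For a fixed $\sigma$, the only tree-top count available is \eqref{eq:est-tops-trees}, namely $\sum_T|I_T|\lesssim\prod_{s=1}^k 2^{2l_{i_s}}S_{i_s}|I_0|$, so the summand in the directions $j\in\sigma$ is $2^{-l_j}\cdot 2^{2l_j}=2^{l_j}$; since the scale parameters range over $2^{-l_j}\lesssim S_j\le 1$, i.e. $l_j\to+\infty$, this series diverges and the per-tuple bound $\prod_j S_j^{\beta_j^{(\sigma)}}|I_0|^{\gamma^{(\sigma)}}$ you invoke does not exist. The interpolation over the $\binom{n+1}{k}$ counting bounds has to be performed \emph{for each fixed scale vector}, at the level of $\sum_{T\in\ic T_{l_1,\ldots,l_{n+1}}}|I_T|$; only after that averaging does the exponent of $2^{-l_j}$ become $1-2\alpha_j>0$, which is exactly where the hypothesis $k<(n+1)/2$ (equivalently $\alpha_j<1/2$) is consumed. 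This is how the paper proceeds, and with that reordering your argument closes.

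A secondary, smaller point: the ``at least two lacunary directions'' guaranteed by the rank-$k$ structure are what make the single-tree estimate of Lemma \ref{lemma-tree-lemma} work; they are not what drives the tree-top count. The count \eqref{eq:est-tops-trees} for $(i_1,\ldots,i_k)$-trees is obtained by applying the localized energy bound of Lemma \ref{lemma:dec-lemma} nestedly in the $k$ directions \emph{inside} the tuple $\sigma$ (ordering the tops by inclusion), not in the transverse directions.
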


\subsection{Proof of Theorem \ref{thm:localization-rank-k}: the case $k=1$}~\\
\label{sec:proof-bht-case}
We first present the proof in the case $k=1, n=2$, which corresponds to the bilinear Hilbert transform operator. Although the localization result for $BHT$ has already appeared in \cite{vv_BHT}, we include a short revision, anticipating that the general rank $k$ case will be built upon it.

Thus we have a rank $1$ family $\rr P$ of tri-tiles, $I_0$ is a fixed interval, and $f_1, f_2, f_3$ are restricted-type functions:
\[
\vert f_1(x) \vert \leq \one_{E_1}(x), \quad \vert f_2(x) \vert \leq \one_{E_2}(x) \quad \text{and} \quad \vert f_3(x) \vert \leq \one_{E_3}(x).
\] 
We want to prove that 
\begin{equation}
\label{eq:BHT-local-proof-rank-k}
\vert \Lambda_{\rr P \left(I_0 \right)}(f_1, f_2, f_3)  \vert \lesssim \big( \sssize_{I_0} \one_{E_1}  \big)^{1-\theta_1} \cdot  \big( \sssize_{I_0} \one_{E_2}  \big)^{1-\theta_2} \cdot  \big( \sssize_{I_0} \one_{E_3}  \big)^{1-\theta_3} \cdot \vert I_0  \vert,
\end{equation}
where $0 < \theta_i \leq \frac{1}{2}$ and $\theta_1+\theta_2+\theta_3=1$. 

Using the decomposition result in Lemma \ref{lemma:dec-lemma}, we obtain the families: $\ic T_{n_1}, \ic T_{n_2}$ and $\ic T_{n_3}$, where each $\ic T_{n_j}$ is a union of disjoint $j$-trees satisfying 
\[
\sum_{T \in \ic T_{n_j}} |I_T| \lesssim 2^{2\, n_j} \|f_j \cdot \ci_{I_0} \|_2^2, \qquad \ssize_{T}(f_j) \leq 2^{-n_j} \leq \sssize_{I_0} \one_{E_j} \quad \forall \, T \in \ic T_j.
\]
If we denote by $\ic T_{n_1, n_2, n_3}$ the collection of trees $T:=T_1 \cap T_2 \cap T_3$, where $T_j \in \ic T_{n_j}$, we have that 
\begin{align*}
\vert \Lambda_{\rr P \left(I_0 \right)}(f_1, f_2, f_3) \vert & \lesssim \sum_{n_1, n_2, n_3} \sum_{T \in \ic T_{n_1, n_2, n_3}} \sum_{\substack{P \in T \\I_P \subseteq I_0}} \big\vert \vert I_P \vert^{-\frac{1}{2}} \langle f_1, \phi_P^1  \rangle  \langle f_2, \phi_P^2  \rangle  \langle f_3, \phi_P^3 \rangle  \big\vert \\
&\lesssim \sum_{n_1, n_2, n_3} 2^{-n_1} 2^{-n_2} 2^{-n_3} \sum_{T \in \ic T_{n_1, n_2, n_3}}\vert I_T \vert.
\end{align*}

Then Lemma \ref{lemma:dec-lemma} implies that 
\[
\sum_{T \in \ic T_{n_1, n_2, n_3}} \vert I_T\vert \lesssim 2^{2n_1} \|  f_1 \cdot \ci_{I_0} \|_2^2 \lesssim 2^{2 n_1} \cdot \big( \int_{\rr R}  \one_{E_1} \cdot \ci_{I_0}^M dx  \big) \lesssim 2^{2 n_1} \big( \sssize_{I_0} \one_{E_1}  \big) \cdot \vert I_0 \vert.
\]

Similarly, we have 
\[
\sum_{T \in \ic T_{n_1, n_2, n_3}} \vert I_T\vert \lesssim 2^{2 n_2} \big( \sssize_{I_0} \one_{E_2}  \big) \cdot \vert I_0 \vert \qquad \text{and } \qquad \sum_{T \in \ic T_{n_1, n_2, n_3}} \vert I_T\vert \lesssim 2^{2 n_3} \big( \sssize_{I_0} \one_{E_3}  \big) \cdot \vert I_0 \vert,
\]
and interpolating the three inequalities above we obtain that 
\[
\sum_{T \in \ic T_{n_1, n_2, n_3}} \vert I_T\vert \lesssim 2^{2 \theta_1 n_1} \big( \sssize_{I_0} \one_{E_1}  \big)^{\theta_1} \cdot 2^{2 \theta_2 n_2} \big( \sssize_{I_0} \one_{E_2}  \big)^{\theta_2}  \cdot 2^{2 \theta_3 n_3} \big( \sssize_{I_0} \one_{E_3}  \big)^{\theta_3}  \cdot \vert I_0 \vert.
\]

This allows us to estimate the trilinear form by 
{\fontsize{10}{11}\begin{align*}
\vert \Lambda_{\rr P \left(I_0 \right)}(f_1, f_2, f_3) \vert & \lesssim \sum_{n_1, n_2, n_3} 2^{-n_1\left( 1- 2 \theta_1\right)} 2^{-n_2\left( 1- 2 \theta_2\right)} 2^{-n_3\left( 1- 2 \theta_3\right)} \cdot \big( \sssize_{I_0} \one_{E_1}  \big)^{\theta_1} \cdot \big( \sssize_{I_0} \one_{E_2}  \big)^{\theta_2} \cdot \big( \sssize_{I_0} \one_{E_3}  \big)^{\theta_3} \cdot \vert I_0 \vert.
\end{align*}}

Since $2^{-n_1} \leq \sssize_{I_0} \one_{E_1}, 2^{-n_2} \leq \sssize_{I_0} \one_{E_2}$ and $2^{-n_3} \leq \sssize_{I_0} \one_{E_3}$ from the decomposition, we obtain, provided that $0 \leq \theta_1, \theta_2, \theta_3 <\dfrac{1}{2}$ and $\theta_1+\theta_2+\theta_3=1$
\[
\vert \Lambda_{\rr P \left(I_0 \right)}(f_1, f_2, f_3) \vert  \lesssim \big( \sssize_{I_0} \one_{E_1}  \big)^{1-\theta_1} \cdot \big( \sssize_{I_0} \one_{E_2}  \big)^{1-\theta_2} \cdot \big( \sssize_{I_0} \one_{E_3}  \big)^{1-\theta_3} \cdot \vert I_0 \vert.
\]

This is \eqref{eq:BHT-local-proof-rank-k}, which is another way of writing \eqref{eq:localBHT}, but the $\theta_j$ in the two equations are different.

\subsection{Proof of Theorem \ref{thm:localization-rank-k}: the case $k>1$}
Applying Lemma \ref{lemma:dec-lemma} in every component, for every $1 \leq j \leq n+1$ we can write $\rr P(I_0)$ as
\[
\rr P (I_0) := \bigcup_{l_j} \bigcup_{T_j \in \ic T_{l_j}} T_j,
\]
where the collection $T_j$ is a $j$-tree (the order relation needs to hold only in the $j^{\text{th}}$ component) satisfying 
\[
2^{-l_j-1} \leq \ssize_{T_j} \big(\langle f_j, \phi_P^j \rangle\big) \leq 2^{-l_j} \lesssim \sssize_{I_0} \one_{E_j}.
\]

Hence we can write 
\[
\rr P (I_0) :=\bigcup_{l_1, \ldots, l_{n+1}}\bigcup_{T_1 \in \ic T_{l_1}} \ldots  \bigcup_{T_{n+1} \in \ic T_{l_{n+1}}} \big( T_1 \cap \ldots \cap T_{n+1} \big).
\]
Equivalently, if we denote $\ic T_{l_1, \ldots, l_{n+1}}$ the collection of multi-tiles $\ds T=T_1 \cap \ldots \cap T_{n+1}$, where $T_j \in \ic T_{l_j}$, we have $\ds \rr P(I_0)=\bigcup_{l_1, \ldots, l_{n+1}} \bigcup_{T \in \ic T_{l_1, \ldots, l_{n+1}}} T$, and every $T \in \ic T_{l_1, \ldots, l_{n+1}}$ satisfies, for any $1 \leq j \leq n+1$, 
\[
 \ssize_{T} \big(\langle f_j, \phi_P^j \rangle\big) \leq 2^{-l_j} \lesssim \sssize_{I_0} \one_{E_j}.
\]

Using Lemma \ref{lemma-tree-lemma}, we have that
\[
\vert \Lambda_{\rr P \left(I_0 \right)}(f_1, \ldots, f_{n+1}) \vert \lesssim \sum_{l_1, \ldots, l_{n+1}} 2^{-l_1} \cdot \ldots \cdot 2^{-l_{n+1}} \sum_{T \in \ic T_{l_1, \ldots, l_{n+1}}} \vert I_T \vert.
\] 

Every subset $T \in \ic T_{l_1, \ldots, l_{n+1}}$ is part of an $(i_1, \ldots, i_k)$-tree (or a ``fundamental constituent"), and there are $n+1 \choose k$ such trees. The reader should recall that in Section \ref{sec:proof-bht-case}, any $T \in \ic T_{n_1, n_2, n_3}$ is of the form $T=T_1 \cap T_2 \cap T_3$, and hence can be viewed as a subset of a $1$-tree, $2$-tree or $3$-tree, a fact which was used for estimating $\ds \sum_{T \in \ic T_{n_1, n_2, n_3}}|I_T|$.

Similarly, the sum $\ds \sum_{T \in \ic T_{l_1, \ldots, l_{n+1}}} \vert I_T \vert$ can be estimated in $n+1 \choose k$ ways, and by their geometric mean:
\[
\sum_{T \in \ic T_{l_1, \ldots, l_{n+1}}} \vert I_T \vert \lesssim \prod_{1\leq i_1< \ldots < i_k \leq n+1} \big(  \sum_{\substack{T \in \ic T_{l_1, \ldots, l_{n+1}} \\ T \text{ is a } (i_1, \ldots, i_k)-\text{tree}}}\vert I_T \vert \big)^{\theta_{i_1, \ldots, i_k}}
\]
for $0 \leq \theta_{i_1, \ldots, i_k} \leq 1$ with $\ds \sum_{1\leq i_1< \ldots < i_k \leq n+1} \theta_{i_1, \ldots, i_k}=1$

For each multi-index $(i_1, \ldots, i_k)$ we have
{\fontsize{10}{10}\begin{equation}
\label{eq:est-tops-trees}
\sum_{\substack{T \in \ic T_{l_1, \ldots, l_{n+1}} \\ T \subseteq T_{i_1} \cap \ldots \cap T_{i_k} \text{ is a } (i_1, \ldots, i_k)-\text{tree}}}\vert I_T \vert \lesssim    \sum_{T_{i_1} \in \ic T_{l_{i_1}}} \ldots \sum_{T_{i_k} \in \ic T_{l_{i_k}}} \vert I_{T_{i_1}} \cap \ldots \cap I_{T_{i_{k}}}  \vert \lesssim  \prod_{s=1}^k  2^{2l_{i_s}} \cdot \sssize_{I_0}\, \one_{E_{i_s}} \cdot \vert I_0  \vert . 
\end{equation}}
If $k=1$, the inequality is a consequence of the decomposition into trees, and was presented in Section \ref{sec:proof-bht-case}. If $k\geq 2$, $\ds \sum_{T \in \ic T_{l_1, \ldots, l_{n+1}}}|I_T|$ represents a $k$-dimensional volume, rather than a ``length"; the trees are $k$-dimensional and $k$ functions are required for computing this volume.

In order to prove \eqref{eq:est-tops-trees}, we can assume for simplicity that we sum upon collections $T_{i_s} \in \ic T_{l_{i_s}}$ satisfying $I_{T_{i_1}}\supseteq \ldots I_{T_{i_k}}$ (the general case reduces to this particular one, since the collection of dyadic intervals is well-ordered). Then we have to estimate
\[
\sum_{\substack{ T_{i_1} \in \ic T_{l_{i_1}}, \ldots, T_{i_k} \in \ic T_{l_{i_k}} \\ I_{T_{i_1}}\supseteq \ldots \supseteq I_{T_{i_k}} }} \vert I_{T_{i_1}} \cap \ldots \cap I_{T_{i_{k}}}  \vert = \sum_{\substack{ T_{i_1} \in \ic T_{l_{i_1}}, \ldots, T_{i_{k-1}} \in \ic T_{l_{i_{k-1}}} \\ I_{T_{i_1}}\supseteq \ldots \supseteq I_{T_{i_{k-1}}} }} \sum_{\substack{ T_{i_k} \in \ic T_{l_{i_k}} \\ I_{T_{i_{k}}} \subseteq I_{T_{i_{k-1}}} }} |I_{T_{i_k}}|.
\]
For the last term, we have, due to Lemma \ref{lemma:dec-lemma}:
\[
\sum_{\substack{ T_{i_k} \in \ic T_{l_{i_k}} \\ I_{T_{i_{k}}} \subseteq I_{T_{i_{k-1}}} }} |I_{T_{i_k}}| \lesssim 2^{2 l_{i_k}} \| f_{i_k} \cdot \ci_{I_{T_{i_{k-1}}}} \|_2^2 \lesssim 2^{2 l_{i_k}} \, \big( \sssize_{I_0} \one_{E_{i_k}} \big) \cdot |I_{T_{i_{k-1}}}|.
\]

We repeat the procedure, summing now over the trees $T_{i_{k-1}} \in \ic T_{l_{i_{k-1}}}$, all of which have spatial supports inside $I_{T_{i_{k-2}}}$. Eventually, we obtain \eqref{eq:est-tops-trees} since all the tiles in $\rr P(I_0)$, and in consequence all the trees, are localized on $I_0$.

Hence, we get 
\begin{align*}
\vert \Lambda_{\rr P \left(I_0 \right)}(f_1, \ldots, f_{n+1}) \vert & \lesssim \sum_{l_1, \ldots, l_{n+1}} 2^{-l_1} \cdot \ldots \cdot 2^{-l_{n+1}} \prod_{1\leq i_1< \ldots < i_k \leq n+1} \big(  \prod_{s=1}^k  2^{2l_{i_s}} \cdot \sssize_{I_0} \one_{E_{i_s}} \cdot \vert I_0  \vert \big)^{\theta_{i_1, \ldots, i_k}}  \\
&\lesssim \sum_{l_1, \ldots, l_{n+1}} 2^{-l_1\left( 1-2\alpha_1 \right)} \cdot \ldots \cdot 2^{-l_{n+1}\left(1-2\alpha_{n+1} \right)} \prod_{j=1}^{n+1}  \big( \sssize_{I_0} \one_{E_j} \big)^{\alpha_j} \cdot \vert I_0 \vert,
\end{align*}
where the $\alpha_j$ are defined by formula \eqref{def:exp-alpha_j}. Recalling that $2^{-l_j} \lesssim \sssize_{I_0} \one_{E_{l_j}}$, we deduce \eqref{eq:local-est-k-thm}. 

\begin{remark}
The condition that $k < \dfrac{n+1}{2}$, which is indispensable in the proof, becomes evident in the estimate above: if we let $l_1=\ldots =l_{n+1}=l$, then $\ds \sum_{l_1, \ldots, l_{n+1}} 2^{-l_1} \cdot \ldots \cdot 2^{-l_{n+1}} \sum_{T \in \ic T_{l_1, \ldots, l_{n+1}}} \vert I_T \vert$ becomes 
\[
\sum_{l} 2^{-l} \cdot \ldots \cdot 2^{-l} \sum_{T \in \ic T_{l_1, \ldots, l_{n+1}}} \vert I_T \vert \lesssim \sum_l 2^{-\left( n+1\right)l} \cdot 2^{2kl} \prod_{j=1}^{n+1}  \big( \sssize_{I_0} \one_{E_j} \big)^{\alpha_j} \cdot \vert I_0 \vert.
\]
This expression can be finite only if $k<(n+1)/2$.
\end{remark}

\section{Proof of Theorem \ref{thm:main-thm}}
\label{sec:proof_MainThm}

We want to prove vector-valued and sparse vector-valued estimates for the operator $T_k$. Moreover, we want a sparse domination for $\ds\big\|  \| T_k(\vec f_1, \ldots, \vec f_n) \|_{L^{R'_{n+1}}(\ii W, \mu)} \|_{L^q(w)}$, where $w \geq 0$ is a locally integrable function. In the analysis, there is a natural division between the Banach and the quasi-Banach case: that is, the case when $1 \leq \big(r^l_{n+1}  \big)' <\infty \, \forall 1 \leq l \leq m$ and that when $\big(r^{l_0}_{n+1}  \big)' <1$ for some $1 \leq l_0 \leq m$. In the former situation, the space $L^{R'_{n+1}}(\ii W, \mu)$ is Banach, it has a dual, and the operator can be studied through the associated $(n+1)$-linear form.

In the quasi-Banach case, our approach relies on methods developed in \cite{quasiBanachHelicoid}: we cannot use the multilinear form directly, but ``dualization" (in a restricted-type sense) through some $L^{r^{j_0}}$ space is still possible. Further, we need to analyze separately the situation when $\ds q \leq r^{j_0}:=\min (1, \min_{1 \leq l \leq m} \big(r^l_{n+1}  \big)' )$ and that when $q>r^{j_0}$. The latter will be deduced from the first one in Proposition \ref{prop:no-subadd}. 

We point out that for the sparse estimate \eqref{eq:sparse-T_k-all-tau} of Theorem \ref{thm:main-thm}, the case $q=1$ is equivalent, in the Banach case, to finding a sparse domination of the multilinear form, and it will be detailed in the next section.   

Also, in order to facilitate the presentation, in this section we simply denote the operator $T_k$ by $T$. Many of the results that we present here generalize to other operators, once the corresponding local estimates are known.

\subsection{The Banach case: $1 \leq R'_{n+1}<\infty$}
\label{sec:Banach-case-T1}
Now we present the proof of Theorem \ref{thm:main-thm}, in the Banach case, when $q=1$. This will be done inductively, and in the end it is the \emph{scalar} local estimate \eqref{eq:local-est-k-thm} that implies both the vector-valued and the (vector-valued) sparse estimate. In this situation, it suffices to study the multilinear form associated to the operator $T_k$.

We consider the tuple $(\alpha_1, \ldots, \alpha_{n+1})$ as in \eqref{def:exp-alpha_j} to be fixed. For any $1 \leq j \leq n+1$, let $R_j=(r_j^1, \ldots, r_j^{m+1})$ be $(m+1)$-tuples satisfying \eqref{eq:cond-Leb-exp-vv-T}. We note that $R_j=(r_j^1, \tilde R_j)$, where $\tilde R_j$ is an $m$-tuple (this will be useful in the inductive proof), and hence, the depth $m+1$-space $L^{R_j}(\ii W, \mu)$ will be denoted $X_j$, while $\tilde X_j$ will stand for the depth-$m$ space $L^{\tilde R_j}( \tilde {\ii W}, \tilde \mu)$ (and $m=0$ corresponds to the scalar case).

Assuming the depth-$m$ localization result
\begin{equation}
\label{eq:local-sizes-m} \tag{loc $m$}
\vert \Lambda_{\rr P \left(I \right)}^m (\vec f_1, \ldots, \vec f_{n+1})   \vert \lesssim \prod_{j=1}^{n+1} \big( \sssize_{I} \one_{E_j}   \big)^{1-\alpha_j-\epsilon} \cdot \vert I \vert,
\end{equation}
that holds for any given interval $I$ and any vector-valued functions $\vec f_1, \ldots \vec f_{n+1}$ satisfying $\| \vec f_j(x, \cdot)  \|_{\tilde X_j} \leq \one_{E_j}$ for all $1 \leq j \leq n+1$ (here $E_1, \ldots, E_{n+1}$ are sets of finite measure), we will prove the following inequalities:
\begin{itemize}
\item[(i)] for any dyadic interval $I_0$ and any vector-valued function $\vec g_1, \ldots \vec g_{n+1}$ satisfying $\| \vec g_j(x, \cdot)  \|_{ X_j} \leq \one_{G_j}$ for all $1 \leq j \leq n+1$, we have a depth-$(m+1)$ localization result:
\begin{equation}
\label{eq:local-sizes-m+1}\tag{loc $m+1$}
\vert \Lambda_{\rr P \left(I_0 \right)}^{m+1} (\vec g_1, \ldots, \vec g_{n+1})   \vert \lesssim \prod_{j=1}^{n+1} \big( \sssize_{I_0} \one_{G_j}   \big)^{1-\alpha_j-\epsilon} \cdot \vert I_0 \vert
\end{equation}
\begin{remark}
The $\epsilon$ in \eqref{eq:local-sizes-m+1} might differ from the $\epsilon$ in \eqref{eq:local-sizes-m}, but in essence it represents a very small loss.
\end{remark}

\item[(ii)] the operator $T$ defined through the $(n+1)$ linear form in \eqref{eq:def-op-T_m} satisfies the depth $m$ vector valued inequality:
\begin{equation} \tag{VV $m$}
\label{eq:VV-depth-m}
\big\| T(\vec f_1, \ldots, \vec f_{n})  \big\|_{L^{p'_{n+1}}( \rr R; \tilde X'_{n+1})} \lesssim \prod_{j=1}^{n} \big\|\vec f_j \big\|_{L^{p_j}( \rr R; \tilde X_{j})}
\end{equation}
where the Lebesgue exponents satisfy \eqref{eq:Holder-tuples-for-multilinear-op}, \eqref{eq:Holder-again} and \eqref{eq:cond-Leb-exp-vv-T}.
\item[(iii)] there exists a sparse collection $\ic S$ depending on $\vec f_1, \ldots, \vec f_{n+1}$ and on $s_1, \ldots, s_{n+1}$, so that
\begin{equation}
\label{eq:SParse-depth-m}\tag{VV sparse $m$}
\big| \Lambda_{\rr P}(\vec f_1, \ldots, \vec f_{n+1}) \big|\lesssim \sum_{Q \in \ic S} \prod_{j=1}^{n+1}\big( \frac{1}{|Q|} \int_{\rr R} \| \vec f_j(x, \cdot) \|_{\tilde X_j}^{s_j} \cdot \ci_{Q}^M dx \big)^{1/{s_j}}  \cdot |Q|,
\end{equation}
given that the Lebesgue exponents satisfy $\frac{1}{s_j}<1-\alpha_j \quad \forall 1 \leq j \leq n+1$.
\end{itemize}

The easiest inequality to prove is \eqref{eq:SParse-depth-m}; using Proposition \ref{prop:averages-restr-gen}, we obtain the equivalent of \eqref{eq:local-sizes-m} for general vector-valued functions, and then we just need to apply the stopping time described in Section \ref{sec:local->sparse}. This inequality also implies \eqref{eq:VV-depth-m} for $1 < p_1, \ldots, p_{n+1} \leq\infty$, just by making use of disjointness of the sets $\ds \lbrace E_Q \rbrace_{Q \in \ic S}$: assuming \eqref{eq:SParse-depth-m}, we have that
\begin{align*}
\big| \Lambda_{\rr P}(\vec f_1, \ldots, \vec f_{n+1}) \big| &\lesssim \sum_{Q \in \ic S} \prod_{j=1}^{n+1} \inf_{y \in Q} \ic M_{s_j}(\| \vec f_j(x, \cdot) \|_{\tilde X_j}) \cdot \vert E_Q \vert \\
&\lesssim \int_{\rr R} \prod_{j=1}^{n+1} \ic M_{s_j}(\| \vec f_j(x, \cdot) \|_{\tilde X_j}) dx \lesssim \prod_{j=1}^{n+1} \big\|\vec f_j \big\|_{L^{p_j}( \rr R; \tilde X_{j})},
\end{align*}
which is true as long as $s_j < p_j$.

We are left with proving $\eqref{eq:local-sizes-m} \Rightarrow \eqref{eq:local-sizes-m+1}$. In the Banach case (that is, when all Lebesgue exponents are contained inside $(1, \infty)$), we present an approach that makes use of a local version of the estimate \eqref{eq:SParse-depth-m}. In \cite{vv_BHT} however, we used a different stopping time, as presented in Section \ref{sec:stopping-time-helicoidal-method}; the implication $\eqref{eq:local-sizes-m} \Rightarrow \eqref{eq:local-sizes-m+1}$ can be proved in a way that does not use the sparseness property explicitly, but here we want to emphasize the connection between \eqref{eq:VV-depth-m} and \eqref{eq:SParse-depth-m}.

Let $G_1, \ldots, G_{n+1}$ be sets of finite measure, $I_0$ a fixed dyadic interval, and $\vec g_1, \ldots, \vec g_{n+1}$ vector-valued functions satisfying $\|  \vec g_j(x, \cdot ) \|_{L^{R_j}(\ii W, \mu)} \leq \one_{G_j}$.

Since we argue by induction, we will denote by $\vec g_{j, w_1}$ the vector-valued function with the $w_1$ component fixed. We note that these functions also satisfy $\vec g_{j, w_1}(x, \tilde w) =\one_{G_j}(x) \cdot \vec g_{j, w_1}(x, \tilde w)$, where $\tilde w= (w_2, \ldots, w_{n+1})$. Then we use a local version of \eqref{eq:SParse-depth-m}, which can easily be verified because our operator is local. Given $\vec g_1, \ldots, \vec g_{n+1}$, $G_1, \ldots, G_{n+1}$ as above, and $I$ a dyadic interval, there exists a collection $\ic S(I)$ of sparse intervals contained inside $I$ so that
{\fontsize{10}{10}\begin{equation}
\label{eq:Sparse-depth-m-local}\tag{sparse loc $m$}
\big| \Lambda_{\rr P(I)}(\vec g_{1, w_1} \cdot \one_{G_1} , \ldots, \vec g_{n+1, w_1} \cdot \one_{G_{n+1}}) \big|\lesssim \prod_{j=1}^{n+1} \big( \sssize_{I} \one_{G_j}  \big)^{\frac{1}{s_j}-\frac{1}{\tau_j}}  \sum_{Q \in \ic S(I)} \prod_{j=1}^{n+1}\big( \frac{1}{|Q|} \int_{\rr R} \| \vec g_{j, w_1}(x, \cdot) \|_{\tilde X_j}^{\tau_j} \cdot \ci_{Q}^M dx \big)^{1/{\tau_j}}  \cdot |Q|,
\end{equation}}
where $\ds \frac{1}{\tau_j} < \frac{1}{s_j}=1-\alpha_j -\epsilon$ for all $1 \leq j \leq n+1$. Here the $\tau_j$, upon which the sparse domination depends (together with the functions $\vec g_{j, w_1}$), are so that $\ds \frac{1}{\tau_j}<\min \big( \frac{1}{s_j}, \frac{1}{r_j^1} \big) < 1-\alpha_j$.

As before, by making use of the sparse property of the collection, we can prove that
\[
\big| \Lambda_{\rr P(I)}(\vec g_{1, w_1} \cdot \one_{G_1}, \ldots, \vec g_{n+1, w_1} \cdot \one_{G_{n+1}}) \big|\lesssim \prod_{j=1}^{n+1} \big( \sssize_{I} \one_{G_j}  \big)^{\frac{1}{s_j}-\frac{1}{\tau_j}} \cdot \prod_{j=1}^{n+1} \| \| \vec g_{j, w_1}(x, \cdot) \|_{\tilde X_j} \cdot \ci_{I}^M \|_{L^{r_j^1}}.
\]

Since we make appear the maximal operator $\ic M_{\tau_j}$, these exponents have to be slightly smaller than $r_j^1$; so in fact the operatorial norm can be written as 
\[
\prod_{j=1}^{n+1} \big( \sssize_{I_0} \one_{G_j}  \big)^{\frac{1}{s_j}-\frac{1}{r_j^1}-\epsilon},
\]
similarly to what we would get by applying directly the helicoidal method.

Now we return to the $(n+1)$-linear form:
\begin{align*}
&\vert \Lambda_{\rr P \left(I_0 \right)}(\vec g_1, \ldots, \vec g_{n+1}) \vert =\vert \int_{\ii W_1} \Lambda_{\rr P \left(I_0 \right)}(\vec g_{1, w_1}\cdot \one_{G_1}, \ldots, \vec g_{n+1, w_1}\cdot \one_{G_{n+1}}   ) d w_1\vert\\
& \lesssim \int_{\ii W_1} \prod_{j=1}^{n+1} \big( \sssize_{I_0} \one_{G_j}  \big)^{\frac{1}{s_j}-\frac{1}{r_j^1}-\epsilon} \cdot \prod_{j=1}^{n+1} \big\|  \| g_{j, w_1} (x, \cdot)  \|_{\tilde X_j} \cdot \ci_{I_0}^M  \big\|_{L^{r^1_j}_x} d w_1.
\end{align*}

By using H\"older's inequality and Fubini, we get that
\begin{align*}
\vert \Lambda_{\rr P \left(I_0 \right)}(\vec g_1, \ldots, \vec g_{n+1}) \vert &\lesssim \prod_{j=1}^{n+1} \big( \sssize_{I_0} \one_{G_j}  \big)^{\frac{1}{s_j}-\frac{1}{r_j^1}-\epsilon} \cdot \prod_{j=1}^{n+1} \big\|  \| g_{j} (x, \cdot)  \|_{ X_j} \cdot \ci_{I_0}^M  \big\|_{L^{r^1_j}_x} \\
& \lesssim \prod_{j=1}^{n+1} \big( \sssize_{I_0} \one_{G_j}  \big)^{\frac{1}{s_j}-\epsilon} \cdot |I_0|,
\end{align*}
which is similar to \eqref{eq:local-sizes-m+1} since $\ds \frac{1}{s_j}=1-\alpha_j -\epsilon$.

\begin{remark}
If $q<1$, the sparse domination estimate \eqref{eq:sparse-T_k-all-tau}, in the Banach case, will be discussed in the following section. We recall that the case $q=1$ corresponds to the study of the $(n+1)$-linear form.
\end{remark}

\subsection{The quasi-Banach case}
\label{sec:quasi-Banach}
We consider separately the case when there exists $1 \leq l_0 \leq m$ so that $r_{n+1}^{l_0} <1$ (in this situation, $r^{j_0}<1$). The difference now is that we cannot obtain the full range of exponents just by considering the $(n+1)$-linear form. Instead, we will use certain inequalities similar to those in \cite{quasiBanachHelicoid}.


We look at the scalar case first, corresponding to $m=0$. The estimates obtained for the multilinear form imply immediately that
\[
\|  T_{I_0}(f_1, \ldots, f_n) \cdot \one_{\tilde E_{n+1}} \|_1 \lesssim \prod_{j=1}^n \big( \sssize_{I_0} \one_{E_j}   \big)^{1-\alpha_j-\epsilon} \cdot \big( \sssize_{I_0} \one_{\tilde E_{n+1}}  \big)^{1-\alpha_{n+1}-\epsilon} \cdot \vert I_0  \vert,
\]
for functions $f_1, \ldots, f_{n+1}$ satisfying $\vert f_j(x) \vert \leq \one_{E_j}(x)$ for all $1 \leq j \leq n$. Here $E_1, \ldots, E_n, \tilde E_{n+1}$ are sets of finite measure.

Under the same assumptions, for any $\tau <1$, we get a ``\emph{better}" estimate:
\[
\|  T_{I_0}(f_1, \ldots, f_n) \cdot \one_{\tilde E_{n+1}} \|_\tau \lesssim \prod_{j=1}^n \big( \sssize_{I_0} \one_{E_j}   \big)^{1-\alpha_j-\epsilon} \cdot \big( \sssize_{I_0} \one_{\tilde E_{n+1}}  \big)^{1-\alpha_{n+1}-\epsilon-\frac{1}{\tau'}} \cdot \vert I_0  \vert^{\frac{1}{\tau}}.
\]

Because of the exponent of $\sssize_{I_0} \, \one_{\tilde E_{n+1}}:$ $1-\alpha_{n+1}-\epsilon-\frac{1}{\tau'} > 1-\alpha_{n+1}-\epsilon$ (here $\tau' <0$), this estimate is an improvement of what we could deduce by using the multilinear form exclusively.

The inductive multiple vector-valued inequalities that we obtain in this situation are the following:
{\fontsize{9}{10}\begin{equation}
\label{eq:vv-local-qB}
\big \|  \big\|  T_{I_0}(\vec f_1 \cdot \one_{E_1}, \ldots, \vec f_n \cdot \one_{E_n}) \|_{L^{R'_{n+1}}}\cdot \one_{\tilde E_{n+1}}  \big\|_{p'_{n+1}} \lesssim \prod_{j=1}^n \big(  \sssize_{I_0} \one_{E_j}  \big)^{1-\alpha_j -\epsilon -\frac{1}{p_j}} \cdot \big(  \sssize_{I_0} \one_{\tilde E_{n+1}}  \big)^{1-\alpha_{n+1} -\epsilon -\frac{1}{p_{n+1}}} \cdot \prod_{j=1}^n \big\|  \|  \vec f_j  \|_{\tilde X_j} \cdot \ci_{I_0}^M \big\|_{p_j}.
\end{equation}}
This applies to the case when $p_1, \ldots, p_{n+1}$ satisfy \eqref{eq:cond-Leb-exp-vv-T}; note that here $p_{n+1}' <1$ and hence its harmonic conjugate $p_{n+1}$ is negative.

When the functions satisfy $\| \vec f_j(x, \cdot)  \|_{\tilde X_j} \leq \one_{E_j}$, this becomes
\begin{equation}
\label{eq:vv-local-qB-restric} \tag{qB : $m$}
\big \|  \big\|  T_{I_0}(\vec f_1, \ldots, \vec f_n ) \|_{L^{R'_{n+1}}}\cdot \one_{\tilde E_{n+1}}  \big\|_{p'_{n+1}} \lesssim \prod_{j=1}^n \big(  \sssize_{I_0} \one_{E_j}  \big)^{1-\alpha_j -\epsilon} \cdot \big(  \sssize_{I_0} \one_{\tilde E_{n+1}}  \big)^{1-\alpha_{n+1} -\epsilon -\frac{1}{p_{n+1}}} \cdot \vert I_0 \vert^{\frac{1}{p'_{n+1}}}.
\end{equation}

Using a stopping time as in section \ref{sec:stopping-time-helicoidal-method} or as in \cite{quasiBanachHelicoid}, we obtain 
\begin{equation}
\label{eq:vv-qB-m}\tag{qB VV $m$}
\big\| T(\vec f_1, \ldots, \vec f_{n})  \big\|_{L^{p_{n+1}'}\left( \rr R; L^{R_{n+1}'} \right)} \lesssim \prod_{j=1}^{n} \big\| \vec f_j \big\|_{L^{p_j}\left( \rr R; L^{R_j} \right)},
\end{equation}
where the Lebesgue exponents $p_1, \ldots, p_{n+1}$ and the $m$-tuples $R_1, \ldots , R_{n+1}$ satisfy \eqref{eq:cond-Leb-exp-vv-T}.

We are left with proving the sparse domination result \eqref{eq:sparse-T_k-all-tau}. The inequality \eqref{eq:vv-local-qB-restric} can be extended to 
\begin{equation}
\label{eq:vv-local-qB-restric-tau}
\big \|  \big\|  T_{I_0}(\vec f_1, \ldots, \vec f_n ) \|_{L^{R'_{n+1}}}\cdot \one_{\tilde E_{n+1}}  \big\|_{\tau} \lesssim \prod_{j=1}^n \big(  \sssize_{I_0} \one_{E_j}  \big)^{1-\alpha_j -\epsilon} \cdot \big(  \sssize_{I_0} \one_{\tilde E_{n+1}}  \big)^{1-\alpha_{n+1} -\frac{1}{\tau'} -\epsilon} \cdot \vert I_0 \vert^{\frac{1}{\tau}},
\end{equation}
for any $\tau<1$. Note that in contrast the Lebesgue exponents $p_1, \ldots, p_{n+1}$ in \eqref{eq:vv-local-qB} have to satisfy \eqref{eq:Holder-again} and \eqref{eq:cond-Leb-exp-vv-T}.

As a consequence of Proposition \ref{prop:local->sparse->subadditive}, provided $\| \cdot \|_{L^{R'}_{n+1}}^\tau$ is \emph{subadditive}, the local estimate from \eqref{eq:vv-local-qB-restric-tau} can be put to use as in Section \ref{sec:local->sparse} (Proposition \ref{prop:averages-restr-gen} extends also to the context of multiple vector-valued Banach or quasi-Banach spaces; the multilinear form is replaced by a quasi-norm in the second situation) to obtain a sparse domination: for any vector-valued functions $\vec f_1, \ldots, \vec f_n$ so that $\big\|  \vec f_1(x, \cdot) \big\|_{ X_1}, \ldots, \big\|  \vec f_n(x, \cdot) \big\|_{ X_n}$ are locally integrable, and any locally $\tau$-integrable function $v$ , there exists a sparse collection $\ic S$ of dyadic intervals (depending on the functions $\vec f_j, v$ and on the Lebesgue exponents $s_j$ below) so that
{\fontsize{10}{10}\begin{equation}
\label{eq:sparse-qB}
\big\|  \big\|T(\vec f_1, \ldots, \vec f_n)\big\|_{L^{R'}_{n+1}} \cdot v \big\|_\tau^\tau \lesssim \sum_{Q \in \ic S} \prod_{j=1}^{n} \big( \frac{1}{\vert Q \vert} \int_{\rr R} \big\|  \vec f_j(x, \cdot) \big\|_{ X_j}^{s_j} \cdot \ci_{Q}^{M} dx  \big)^{\frac{\tau}{s_j}} \cdot \big( \frac{1}{\vert Q \vert} \int_{\rr R} |v(x)|^{s_{n+1}} \cdot \ci_{Q}^{M} dx  \big)^{\frac{\tau}{s_{n+1}}}  \cdot \vert Q \vert
\end{equation}} 
for any $s_j$ so that $\frac{1}{s_j} <1 -\alpha_j$ for all $1 \leq j \leq n$, $\frac{1}{s_{n+1}} <\frac{1}{\tau} -\alpha_{n+1}$.

Subadditivity is achieved if $\ds \tau< r^{j_0}:=\min( 1, \min_{ 1 \leq l \leq m} (r^l)')$ (here we apply directly Proposition \ref{prop:local->sparse->subadditive} since $\|  \cdot \|_{L^{R'_{n+1}}}^{r^{j_0}}$ is subadditive), but it doesn't hold in general. We can overcome this situation by using the following proposition:

\begin{proposition}
\label{prop:no-subadd}
Let $T$ a multilinear operator, and $ X_j=L^{R_j}(\ii W, \mu) \, \forall 1 \leq j \leq n$, $ X_{n+1}=L^{R'_{n+1}}(\ii W, \mu)$ multiple vector-valued spaces as above (that is, the Lebesgue exponents $r_j^l, (r^l)'$ satisfy condition \eqref{eq:cond-Leb-exp-vv-T} for a fixed tuple $(\alpha_1, \ldots, \alpha_{n+1})$). Let $\tau \geq r^{j_0}$. Given functions $\vec f_1, \ldots, \vec f_n$ so that $\big\|  \vec f_1(x, \cdot) \big\|_{ X_1}, \ldots, \big\|  \vec f_n(x, \cdot) \big\|_{ X_n}$ are locally integrable, and $v$ any locally $\tau$-integrable function, there exists a sparse domination collection $\ic S$ of dyadic intervals (depending on the functions $\vec f_j, v$ and on the Lebesgue exponents $s_j, r_j^l, (r^l)'$ below) so that
\begin{equation*}
\big\|  \big\|T(\vec f_1, \ldots, \vec f_n)\big\|_{X_{n+1}} \cdot v \big\|_\tau^\tau \lesssim \sum_{Q \in \ic S} \prod_{j=1}^{n} \big( \frac{1}{\vert Q \vert} \int_{\rr R} \big\|  \vec f_j(x, \cdot) \big\|_{ X_j}^{s_j} \cdot \ci_{Q}^{M} dx  \big)^{\frac{\tau}{s_j}} \cdot \big( \frac{1}{\vert Q \vert} \int_{\rr R} |v(x)|^{\tilde s_{n+1}} \cdot \ci_{Q}^{M} dx  \big)^{\frac{\tau}{\tilde s_{n+1}}}  \cdot \vert Q \vert.
\end{equation*} 
for any $s_j$ so that $\frac{1}{s_j} <1 -\alpha_j$ for all $1 \leq j \leq n$, $\frac{1}{\tilde s_{n+1}} <\frac{1}{\tau} -\alpha_{n+1}$.
\begin{proof}
We lack subadditivity because $\tau > r^{j_0}$; however, this means that we can dualize the $\ds L^\frac{\tau}{r^{j_0}}$ norm. We start with the observation that
\[
\big\|  \big\|T(\vec f_1, \ldots, \vec f_n)\big\|_{X_{n+1}} \cdot v \big\|_\tau^\tau= \big\|  \big\|T(\vec f_1, \ldots, \vec f_n)\big\|_{X_{n+1}}^{r^{j_0}} \cdot v^{r^{j_0}} \big\|_{\frac{\tau}{r^{j_0}}}^\frac{\tau}{r^{j_0}}=\big( \int_{\rr R} \big\|T(\vec f_1, \ldots, \vec f_n)\big\|_{X_{n+1}}^{r^{j_0}} \cdot v^{r^{j_0}} \cdot u \,dx\big)^\frac{\tau}{r^{j_0}},
\]
where $u \in  L^{(\frac{\tau}{r^{j_0}})'}$, with $\ds \| u \|_{(\frac{\tau}{r^{j_0}})'}=1$. Note that the function $u$ depends on the previous data (the function $\vec f_1, \ldots, \vec f_n, v$, the Lebesgue exponents).

Moreover, 
\[
\int_{\rr R} \big\|T(\vec f_1, \ldots, \vec f_n)\big\|_{X_{n+1}}^{r^{j_0}} \cdot v^{r^{j_0}} \cdot u\, dx =\big\| \big\|T(\vec f_1, \ldots, \vec f_n)\big\|_{X_{n+1}} \cdot v \cdot u^\frac{1}{r^{j_0}}   \big\|_{r^{j_0}}^{r^{j_0}},
\]
for which we can apply the sparse estimate for the exponent $r^{j_0}<1$. Then $\| \| \cdot\|_{X_{n+1}}  \|_{r^{j_0}}^{r^{j_0}}=\| \| \cdot\|_{L^{R'_{n+1}}}  \|_{r^{j_0}}^{r^{j_0}}$ is subadditive (by Proposition 7 of \cite{quasiBanachHelicoid}) and we can deduce the sparse domination
{\fontsize{10}{10}\begin{equation*}
\big\|  \big\|T(\vec f_1, \ldots, \vec f_n)\big\|_{X_{n+1}} \cdot v \cdot u^\frac{1}{r^{j_0}} \big\|_{r^{j_0}}^{r^{j_0}} \lesssim \sum_{Q \in \ic S} \prod_{j=1}^{n} \big( \frac{1}{\vert Q \vert} \int_{\rr R} \big\|  \vec f_j(x, \cdot) \big\|_{ X_j}^{s_j} \cdot \ci_{Q}^{M} dx  \big)^{\frac{r^{j_0}}{s_j}} \cdot \big( \frac{1}{\vert Q \vert} \int_{\rr R} |v(x) \cdot u^\frac{1}{r^{j_0}}|^{ s_{n+1}} \cdot \ci_{Q}^{M} dx  \big)^{\frac{r^{j_0}}{ s_{n+1}}}  \cdot \vert Q \vert,
\end{equation*}} 
where, in this case, $s_{n+1}$ satisfies $\ds \frac{1}{s_{n+1}}<\frac{1}{r^{j_0}}-\alpha_{n+1}$.

Now we use H\"older's inequality for the $L^{s_{n+1}}$ average, but we need to be cautious about the Lebesgue exponents: we will have 
\[
\big( \frac{1}{\vert Q \vert} \int_{\rr R} |v(x) \cdot u^\frac{1}{r^{j_0}}|^{ s_{n+1}} \cdot \ci_{Q}^{M} dx  \big)^{\frac{1}{ s_{n+1}}} \leq \big( \frac{1}{\vert Q \vert} \int_{\rr R} |v(x) |^{ p_1} \cdot \ci_{Q}^{M} dx  \big)^{\frac{1}{ p_1}} \cdot \big( \frac{1}{\vert Q \vert} \int_{\rr R} |u^\frac{1}{r^{j_0}}|^{ p_2} \cdot \ci_{Q}^{M} dx  \big)^{\frac{1}{p_2}},
\]
where $\ds \frac{1}{s_{n+1}}=\frac{1}{p_1}+\frac{1}{p_2}$ and $\frac{p_2}{r^{j_0}}<\big(\frac{\tau}{r^{j_0}}\big)'$. The latter condition (the necessity of which will be made clear soon) is equivalent to $\ds \frac{1}{p_2}>\frac{1}{r^{j_0}}-\frac{1}{\tau}$, so we can set  $\ds \frac{1}{p_2}=\frac{1}{r^{j_0}}-\frac{1}{\tau}+\epsilon$, where $\epsilon >0$ is arbitrarily small.

We also want to use H\"older's inequality for the spaces $\ds \ell^{\frac{\tau}{r^{j_0}}}$ and $\ds \ell^{\big(\frac{\tau}{r^{j_0}}\big)'}$, indexed after the sparse collection of dyadic intervals $\ic S$. In this way, we have 
\begin{align*}
& \big\|  \big\|T(\vec f_1, \ldots, \vec f_n)\big\|_{X_{n+1}} \cdot v \cdot u^\frac{1}{r^{j_0}} \big\|_{r^{j_0}}^{r^{j_0}} \\
& \lesssim \Big( \sum_{Q \in \ic S} \prod_{j=1}^{n} \big( \frac{1}{\vert Q \vert} \int_{\rr R} \big\|  \vec f_j(x, \cdot) \big\|_{ X_j}^{s_j} \cdot \ci_{Q}^{M} dx  \big)^{\frac{\tau}{s_j}} \cdot \big( \frac{1}{\vert Q \vert} \int_{\rr R} |v(x)|^{ p_1} \cdot \ci_{Q}^{M} dx  \big)^{\frac{\tau}{ p_1}} |Q| \Big)^{\frac{r^{j_0}}{\tau}} \\
&\qquad \cdot \Big( \sum_{Q \in \ic S} \big( \frac{1}{\vert Q \vert} \int_{\rr R} |u(x)|^{\frac{p_2}{r^{j_0}}} \cdot \ci_{Q}^{M} dx  \big)^{\frac{r^{j_0}}{ p_2} \cdot \big(\frac{\tau}{r^{j_0}} \big)'} |Q| \Big)^{\frac{1}{\big(\frac{\tau}{r^{j_0}}\big)'}}.
\end{align*}

Using the sparseness properties of the collection $\ic S$, the term on the last line can be bounded by
\[
\Big\|  \ic M_{\frac{p_2}{r^{j_0}}} (u) \Big\|_{\big(\frac{\tau}{r^{j_0}}\big)'} \lesssim \| u \|_{\big(\frac{\tau}{r^{j_0}}\big)'}=1,
\]
since $p_2$ was chosen so that $\frac{p_2}{r^{j_0}}<\big(\frac{\tau}{r^{j_0}}\big)'$. We need to observe that our choice of $p_2$ implies that
\[
\frac{1}{p_1}=\frac{1}{s_{n+1}}-\frac{1}{p_2}=\frac{1}{s_{n+1}}-\frac{1}{r^{j_0}}+\frac{1}{\tau}-\epsilon.
\]
Hence, if we set $\ds \frac{1}{\tilde s_{n+1}}=\frac{1}{p_1}=\frac{1}{s_{n+1}}-\frac{1}{r^{j_0}}+\frac{1}{\tau}$, then we have $\ds \frac{1}{\tilde s_{n+1}}<\frac{1}{\tau}-\alpha_{n+1}$, which is what we wanted.

As a matter of fact, we have obtained that 
{\fontsize{10}{10}\[
\big\|  \big\|T(\vec f_1, \ldots, \vec f_n)\big\|_{X_{n+1}} \cdot v \big\|_\tau^\tau \lesssim \sum_{Q \in \ic S} \prod_{j=1}^{n} \big( \frac{1}{\vert Q \vert} \int_{\rr R} \big\|  \vec f_j(x, \cdot) \big\|_{ X_j}^{s_j} \cdot \ci_{Q}^{M} dx  \big)^{\frac{\tau}{s_j}} \cdot \big( \frac{1}{\vert Q \vert} \int_{\rr R} |v(x)|^{\tilde s_{n+1}} \cdot \ci_{Q}^{M} dx  \big)^{\frac{\tau}{ \tilde s_{n+1}}} |Q|. 
\]}
\end{proof}
\end{proposition}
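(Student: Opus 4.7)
The plan is to exploit duality in order to bypass the failure of subadditivity of $\| \cdot \|_{L^{R'_{n+1}}}^{\tau}$ when $\tau \geq r^{j_0}$. Since $\tau / r^{j_0} > 1$, I would begin by writing
\[
\big\|  \|T(\vec f_1, \ldots, \vec f_n)\|_{X_{n+1}} \cdot v \big\|_\tau^\tau = \Big( \int_{\rr R} \|T(\vec f_1, \ldots, \vec f_n)\|_{X_{n+1}}^{r^{j_0}} \cdot v^{r^{j_0}} \cdot u \, dx  \Big)^{\tau/r^{j_0}}
\]
for some nonnegative $u$ with $\| u \|_{(\tau/r^{j_0})'} = 1$, chosen by $L^{\tau/r^{j_0}}$-duality, and then recognize the integral as $\| \|T(\vec f_1,\ldots,\vec f_n)\|_{X_{n+1}} \cdot v \cdot u^{1/r^{j_0}} \|_{r^{j_0}}^{r^{j_0}}$. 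Because $r^{j_0} \leq 1$, the quasi-norm $\|\cdot\|_{L^{R'_{n+1}}}^{r^{j_0}}$ is subadditive, so Proposition \ref{prop:local->sparse->subadditive} applies with this exponent and yields a sparse collection $\ic S$ (depending on $\vec f_1,\ldots,\vec f_n$, $v \cdot u^{1/r^{j_0}}$ and the chosen Lebesgue exponents) such that the integrand is dominated by a sparse sum whose $(n+1)$-th factor is an $L^{s_{n+1}}$ average of $v \cdot u^{1/r^{j_0}}$, for any $s_{n+1}$ satisfying $\frac{1}{s_{n+1}} < \frac{1}{r^{j_0}} - \alpha_{n+1}$.

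Next, on each cube $Q \in \ic S$ I would split the average of $v \cdot u^{1/r^{j_0}}$ via H\"older as
\[
\big( \aver{Q} |v \cdot u^{1/r^{j_0}}|^{s_{n+1}} \big)^{1/s_{n+1}} \leq \big( \aver{Q} |v|^{p_1} \big)^{1/p_1} \cdot \big( \aver{Q} |u^{1/r^{j_0}}|^{p_2} \big)^{1/p_2},
\]
with $\frac{1}{s_{n+1}} = \frac{1}{p_1} + \frac{1}{p_2}$ and $p_2$ chosen just above the threshold $\frac{1}{p_2} = \frac{1}{r^{j_0}} - \frac{1}{\tau} + \epsilon$ so that $\frac{p_2}{r^{j_0}} < \big( \frac{\tau}{r^{j_0}} \big)'$. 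Setting $\frac{1}{\tilde s_{n+1}} := \frac{1}{p_1} = \frac{1}{s_{n+1}} - \frac{1}{p_2}$ gives exactly the required bound $\frac{1}{\tilde s_{n+1}} < \frac{1}{\tau} - \alpha_{n+1}$.

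The last step is to pair off the sparse sum: apply H\"older with exponents $\tau/r^{j_0}$ and $(\tau/r^{j_0})'$ to the sum over $Q \in \ic S$. The factor involving $\vec f_j$ and $v$ reproduces, after raising to $\tau/r^{j_0}$ and then to the $\tau$-th power overall, the desired sparse expression. The factor involving $u$ has the shape $\sum_{Q \in \ic S} \big( \aver{Q} u^{p_2/r^{j_0}} \big)^{(r^{j_0}/p_2)(\tau/r^{j_0})'} |Q|$, and the sparseness of $\ic S$ (equivalently, its Carleson property) bounds its $1/(\tau/r^{j_0})'$-th power by $\| \ic M_{p_2/r^{j_0}} u \|_{(\tau/r^{j_0})'}$, which is finite and $\lesssim 1$ precisely because $p_2/r^{j_0} < (\tau/r^{j_0})'$ and $\| u \|_{(\tau/r^{j_0})'} = 1$. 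Taking suprema over admissible $u$ recovers the $L^\tau$ norm on the left-hand side.

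The main obstacle is a balancing act among several exponents: $p_2$ must be simultaneously (i) large enough that $\frac{p_2}{r^{j_0}} < (\tau/r^{j_0})'$, so that the maximal function is bounded, and (ii) small enough that $\frac{1}{p_1} = \frac{1}{s_{n+1}} - \frac{1}{p_2}$ still satisfies $\frac{1}{p_1} < \frac{1}{\tau} - \alpha_{n+1}$. The identity $\frac{1}{r^{j_0}} - \alpha_{n+1} - \big( \frac{1}{r^{j_0}} - \frac{1}{\tau} \big) = \frac{1}{\tau} - \alpha_{n+1}$ shows that these two constraints match exactly, leaving the loss $\epsilon$ in $p_2$ as the only slack one needs. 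Once this arithmetic is set up, everything else is essentially an application of H\"older and the previously established subadditive sparse form.
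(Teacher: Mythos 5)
Your proposal is correct and follows essentially the same route as the paper's own proof: dualize the $L^{\tau/r^{j_0}}$ norm against a function $u$, apply the subadditive sparse domination at exponent $r^{j_0}$ to $v\cdot u^{1/r^{j_0}}$, split the resulting average by H\"older on each cube with the threshold choice $\frac{1}{p_2}=\frac{1}{r^{j_0}}-\frac{1}{\tau}+\epsilon$, and absorb the $u$-factor via H\"older over the sparse family and the boundedness of $\ic M_{p_2/r^{j_0}}$ on $L^{(\tau/r^{j_0})'}$. The exponent bookkeeping yielding $\frac{1}{\tilde s_{n+1}}<\frac{1}{\tau}-\alpha_{n+1}$ matches the paper exactly.
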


\begin{remark}
We note that a consequence of Proposition \ref{prop:no-subadd} is that the $L^{s_{n+1}}$ average of $v$ in the sparse domination of $\ds \| \|T (\vec f_1, \ldots, \vec f_n)\|_{L^{R'_{n+1}}}\cdot v \|_q^q$, for $q \leq r^{j_0}$, can be replaced by an $L^{\tilde s_{n+1}}$ average in the sparse domination of $\ds \| \|T (\vec f_1, \ldots, \vec f_n)\|_{L^{R'_{n+1}}}\cdot v \|_\tau^\tau$ if $\tau >q$, where $\frac{1}{\tilde s_{n+1}}=\frac{1}{s_{n+1}}-\frac{1}{q}+\frac{1}{\tau}$.
\end{remark}

\subsection{Weighted estimates}
\label{sec:weighted-est}

Using the Fefferman-Stein vector-valued inequality from Corollary \ref{cor:fefferman-stein-general}, we can deduce weighted estimates for the multiple vector-valued extensions of $T_k$. Alternatively, weighted estimates can be deduced directly from the sparse form, which would probably imply better quantitative estimates. Instead, we use the Fefferman-Stein inequality, which is itself deduced from the sparse domination.

We recall the multi(sub)-linear maximal operator $\vec {\ic M}_{s_1, \ldots, s_n}$, defined in \eqref{eq:def-func-max-multilin} by
\[
\vec {\ic M}_{s_1, \ldots, s_n}(f_1, \ldots, f_n)(x):=\sup_{Q \ni x} \prod_{j=1}^n  \big(  \frac{1}{|Q|}  \int_{Q}  |f_j(y)|^{s_j} dy  \big)^\frac{1}{s_j}.
\]
We use the inequality
\[
\big\|  \| T_k( \vec f_1, \ldots, \vec f_n)  \|_{L^{R'_{n+1}}} \big\|_{L^q(w^q)} \lesssim  \big\|  \vec {\ic M}_{s_1, \ldots, s_n} ( \| \vec f_1 (x,\cdot)  \|_{L^{R_1}}, \ldots, \| \vec f_n (x,\cdot)  \|_{L^{R_n}} )  \big\|_{L^q(w^q)},
\]
which holds true for $s_1, \ldots, s_{n+1}, q$ as in Corollary \ref{cor:fefferman-stein-general}, provided $w^q \in RH_{\frac{s_{n+1}}{q}}$.

Weighted estimates for $\vec {\ic M}_{s_1, \ldots, s_n}$ can be obtained, following closely the procedure in \cite{multiple-weights-multilinear-op}. We have the following results:

\begin{proposition}
\label{propo:weighted-est-weak-type-multi-max}
Let $\frac{1}{n}<q \leq \infty$, $1\leq s_j<q_j \leq \infty$, $1 \leq j \leq n$, and $\frac{1}{q}=\frac{1}{q_1}+\ldots + \frac{1}{q_n}$. Let $\nu$ and $w_j$ be weights (here we set $w_j \equiv 1$ if $p_j=\infty$). Then the inequality 
\[
\big \| \vec {\ic M}_{s_1, \ldots, s_n}(f_1, \ldots, f_n) \big\|_{L^{q, \infty}(\nu)} \leq C \prod_{j=1}^n \| f_j \|_{L^{q_j}(w_j)}
\]  
holds for any functions $f_1, \ldots, f_n$ if and only if 
\begin{equation}
\label{eq:cond-weak-weight-multi-max}
\sup_{Q} \big( \aver{Q} \nu \big)^\frac{1}{q} \, \prod_{j=1}^n \Big(  \aver{Q} w_j^{- \frac{s_j}{q_j-s_j}} \Big)^{\frac{1}{s_j}-\frac{1}{q_j}}<+\infty.
\end{equation}
\begin{proof}
For the necessity, we note that
\[
Q \subseteq \lbrace x: \vec {\ic M}_{s_1, \ldots, s_n}(f_1, \ldots, f_n)(x)> \prod_{j=1}^n  \big( \aver{Q} |f_j|^{s_j}\big)^\frac{1}{s_j}    \rbrace,
\]
and hence
\[
\big( \aver{Q} \nu \big)^\frac{1}{q} \, \prod_{j=1}^n \big( \aver{Q} |f_j|^{s_j} dx \big)^\frac{1}{s_j} \leq C \, \big( \aver{Q} |f_j|^{q_j} \, w_j dx \,\big)^\frac{1}{q_j}.
\]
Picking $f_j=w_j^\frac{1}{s_j-q_j}$ so that $|f_j|^{s_j}=|f_j|^{q_j} w_j$, we obtain \eqref{eq:cond-weak-weight-multi-max}, since $Q$ was arbitrary.

We adapt the proof from \cite{multiple-weights-multilinear-op} in order to show sufficiency. A straightforward application of H\"older's inequality yields that 
\[
\vec {\ic M}_{s_1, \ldots, s_n}(f_1, \ldots, f_n)(x) \leq \sup_{Q} \big( \aver{Q} \nu \big)^\frac{1}{q} \, \prod_{j=1}^n \Big(  \aver{Q} w_j^{- \frac{s_j}{q_j-s_j}} \Big)^{\frac{1}{s_j}-\frac{1}{q_j}}  \cdot \prod_{j=1}^n \ic M_{\nu} \Big( |f_j|^{q_j} \frac{w_j}{\nu}  \Big)^\frac{1}{q_j}. 
\]

H\"older's inequality for weak $L^p$ spaces and the boundedness of the weighted maximal operator $\ic M_\nu$ imply the $L^{q_1}(w_1) \times \ldots \times L^{q_n}(w_n) \to L^{q, \infty}(w)$ boundedness of $\vec {\ic M}_{s_1, \ldots, s_n}$. Whenever $q_j=\infty$, the $L^\infty$ norm of $f_j$ is set on the side, and the problem reduces to the study of an $(n-1)$-linear maximal operator.
\end{proof}
\end{proposition}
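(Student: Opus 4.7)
The plan is to follow the strategy of \cite{multiple-weights-multilinear-op} (which treats the unweighted $s_j=1$ case), promoting the Hölder step to accommodate the general $s_j$. The structure will be the standard necessary-and-sufficient dichotomy.

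For necessity, I would observe that for any $x \in Q$ the cube $Q$ competes in the supremum defining $\vec{\ic M}_{s_1,\ldots,s_n}$, so
\[
Q \subseteq \Bigl\{ \vec{\ic M}_{s_1,\ldots,s_n}(f_1,\ldots,f_n)>\tfrac12\prod_{j}\bigl(\aver{Q}|f_j|^{s_j}\bigr)^{1/s_j}\Bigr\}.
\]
Applying the hypothesized weak inequality then yields $\nu(Q)^{1/q}\prod_j(\aver{Q}|f_j|^{s_j})^{1/s_j} \lesssim \prod_j \|f_j\|_{L^{q_j}(w_j)}$. Testing with $f_j = w_j^{-1/(q_j-s_j)}\mathbf{1}_Q$ when $q_j<\infty$ (and $f_j=\mathbf{1}_Q$ when $q_j=\infty$) arranges the pointwise identity $|f_j|^{s_j}=|f_j|^{q_j}w_j$ on $Q$, so both sides can be written in terms of $\aver{Q}w_j^{-s_j/(q_j-s_j)}$ and $\aver{Q}\nu$; cancelling and taking sup over $Q$ yields \eqref{eq:cond-weak-weight-multi-max}.

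For sufficiency, the key is a pointwise linearization. Writing $|f_j|^{s_j}=\bigl(|f_j|^{s_j}w_j^{s_j/q_j}\bigr)\cdot w_j^{-s_j/q_j}$ and applying Hölder on $Q$ with exponents $q_j/s_j$ and its conjugate $q_j/(q_j-s_j)$ gives
\[
\bigl(\aver{Q}|f_j|^{s_j}\bigr)^{1/s_j} \leq \bigl(\aver{Q}|f_j|^{q_j}w_j\bigr)^{1/q_j}\bigl(\aver{Q}w_j^{-s_j/(q_j-s_j)}\bigr)^{1/s_j-1/q_j}.
\]
Rewriting $|f_j|^{q_j}w_j\,dx = \bigl(|f_j|^{q_j}w_j/\nu\bigr)\nu\,dx$ and factoring out $\aver{Q}\nu$ bounds the first factor by $(\aver{Q}\nu)^{1/q_j}\ic M_\nu(|f_j|^{q_j}w_j/\nu)(x)^{1/q_j}$ for $x\in Q$, where $\ic M_\nu$ is the weighted Hardy–Littlewood maximal function. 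Taking the product over $j$, using $\sum_j 1/q_j=1/q$, and invoking \eqref{eq:cond-weak-weight-multi-max} to absorb the geometric factors yields the pointwise majorization
\[
\vec{\ic M}_{s_1,\ldots,s_n}(f_1,\ldots,f_n)(x) \lesssim \prod_{j=1}^n \ic M_\nu(|f_j|^{q_j}w_j/\nu)(x)^{1/q_j}.
\]
Since $\ic M_\nu$ is of weak type $(1,1)$ with respect to $\nu$ with a constant independent of $\nu$ (a direct Vitali-covering argument using $\nu$-measure), each factor lies in $L^{q_j,\infty}(\nu)$ with norm $\lesssim\|f_j\|_{L^{q_j}(w_j)}$. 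The generalized Hölder inequality for weak Lebesgue spaces then closes the estimate. Indices with $q_j=\infty$ present no obstacle: $w_j\equiv 1$, the $L^\infty$ norm of $f_j$ factors out of the definition of $\vec{\ic M}$, and the argument reduces to a maximal operator of lower multilinearity.

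The main obstacle I anticipate is the last step, namely the generalized Hölder inequality $\|\prod_j g_j\|_{L^{q,\infty}(\nu)}\lesssim \prod_j\|g_j\|_{L^{q_j,\infty}(\nu)}$ with $\tfrac1q=\sum_j\tfrac1{q_j}$. This is a classical fact (see, e.g., Grafakos's book) but requires care when $q<1$, which is exactly the interesting regime here since $q>1/n$ is permitted. The second delicate point is verifying that the operator norm of $\ic M_\nu: L^1(\nu)\to L^{1,\infty}(\nu)$ is universal in $\nu$, which is essential to keep the final constant dimensional-only; this is standard provided the underlying covering lemma is applied to $\nu$ rather than Lebesgue measure.
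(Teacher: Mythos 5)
Your proposal is correct and follows essentially the same route as the paper: the same test functions $f_j$ with $|f_j|^{s_j}=|f_j|^{q_j}w_j$ for necessity, and for sufficiency the same Hölder-based pointwise domination by $\prod_j \ic M_\nu\bigl(|f_j|^{q_j}w_j/\nu\bigr)^{1/q_j}$ followed by weak-type Hölder and the $\nu$-uniform weak $(1,1)$ bound for $\ic M_\nu$. The only differences are cosmetic refinements (localizing the test functions to $Q$, flagging the $q<1$ case of weak-space Hölder), which the paper leaves implicit.
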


\begin{remark}
Since our study of the $T_k$ operators is focusing on $L^{p_1}\times \ldots \times L^{p_n} \mapsto L^{p'_{n+1}}$ for $1<p_1, \ldots, p_n \leq \infty$, we only consider the boundedness of $\vec {\ic M}_{s_1, \ldots, s_n}$ operators on $L^{q_1} \times \ldots \times L^{q_n}$ for $q_j>s_j$: for simplicity, we leave out the case $q_j=s_j$.
\end{remark}

However, the operators $T_k$ and their vector-valued extensions are controlled by $\vec {\ic M}_{s_1, \ldots, s_n}$ in $L^q$ norms and we need a strong-type version of the above result. We continue on following \cite{multiple-weights-multilinear-op} and adapt the definitions therein:
\begin{definition}
Let $1 \leq s_j < q_j \leq \infty$. Given $\vec w =(w_1, \ldots, w_n)$, we set $\ds \nu_{\vec w}:=\prod_{j=1}^n w_j^\frac{q}{q_j}$. We say that $\vec w$ satisfies the $A_{\vec q, \vec s}$ condition if  
\begin{equation}
\label{eq:def-vec-Aps-condition}
\sup_{Q} \big( \aver{Q} \nu_{\vec w} \big)^\frac{1}{q} \, \prod_{j=1}^n \Big(  \aver{Q} w_j^{- \frac{s_j}{q_j-s_j}} \Big)^{\frac{1}{s_j}-\frac{1}{q_j}}<+\infty.
\end{equation}
We denote the quantity in \eqref{eq:def-vec-Aps-condition} $[w_1, \ldots, w_n]_{A_{\vec q, \vec s}}$.
\end{definition} 

\begin{notation}
Above and everywhere else, whenever $q_j=\infty$, it should be understood that $w_j \equiv 1$.
\end{notation}

Then we have the strong-type boundedness of $ \vec {\ic M}_{s_1, \ldots, s_n}$:
\begin{proposition}
\label{prop-strong-est-weighted-multi-max}
Let $1\leq s_j<q_j \leq \infty$, $1 \leq j \leq n$, and $\frac{1}{q}=\frac{1}{q_1}+\ldots + \frac{1}{q_n}$. Let $\vec w=(w_1, \ldots, w_n)$ be a vector weight. Then the inequality 
\[
\big \| \vec {\ic M}_{s_1, \ldots, s_n}(f_1, \ldots, f_n) \big\|_{L^{q}(\nu_{\vec w})} \leq C \prod_{j=1}^n \| f_j \|_{L^{q_j}(w_j)}
\]  
holds for any functions $f_1, \ldots, f_n$ if and only if $\vec w$ satisfies the $A_{\vec q, \vec s}$ condition.
\end{proposition}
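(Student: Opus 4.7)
The plan is to follow the template of Lerner--Ombrosi--P\'erez--Torres--Trujillo-Gonz\'alez \cite{multiple-weights-multilinear-op} for the classical multilinear maximal operator, adapting it to accommodate the non-uniform exponents $s_1, \ldots, s_n$. Necessity of \eqref{eq:def-vec-Aps-condition} will be immediate from the already-proved weak-type characterization in Proposition \ref{propo:weighted-est-weak-type-multi-max}: a strong-type bound trivially implies its weak-type counterpart, and the testing choice $f_j = w_j^{-1/(q_j-s_j)} \chi_Q$ used in that proof yields \eqref{eq:def-vec-Aps-condition} at once (with the minor change that $\nu$ is now the concrete weight $\nu_{\vec w} = \prod_j w_j^{q/q_j}$ rather than a free parameter).

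For sufficiency, my starting point is the pointwise bound derived in the proof of Proposition \ref{propo:weighted-est-weak-type-multi-max},
\[
\vec{\ic M}_{s_1,\ldots,s_n}(f_1,\ldots,f_n)(x) \lesssim [\vec w]_{A_{\vec q,\vec s}} \prod_{j=1}^n \ic M_{\nu_{\vec w}}\bigl( |f_j|^{q_j} w_j/\nu_{\vec w} \bigr)(x)^{1/q_j}.
\]
Raising to the $q$-th power, integrating against $\nu_{\vec w}$, and attempting H\"older with exponents $(q_j/q)_{j=1}^n$ (which are indeed conjugate since $\sum q/q_j = 1$) reduces matters to the $L^1(\nu_{\vec w})$-boundedness of $\ic M_{\nu_{\vec w}}$, which fails at this endpoint. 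The remedy, and the heart of the proof, is to establish openness of the $A_{\vec q, \vec s}$ class: I would show that \eqref{eq:def-vec-Aps-condition} forces $\nu_{\vec w}$ and each $\sigma_j := w_j^{-s_j/(q_j-s_j)}$ to lie in $A_\infty$ (by extracting one weight at a time through H\"older and bounding the rest with the finite $A_{\vec q, \vec s}$ characteristic), so that each admits a reverse H\"older inequality with some exponent $1+\delta$. This self-improvement will let me replace $s_j$ by slightly smaller $s_j^\ast \in (s_j-\eta, s_j)$ and $q_j$ by slightly larger $q_j^\ast$, while still satisfying the corresponding $A_{\vec q^\ast, \vec s^\ast}$ condition, thereby creating strict slack in the subsequent H\"older step.

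Once this openness is in place, I foresee two equivalent finishes. The first is to re-run the pointwise bound at $(\vec q^\ast, \vec s^\ast)$, so that the H\"older exponents $p_j = q_j^\ast/q^\ast$ now strictly exceed $q_j/q$; this places each factor inside an $L^r(\nu_{\vec w})$ norm with $r = q p_j/q_j > 1$, a range in which $\ic M_{\nu_{\vec w}}$ is strong-type, and unwinding the identities $|f_j|^{q_j} w_j/\nu_{\vec w}$ then recovers $\prod_j \|f_j\|_{L^{q_j}(w_j)}$. The second is to apply multilinear Marcinkiewicz interpolation directly to the weak-type bound of Proposition \ref{propo:weighted-est-weak-type-multi-max} at two perturbed tuples $(\vec q^{(0)}, \vec s^{(0)})$ and $(\vec q^{(1)}, \vec s^{(1)})$ flanking $(\vec q, \vec s)$, yielding strong-type at $(\vec q, \vec s)$ itself. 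The main obstacle is the openness/reverse H\"older step; for the uniform case $s_j \equiv 1$ this is exactly the $A_{\vec P}$ characterization of \cite{multiple-weights-multilinear-op}, and I expect the same machinery to adapt to general $s_j$ with only bookkeeping modifications to the exponents in the testing weights $\sigma_j$.
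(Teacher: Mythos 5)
Your proposal matches the paper's (sketched) proof: necessity is read off from the weak-type testing of Proposition \ref{propo:weighted-est-weak-type-multi-max}, and sufficiency rests on a reverse H\"older self-improvement of the weights (the paper's Lemma \ref{lemma:RH-vector-weights} is exactly your ``extract one weight at a time'' step, and the paper cites Theorem 3.6 of \cite{multiple-weights-multilinear-op}), which upgrades the pointwise domination to $\vec{\ic M}_{s_1,\ldots,s_n}(\vec f)(x)\lesssim \prod_j \ic M_{\nu_{\vec w}}\big((|f_j|^{q_j}w_j/\nu_{\vec w})^{\alpha}\big)^{1/(q_j\alpha)}$ with $\alpha<1$, so that H\"older lands $\ic M_{\nu_{\vec w}}$ in its strong-type range $L^{1/\alpha}(\nu_{\vec w})$. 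Your ``perturb $(\vec q,\vec s)$'' formulation is the same slack as the paper's ``$\alpha$ inside the maximal function''; only your alternative interpolation finish would need extra care, since the natural target weight $\nu$ at the flanking tuples differs from $\nu_{\vec w}$.
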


The proof of the above Proposition (the details of which are left to the interested reader) is based on a \emph{reverse H\"older} property of each of the weights making up the vector weight (see Theorem 3.6 in \cite{multiple-weights-multilinear-op} and Lemma 3.2 in \cite{martell-kangwei-mulilinear-weights-extrapolation}). This would yield, for some $\alpha<1$
\[
\vec {\ic M}_{s_1, \ldots, s_n}(f_1, \ldots, f_n)(x) \leq  \, C  \cdot \prod_{j=1}^n \ic M_{\nu_{\vec w}} \Big( \big(|f_j|^{q_j} \frac{w_j}{\nu_{\vec w}}\big)^{\alpha}  \Big)^\frac{1}{q_j \alpha},
\]
and such an inequality, together with the strong boundedness of the weighted (sub-)linear maximal function $\ic M_{\nu_{\vec w}}$ implies the result in Proposition \ref{prop-strong-est-weighted-multi-max}.

In fact, for proving the boundedness of $\vec {\ic M}_{s_1, \ldots, s_n}$ we need a simpler version than Lemma \ref{lemma:RH-vector-weights} below. However, we will see that the vector weight condition \eqref{eq:def-vec-Aps-condition} pairs with a certain reverse H\"older condition to yield the assumption \eqref{eq:joint-weight-condition} of Corollary \ref{cor:main-weights-vector-weights}. Hence the required result is similar to Lemma 3.2 from \cite{martell-kangwei-mulilinear-weights-extrapolation}.

\begin{lemma}
\label{lemma:RH-vector-weights}
 Let $\vec w=(w_1, \ldots, w_n)$ be a vector weight as above. Then 
\begin{equation}
\label{eq:vec-weight-alpha}
\sup_{Q} \big( \aver{Q} \nu_{\vec w}^\frac{\beta}{q} \big)^\frac{1}{\beta} \, \prod_{j=1}^n \big(  \aver{Q} w_j^{-  \frac{\beta_j}{q_j}} \big)^{\frac{1}{\beta_j}} <\infty
\end{equation}
if and only if $\nu_{\vec w}^{\frac{\beta}{q}} \in A_{t}$ with $\ds t=1+\beta \big( \sum_{i=1}^n \frac{1}{\beta_i} \big) $ and for every $1 \leq j \leq n$, $w_j^{-  \frac{ \beta_j}{q_j} } \in A_{t_j}$ where $\ds t_j=1+\beta_j \big(\frac{1}{\beta} +\sum_{i \neq j} \frac{1}{\beta_i} \big)$. Moreover, if we denote by $C$ the expression in \eqref{eq:vec-weight-alpha}, we have $[\nu_{\vec w}]_{A_t} \leq C^\beta$ and $\big[w_j^{-  \frac{ \beta_j}{q_j} }\big]_{A_{t_j}} \leq C^{\beta_j}$.
\begin{proof}
We only prove the direct implication, the reverse being an easy consequence of H\"older's inequality. Fix $1 \leq j \leq n$. We want to prove that 
\[
\aver{Q} w_j^{- \frac{\beta_j}{q_j}} \cdot \big( \aver{Q} w_j^{ \frac{ \beta_j}{q_j} ( t_j'-1 )} \big)^{\left( t_j-1 \right)} \leq C^{\beta_j}.
\]

First, note that $\ds w_j^{  \frac{ \beta_j}{q_j} (t_j'-1)}=\prod_{i=1}^n  w_i^{ \frac{ \beta_i}{q_i}\, (t_i'-1)}  \cdot \prod_{\substack{i=1\\ i \neq j}}^n w_i^{- \frac{ \beta_i}{q_i}\, (t_i'-1)}$, for numbers $t_i'$ that are to be determined if $i \neq j$. We will also need certain Lebesgue indices $r_1, \ldots, r_n$ satisfying the H\"older condition $\frac{1}{r_1}+\ldots+\frac{1}{r_n}=1$. With these, we have
\[
\aver{Q} w_j^{ \frac{ \beta_j}{q_j} (t_j'-1)} dx \leq \Big( \aver{Q} \prod_{i=1}^n w_i^{ \frac{ \beta_i}{q_i}\, r_j \, (t_i'-1)} \Big)^{\frac{1}{r_j}} \cdot \prod_{\substack{i=1 \\ i \neq j}}^n \Big( \aver{Q} w_i^{-  \frac{ \beta_i}{q_i}\, r_i \, (t_i'-1)}  \Big)^\frac{1}{r_i}.
\]
Now we want to choose the Lebesgue exponents so that 
\[
\frac{ \beta_i}{q_i}\, r_j \, (t_i'-1) =\frac{q}{q_i} \cdot \frac{\beta}{q} \quad \text{for all }\quad 1\leq i \leq n, \qquad \frac{ \beta_i}{q_i}\, r_i \, (t_i'-1)=\frac{ \beta_i}{q_i} \quad \text{for all }\quad 1\leq i \leq n, i \neq j.
\]
Such a choice is possible provided $\ds r_i= r_j \frac{\beta_i}{\beta}$ for $i \neq j$ and $ r_j =\frac{\beta \,(t_j-1)}{\beta_j}$. Moreover, the condition $\ds \sum_{i=1}^n \frac{1}{r_i}=1$ determines $t_j$ uniquely: it is given by $\ds t_j=1+\beta_j \big(\frac{1}{\beta} +\sum_{i \neq j} \frac{1}{\beta_i} \big)$. 
In order to prove the desired estimate  $\big[w_j^{-  \frac{ \beta_j}{q_j} }\big]_{A_{t_j}} \leq C^{\beta_j}$, we only need to check that $\dfrac{t_j-1}{r_j}=\dfrac{\beta_j}{\beta}$ and $\dfrac{t_j-1}{r_i}=\dfrac{\beta_j}{\beta_i}$, which are immediate consequences of the above choices. The estimate for $\nu_{\vec w}^{\frac{\beta}{q}}$ is very similar and we skip the details.
\end{proof}
\end{lemma}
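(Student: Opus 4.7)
The plan is to prove the equivalence in both directions, focusing the effort on the forward implication (joint condition $\Rightarrow$ individual $A_p$ conditions), since the reverse amounts to a direct Hölder assembly. I would start with the forward direction.

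Fix $1 \leq j \leq n$ and examine the $A_{t_j}$ characteristic of $w_j^{-\beta_j/q_j}$:
\[
[w_j^{-\beta_j/q_j}]_{A_{t_j}} = \aver{Q} w_j^{-\beta_j/q_j} \cdot \Big( \aver{Q} w_j^{\beta_j(t_j'-1)/q_j} \Big)^{t_j - 1}.
\]
The first factor already appears in the joint hypothesis, so the work lies in bounding the second. For this I would invoke the trivial product identity
\[
w_j^{\beta_j(t_j'-1)/q_j} = \prod_{i=1}^n w_i^{\beta_i(t_i'-1)/q_i} \cdot \prod_{i \neq j} w_i^{-\beta_i(t_i'-1)/q_i},
\]
valid for any choice of auxiliary parameters $t_i'$ with $i \neq j$. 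Applying Hölder's inequality with exponents $r_1, \ldots, r_n$ summing to $1$ splits this average into $n$ factors; I then match coefficients so that the first product collapses into $\nu_{\vec w}^{\beta/q}$ via the definition $\nu_{\vec w} = \prod_i w_i^{q/q_i}$, and the remaining factors align with $w_i^{-\beta_i/q_i}$ from the hypothesis. The matching conditions $\beta_i r_j (t_i'-1) = \beta$ for all $i$ and $\beta_i r_i (t_i'-1) = \beta_i$ for $i \neq j$, combined with the normalization $\sum r_i^{-1} = 1$, form a linear system with a unique solution that forces exactly $t_j - 1 = \beta_j(\beta^{-1} + \sum_{i \neq j} \beta_i^{-1})$. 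Assembling the bounds gives $[w_j^{-\beta_j/q_j}]_{A_{t_j}} \leq C^{\beta_j}$.

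The parallel argument for $\nu_{\vec w}^{\beta/q} \in A_t$ is actually simpler: $\nu_{\vec w}^{-\beta(t'-1)/q}$ factors directly as $\prod_i w_i^{-\beta(t'-1)/q_i}$ from the definition of $\nu_{\vec w}$, so Hölder with exponents $r_i$ chosen to satisfy $\beta r_i (t'-1) = \beta_i$ matches the hypothesis, and $\sum r_i^{-1} = 1$ enforces $t - 1 = \beta \sum_i \beta_i^{-1}$, yielding $[\nu_{\vec w}^{\beta/q}]_{A_t} \leq C^\beta$. For the backward direction I would run the manipulations in reverse: starting from the individual $A_t$ and $A_{t_j}$ bounds, the same Hölder exponents reassemble into the joint quantity. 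The main obstacle throughout is purely algebraic bookkeeping — verifying that the chosen $r_i$ and $t_i'$ are simultaneously compatible with the coefficient-matching conditions, with $\sum r_i^{-1} = 1$, and with the prescribed formulas for $t$ and $t_j$. No analytic ingredient beyond Hölder's inequality and the definition of $\nu_{\vec w}$ enters the proof.
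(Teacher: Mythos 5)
Your proposal follows essentially the same route as the paper's proof: the same product identity for $w_j^{\beta_j(t_j'-1)/q_j}$, the same Hölder splitting with exponents $r_i$, and the same coefficient-matching system forcing $t_j-1=\beta_j(\beta^{-1}+\sum_{i\neq j}\beta_i^{-1})$. The argument is correct; your explicit sketch of the $\nu_{\vec w}^{\beta/q}$ case merely fills in a step the paper declares ``very similar'' and omits.
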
 

Now we are ready to deduce the weighted estimates for $T_k$.

\begin{proof}[Proof of Corollary \ref{cor:main-weights-vector-weights}]
The exponents $s_1, \ldots, s_n, s_{n+1}$ satisfy, for $\alpha_1, \ldots, \alpha_{n+1}$ given by \eqref{def:exp-alpha_j}
\[
\frac{1}{s_j}<1-\alpha_j \quad \text{for all }\quad 1 \leq j \leq n, \qquad \frac{1}{s_{n+1}}<\frac{1}{q}-\alpha_{n+1}.
\]
The Fefferman-Stein inequality from Corollary \ref{cor:fefferman-stein-general} implies, under the assumption that $w^q \in RH_{\frac{s_{n+1}}{q}}$, that 
\[
\big\|  \| T_k( \vec f_1, \ldots, \vec f_n)  \|_{L^{R'_{n+1}}} \big\|_{L^q(w^q)} \lesssim  \big\|  \vec {\ic M}_{s_1, \ldots, s_n} ( \| \vec f_1 (x,\cdot)  \|_{L^{R_1}}, \ldots, \| \vec f_n (x,\cdot)  \|_{L^{R_n}} )  \big\|_{L^q(w^q)},
\]
while Proposition \ref{prop-strong-est-weighted-multi-max} implies that 
\[
\big \| \vec {\ic M}_{s_1, \ldots, s_n}(f_1, \ldots, f_n) \big\|_{L^{q}(\nu_{\vec w})} \leq C \prod_{j=1}^n \| f_j \|_{L^{q_j}(w_j^{q_j})},
\]  
if the $A_{\vec q, \vec s}$ condition is satisfied for the vector weight $\vec w=(w_1^{q_1}, \ldots, w_n^{q_n})$.

Hence we need to check that \eqref{eq:joint-weight-condition} implies that
\begin{equation}
\label{eq:two-cond-implied-by-joint}
w^q \in RH_{\frac{s_{n+1}}{q}} \quad \text{and} \quad \vec w =(w_1^{q_1}, \ldots, w_n^{q_n}) \in A_{\vec q, \vec s}.
\end{equation}

We note that in this case $\ds \nu_{\vec w}=(w_1 \cdot \ldots \cdot w_n)^q=w^q$. Our main tool is Lemma \ref{lemma:RH-vector-weights}: we apply it with $\beta=s_{n+1}$ and $\beta_j=\frac{q_j \, s_j}{q_j - s_j}$ to the joint vector weight condition \eqref{eq:joint-weight-condition}
\begin{equation*}
\sup_{Q} \big( \aver{Q} w^{s_{n+1}} \big)^\frac{1}{s_{n+1}} \, \prod_{j=1}^n \Big(  \aver{Q} w_j^{- \frac{1}{\frac{1}{s_j}-\frac{1}{q_j}}} \Big)^{\frac{1}{s_j}-\frac{1}{q_j}} <\infty.
\end{equation*}

We obtain that 
\begin{equation}
\label{eq:implication-vector-weight-RH}
(w^q)^\frac{s_{n+1}}{q} \in A_{1+\frac{s_{n+1}}{q} (  \sum\limits_{i=1}^n \frac{q}{\beta_i} ) }, \quad w_j^{-\beta_j} \in A_{t_j}=A_{1+\beta_j ( \frac{1}{s_{n+1}}+\sum\limits_{i \neq j} \frac{1}{\beta_i} )}.
\end{equation}

But the condition $\ds (w^q)^\frac{s_{n+1}}{q} \in A_{1+\frac{s_{n+1}}{q} (  \sum\limits_{i=1}^n \frac{q}{\beta_i} ) }$ is equivalent to $w^q \in RH_{\frac{s_{n+1}}{q}} \cap A_{1+q (\sum\limits_{i=1}^n \frac{1}{\beta_i})}$ (here we use that $v \in A_p \cap RH_s \Leftrightarrow v^s \in A_{s(p-1)+1}$). Hence we obtain 
\begin{equation}
\label{eq:RH-cond-for-w^q}
w^q \in RH_{\frac{s_{n+1}}{q}} \quad \text{and} \quad w^q \in A_{1+q (\sum\limits_{i=1}^n \frac{1}{\beta_i})}.
\end{equation}

We are left with checking that $\vec w \in A_{\vec q, \vec s}$, and for that we use again Lemma \ref{lemma:RH-vector-weights}, but in the reverse direction. In this case $\tilde \beta=q$, $\tilde \beta_j=\beta_j=\frac{q_j \, s_j}{q_j - s_j}$, and
\[
\tilde t= 1+ \tilde \beta (\sum\limits_{i=1}^n \frac{1}{\tilde \beta_i} )=1+ q (\sum\limits_{i=1}^n \frac{1}{ \beta_i} ), \qquad \tilde t_j= 1+ \tilde \beta_j (\frac{1}{\tilde \beta}+ \sum_{i \neq j} \frac{1}{\tilde \beta_i})= 1+ \beta_j (\frac{1}{q}+ \sum_{i \neq j} \frac{1}{\beta_i}).
\]

Lemma \ref{lemma:RH-vector-weights} implies that $\vec w \in A_{\vec q, \vec s}$ provided 
\[
\nu_{\vec w}=w^q \in A_{\tilde t}=A_{1+ q (\sum\limits_{i=1}^n \frac{1}{ \beta_i} )}, \qquad w^{-\beta_j} \in A_{\tilde t_j}.
\]
The first condition was deduced in \eqref{eq:RH-cond-for-w^q}, while the second one follows from \eqref{eq:implication-vector-weight-RH} since $\ds \tilde t_j >t_j$ (due to the fact that $\frac{1}{s_{n+1}}< \frac{1}{q}-\alpha_{n+1} <\frac{1}{q}$) and $\ds A_{t_j} \subset A_{\tilde t_j}$. Therefore the multi(sub-)linear operator does satisfy the weighted estimates we wanted.

\end{proof}

We remark that weighted estimates for vector weights satisfying \eqref{eq:joint-weight-condition} can also be deduced directly from the sparse domination.

\subsection{The case $k=0$} \label{sec:k=0} Of particular importance is the case $k=0$. In this situation, the operator $T$ of Theorem \ref{thm:main-thm} is a multilinear Fourier multiplier, as introduced by Coifman and Meyer \cite{CoifMeyer-ondelettes}. Weighted estimates for multilinear Calder\'on-Zygmund operators were achieved in \cite{Nazarov-Lerner-DyadicCalculus}, by proving a pointwise sparse domination, which allowed the authors to treat directly the quasi-Banach case as well.

We note that a \emph{rank-$0$} collection of tiles is one which has \emph{$0$ degrees of freedom} in frequency; that means, the tiles are completely characterized by their spatial interval. For that reason, we can index the tiles after a collection $\ii I$ of dyadic intervals.

In the scalar case, that is, when $m=0, q=1$, we recover the sparse estimates of \cite{Nazarov-Lerner-DyadicCalculus} by examining the $(n+1)$-linear form. In the Banach case, the localization result can be obtained directly for general functions, with $L^1$ sizes (here we have $\alpha_j=0$):
\begin{equation}
\label{eq:local-sizes-m-k=0} \tag{loc $m$}
\vert \Lambda_{\rr P \left(I_0 \right)} ( f_1, \ldots, f_{n+1})   \vert \lesssim \prod_{j=1}^{n+1} \big( \sssize_{I_0} |f_j | \big) \cdot \vert I_0 \vert.
\end{equation}

To see this, it is enough to observe in the proof of Theorem \ref{thm:localization-rank-k} that Lemma \ref{lemma:dec-lemma} can be replaced by the following decomposition algorithm:
\begin{lemma}
\label{lemma:dec-lemma-k=0}
If $\rr P$ is a rank-$0$ collection of tiles with $\ssize_{\rr P}\big(\langle f, \phi_P \rangle \big) \leq \lambda$, then there exists a decomposition 
$\rr P =\rr P' \cup \rr P''$ so that $\ssize_{\rr P'}\big(\langle f, \phi_P \rangle \big) \leq \frac{\lambda}{2}$ and $\rr P''$ is a union $\ds \rr T=\bigcup_{T \in \rr T} T$ of disjoint trees so that 
\[
\sum_{T \in \rr T} \vert I_T \vert \lesssim \lambda^{-1} \|  f \|_1.
\]
\begin{proof}
The proof is basically contained in Lemma 2.10 and Lemma 2.14 of \cite{multilinear_harmonic}.
\end{proof}
\end{lemma}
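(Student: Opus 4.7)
The plan is to exploit the fact that in a rank-$0$ collection, each tile $P$ is determined solely by its spatial dyadic interval $I_P$, with the frequency components carrying zero degrees of freedom. In particular a tree with top $I_T$ is simply $\{P \in \rr P : I_P \subseteq I_T\}$, and the key structural simplification over the rank-$k$ case with $k \geq 1$ is that the size
\[
\ssize_{\rr P}\big(\langle f, \phi_P\rangle\big)
\]
is equivalent, for such a collection, to the maximal $L^1$ average $\sup_{P \in \rr P} \frac{1}{|I_P|}\int_\rr R |f(x)|\ci_{I_P}^M dx$, rather than being merely dominated by an $L^2$-type square function. This equivalence is the content of standard wave-packet lemmas (see e.g.\ Lemma~2.10 of~\cite{multilinear_harmonic}) and is ultimately what allows us to replace the $L^2$ tree-top bound $\lambda^{-2}\|f\|_2^2$ of Lemma~\ref{lemma:dec-lemma} by the $L^1$ bound $\lambda^{-1}\|f\|_1$ sought here.

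With this reduction in hand, I would perform a Calder\'on--Zygmund stopping time directly at the level of spatial intervals. Namely, select $\rr T$ as the family of maximal dyadic intervals $I_T$ with the property that there exists $P \in \rr P$, $I_P \subseteq I_T$, for which
\[
\frac{1}{|I_T|}\int_\rr R |f(x)|\ci_{I_T}^M dx > \lambda/C_0,
\]
for a comparability constant $C_0$ furnished by the equivalence above. Set $T := \{P \in \rr P : I_P \subseteq I_T\}$, $\rr P'' := \bigcup_{T \in \rr T} T$, and $\rr P' := \rr P \setminus \rr P''$. By maximality the intervals $I_T$ are pairwise disjoint, and every $P \in \rr P'$ satisfies $\frac{1}{|I_P|}\int |f|\ci_{I_P}^M dx \leq \lambda/C_0$, which delivers $\ssize_{\rr P'}(\langle f, \phi_P\rangle) \leq \lambda/2$ via the same equivalence.

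Finally, I would estimate $\sum_{T \in \rr T}|I_T|$ using the weak-type $(1,1)$ boundedness of the Hardy--Littlewood maximal function: each selected $I_T$ is contained in the level set $\{\ic M f > c\lambda\}$, hence
\[
\sum_{T \in \rr T}|I_T| \leq \big|\{x : \ic M f(x) > c\lambda\}\big| \lesssim \lambda^{-1}\|f\|_1,
\]
as required. The main technical obstacle I anticipate is the careful handling of the smooth tails $\ci_{I_P}^M$ in place of sharp cutoffs $\one_{I_P}$: one has to dominate the weighted averages by a multiple of the classical Hardy--Littlewood maximal function at points of $I_T$, which is routine once the decay parameter $M$ is chosen large enough. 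Beyond this point, the argument is a direct Calder\'on--Zygmund decomposition and, crucially, requires no $L^2$ orthogonality between wave packets, in sharp contrast with the $TT^*$-based proof of Lemma~\ref{lemma:dec-lemma}.
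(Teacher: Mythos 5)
Your argument is correct and is essentially the route the paper takes, since the paper's proof simply defers to Lemma 2.10 (size controlled by the maximal $L^1$ average) and Lemma 2.14 (the $L^1$ energy bound) of \cite{multilinear_harmonic}, which together amount to exactly your Calder\'on--Zygmund stopping time on maximal averages followed by the weak-type $(1,1)$ bound for $\ic M$. One small quibble: the ``equivalence'' of $\ssize_{\rr P}$ with the maximal average is an overstatement --- only the one-sided bound $\ssize_{\rr P} \lesssim \sssize_{\rr P}$ holds (and that for any rank, not just rank $0$) --- but this is the only direction your proof actually uses, so nothing breaks; the genuine rank-$0$ feature you are exploiting is that the selected tree tops are spatially disjoint intervals, which is what lets the weak $(1,1)$ bound replace the Bessel-type $L^2$ orthogonality of Lemma \ref{lemma:dec-lemma}.
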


For the quasi-Banach case, when $\tau<1$, the localization estimate is 
\begin{equation}
\label{eq:quasi-Banach-local}
\| T(f_1, \ldots, f_n) \cdot v\|_\tau \lesssim \prod_{j=1}^n \big( \sssize_{I_0} f_j  \big) \cdot \big( \sssize_{I_0}^\tau v  \big) \cdot |I_0|^\frac{1}{\tau}.
\end{equation}
If $v$ is a characteristic function, the result has already appeared in \cite{quasiBanachHelicoid}, and the general case follows as in Proposition \ref{prop:local-C-var} of the later Section \ref{sec:$L^1$ sizes for the multiple vector-valued sparse domination}, by writing $v=v_1 \cdot v_2$, where $v_1 \in L^1_{\loc}$ and $v_2 \in L^{\tau_0}_{loc}$, with $\ds 1<\frac{1}{\tau}=1+\frac{1}{\tau_0}$.

This further implies the sparse estimate (which depends on the locally integrable functions $f_1, \ldots, f_n, v^\tau$):
{\fontsize{10}{10}\begin{equation*}
\big\|   T( f_1, \ldots,f_n) \cdot v \big\|_\tau^\tau \lesssim \sum_{Q \in \ic S} \prod_{j=1}^{n} \big( \frac{1}{\vert Q \vert} \int_{\rr R} \big|  f_j\big| \cdot \ci_{Q}^{M} dx  \big)^{\tau} \big( \frac{1}{\vert Q \vert} \int_{\rr R} \big| v(x) \big|^\tau \cdot \ci_{Q}^{M} dx  \big)  \cdot  \vert Q \vert,
\end{equation*}}
which is very similar to the quasi-Banach case presentation in \cite{Nazarov-Lerner-DyadicCalculus}.

Reasoning as in Proposition \ref{prop:no-subadd}, we can prove, for $q>1$ and any $\epsilon>0$, the sparse domination
{\fontsize{10}{10}\begin{equation*}
\big\|   T( f_1, \ldots,f_n) \cdot v \big\|_q^q \lesssim \sum_{Q \in \ic S} \prod_{j=1}^{n} \big( \frac{1}{\vert Q \vert} \int_{\rr R} \big|  f_j\big| \cdot \ci_{Q}^{M} dx  \big)^{q} \big( \frac{1}{\vert Q \vert} \int_{\rr R} \big| v(x) \big|^{q+\epsilon} \cdot \ci_{Q}^{M} dx  \big)^\frac{q}{q+\epsilon}  \cdot  \vert Q \vert,
\end{equation*}}where the collection $\ic S$ of dyadic intervals depends on the functions $f_1, \ldots, f_n, v$ and on the Lebesgue exponent $q$.

In the general depth-$m$ case, we are short of obtaining a similar result. An argument similar to that presented in the proof of Theorem \ref{thm:main-thm} yields a sparse domination of the multilinear form involving $L^{1+\epsilon}$ averages for all functions involved. Using the more careful argument of Section \ref{sec:$L^1$ sizes for the multiple vector-valued sparse domination}, we can in fact allow ``most of the averages" to be in $L^1$. This will be made precise later.

Nevertheless, if all the Lebesgue exponents $r_j^l$ are strictly between $1$ and $\infty$ (so we are in the case of reflexive Banach spaces), we can obtain directly a localization result with $L^1$ vector-valued averages which entails, in the case $q=1$, a sparse domination result with $L^1$ averages.
\begin{proposition}
\label{prop:paraprod-UMD}
If $X_j=L^{R_j}(\ii W, \mu)$ are multiple vector-valued spaces with $1< r_j^l <\infty$ for all $1 \leq l \leq m, 1 \leq j \leq n+1$, we have, for the $(n+1)$-linear form of a vector-valued multilinear Fourier multiplier: 
\begin{equation}
\label{eq:local-sizes-m-k=0-vv} 
\vert \Lambda_{\rr P \left(I_0 \right)} ( \vec f_1, \ldots, \vec f_{n+1})   \vert \lesssim \prod_{j=1}^{n+1} \big( \sssize_{I_0} \|\vec f_j(x, \cdot) \|_{X_j} \big) \cdot \vert I_0 \vert.
\end{equation}
\begin{proof}
We present the proof in the particular case when $n=2$, $m=1$, but the generalization is routine. We note that we can write the trilinear form as
\[
\Lambda_{\rr P \left(I_0 \right)}(\vec f, \vec g, \vec h)=\sum_k \sum\limits_{\substack{I \in \ii I \\ I \subseteq I_0}} \frac{1}{|I|^{\frac{1}{2}}} \langle f_k, \varphi_I  \rangle  \langle g_k, \psi_I  \rangle  \langle h_k, \psi_I  \rangle,
\] 
where $\lbrace \psi_I \rbrace_I$ are lacunary and $\lbrace \varphi_I\rbrace_I$ are ``overlapping" families of $L^2$-adapted wave packets associated to the collection $\ii I$. In the general case of a $(n+1)$-linear form, we have at least two lacunary families. The lacunarity condition implies that 
\[
\ic S_{I_0}(g)(x):= \big(\sum\limits_{\substack{I \in \ii I \\ I \subseteq I_0}} \frac{| \langle g, \psi_I  \rangle  |^2}{|I|} \cdot \one_I(x) \big)^\frac{1}{2}
\]
defines a discretized square function  as in Chapter 2.3 of \cite{multilinear_harmonic}. Similarly, 
\[
f \mapsto \sup\limits_{\substack{I \in \ii I \\ I \subseteq I_0}} \frac{| \langle f, \varphi_I  \rangle |}{|I|^\frac{1}{2}}
\]
plays the role of a maximal operator.

If the vector spaces involved are $\ell^{r_1}, \ell^{r_2}, \ell^{r_3}$, with $\ds \frac{1}{r_1}+\frac{1}{r_2}+\frac{1}{r_3}=1$, we have the estimate
\begin{align*}
&\Big |\sum_k \sum\limits_{\substack{I \in \ii I \\ I \subseteq I_0}} \frac{1}{|I|^{\frac{1}{2}}} \langle f_k, \varphi_I  \rangle  \langle g_k, \psi_I  \rangle  \langle h_k, \psi_I  \rangle  \Big |\\
& =\Big| \int_{\rr R} \sum_k \sum\limits_{\substack{I \in \ii I \\ I \subseteq I_0}} \frac{ \langle f_k, \varphi_I  \rangle}{|I|^{\frac{1}{2}}}\cdot \one_I(x) \frac{ \langle g_k, \psi_I  \rangle}{|I|^{\frac{1}{2}}} \cdot \one_I(x) \frac{ \langle h_k, \psi_I  \rangle}{|I|^{\frac{1}{2}}} \cdot \one_{I}(x) dx \Big|\\ 
 &\lesssim  \int_{\rr R} \Big( \sum_{k} \Big| \sup\limits_{\substack{I \in \ii I \\ I \subseteq I_0}} \frac{ |\langle f_k, \varphi_I  \rangle|}{|I|^{\frac{1}{2}}}\cdot \one_{I_0}(x) \Big|^{r_1} \Big)^{\frac{1}{r_1}} \\
 & \cdot \Big( \sum_{k} \big(\sum\limits_{\substack{I \in \ii I \\ I \subseteq I_0}} \frac{| \langle g_k, \psi_I  \rangle  |^2}{|I|} \cdot \one_I(x) \big)^\frac{r_2}{2} \Big)^{\frac{1}{r_2}} \cdot  \Big( \sum_{k} \big(\sum\limits_{\substack{I \in \ii I \\ I \subseteq I_0}} \frac{| \langle h_k, \psi_I  \rangle  |^2}{|I|} \cdot \one_I(x) \big)^\frac{r_3}{2} \Big)^{\frac{1}{r_3}} dx
\end{align*}

The first term, corresponding to the maximal operator, can be bounded pointwise by 
\[
\Big( \sum_{k}  \Big|  \inf_{y \in I_0} \ic M (f_k \cdot \ci_{I_0}^M)(y) \Big|^{r_1} \Big)^{\frac{1}{r_1}} \lesssim \frac{1}{|I_0|} \Big\|  \Big( \sum_{k} \Big|  \ic M (f_k \cdot \ci_{I_0}^M) \Big|^{r_1} \Big)^{\frac{1}{r_1}} \Big\|_{1, \infty}.
\]

Invoking the weak-type $(1, 1)$ Fefferman-Stein inequality, we can estimate this by 
\[
\frac{1}{|I_0|} \big\|  \big( \sum_{k} |  f_k  |^{r_1} \big)^{\frac{1}{r_1}} \cdot \ci_{I_0}^M \big\|_{1} \lesssim \sssize_{I_0} \| \vec f \|_{\ell^{r_1}}.
\]

Hence
\begin{align*}
\big| \Lambda_{\rr P \left(I_0 \right)}(\vec f, \vec g, \vec h)  \big| &\lesssim \sssize_{I_0} \| \vec f \|_{\ell^{r_1}} \cdot \frac{1}{|I_0|^\frac{1}{2}} \Big\|  \Big( \sum_{k} \big(\sum\limits_{\substack{I \in \ii I \\ I \subseteq I_0}} \frac{| \langle g_k, \psi_I  \rangle  |^2}{|I|} \cdot \one_I(x) \big)^\frac{r_2}{2} \Big)^{\frac{1}{r_2}}  \Big\|_2 \\
&\cdot  \frac{1}{|I_0|^\frac{1}{2}} \Big\|  \Big( \sum_{k} \big(\sum\limits_{\substack{I \in \ii I \\ I \subseteq I_0}} \frac{| \langle h_k, \psi_I  \rangle  |^2}{|I|} \cdot \one_I(x) \big)^\frac{r_3}{2} \Big)^{\frac{1}{r_3}}  \Big\|_2  \cdot |I_0|.
\end{align*}

So we have, for each of the functions $\vec g$ and $\vec h$, an $L^2$ average of the vector-valued square function. Using a vector-valued John-Nirenberg  inequality (a straightforward generalization of Theorem 2.7 of \cite{multilinear_harmonic}), we can obtain a ``weak $L^{1}$ average":
\[
\sup_{I' \subseteq I_0} \frac{1}{|I'|^\frac{1}{2}} \Big\|  \Big( \sum_{k} \big(\sum\limits_{\substack{I \in \ii I \\ I \subseteq I'}} \frac{| \langle g_k, \psi_I  \rangle  |^2}{|I|} \cdot \one_I(x) \big)^\frac{r_2}{2} \Big)^{\frac{1}{r_2}}  \Big\|_2 \sim  \sup_{I' \subseteq I_0 }\frac{1}{|I'|} \Big\|  \Big( \sum_{k} \big(\sum\limits_{\substack{I \in \ii I \\ I \subseteq I'}} \frac{| \langle g_k, \psi_I  \rangle  |^2}{|I|} \cdot \one_I(x) \big)^\frac{r_2}{2} \Big)^{\frac{1}{r_2}}  \Big\|_{1, \infty}.
\]

If $1<r_2<\infty$, the $\ell^{r_2}$-valued square function is a bounded operator from $L^1$ into $L^{1, \infty}$, so in the end we get \eqref{eq:local-sizes-m-k=0-vv}.
\end{proof}
\end{proposition}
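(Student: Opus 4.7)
The fact that $k=0$ means the collection $\rr P$ of multi-tiles is indexed just by dyadic intervals, with no frequency degrees of freedom, and among the $n+1$ wave packet families $\lbrace \phi_I^j\rbrace$ at most one can be ``overlapping'' (with Fourier support containing the origin) while all the others are \emph{lacunary} in the sense used for discrete Littlewood--Paley theory. My plan is to exploit exactly this split: one family will be treated by a (vector-valued) maximal function, and the remaining $n$ by (vector-valued) discretized square functions. The rank-0 assumption guarantees that for each fixed $I$ one can pair the normalization $|I|^{-(n-1)/2}$ with $\one_I$'s so as to write the form as an integral over $I_0$ of a pointwise product of these objects.

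First I would fix a trilinear model (extending to $(n+1)$ linearly) in which, say, $\varphi_I$ is overlapping and $\psi_I^{(j)}$ are lacunary, and write
\[
\Lambda_{\rr P(I_0)}(\vec f_1, \ldots, \vec f_{n+1}) = \sum_k \int_{\rr R} \sum_{I \subseteq I_0} \frac{\langle f_1^k, \varphi_I\rangle}{|I|^{1/2}} \, \one_I(x) \, \prod_{j=2}^{n+1} \frac{\langle f_j^k, \psi_I^{(j)}\rangle}{|I|^{1/2}} \, \one_I(x) \, dx,
\]
then pull the sum in $k$ inside the integral and apply Hölder with exponents $(r_1, \ldots, r_{n+1})$ (satisfying $\sum 1/r_j=1$) at each $x$. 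This produces one vector-valued maximal-type expression $\bigl(\sum_k \sup_{I\subseteq I_0} |\langle f_1^k,\varphi_I\rangle|^{r_1}/|I|^{r_1/2}\bigr)^{1/r_1}$ bounded pointwise by $\| \vec{\ic M}(\vec f_1 \cdot \ci_{I_0}^M)\|_{\ell^{r_1}}$, and $n$ discretized $\ell^{r_j}$-valued square functions $S_{I_0}^{r_j}(\vec f_j)$.

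Next I would integrate over $I_0$. For the maximal piece, the weak-type $(1,1)$ vector-valued Fefferman--Stein inequality (available precisely because $1<r_1<\infty$) gives
\[
\frac{1}{|I_0|}\| \vec{\ic M}(\vec f_1 \cdot \ci_{I_0}^M)\|_{L^{1,\infty}(\ell^{r_1})} \lesssim \sssize_{I_0}\, \|\vec f_1\|_{X_1}.
\]
For each square function piece I would produce an $L^2$-average of $S_{I_0}^{r_j}(\vec f_j)$ and then upgrade it to a (weak) $L^1$-average by a vector-valued John--Nirenberg inequality. A standard argument (Theorem 2.7 of \cite{multilinear_harmonic} combined with the $L^1 \to L^{1,\infty}$ boundedness of the $\ell^{r_j}$-valued discretized square function, valid for $1<r_j<\infty$) gives
\[
\frac{1}{|I'|^{1/2}}\| S_{I'}^{r_j}(\vec f_j)\|_2 \;\simeq\; \frac{1}{|I'|}\| S_{I'}^{r_j}(\vec f_j)\|_{1,\infty} \lesssim \sssize_{I_0}\, \|\vec f_j\|_{X_j}
\]
for any $I'\subseteq I_0$, where the last step uses the pointwise control of $S_{I'}^{r_j}(\vec f_j)$ by $\|\vec f_j(\cdot)\|_{X_j}\cdot \ci_{I'}^M$ type averages.

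The main obstacle, and the reason for the hypothesis $1<r_j^l<\infty$, is precisely the vector-valued $L^1\to L^{1,\infty}$ boundedness of the maximal function and of the lacunary square function on iterated $L^{R_j}$ spaces: both require reflexivity (UMD-type behavior) of every level of the $m$-fold iteration and fail at the endpoints $1,\infty$. To push the scheme to depth $m$ one iterates the argument slot by slot, at each step relying on these two weak-type $(1,1)$ results in the remaining vector-valued variable; as long as all $r_j^l$ stay strictly between $1$ and $\infty$, the iteration goes through and yields the product of $L^1$-sizes $\sssize_{I_0}\|\vec f_j(x,\cdot)\|_{X_j}$ claimed in the statement.
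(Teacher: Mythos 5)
Your proposal follows essentially the same route as the paper's proof: the same split of the wave packet families into one overlapping (maximal) and $n$ lacunary (square function) families, the same pointwise H\"older step with exponents $(r_1,\ldots,r_{n+1})$, the vector-valued weak-type $(1,1)$ Fefferman--Stein inequality for the maximal piece, and the vector-valued John--Nirenberg inequality combined with the $L^1\to L^{1,\infty}$ boundedness of the $\ell^{r_j}$-valued square function to convert the $L^2$ averages into $L^1$ sizes. Your remark identifying the reflexivity requirement $1<r_j^l<\infty$ as the reason both weak-type $(1,1)$ inputs are available is exactly the point the paper makes as well.
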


From the proof above, it is clear that, as long as they are not associated to the lacunary directions (i.e. to one of the two square functions), we can allow the Lebesgue exponents $r_j^l$ to be equal to $\infty$. But in the general depth-$m$ case, when we let $1 <r_j^l \leq \infty$, we have, as announced in Theorem \ref{thm:k=0case}, the following: for any $\epsilon >0$ and any vector-valued functions $\vec f_1, \ldots, \vec f_{n+1}$ so that $\| \vec f_j(x, \cdot) \|_{X_j}$ are locally integrable, there exists a sparse collection $\ic S$ depending on the functions $\vec f_j$ for which
\begin{equation*}
\big| \Lambda_{\rr P}(\vec f_1, \ldots, \vec f_{n+1}) \big|\lesssim \sum_{Q \in \ic S} \prod_{j=1}^{n+1}\big( \frac{1}{|Q|} \int_{\rr R} \| \vec f_j(x, \cdot) \|_{X_j}^{1+\delta_j} \cdot \ci_{Q}^M dx \big)^\frac{1}{1+\delta_j}  \cdot |Q|,
\end{equation*}
where $\delta_j \in \lbrace 0, \epsilon \rbrace$, and we can arrange that at most $m$ of the $\delta_j \neq 0$. However, if $\delta_j\neq 0$, then $j$ is an index so that in the multiple vector space $X_j=L^{R_j}(\ii W, \mu)$ there are no $L^\infty$ involved: $\ds r_j^l <\infty$ for all $1 \leq l \leq m$.

As mentioned before, this follows from an analysis similar to that of Section \ref{sec:$L^1$ sizes for the multiple vector-valued sparse domination}. In the scalar case, when $m=0$, we can obtain directly a sparse domination with only $L^1$ averages. For $m=1$, the result with at most one $L^{1+\epsilon}$ average follows from the estimate
\begin{align*}
\big| \Lambda_{\rr P \left(I_0 \right)}(f, f^1_1\cdot f^2_1, \ldots, f^1_n \cdot f^2_n) \big|\lesssim \sssize_{I_0}^{r+\epsilon}(\one_F) \cdot |F|^\frac{1}{r'} \cdot \prod_{k=1}^n \big( \sssize_{I_0}^{r_k'} f_k^1 \big) \cdot \|f_k^2\|_{r_k}, 
\end{align*}
where $|f(x)|\leq \one_F,$ $\ds \frac{1}{r_1}+\ldots  +\frac{1}{r_2}=\frac{1}{r}$, and $1<r <\infty$, $1< r_k <\infty$. Then we deduce the vector-valued local result, as in Proposition \ref{prop:local-ell^s-aver-L^1}, leaving intact the sizes corresponding to $\ell^\infty$, if there are any. In would be interesting to understand whether a multiple vector-valued sparse domination with $L^1$ averages can be obtained in the general case, when $L^\infty$ spaces are allowed. However, it was noticed in \cite{Nazarov-Lerner-DyadicCalculus} that weighted estimates involving $L^\infty$ spaces can still be deduced from a sparse domination with exponents strictly between $1$ and $\infty$, by a careful passage to the limit.

In the quasi-Banach case, if $n=2$, we can prove using \eqref{eq:quasi-Banach-local} that the operator 
\[
f \mapsto T_{I_0}(f \cdot \one_F, g_1\cdot g_2)
\]
maps $L^{r_1,1}$ into $L^r(w)$, with an operatorial norm equal to
\[
\big( \sssize_{I_0} \one_F  \big)^{\frac{1}{r_1'}-\epsilon} \cdot \big( \sssize_{I_0}^{r_2'} g_1 \big) \cdot \big( \sssize_{I_0} w \big)^{\frac{1}{r}} \|g_2 \cdot \ci_{I_0}^M\|_{r_2},
\]
where $w \geq 0$ is a fixed locally integrable function and $1<r_1, r_2 \leq \infty$, $\ds \frac{1}{r_1}+\frac{1}{r_2}=\frac{1}{r}>1$. Since $r<1$, we cannot have $r_1 =\infty$ or $r_2=\infty$. For $n \geq 3$, $L^\infty$ spaces could appear, and in that situation the analysis is similar to the Banach case described earlier.

Using interpolation (for example,  Theorem 1.4.19 of \cite{grafakos-book}), with $r_2$ fixed, we get that 
\[
\|T_{I_0}(f \cdot \one_F, g_1 \cdot g_2) \cdot w^{\frac{1}{r}}\|_r^r \lesssim \big[ \big( \sssize_{I_0} \one_F  \big)^{\frac{1}{r_1'}-\epsilon} \cdot \big( \sssize_{I_0}^{r_2'} g_1 \big) \cdot \big( \sssize_{I_0} w \big)^{\frac{1}{r}} \big]^r \|f \cdot \ci_{I_0}^M\|_{r_1}^r \cdot \|g_2 \cdot \ci_{I_0}^M\|_{r_2}^r.
\]

As in Proposition \ref{prop:local-ell^s-aver-L^1}, this can be used for proving, for vector-valued functions $\vec f =\lbrace  f_k \rbrace, \vec g =\lbrace  g_k \rbrace$ so that $\|\vec f(x)\|_{\ell^{r_1}}, \|\vec g(x)\|_{\ell^{r_2}}$ are locally integrable, that
\[
\big\|  \big(  \sum_k  |T_{I_0}(f_k, g_k)|^r \big)^{\frac{1}{r}} \cdot v  \big\|_{\tau}^{\tau} \lesssim \big( \sssize_{I_0}^{1+\epsilon} \| \vec f\|_{\ell^{r_1}} \big)^\tau \cdot \big( \sssize_{I_0} \|\vec g\|_{\ell^{r_2}} \big)^\tau \cdot \big( \sssize_{I_0} |v|^\tau \big) \cdot |I_0|.
\]

If $\ds \|  \cdot \|_{L^{R'_{n+1}}}^\tau$ is subadditive (a sufficient condition is that $\tau<r^{j_0}$), this generalizes to $n$-linear Calder\'on-Zygmund operators, yielding eventually the sparse domination
{\fontsize{9}{10}\begin{equation}
\label{eq:sparse-qB-k=0}
\big\|  \big\|T(\vec f_1, \ldots, \vec f_n)\big\|_{L^{R'_{n+1}}} \cdot v \big\|_\tau^\tau \lesssim \sum_{Q \in \ic S} \prod_{j=1}^{n} \big( \frac{1}{\vert Q \vert} \int_{\rr R} \big\|  \vec f_j(x, \cdot) \big\|_{\tilde X_j}^{(1+\delta_j)} \cdot \ci_{Q}^{M} dx  \big)^\frac{\tau}{1+\delta_j} \big( \frac{1}{\vert Q \vert} \int_{\rr R} \big| v(x) \big|^\tau \cdot \ci_{Q}^{M} dx  \big)  \cdot  \vert Q \vert,
\end{equation}}where $\delta_j\in \lbrace 0, \epsilon  \rbrace$, with at most $m$ of the $\delta_j \neq 0$ (in this respect, the situation is similar to the Banach case: the $L^{1+\epsilon}$ averages cannot correspond to multiple vector-valued spaces involving $L^\infty$).

In the usual manner, the sparse domination result is to be read as: there exists a sparse family $\ic S$ of dyadic intervals, which depends on the functions $\vec f_1, \ldots, \vec f_{n}$ and $v$, so that \eqref{eq:sparse-qB-k=0} holds.  
\begin{remark}
A careful inspection of the proof of Proposition \ref{prop:local-est-var-C-L^q-size} reveals that it is essential to have at least one restricted-type function in the estimation of the multilinear form. Also, in order to sum up the averages, we need to lose a bit of information by performing a stopping time in $\one_F$ with respect to $L^{r_1-\epsilon}$.
\end{remark}

For $q>r^{j_0}$, we have similarly, the sparse domination
{\fontsize{9}{10}\begin{equation*}
\big\|  \big\|T(\vec f_1, \ldots, \vec f_n)\big\|_{L^{R'_{n+1}}} \cdot v \big\|_q^q \lesssim \sum_{Q \in \ic S} \prod_{j=1}^{n} \big( \frac{1}{\vert Q \vert} \int_{\rr R} \big\|  \vec f_j(x, \cdot) \big\|_{\tilde X_j}^{(1+\delta_j)} \cdot \ci_{Q}^{M} dx  \big)^\frac{q}{1+\delta_j} \big( \frac{1}{\vert Q \vert} \int_{\rr R} \big| v(x) \big|^{q+\epsilon_q} \cdot \ci_{Q}^{M} dx  \big)^\frac{q}{q+\epsilon_q}  \cdot  \vert Q \vert.
\end{equation*}}

Lastly, we present a proof of Corollary \ref{cor:Fefferman-Stein-k=0case}, the Fefferman-Stein inequality for $T_0$, which follows immediately from the sparse domination above. Let $0<p< \infty$, and $w \in A_{\infty}$. The $A_\infty$ assumption appears twice in the argument: first for invoking a \emph{reverse H\"older} property, and second for deducing that $\ds E_Q \subseteq Q, |E_Q | \geq \eta |Q| \Rightarrow w(E_Q) \geq \tilde \eta \,w(Q)$.

Let $(R_1, \ldots, R_n, R'_{n+1})$ be $m$-tuples so that $1<r_j^l \leq \infty$ for all $1 \leq l \leq m, 1 \leq j \leq n$, $\frac{1}{2} <  (r^l_{n+1})' <\infty$. Then if $p<r^{j_0}$, we have the sparse domination, which depends on the functions and the Lebesgue exponents:
\[
\int_{\rr R} \big\|T(\vec f_1, \ldots, \vec f_n)\big\|_{L^{R'_{n+1}}}^p w(x) dx \lesssim \sum_{Q \in \ic S} \prod_{j=1}^{n} \big( \frac{1}{\vert Q \vert} \int_{Q} \big\|  \vec f_j(x, \cdot) \big\|_{\tilde X_j}^{(1+\delta_j)}  dx  \big)^\frac{p}{1+\delta_j} \big( \frac{1}{\vert Q \vert} \int_{Q} w(x) dx  \big)  \cdot  \vert Q \vert.
\]

The sparseness property of the intervals implies the existence of mutually disjoints subsets $E_Q \subseteq Q$ with $ |E_Q | \geq \eta |Q| $ and hence 
$w(E_Q) \geq \tilde \eta \, w(Q)$, which yields
{\fontsize{10}{10}\begin{align*}
\int_{\rr R} \big\|T(\vec f_1, \ldots, \vec f_n)\big\|_{L^{R'_{n+1}}}^p w(x) dx &\lesssim  \sum_{Q \in \ic S} \inf_{x \in E_Q} \vec {\ic M}_{1+\delta_1, \ldots, 1+\delta_n}(\big\|  \vec f_1(y, \cdot) \big\|_{L^{R_1}}, \ldots, \big\|  \vec f_n(y, \cdot) \big\|_{L^{R_n}}))^p(y) \cdot w(E_Q) \\
&\lesssim \int_{\rr R} \big| \vec {\ic M}_{1+\delta_1, \ldots, 1+\delta_n}(\big\|  \vec f_1(x, \cdot) \big\|_{L^{R_1}}, \ldots, \big\|  \vec f_n(x, \cdot) \big\|_{L^{R_n}}))(x)\big|^p w(x) dx.
\end{align*}}

If $p>r^{j_0}$, the $L^1$ average of $w$ is replaced by an $L^{1+\epsilon_p}$ average, where $\epsilon_p$ can be as small as we wish. Since $w \in A_\infty$, it satisfies a reverse H\"older inequality: there exists $\epsilon >0$ so that
\[
(\aver{Q} w^{1+\epsilon})^\frac{1}{1+\epsilon} \lesssim \aver{Q} w, \quad \text{for all intervals } Q.
\]
Hence we obtain again the same estimate as above.

The Fefferman-Stein inequalities for operators $T_k$ of Theorem \ref{thm:main-thm} or for Carleson and variational Carleson operators can be proved similarly.

\section{Another study case: Carleson and variational Carleson operators}
\label{sec:Carleson-op}
\subsection{The Carleson operator}
\label{sec:carleson_op-only}
Before studying the variational Carleson operator $\ic C^{var,r}$, we briefly describe the procedure for obtaining vector-valued or sparse estimates for the more classical Carleson operator, which is defined by
\begin{equation}
\label{def:Carleson}
\ic C f(x):=\sup_{N} \big\vert \int_{\xi < N} \hat{f}(\xi) e^{2 \pi i x \xi} d \xi  \big\vert.
\end{equation}

Its boundedness on $L^p$, together with a transference principle, implies a.e. convergence of Fourier series in $L^p(\rr T)$.

The operator $\ic C$ can be linearized by introducing the function $N(x)$ which attains the sup in \eqref{def:Carleson}, of which nothing is known. Furthermore, the condition $\xi < N(x)$ is replaced by
\[
\text{there exists a dyadic interval $\omega$ so that $\xi \in \omega_{\text{left}}$ and $N(x) \in \omega_{\text{right}}$.}
\]

As a consequence, the study of $ \ic C$ is replaced by that of its model operator
\[
\ic C_{\rr P}(x):=\sum_{P \in \rr P} \langle f, \phi_P \rangle \phi_P(x) \one_{\left\lbrace x: N(x) \in \omega_{P_2} \right\rbrace}:=\sum_{P \in \rr P} \langle f, \phi_P \rangle \tilde \phi_P(x),
\]
where $\rr P$ is a collection of bitiles $P=(I_P \times \omega_{P_1}, I_P \times \omega_{P_2})$ (here $\omega_{P_1}$ and $\omega_{P_2}$ are the left half and right half of the dyadic interval $\omega_P$).

We need to introduce a new type of size, which captures the behavior of the functions $\tilde \phi_P(x)$. Here we follow the presentation in \cite{multilinear_harmonic}.

\begin{definition}
If $P$ and $P'$ are distinct bitiles, we say that $P \leq P'$ if $I_P \subseteq I_{P'}$ and $\omega_{P'} \subseteq \omega_P$.

Also, we denote by $\bar {\rr P}$ the collection of all possible dyadic bitiles in the plane. If the collection $\rr P(I_0)$ is localized in space onto a certain dyadic interval $I_0$, then $\bar {\rr P}(I_0)$ denotes the collection of all possible dyadic bitiles $P$ in the plane with $I_P \subseteq 3 I_0$.
\end{definition}

\begin{definition}
If $\rr P$ is a finite collection of bitiles, then
\[
\ssize_{\rr P} (\langle g, \tilde \phi_P  \rangle):=\sup_{P \in \rr P} \sup_{\substack{P' \in \bar{\rr P} \\ P' \geq P}} \frac{1}{\vert  I_P' \vert} \int_{\rr R} \vert g(x) \vert \ci_{I_{P'}}^M(x) \cdot \one_{\left \lbrace x: N(x) \in \omega_{P'}  \right\rbrace} dx.
\]
\end{definition}

Here we use the classical proof of the boundedness of $\ic C$ as a black box, but it will be enough to consider a simpler $\ssize$, which is larger than the one we introduced above:
\[
\ssize_{\rr P} (\langle g, \tilde \phi_P  \rangle) \lesssim \sssize_{\rr P}^* (g):=\sup_{P \in \rr P} \sup_{\substack{P' \in \bar{\rr P} \\ P' \geq P}} \frac{1}{\vert  I_P' \vert} \int_{\rr R} \vert g(x) \vert \cdot \ci_{I_{P'}}^M(x) dx.
\]

The proof of \cite{multilinear_harmonic} relies on a quantity dual to the size, called \emph{energy}. We can avoid to overtly use the `energy', but we refer the interested reader to Chapter 7 of \cite{multilinear_harmonic}. Instead, we use directly the estimate
\begin{proposition}[Proposition 7.7 from \cite{multilinear_harmonic}]
If $\rr P$ is a finite collection of bitiles, and $f$ and $g$ are measurable functions, then 
\[
\big\| \int_{\rr R}\ic C_{\rr P}(f)(x) g(x) dx \big\| \lesssim \big( \sssize_{\rr P} f   \big) ^{\theta_1} \cdot \big( \sssize_{\rr P}^*(g)  \big)^{\theta_2} \cdot \|  f \|^{1-\theta_1}_2 \cdot \| g \|^{1-\theta_2}_1
\]
for any $0 \leq \theta_1<1, 0< \theta_2 \leq \frac{1}{2}$ with $\theta_1 +2 \theta_2=1$.
\end{proposition}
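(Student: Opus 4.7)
The plan is to carry out the classical Lacey--Thiele size/energy decomposition for the linearized Carleson operator, trading off two competing counting estimates on tree tops, one controlled by $\| f\|_2$ and one by $\| g\|_1$, and then geometrically summing against a tree estimate. Writing $\Lambda_{\rr P}(f,g) = \sum_{P \in \rr P} \langle f, \phi_P\rangle \langle g, \tilde\phi_P\rangle$, the decomposition of $\rr P$ is organized simultaneously by the two sizes $\sssize f$ and $\sssize^{*} g$.

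First I would prove the tree estimate: for any tree $T \subseteq \rr P$ with top $P_T$,
\[
\Bigl| \sum_{P \in T} \langle f, \phi_P\rangle \langle g, \tilde\phi_P\rangle \Bigr| \lesssim (\sssize_T f)(\sssize^{*}_T g) \cdot |I_T|.
\]
Splitting $T$ into its lacunary and overlapping parts relative to $P_T$, the lacunary subtrees are estimated by Bessel's inequality for the approximately orthogonal wave packets $\lbrace \phi_P\rbrace$, producing an $L^2$ factor that is converted into $\sssize_T f \cdot |I_T|^{1/2}$ via the localization of the $L^2$ energy (as in Lemma \ref{lemma:dec-lemma}); the $g$-side contributes $\sssize^{*}_T g \cdot |I_T|^{1/2}$ by the definition of the maximal average. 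The overlapping part essentially collapses to one scale where both sizes are saturated at the top.

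Second I would decompose $\rr P = \bigsqcup_{n_1, n_2 \geq 0} \rr P_{n_1, n_2}$ so that on each $\rr P_{n_1,n_2}$ one has $\sssize f \sim 2^{-n_1} \lesssim \sssize_{\rr P} f$ and $\sssize^{*} g \sim 2^{-n_2} \lesssim \sssize^{*}_{\rr P} g$, with $\rr P_{n_1,n_2}$ written as a disjoint union of trees $\rr T_{n_1,n_2}$ subject to the two dual counting inequalities
\[
\sum_{T \in \rr T_{n_1, n_2}} |I_T| \lesssim 2^{2n_1} \| f\|_2^2, \qquad \sum_{T \in \rr T_{n_1, n_2}} |I_T| \lesssim 2^{n_2} \| g\|_1.
\]
The first is the standard $L^2$-energy bound via a $TT^{*}$ argument on the extracted lacunary tree tops (already used in Lemma \ref{lemma:dec-lemma}). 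The second is the Carleson counting bound for the linearizing function $N(x)$ concealed in $\tilde\phi_P$: tree tops $P_T$ selected because $\ave_{I_{P'}}(g) \gtrsim 2^{-n_2}$ for some $P' \geq P_T$ can be arranged via a John--Nirenberg-type / maximal-function selection so that $\sum_T |I_T| \lesssim 2^{n_2} \| g\|_1$. This counting inequality is the genuine depth of Carleson's theorem and constitutes the main obstacle; the remaining ingredients are routine tree manipulations.

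Third, combining the tree estimate with these bounds gives
\[
|\Lambda_{\rr P}(f,g)| \lesssim \sum_{n_1, n_2} 2^{-n_1} 2^{-n_2} \min\bigl( 2^{2n_1}\| f\|_2^2,\ 2^{n_2} \| g\|_1\bigr).
\]
Applying $\min(A,B) \leq A^{\alpha} B^{\beta}$ with $\alpha + \beta = 1$, and using the truncations $2^{-n_1} \leq \sssize_{\rr P} f$ and $2^{-n_2} \leq \sssize^{*}_{\rr P} g$, the two geometric sums converge provided $1-2\alpha > 0$ and $1-\beta > 0$ and yield
\[
|\Lambda_{\rr P}(f,g)| \lesssim (\sssize_{\rr P} f)^{1-2\alpha} (\sssize^{*}_{\rr P} g)^{1-\beta} \| f\|_2^{2\alpha} \| g\|_1^{\beta}.
\]
Setting $\theta_1 = 1 - 2\alpha$ and $\theta_2 = 1 - \beta$ produces exactly the stated exponents, with the scaling relation $\theta_1 + 2\theta_2 = 1$ and the constraints $0 \leq \theta_1 < 1$, $0 < \theta_2 \leq 1/2$. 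The endpoint $\theta_1 = 0$ requires a minor additional truncation argument to absorb a logarithmic loss, but this fits within the same framework.
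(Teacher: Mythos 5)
The paper does not actually prove this proposition --- it is imported verbatim as Proposition 7.7 of \cite{multilinear_harmonic} and used as a black box --- so there is no internal proof to compare against; what you have written is, in outline, the standard Lacey--Thiele size/energy argument from that source. Your bookkeeping is correct: the tree estimate combined with the two counting bounds, the interpolation $\min(A,B)\le A^{\alpha}B^{\beta}$, and the truncations $2^{-n_1}\lesssim \sssize_{\rr P}f$, $2^{-n_2}\lesssim \sssize^*_{\rr P}g$ do reproduce exactly the exponents and the constraint $\theta_1+2\theta_2=1$; and the endpoint $\theta_1=0$ is in fact handled with no logarithmic loss at all by summing the minimum directly in $n_1$ (geometric on both sides of the crossover $2^{2n_1}\|f\|_2^2\sim 2^{n_2}\|g\|_1$) before summing in $n_2$, rather than by a ``truncation to absorb a log.''

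Two points in your sketch are loose enough to flag. First, in the tree estimate the overlapping part does not ``collapse to one scale'': for a $1$-overlapping tree the mechanism is that the frequency intervals $\omega_{P_2}$ of tiles at distinct scales are pairwise disjoint, hence so are the sets $\lbrace x: N(x)\in\omega_{P_2}\rbrace$ hidden in $\tilde\phi_P$, and it is this disjointness (together with the adaptedness of $\phi_P$) that lets you sum $|\langle g,\tilde\phi_P\rangle|$ against the top and produce $\sssize^*_T(g)\cdot|I_T|$. Second, you have mislocated the difficulty: the counting bound $\sum_T|I_T|\lesssim 2^{n_2}\|g\|_1$ is the \emph{elementary} one (with the simplified size it is just a disjoint maximal-interval selection inside a level set of $\ic M g$), whereas the genuine depth of the theorem sits in the energy lemma --- the $TT^*$/Bessel argument requiring a careful selection of mutually orthogonal overlapping trees to get $\sum_T|I_T|\lesssim 2^{2n_1}\|f\|_2^2$ --- and in the single-tree estimate itself. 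Labelling those ``routine tree manipulations'' undersells the part of the proof that still has to be written out; as a blueprint, though, your decomposition is the right one and leads to the stated inequality.
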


In order to obtain vector-valued or sparse estimates for $\ic C_{\rr P}$, we need to work with the localized bilinear form. Here we use the fact that the energy corresponding to $g$ is an $L^1$ quantity, so for $g$ we don't need to use restricted-type functions.

\begin{lemma}
\label{lemma:local-Carleson}
If $\rr P$ is a finite collection of bitiles, $I_0$ is a fixed dyadic interval, $f$ and $g$ are measurable functions so that $\vert f(x)\vert \leq \one_{F}(x)$, then we have
\[
\vert \Lambda_{\ic C; \rr P (I_0)}(f,g)  \vert \lesssim \big( \sssize_{I_0} \one_F \big)^{\frac{1+\theta_1}{2}} \cdot \big( \sssize_{I_0}^* g  \big)^1 \cdot \vert  I_0 \vert,
\]
for any $0 \leq \theta_1 <1$.
\end{lemma}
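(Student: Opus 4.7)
The plan is to mimic the $BHT$ argument of Section \ref{sec:proof-bht-case}, with the crucial modification that the ``energy" dual to $\sssize^*$ is of $L^1$-type rather than $L^2$-type. The starting point is the analogue of Lemma \ref{lemma-tree-lemma} for the Carleson setting: for any tree $T$,
\[
\big| \int_{\rr R} \ic C_T(f)(x)\, g(x)\, dx \big| \lesssim \ssize_T(\langle f, \phi_P\rangle) \cdot \ssize_T^*(\langle g, \tilde\phi_P \rangle) \cdot |I_T|,
\]
which is the content of Proposition 7.5 in \cite{multilinear_harmonic}. Passing to $\sssize_T(f)$ and $\sssize_T^*(g)$ costs only constants.

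I will then perform two tree decompositions of $\rr P(I_0)$, one in each variable. For $f$, this is the standard $L^2$ decomposition given by Lemma \ref{lemma:dec-lemma}: $\rr P(I_0) = \bigcup_{n_1} \bigcup_{T \in \ic T_{n_1}} T$ with $\sssize_T(f) \leq 2^{-n_1} \lesssim \sssize_{I_0}\one_F$ and, by the localized bound \eqref{eq:improved-local-sum-tree-top} together with $|f| \leq \one_F$,
\[
\sum_{T \in \ic T_{n_1}} |I_T| \lesssim 2^{2n_1} \| f \cdot \ci_{I_0}\|_2^2 \lesssim 2^{2n_1} \big(\sssize_{I_0}\one_F\big) \cdot |I_0|.
\]
For $g$ I will invoke the $L^1$-analogue (Lemma \ref{lemma:dec-lemma-k=0} extended to this rank-$1$ Carleson setting, which only requires the $L^1$ John-Nirenberg estimate for $\sssize^*$): $\rr P(I_0) = \bigcup_{n_2} \bigcup_{T \in \ic T_{n_2}} T$ with $\sssize_T^*(g) \leq 2^{-n_2} \lesssim \sssize_{I_0}^* g$ and
\[
\sum_{T \in \ic T_{n_2}} |I_T| \lesssim 2^{n_2} \| g \cdot \ci_{I_0} \|_1 \lesssim 2^{n_2} \big(\sssize_{I_0}^* g\big) \cdot |I_0|.
\]

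Intersecting the two decompositions, setting $\ic T_{n_1, n_2} := \ic T_{n_1} \cap \ic T_{n_2}$, and plugging the tree estimate in, the bilinear form is controlled by
\[
|\Lambda_{\ic C; \rr P(I_0)}(f,g)| \lesssim \sum_{n_1, n_2} 2^{-n_1} 2^{-n_2} \sum_{T \in \ic T_{n_1, n_2}} |I_T|.
\]
I will then take the geometric mean of the two counting estimates with parameters $\theta \in (0, 1/2)$ and $1-\theta$,
\[
\sum_{T \in \ic T_{n_1,n_2}} |I_T| \lesssim 2^{2\theta n_1} 2^{(1-\theta) n_2} \big(\sssize_{I_0}\one_F\big)^{\theta} \big(\sssize_{I_0}^* g\big)^{1-\theta} \cdot |I_0|,
\]
yielding
\[
|\Lambda_{\ic C; \rr P(I_0)}(f,g)| \lesssim \sum_{n_1, n_2} 2^{-n_1(1-2\theta)} 2^{-n_2 \theta} \big(\sssize_{I_0}\one_F\big)^{\theta} \big(\sssize_{I_0}^* g\big)^{1-\theta} |I_0|.
\]
Since the $n_j$ range only over values with $2^{-n_1} \lesssim \sssize_{I_0}\one_F$ and $2^{-n_2} \lesssim \sssize_{I_0}^* g$, the two geometric series sum to $\big(\sssize_{I_0}\one_F\big)^{1-2\theta}$ and $\big(\sssize_{I_0}^* g\big)^{\theta}$ respectively. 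Collecting exponents, the total contribution is
\[
\big(\sssize_{I_0}\one_F\big)^{1-\theta} \cdot \big(\sssize_{I_0}^* g\big)^{1} \cdot |I_0|,
\]
and the reparametrization $\theta_1 := 1 - 2\theta \in (0,1)$ converts the exponent $1 - \theta$ into $(1+\theta_1)/2$, producing exactly the claimed inequality. The endpoint $\theta_1 = 0$ can be recovered by absorbing an arbitrarily small loss into the implicit constant.

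The only nontrivial point is establishing the localized $L^1$-type decomposition for $g$ with the bound $\sum_{T \in \ic T_{n_2}} |I_T| \lesssim 2^{n_2} \| g \cdot \ci_{I_0}\|_1$: one chooses maximal $L^1$-selector trees in the Carleson sense (i.e.\ trees whose top realizes the $\sssize^*$ at level $\sim 2^{-n_2}$), and the $L^1$ bound follows from a covering/weak-$L^1$ argument analogous to the one underlying Lemma \ref{lemma:dec-lemma-k=0} but adapted to the bitile ordering used to define $\sssize^*$. Once this $L^1$ decomposition is in hand, everything else is routine geometric-series bookkeeping.
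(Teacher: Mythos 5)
Your proof is correct and is essentially the paper's argument: the paper obtains this lemma by localizing the global size--energy estimate of Proposition 7.7 of \cite{multilinear_harmonic} (using that the $L^2$-energy of $f$ localizes to $\|f\cdot\ci_{I_0}\|_2\lesssim(\sssize_{I_0}\one_F)^{1/2}|I_0|^{1/2}$ and the $L^1$-energy of $g$ to $\|g\cdot\ci_{I_0}\|_1\lesssim(\sssize^*_{I_0}g)|I_0|$), and your double tree decomposition, geometric-mean interpolation of the counting bounds, and geometric-series summation are exactly the mechanism behind that cited estimate, run in its localized form as in Section \ref{sec:proof-bht-case}. One cosmetic point: the endpoint $\theta_1=0$ is not recovered by ``absorbing a loss into the constant'' but simply because $\sssize_{I_0}\one_F\lesssim 1$, so the bound with exponent $\frac{1+\theta_1}{2}$ for any $\theta_1>0$ implies the weaker bound with exponent $\frac{1}{2}$.
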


Using the techniques presented in the previous sections, we get the following sparse domination for $\ic C_{\rr P}$:
\begin{proposition}
If $\rr P$ is a finite collection of bitiles, and $f$ and $g$ are locally integrable functions, then for any $\epsilon>0$ there exists a sparse collection $\ic S$ of dyadic intervals so that 
\[
\vert \Lambda_{\ic C _{\rr P}}(f, g)  \vert \lesssim \sum_{Q \in \ic S} \big( \frac{1}{\vert Q \vert} \int_{\rr R} \vert f(x) \vert^{1+\epsilon} \cdot \ci_{Q}^M(x) dx \big)^{\frac{1}{1+\epsilon}} \cdot \big( \frac{1}{\vert Q \vert} \int_{\rr R} \vert g(x) \vert \cdot \ci_{Q}^M(x) dx \big) \cdot \vert Q \vert.
\]
\end{proposition}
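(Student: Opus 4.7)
The plan is to combine three ingredients already assembled in the paper: the local estimate of Lemma \ref{lemma:local-Carleson}, the mock-interpolation Proposition \ref{prop:averages-restr-gen}, and the stopping-time argument of Theorem \ref{thm:local->sparse}. First I would upgrade Lemma \ref{lemma:local-Carleson} from restricted-type functions to general locally integrable functions. Since $g$ enters the local estimate only through $\sssize^*_{I_0} g$, which is already an $L^1$-type size, no interpolation is needed on the $g$-side. For $f$, choosing any $\theta_1 \in [0,1)$ with $\tfrac{1+\theta_1}{2} > \tfrac{1}{1+\epsilon}$, a routine adaptation of Proposition \ref{prop:averages-restr-gen} (applied to the bilinear form in the single variable $f$, decomposing $f=\sum_k 2^k f_k \one_{F_k}$ along its level sets) converts $\big(\sssize_{I_0}\one_F\big)^{(1+\theta_1)/2}$ into $\sssize_{I_0}^{1+\epsilon} f$ at the cost of a geometric summation in $k$. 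The outcome is the general local estimate
\[
\big|\Lambda_{\ic C;\,\rr P(I_0)}(f,g)\big| \lesssim \big(\sssize_{I_0}^{1+\epsilon} f\big)\cdot\big(\sssize^*_{I_0} g\big)\cdot |I_0|.
\]

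Next I would run the stopping-time construction of Section \ref{sec:local->sparse} almost verbatim. Define $\ic S_0$ as the maximal dyadic intervals that contain some $I_P$, $P\in\rr P$, and recursively build $\ic S_{k+1}$ from $\ic S_k$: for each $Q_0\in\ic S_k$, let $ch_{\ic S}(Q_0)$ consist of maximal dyadic $Q\subseteq Q_0$ containing some $I_P$ for which either
\[
\Big(\tfrac{1}{|Q|}\!\int |f|^{1+\epsilon}\ci_Q^M\Big)^{1/(1+\epsilon)} > C \Big(\tfrac{1}{|Q_0|}\!\int |f|^{1+\epsilon}\ci_{Q_0}^{M-1}\Big)^{1/(1+\epsilon)},
\]
or the analogous $L^1$ condition for $g$. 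The weak-type boundedness of $\ic M_{1+\epsilon}$ and $\ic M$ gives $\sum_{Q\in ch_{\ic S}(Q_0)}|Q|\le \tfrac{1}{2}|Q_0|$ for $C$ large enough, hence $\ic S=\bigcup_k\ic S_k$ is sparse.

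For each $Q\in\ic S$ set $\rr P_Q:=\{P\in\rr P:I_P\subseteq Q$ and $I_P\not\subseteq Q'$ for any $Q'\in ch_{\ic S}(Q)\}$. Applying the general local estimate on $Q$ to the subcollection $\rr P_Q$ and summing in $Q$ yields the claim, provided one verifies the two control inequalities
\[
\sssize_{\rr P_Q}^{1+\epsilon} f \lesssim \Big(\tfrac{1}{|Q|}\!\int |f|^{1+\epsilon}\ci_Q^{M-1}\Big)^{1/(1+\epsilon)},\qquad \sssize^*_{\rr P_Q} g \lesssim \tfrac{1}{|Q|}\!\int |g|\ci_Q^{M-1}.
\]
The first is immediate from the stopping rule, exactly as in Theorem \ref{thm:local->sparse}. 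The second is the only genuinely new point and is what I expect to be the main obstacle: $\sssize^*_{\rr P_Q} g$ takes the supremum over tiles $P'\geq P$ with $P\in\rr P_Q$, and the dyadic interval $I_{P'}$ may be \emph{strictly larger} than $Q$. I would handle this by cases on the position of $I_{P'}$: if $I_{P'}\subseteq Q$ then $I_{P'}$ is not a descendant of $Q$ selected into $\ic S$, so by maximality its $L^1$ average is $\lesssim$ the $L^1$ average over $Q$; if $I_{P'}\supsetneq Q$, then $I_{P'}$ is a dyadic ancestor of $Q$ lying between $Q$ and some $Q_0\in\ic S_k$ (possibly the top level in $\ic S_0$), and since no interval strictly between $Q$ and $Q_0$ was selected, $\tfrac{1}{|I_{P'}|}\int|g|\ci_{I_{P'}}^M\lesssim \tfrac{1}{|Q_0|}\int|g|\ci_{Q_0}^{M-1}\lesssim \tfrac{1}{|Q|}\int|g|\ci_{Q}^{M-1}$, the last inequality holding because the stopping time selected $Q$ precisely when its average dominates that of its parent in $\ic S$. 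Combining the two bounds with the local estimate on each $\rr P_Q$ gives the announced sparse domination.
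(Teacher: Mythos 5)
Your overall route --- upgrade Lemma \ref{lemma:local-Carleson} to general functions by running the mock interpolation of Proposition \ref{prop:averages-restr-gen} in the $f$-variable only (the $g$-side already carries an $L^1$-type size), and then feed the resulting local estimate into the stopping time of Theorem \ref{thm:local->sparse} --- is exactly the intended one, and everything up to the two control inequalities is sound. The gap is in your treatment of the case $I_{P'}\supsetneq Q$. Your chain of comparisons ends with the claim that $\ave_{Q_0}(g)\lesssim \ave_{Q}(g)$ ``because the stopping time selected $Q$ precisely when its average dominates that of its parent in $\ic S$.'' But the selection rule is a disjunction: $Q$ enters $ch_{\ic S}(Q_0)$ if \emph{either} its $f$-average \emph{or} its $g$-average is large relative to $Q_0$. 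If $Q$ was selected only because of the $f$-condition, there is no lower bound on $\ave_Q(g)$ in terms of $\ave_{Q_0}(g)$, and the inequality you need can fail badly (take $g$ concentrated well inside $Q_0$ but far from $Q$, so that the tail $\ci_Q^{M}$ suppresses it). Moreover, $I_{P'}$ need not sit between $Q$ and its parent in $\ic S$: if the supremum in $\sssize^{*}$ is taken over all dyadic $P'\geq P$, then $I_{P'}$ can exceed every interval of $\ic S$, and in that generality the inequality $\sssize^{*}_{\rr P_Q}(g)\lesssim \ave_Q(g)$ is simply false (mass of $g$ at distance $2^{k}|Q|$ makes the ratio of the two sides blow up in $k$).

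The repair is to localize the size \emph{before} running the stopping time, which is what the paper's conventions already provide: $\bar{\rr P}(Q)$ contains only bitiles with $I_{P'}\subseteq 3Q$, and, as observed for the variational Carleson operator around \eqref{eq:size-g-localized}, the tree estimate in fact only ever produces intervals $I_{P'}\subseteq C\, I_P$. With either localization, a dyadic $I_{P'}$ containing some $I_P$ with $P\in\rr P_Q$ is either contained in $Q$ --- and then its $L^1$ average is controlled by the stopping rule exactly as on the $f$-side --- or it is comparable to $Q$ (in the $3Q$ normalization the only ancestor that can occur is the dyadic parent $\hat Q$), in which case $\frac{1}{|I_{P'}|}\int |g|\,\ci_{I_{P'}}^{M}dx\lesssim_M \frac{1}{|Q|}\int |g|\,\ci_{Q}^{M}dx$ trivially. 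Once the size is localized in this way, both control inequalities hold and the rest of your argument closes.
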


By applying Proposition 6.4. of \cite{BernicotFreyPetermichl}, we recover the known weighted estimates for the Carleson operator: for any $\epsilon >0$ and any $1<p <\infty$,
$$\ic C :L^p(w) \to L^p(w)  \quad \text{for all     } w \in A_{p/{1+\epsilon}},$$
with an operatorial norm 
\[
\|  \ic  C \|_{ L^p(w) \to L^p(w)} \lesssim [w]_{A_\frac{p}{1+\epsilon}}^{\max \big( \frac{1}{p-1-\epsilon}, 1 \big)}.
\]
We recall that weighted estimates for the Carleson operator were proved in \cite{weighted-Carleson-DiPlinioLerner}. Sparse vector-valued estimates were also proved in \cite{DavidBeltran-FeffermanSteinCarleson}.

The multiple vector-valued and sparse vector-valued estimates that we can get for $\ic C$ are the following:
\begin{theorem}
\label{thm:multi-Carleson}
If $R=(r^1, \ldots, r^m)$ is an $m$-tuple with $1<r^j<\infty$, then we have
\[
\ic C: L^p(\rr R; L^R(\ii W, \mu)) \to L^p(\rr R; L^R(\ii W, \mu))
\]
for all $1<p<\infty$. Moreover, for any $q>0$, $\epsilon, \epsilon_q>0$, any $\vec f$ multiple vector-valued functions with $\| f(x, \cdot) \|_{L^R(\ii W, \mu)}$locally integrable and any locally $q$-integrable function $v$, there exists a sparse collection $\ic S$ so that
{\fontsize{10}{10}\[
\big\| \| \ic C _{\rr P}(\vec f)\|_{L^R(\ii W, \mu)} \cdot v \big\|_q^q  \lesssim \sum_{Q \in \ic S} \big( \frac{1}{\vert Q \vert} \int_{\rr R} \| f(x, \cdot) \|_{L^R(\ii W, \mu)}^{1+\epsilon} \cdot \ci_{Q}^M(x) dx \big)^{\frac{q}{1+\epsilon}} \cdot \big( \frac{1}{\vert Q \vert} \int_{\rr R} |v(x)|^{q+\epsilon_q} \cdot \ci_{Q}^M(x) dx \big)^{\frac{q}{q+\epsilon_q}} \cdot \vert Q \vert.
\]}
If $q \leq 1$, we can take $\epsilon_q=0$.

\end{theorem}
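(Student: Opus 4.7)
My plan follows the template developed for $T_k$ in Sections \ref{sec:Banach-case-T1}--\ref{sec:quasi-Banach}, with Lemma \ref{lemma:local-Carleson} playing the role of the fundamental scalar local estimate. First, by invoking Proposition \ref{prop:averages-restr-gen} on the $f$-slot only (no restricted-type reduction is needed on the $g$-slot because the $g$-energy is already an $L^1$ quantity), the local bilinear bound extends to arbitrary locally integrable functions as
\[
\vert \Lambda_{\ic C;\rr P(I_0)}(f,g)\vert \lesssim \bigl(\sssize_{I_0}^{1+\epsilon} f\bigr)\cdot \bigl(\sssize_{I_0}^{*} g\bigr)\cdot \vert I_0\vert
\]
for every $\epsilon>0$, where the exponent of $\sssize_{I_0}^{1+\epsilon}f$ can be taken arbitrarily close to $1$ by sending $\theta_1\to 1^{-}$.

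Next, I would run the induction of Section \ref{sec:Banach-case-T1} to promote this scalar estimate to a depth-$m$ multiple vector-valued local inequality of the form
\[
\bigl\|\,\|\ic C_{\rr P(I_0)}(\vec f)\|_{L^R(\ii W,\mu)}\cdot \one_{\tilde E}\,\bigr\|_{p'}\lesssim \bigl(\sssize_{I_0}\one_F\bigr)^{1-\epsilon}\cdot \vert I_0\vert^{1/p'},
\]
valid whenever $\|\vec f(x,\cdot)\|_{L^R}\leq \one_F$ and $1<p'<\infty$. The inductive step is exactly the one used for $T_k$: fix the outer-most vector variable $w_1$, apply the induction hypothesis at depth $m$ in its \emph{local sparse} form (as in Section \ref{sec:Banach-case-T1}), and then integrate in $w_1$ via H\"older and Fubini. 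Since every $r^l\in(1,\infty)$, the iterated space $L^R(\ii W,\mu)$ is a reflexive Banach space, so the entire iteration stays in the Banach regime and no quasi-Banach modifications are needed.

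From this vector-valued local estimate, the boundedness $\ic C:L^p(\rr R;L^R)\to L^p(\rr R;L^R)$ for $1<p<\infty$ follows from the double stopping time reviewed in Section \ref{sec:stopping-time-helicoidal-method}, performed on $\one_F$ and on the dual function $\one_G$; the full range $1<p<\infty$ is reached because the only constraints on the sizes are $\frac{1}{p}<1$ and $\frac{1}{p'}<1$. For the sparse estimate with $q\leq 1$, I would apply Proposition \ref{prop:local->sparse->subadditive} directly, since $\|\cdot\|_{L^R}^{q}$ is subadditive in this range; this yields a sparse family $\ic S$ (depending on $\vec f$, $v$, and the relevant Lebesgue exponents) producing the claimed domination with $L^{1+\epsilon}$ averages of $\|\vec f\|_{L^R}$ and $L^{q}$ averages of $v$, i.e.\ with $\epsilon_q=0$. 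For $q>1$, subadditivity of $\|\cdot\|_{L^R}^{q}$ fails and I would invoke the dualization trick of Proposition \ref{prop:no-subadd}: write the $L^{q}$ norm by duality against a unit-norm $u\in L^{(q)'}$, apply the $q=1$ sparse bound to $v\cdot u^{1/1}$, and absorb $u$ by H\"older on the sparse sum together with the $L^{(q)'}$-boundedness of the Hardy--Littlewood maximal operator of suitable order. This converts the $L^{q}$ size of $v$ into an $L^{q+\epsilon_q}$ size, for any $\epsilon_q>0$, as stated.

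The main obstacle I expect is to verify carefully that the $g$-side of the induction still works in the presence of the linearizing function $N(x)$ hidden in the size $\sssize^{*}$. Because $\sssize^{*}g$ is controlled by the supremum of $L^{1}$-type averages of $|g|$ against weights adapted to the tiles, the measurability role of $N(\cdot)$ enters only through the set $\{x:N(x)\in \omega_{P'}\}$ and does not interact with the vector variables, so the iteration on the $g$-side reduces to repeated scalar $L^{1}$ maximal estimates and causes no additional difficulty. With this observation, the scheme of Section \ref{sec:Banach-case-T1} transfers unchanged, and the theorem follows.
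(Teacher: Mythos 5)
Your proposal is correct and follows essentially the route the paper itself indicates: the remark after Theorem \ref{thm:multi-Carleson} states that the proof "follows the same ideas from Sections \ref{sec:local->sparse} and \ref{sec:rank-k}" and leaves the details to the reader, and your chain (mock interpolation on the $f$-slot of Lemma \ref{lemma:local-Carleson}, the depth-$m$ induction of Section \ref{sec:Banach-case-T1}, Proposition \ref{prop:local->sparse->subadditive} for $q\leq 1$, and the dualization of Proposition \ref{prop:no-subadd} for $q>1$) is precisely that template, with the exponent bookkeeping ($\alpha_1\to 0$ as $\theta_1\to 1^-$, $\alpha_2=0$ on the $g$-side, $r^{j_0}=1$ in the Banach regime) carried out correctly. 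Your observation that the linearizing function $N(x)$ is harmless is also consistent with the paper, which already replaces the true size by the larger $N$-free quantity $\sssize^{*}$ before localizing.
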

\begin{remark}
The proof of the vector-valued or sparse estimates follows the same ideas from Sections \ref{sec:local->sparse} and \ref{sec:rank-k}, and we leave the details to the interested reader. 

For the operator of Theorem \ref{thm:main-thm}, conditions \eqref{eq:cond-Leb-exp-vv-T} point to an open range of Lebesgue exponents. Instead, for
Carleson and variational Carleson operators, there exists some sort of endpoint estimate, which remains noticeable also in the sparse domination. Since the adjoint operators $\ic C^*$ and $( \ic C^{var, r})^*$ map $L^1$ into $L^{1, \infty}$, we should be able to obtain, in the sparse domination of the bilinear form (hence, in the case $q=1$), an $L^1$ average! 

These optimal results, corresponding to $q=1$ and $\epsilon_q=0$, are far more involved, but they will be described in detail in the following section, where the variational Carleson operator is examined.
\end{remark}

Also, we have the following upper-bound for the Carleson operator:
\begin{corollary}
For any $0<p<\infty$, $\epsilon>0$, and any $m$-tuple $R=(r^1, \ldots, r^m)$ with $1< r^j<\infty$, we have 
\[
\big\| \big\| \ic C f(x, \cdot) \big\|_{ L^R(\ii W, \mu)}  \big\|_p < \big\|  \ic M_{1+\epsilon} \big( \| f(x, \cdot)\|_{L^R(\ii W, \mu)} \big)\big\|_p.
\]
Under the assumption that $w \in A_\infty$, we have, for any $0<p<\infty$
\[
\big\| \| \ic C _{\rr P}(\vec f)\|_{L^R(\ii W, \mu)}\big\|_{L^p(w)} \lesssim \big\| \ic M_{1+\epsilon}\| (\vec f)\|_{L^R(\ii W, \mu)}\big\|_{L^p(w)}.
\]
The implicit constant in the last inequality might depend on $w$ or on $[w]_{A_\infty}$.
\end{corollary}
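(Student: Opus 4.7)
The plan is to derive both inequalities as essentially immediate consequences of the sparse domination already given by Theorem \ref{thm:multi-Carleson}, in exactly the spirit of the proof of Corollary \ref{cor:Fefferman-Stein-k=0case}. Fix an $m$-tuple $R$ with $1<r^j<\infty$, fix $0<p<\infty$ and fix $\epsilon>0$.

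For the unweighted estimate, I apply Theorem \ref{thm:multi-Carleson} with $q=p$ and $v\equiv 1$: when $p\leq 1$ we may take $\epsilon_q=0$ and the $v$-average equals $1$; when $p>1$ we fix some small $\epsilon_p>0$, and again the $v$-average equals $1$. This produces a sparse collection $\ic S$ (depending on $\vec f$, $p$, $\epsilon$) with
\[
\big\|\,\|\ic C(\vec f)\|_{L^R(\ii W,\mu)}\,\big\|_p^{\,p}\;\lesssim\;\sum_{Q\in\ic S}\Big(\frac{1}{|Q|}\int_{\rr R}\|\vec f(x,\cdot)\|_{L^R}^{1+\epsilon}\,\ci_Q^{M}\,dx\Big)^{\!p/(1+\epsilon)}|Q|.
\]
Choosing the pairwise disjoint sets $E_Q\subset Q$ with $|E_Q|\geq \eta|Q|$ given by sparseness (Definition \ref{def-sparse-disj}) and noting that the weighted average over $Q$ is pointwise controlled by $\ic M_{1+\epsilon}(\|\vec f(\cdot,\cdot)\|_{L^R})(y)$ for every $y\in Q\supset E_Q$, the right-hand side is bounded by
\[
\sum_{Q\in\ic S}\,\inf_{y\in E_Q}\ic M_{1+\epsilon}\big(\|\vec f(\cdot,\cdot)\|_{L^R}\big)(y)^p\cdot|E_Q|\;\leq\;\int_{\rr R}\ic M_{1+\epsilon}\big(\|\vec f(x,\cdot)\|_{L^R}\big)^p\,dx,
\]
which is the desired Fefferman--Stein inequality in the unweighted case.

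For the weighted estimate, I set $v:=w^{1/p}$ in Theorem \ref{thm:multi-Carleson}; this is locally $p$-integrable because $w$ is locally integrable. If $p\leq 1$, we take $\epsilon_p=0$ and the $v$-contribution is exactly $\aver{Q}^{\,p}\,(w^{1/p})^{\,p}=\aver{Q}\,w$. If $p>1$, we take a small $\epsilon_p>0$; then the $v$-contribution equals
\[
\Big(\frac{1}{|Q|}\int_{\rr R}w^{1+\epsilon_p/p}\,\ci_Q^M\,dx\Big)^{p/(p+\epsilon_p)},
\]
and this is where $w\in A_\infty$ is used: $A_\infty$ weights enjoy a reverse H\"older inequality (with exponent depending only on $[w]_{A_\infty}$), so choosing $\epsilon_p$ small enough we can absorb the power $1+\epsilon_p/p$ and bound the last display by $C_{[w]_{A_\infty}}\,\aver{Q}\,w$. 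In either case the sparse estimate becomes
\[
\int_{\rr R}\|\ic C(\vec f)\|_{L^R}^{\,p}\,w\,dx\;\lesssim\;\sum_{Q\in\ic S}\inf_{y\in Q}\ic M_{1+\epsilon}\big(\|\vec f(\cdot,\cdot)\|_{L^R}\big)(y)^{\,p}\cdot w(Q).
\]
Invoking the second standard consequence of $w\in A_\infty$, namely $w(E_Q)\gtrsim w(Q)$ for the sparse subsets $E_Q$, and using disjointness of the $E_Q$, the right-hand side is bounded by
\[
\sum_{Q\in\ic S}\int_{E_Q}\ic M_{1+\epsilon}\big(\|\vec f(x,\cdot)\|_{L^R}\big)^{\,p}\,w(x)\,dx\;\leq\;\int_{\rr R}\ic M_{1+\epsilon}\big(\|\vec f(x,\cdot)\|_{L^R}\big)^{\,p}\,w(x)\,dx.
\]

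The only non-cosmetic obstacle is passing from the smooth weights $\ci_Q^M$ that appear in the sparse form to the characteristic functions $\one_Q$ implicit in $\aver{Q}\,w$ and in $\ic M_{1+\epsilon}$; this is handled in the customary way by decomposing $\rr R=\bigcup_{k\geq 0}(2^{k+1}Q\setminus 2^k Q)$, using the polynomial decay of $\ci_Q^M$ against the doubling of $w\in A_\infty$, and summing a geometric series in $k$. All remaining steps are routine, so the two estimates follow directly from the sparse domination of Theorem \ref{thm:multi-Carleson}.
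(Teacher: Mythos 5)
Your proposal is correct and follows essentially the same route as the paper: the authors prove the analogous Fefferman--Stein inequality for $T_0$ (Corollary \ref{cor:Fefferman-Stein-k=0case}) at the end of Section \ref{sec:k=0} by exactly this argument — sparse domination, reverse H\"older to absorb the $L^{1+\epsilon_q}$ average of the weight, $w(E_Q)\gtrsim w(Q)$ from $A_\infty$, and disjointness of the $E_Q$ — and then state that the Carleson case "can be proved similarly." Your handling of the adapted cut-offs $\ci_Q^M$ via dyadic shells and doubling is the right way to fill in the one detail the paper leaves implicit.
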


\subsection{Variational Carleson operator}~\\
\label{sec:var-Carleson}
Alternatively, one might be interested in finding the rate of convergence of the Fourier series in $L^p(\rr T)$. In that case, the necessary tool is the \emph{variational Carleson operator}, defined by means of the $r$-variation norm as
\begin{equation}
\label{def-varCarleson}
\ic C^{var, r} (f)(x):= \sup_{K} \sup_{n_0< \ldots < n_k} \big( \sum_{\ell=1}^K \vert  \int_{a_{n_{\ell-1}}}^{a_{n_\ell}} \hat{f}(\xi) e^{2 \pi i x \xi}  d \xi  \vert^r \big)^{\frac{1}{r}}.
\end{equation}

We recall the following result from \cite{variational_Carleson} (both conditions $r>2$ and $p>r'$ are necessary):
\begin{theorem}[Oberlin, Seeger, Tao, Thiele, Wright]
\label{thm:OSTTW}
Suppose that $r>2$ and $r'<p < \infty$. Then $\ic C^{var, r} :L^p \to L^p$.
\end{theorem}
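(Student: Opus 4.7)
The plan is to reduce $\ic C^{var,r}$ to a discrete model operator on a collection of bitiles and then run a time-frequency analysis parallel to the one for the classical Carleson operator, but where the role of the $\ell^\infty$ supremum is played by an $\ell^r$ variational norm. First I would linearize: for each $x$, select measurably the integer $K(x)$ and the increasing sequence $n_0(x)<n_1(x)<\ldots<n_{K(x)}(x)$ that (essentially) attains the supremum in \eqref{def-varCarleson}, so that the $r$-variation becomes an $\ell^r$-norm of jumps $D_\ell f(x):= \int_{a_{n_{\ell-1}(x)}}^{a_{n_\ell(x)}}\hat f(\xi)e^{2\pi i x\xi}\,d\xi$. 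Each $D_\ell f$ may then be written, up to controllable error terms involving the Carleson operator itself (which is $L^p$-bounded for $p>1$), as a telescoping difference of smoothly truncated frequency projections $S_{n(x)}f$, where $n(x)$ runs through the chosen sequence. A standard smooth dyadic frequency decomposition turns $S_{n(x)}f$ into a sum $\sum_{P}\langle f,\phi_P\rangle\,\phi_P(x)\,\one_{\{n(x)\in\omega_{P_2}\}}$ over bitiles $P=I_P\times\omega_P$, exactly as in the Carleson setup recalled in Section \ref{sec:carleson_op-only}.

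Next I would introduce the natural $r$-variational analogues of size and energy. For a finite collection $\rr P$ of bitiles, define
\[
\ssize^{var,r}_{\rr P}(f) := \sup_{T \subseteq \rr P}\Bigl(\frac{1}{|I_T|}\sum_{P\in T}|\langle f,\phi_P\rangle|^2\Bigr)^{1/2}
\]
(supremum over lacunary trees), exactly as for Carleson, together with the dual quantity (an $r$-variational size) measuring $\|x\mapsto \bigl(\sum_\ell |\sum_{P\in T,\,\omega_{P_2}\ni n_\ell(x)}\langle f,\phi_P\rangle\phi_P(x)|^r\bigr)^{1/r}\|_{L^1}$. The fundamental tree estimate I would aim for is: for any tree $T$ with top $P_T$,
\[
\Bigl\|\Bigl(\sum_{\ell=1}^{K(\cdot)}\bigl|\sum_{P\in T}\langle f,\phi_P\rangle\phi_P(\cdot)\,\one_{\omega_{P_2,\ell}}\bigr|^r\Bigr)^{1/r}\Bigr\|_{L^{1}(I_T)} \lesssim \ssize^{var,r}_{T}(f)\cdot|I_T|.
\]
This is where the hypothesis $r>2$ enters decisively: via a Rademacher--Menshov / Lépingle-type jump inequality, the $r$-variation of a sequence of orthogonal partial sums is controlled in $L^2$ by the square function of the building blocks, and the square function on a lacunary tree is exactly what the Carleson size controls.

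Once the tree estimate is in place, I would run the standard size/energy selection algorithm (Lemma \ref{lemma:dec-lemma}-type decomposition) to write $\rr P$ as a union of trees $\bigcup_n \ic T_n$ with $\ssize^{var,r}\sim 2^{-n}$ and $\sum_{T\in \ic T_n}|I_T|\lesssim 2^{2n}\|f\|_2^2$, together with the dual decomposition on the variational side driven by $g=\one_H$. Pairing against a test function and interpolating the $\theta,1-\theta$ combinations of sizes and energies, as in the proof of boundedness of the Carleson operator, yields a restricted-weak-type bound of the bilinear form by $|F|^{\alpha_1}|H|^{\alpha_2}$ for any $(\alpha_1,\alpha_2)$ close to $(1/p,1/p')$, provided $p>r'$; the restriction $p>r'$ is forced by the fact that on the variational side only $L^{r',\infty}\to L^{r',\infty}$-type quasi-orthogonality is available, exactly as for the classical Carleson operator the restriction was $p>1$. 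Multilinear restricted-weak-type interpolation then upgrades this to the strong $L^p\to L^p$ bound for all $r'<p<\infty$.

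The main obstacle, and the only place where the argument genuinely departs from the Carleson template, is the tree estimate in the second paragraph: one must show that on a single lacunary tree the $r$-variation of smooth frequency projections is controlled by the usual $L^2$-based square function, uniformly in the measurable cut-off sequence $\{n_\ell(x)\}$. Proving this cleanly requires either Lépingle's inequality applied to martingale-like approximations of the Fourier projections, or a direct Rademacher--Menshov argument on the tree's lacunary frequency structure, both of which explain why $r>2$ is sharp in this method.
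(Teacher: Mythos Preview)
Your outline follows, at the level of strategy, the original argument of \cite{variational_Carleson}: linearize, pass to a discrete model, prove a tree estimate in which L\'epingle's inequality supplies the $r>2$ constraint, and then run a size/energy decomposition. That is a legitimate route, but it is not the one taken here, and two points in your formulation need correcting.

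First, the tile structure and the placement of the $r'$ exponent are not quite as you describe. The model multi-tiles carry three frequency intervals $\omega_l,\omega_u,\omega_h$ rather than two, because each jump $\int_{\xi_{k-1}(x)}^{\xi_k(x)}$ has two endpoints to track; the wave packet lives in $\omega_u$ while $\omega_l,\omega_h$ record the positions of $\xi_{k-1}(x),\xi_k(x)$. More importantly, the $r'$ structure does not appear as a ``variational size'' of $f$: the size of $f$ remains the usual $L^2$ square function over $l$-overlapping trees. One instead dualizes the $\ell^r$-variation, obtaining coefficients $a_k(x)$ with $\|a_k(x)\|_{\ell^{r'}}=1$, so that the model operator is $\sum_P\langle f,\phi_P\rangle\phi_P(x)a_P(x)$. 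The $r'$ then lives in the size of the dual function $g$, defined in \eqref{def:size-g-C-var} as an $L^{r'}$ maximal average weighted by the $a_k$, and the tree estimate (Proposition \ref{prop:tree-est-Car-var}) is genuinely bilinear: $|\Lambda_T(f,g)|\lesssim\ssize_T(f)\,\ssize_T(g)\,|I_T|$. Your tree estimate as stated omits the $g$-side and would not suffice.

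Second, and this is where the paper departs from \cite{variational_Carleson}: the original argument needs a $BMO$ bound on $\sum_{T\in\bbf T}\one_{I_T}$, interpolated against the $L^1$ bound, to reach $p$ all the way down to $r'$. Your sketch skips this, and a bare size/energy interpolation of the type you describe does not directly deliver the full range. The present paper avoids the $BMO$ step entirely by proving instead the \emph{local} estimate of Proposition \ref{prop:local-C-var}: for $|f|\le\one_F$,
\[
\big|\Lambda_{\ic C^{var,r};\rr P(I_0)}(f,g)\big|\lesssim\big(\sssize_{\rr P(I_0)}\one_F\big)^{\frac{1}{r'}-\epsilon}\cdot\big(\sssize_{\rr P(I_0)}g\big)\cdot|I_0|,
\]
where the $L^1$ size of $g$ is obtained by writing $g=g_1\cdot g_2$ with $g_1\in L^r$, $g_2\in L^{r'}$. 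A further double stopping time (Theorem \ref{thm:local-C-var}) then converts this local estimate into restricted weak-type bounds for all $p>r'$, with a simpler exceptional set than in the original proof. So the localization buys the full range without $BMO$ and simultaneously feeds into the sparse and vector-valued machinery; your approach would need the $BMO$ ingredient supplied explicitly.
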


Our intention here is to present a proof of Theorem \ref{thm:OSTTW} which is based on the same localization principle that was portrayed in Sections \ref{sec:local->sparse}, \ref{sec:review-hel-mthd} and \ref{sec:rank-k}. Not only will this allow us to obtain a simplified proof (we avoid in this way $BMO$ estimates of $\sum_{T \in \bbf T} \one_T$), but it will immediately imply sparse estimates and, after applying the helicoidal method, also multiple vector-valued and sparse multiple vector-valued estimates. 

More exactly, we will show that the variational Carleson operator has a \emph{local} character as well, and upon obtaining the local inequality (corresponding to the ``level 0" estimate in the helicoidal method), we conclude the multi-vectorial sparse estimates described in Theorem \ref{thm:multi-var-Carleson}.

\subsection{Study of the model operator}
\label{sec:model-op-Carleson-var}
By standard techniques, $\ic C^{var, r}$ can also be linearized: first we fix $K \in \rr Z^+$, and for every $x \in \rr R$, there exist measurable real-valued functions  
$\xi_0(x) < \xi_1(x)< \ldots <\xi_K(x)$ and measurable complex valued functions $a_1(x), \ldots, a_K(x)$ satisfying
\[
\big( \sum_{k=1}^K \vert a_k(x)\vert^{r'}  \big)^\frac{1}{r'}=1
\]
and so that 
\[
\ic C^{var, r}(f)(x)=\sum_{k=1}^K a_k(x) \cdot \int_{\rr R} \one_{\left[\xi_{k-1}\left(x\right), \xi_k\left(x \right) \right]}(x) \hat{f}(\xi) e^{2 \pi i x \xi} d \xi.
\]

Further, assuming that the functions $\xi_k(\cdot)$ take only a finite number of values, the operator can be approximated by a model operator:
\begin{equation}
\label{def-modOpVarCarl}
\ic C^{var, r}_{\rr P} (f)(x):= \sum_{P \in \rr P} \langle f, \phi_P  \rangle \phi_P(x) a_P(x).
\end{equation}
The multi-tiles in $\rr P$ are of the form $P=I_P \times \omega_P$, where each $\omega_P$ is a union of three intervals $\omega_l, \omega_u, \omega_h$. The wave packet $\phi_P$ contains the information in $I_P \times \omega_u$, while in $\omega_l$ (and $\omega_h$) is captured the lowest (or highest) possible frequency information: if $ x \in I_P$, then for some $1 \leq k \leq K$, $\xi_{k-1}(x) \in \omega_l$ (and $\xi_{k}(x) \in \omega_h$). In that case, $a_P(x)=a_k(x)$, and $a_P(x)=0$ if no such a $k$ exists.

In addition, since the frequency information represents a Whitney decomposition of the interval $[a_{k -1(x)}, a_{k(x)}]$, we can assume that there exist constants $1 \leq C_3<C_2<C_1$ so that, for every $P \in \rr P$,
\[
\text{supp\,}(\hat{\phi}_P) \subset C_3 \omega_u, \quad C_2 \omega_u \cap C_2 \omega_l =\emptyset, \quad C_2 \omega_u \cap C_2 \omega_h =\emptyset, \quad C_2 \omega_l \subset C_1 \omega_u, \quad C_2 \omega_u \subset C_1 \omega_l.
\]
On account of the frequency data being contained in three intervals, we denote by $\omega_m$ the convex hull of $C_2 \omega_u \cup C_2 \omega_l$:
\[
\omega_m:= \text{conv} (C_2 \omega_u \cup C_2 \omega_l).
\]

Now we are ready to define the trees, which, as usual, play a very important role in the analysis. As in \cite{variational_Carleson}, we assume there exists $\Xi^{\text{top}}$, a finite set of admissible tree top frequencies.
\begin{definition}
A tree $T=(T, I_T, \xi_T)$ with top datum $(I_T, \xi_T)$, where $I_T$ is a dyadic interval and $\xi_T \in \Xi^{\text{top}}$, is a subcollection of multi-tiles $P \in \rr P$ with the property that
\begin{equation}
\label{eq-cond-tree}
I_P \subseteq I_T \quad \text{and}\quad \omega_{T}:=[\xi_T-\frac{C_2-1}{4 |I_T|}, \xi_T+\frac{C_2-1}{4 |I_T|} )\subseteq \omega_m.
\end{equation}
A tree $(T, I_T, \xi_T)$ is called \emph{$l$-overlapping} if $\xi_T \in C_2 \omega_l$ for every $P \in T$, and \emph{$l$-lacunary} if $\xi_T \notin C_2 \omega_l$ for all $P \in T$.
\end{definition}

In order to obtain the vector-valued estimate in Theorem \ref{thm:multi-var-Carleson}, we need to prove, for $p_0>r'$ 
\begin{equation}
\label{eq:ineq-var-C}
\vert \Lambda_{\ic C^{var, r}; \rr P(I_0)}^{F, G}(f,g)  \vert \lesssim \big\| \Lambda_{\ic C^{var, r}; \rr P(I_0)}\big\| \|f \cdot \ci_{I_0}\|_{p_0} \cdot \|g \cdot \ci_{I_0}\|_{p'_0},
\end{equation}
where $\big\| \Lambda_{\ic C^{var, r}; \rr P(I_0)}\big\|$ is the operatorial norm given by
\begin{equation}
\label{def:op-norm-var-Carleson}
\big\| \Lambda_{\ic C^{var, r}; \rr P(I_0)}\big\|:= \big( \sssize_{\rr P \left( I_0 \right)}  \one_F \big)^{\frac{1}{r'}- \frac{1}{p_0}-\epsilon} \cdot \big( \sssize_{\rr P \left( I_0 \right)}  \one_G \big)^{\frac{1}{p_0}-\epsilon}.
\end{equation}

As usual, this will be showed through restricted type-interpolation. If $E_1, E_2$ are sets of finite measure, it is enough to find $E_2' \subseteq E_2$ a major subset so that, for any restricted-type functions $f$ and $g$ satisfying $|f(x)| \leq \one_{E_1}(x), |g(x)| \leq \one_{E'_2}(x)$, we have
 \[
 \vert \Lambda_{\ic C^{var, r}; \rr P(I_0)}^{F, G}(f,g)  \vert \lesssim \big\| \Lambda_{\ic C^{var, r}; \rr P(I_0)}\big\| \cdot |E_1|^\frac{1}{p} |E_2|^\frac{1}{p'},
 \]
for all $p >r'$ in a neighborhood of $p_0$.
 
Interpolation theory then implies that  
\[ 
\vert \Lambda_{\ic C^{var, r}; \rr P(I_0)}^{F, G}(f,g)  \vert \lesssim \big\| \Lambda_{\ic C^{var, r}; \rr P(I_0)}\big\| \|f \|_{p_0} \cdot \|g \|_{p'_0},
\]
but the estimate \eqref{eq:ineq-var-C} can be easily deduced by using the decay of the sizes (so here we might loose an $\epsilon$ in the exponents of the two sizes).

The proof of Theorem \ref{thm:OSTTW} from \cite{variational_Carleson} is based on a \emph{tree estimate} and two decomposition lemmas: one for the \emph{energy} and one for the \emph{density}. In order to obtain the full range from Theorem \ref{thm:OSTTW}, the authors use a $BMO$ estimates for the tops of the trees (that is, $ \| \sum\limits_{T \in \bbf T} \one_{I_T}  \|_{BMO}$). This is necessary (via intepolation and the classical $ \| \sum\limits_{T \in \bbf T} \one_{I_T}  \|_1$ estimate) for defining the exceptional set, which is somehow complicated. By using localizations and two additional stopping times, we obtain a somewhat simpler demonstration. The ``energy" and ``density" of \cite{variational_Carleson} will be replaced by sizes, an approach which is more similar to the exposition in \cite{multilinear_harmonic}. 

\begin{definition}
\[
\ssize_{\rr P}(f):= \sup_{\substack{T \subset \rr P \\ \text{$l$-overlapping tree}}} \big( \frac{1}{\vert I_T \vert} \sum_{P \in T} \vert \langle f, \phi_P \rangle  \vert^2   \big)^{\frac{1}{2}}
\]

For $g$, the size will be similar to the one appearing in the case of the Carleson operator:
\begin{equation}
\label{def:size-g-C-var}
\ssize_{\rr P} ( g):=  \sup_{P \in \rr P} \sup_{\substack{P' \in \bar{\rr P} \\ P' \geq P}} \big(  \frac{1}{\vert  I_P' \vert} \int_{\rr R} \vert g(x) \vert^{r'} \ci_{I_{P'}}^M(x) \cdot \sum_{k=1}^K \vert a_k(x)  \vert^{r'} \cdot \one_{\omega_{P'}}(\xi_{k-1}(x)) dx \big)^{\frac{1}{r'}}.
\end{equation}
\end{definition}

\begin{remark}
In the definition of $\ssize_{\rr P}(g)$, we consider the supremum over all admissible tiles $P'$ so that $I_P \subseteq I_{P'}$ and $\omega_{P'} \subset \omega_P$, where 
\[
\omega_{P'}:=[\xi_{P'}-\frac{C_2-1}{4 |I_{P'}|}, \xi_{P'}+\frac{C_2-1}{4 |I_{P'}|} )
\]
for some $\xi_{P'} \in \Xi^{\text{top}}$.

\end{remark}

\begin{proposition}[Similar to Prop. 5.1 of \cite{variational_Carleson}] \label{prop:tree-est-Car-var} Let $T \subset \rr P$ be a tree. Then 
\begin{equation}
\label{eq:tree-est-L1}
\vert  \int_{\rr R} \sum_{P \in T} \langle f, \phi_P  \rangle \phi_P(x) a_P(x) g(x) dx \vert \lesssim \ssize_{T}(f) \cdot \ssize_{ T}(g) \cdot \vert I_T \vert
\end{equation}
and also,  
\begin{equation}
\label{eq:tree-est-Lr'}
\|\sum_{P \in T} \langle f, \phi_P  \rangle \phi_P a_P g \|_{L^{r'}} \lesssim \ssize_{T}(f) \cdot \ssize_{ T}(g) \cdot \vert I_T \vert^{\frac{1}{r'}}.
\end{equation}
Furthermore, for $\ell \geq 0$, we have 
\[
\|\sum_{P \in T} \langle f, \phi_P  \rangle \phi_P a_P g \|_{L^1(\rr R \setminus 2^{\ell} I_T)} \lesssim 2^{-\ell (N-10)} \ssize_{T}(f) \cdot \ssize_{ T}(g) \cdot \vert I_T \vert
\]
and 
\[
\|\sum_{P \in T} \langle f, \phi_P  \rangle \phi_P a_P g \|_{{L^{r'}(\rr R \setminus 2^{\ell} I_T)}} \lesssim 2^{-\ell (N-10)} \ssize_{T}(f) \cdot \ssize_{ T}(g) \cdot \vert I_T \vert^{\frac{1}{r'}}.
\]
\begin{proof}The proof follows the same reasoning as that of Proposition 5.1 from \cite{variational_Carleson}, which corresponds to the special case when $g(x)$ is a characteristic function. We have to consider $l$-lacunary trees and $l$-overlapping trees and in the second case, make use of L\'epingle's inequality.

The quantity $\ssize_{\rr P}(g)$ appears naturally in the \emph{tree estimate}. If $T$ is a tree with top $I_T$, we perform a decomposition of $\rr R$ into a collection $\ii J$ made of maximal dyadic intervals $J$ with the property that there is no $P \in T$ so that $I_P \subseteq 3J$.

The tree estimate from \eqref{eq:tree-est-Lr'} follows by estimating first $\|\sum_{P \in \rr P} \langle f, \phi_P  \rangle \phi_P a_P g \|_{L^{r'}(J)}$ for every $J \in \ii J$. There are two types of intervals $J \in \ii J$:
\begin{itemize}
\item[1.] relatively large intervals $J$ that are supported away from $I_P$, for $P \in T$. In this case, $\ssize_{\rr P}(g)$ becomes
\[
\sup_{\substack{ P \in T\\ |I_P| \leq C'' |J| }} \Big( \frac{1}{|I_P|}  \int_{\rr R} |g(x)|^{r'}  \ci_{I_P}^M(x) \cdot \sum_{k=1}^K |a_k(x)|^{r'} \cdot \one_{\omega_P}(\xi_{k-1}(x))  dx \Big)^\frac{1}{r'}
\]
\item[2.] small intervals $J$, that are smaller than certain $I_P$, for $P \in T$. However, in this case, by the maximality of $J$, there exists $P(J) \in T$ so that $I_{P(J)} \subseteq  3 \tilde J$, where $\tilde J$ is the dyadic parent of $J$. Hence there exists $J'$ of length comparable to $J$ and $I_P$ so that $I_{P(J)} \subseteq J'$. The $\ssize_{\rr P}(g)$ that emerges in this situation is 
\[
\big(  \frac{1}{|J'|} \int_{\rr R} |g(x)|^{r'} \ci_{J'}^M(x) \cdot \sum_{k=1}^K |a_k(x)|^{r'} \cdot \one_{\omega_{J'}}(\xi_{k-1}(x))  dx   \big)^\frac{1}{r'}
\]
\end{itemize}

In either case, we can see how $\ssize_{\rr P}(g)$ can be replaced by a similar quantity, more local in nature, defined by
\begin{equation}
\label{eq:size-g-localized}
 \sup_{P \in \rr P} \sup_{\substack{P' \in \bar{\rr P} \\ P' \geq P, I_{P'} \subseteq 9 I_P}} \big(  \frac{1}{\vert  I_P' \vert} \int_{\rr R} \vert g(x) \vert^{r'} \ci_{I_{P'}}^M(x) \cdot \sum_{k=1}^K \vert a_k(x)  \vert^{r'} \cdot \one_{\omega_{P'}}(\xi_{k-1}(x)) dx \big)^{\frac{1}{r'}}.
\end{equation}
This will be useful later on in Section \ref{sec-local-est-Var-C}, for the local estimates.
\end{proof}
\end{proposition}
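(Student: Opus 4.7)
The plan is to adapt Proposition 5.1 of \cite{variational_Carleson}, where the case $g=\one_G$ is treated, so that the tree estimate accommodates a general function $g$ weighted through the size \eqref{def:size-g-C-var}. As indicated at the end of the excerpt, the tree is split as $T=T_o\cup T_l$ into its $l$-overlapping and $l$-lacunary parts, and the two subtrees are handled by different mechanisms.

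For the $l$-lacunary subtree $T_l$, the frequency intervals $\omega_u$ at each fixed scale are pairwise disjoint --- a consequence of $\xi_T$ lying outside every $C_2\omega_l$ together with the admissibility condition $\omega_T\subseteq\omega_m$ --- so $\{\phi_P\}_{P\in T_l}$ is essentially an orthonormal family after $L^2$ normalization. Pairing with $a_P(x)g(x)$ and applying Cauchy--Schwarz in $P$ one separates the sum into a square function $\bigl(\sum_{P\in T_l}|\langle f,\phi_P\rangle|^2\bigr)^{1/2}\lesssim\ssize_T(f)\cdot|I_T|^{1/2}$ and a dual quantity on the $g$-side. Writing $a_P(x)=\sum_k a_k(x)\one_{\omega_P}(\xi_{k-1}(x))$ exhibits the integrand appearing in \eqref{def:size-g-C-var}, and a $TT^*$-style computation, combined with H\"older's inequality to migrate from $L^2$ to $L^{r'}$, bounds the dual factor by $\ssize_T(g)\cdot|I_T|^{1/2}$.

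For the $l$-overlapping subtree $T_o$, the frequencies $\xi_T$ all lie in a common interval $C_2\omega_l$, and L\'epingle's variational inequality bounds the $r$-variation in scale of $\sum_{P\in T_o}\langle f,\phi_P\rangle\phi_P(x)$ pointwise by a maximal square function controlled by $\ssize_T(f)$. Pairing this $r$-variation with the $r'$-variation carried by the coefficients $\{a_k(x)\}$ through H\"older in the $r$/$r'$ duality, and recognizing that the weight $\sum_k|a_k(x)|^{r'}\one_{\omega_P}(\xi_{k-1}(x))$ is exactly what \eqref{def:size-g-C-var} measures, yields \eqref{eq:tree-est-L1}. The $L^{r'}$-norm version \eqref{eq:tree-est-Lr'} is then obtained by duality: take the supremum over $g\in L^r$ of unit norm and absorb $\|g\|_r$ into the computation, noting that in that case the contribution of $\sum_k|a_k|^{r'}$ is of order $1$.

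The decay outside $2^\ell I_T$ is a routine consequence of the pointwise bound $|\phi_P(x)|\lesssim|I_P|^{-1/2}(1+\dist(x,I_P)/|I_P|)^{-N}$: for $x\notin 2^\ell I_T$ and $P\in T$ with $I_P\subseteq I_T$ each wave packet contributes a factor $2^{-\ell N}$, with a loss of at most $2^{10\ell}$ from summing over scales and multiplicities, which accounts for the exponent $N-10$. The localized form \eqref{eq:size-g-localized} then emerges automatically from the Whitney decomposition $\rr R=\bigcup_{J\in\ii J}J$ outlined at the end of the excerpt: each $J$ pairs with a tile $P(J)\in T$ through an ancestor $P'$ satisfying $I_{P'}\subseteq 9 I_P$, so only the local portion of the size is actually needed. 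The main obstacle will be calibrating the H\"older split on the $l$-overlapping side so that the $r$-variation of the $f$-sum couples cleanly with the $r'$-integrability built into $\ssize_T(g)$, particularly in checking that L\'epingle's inequality can be used at the pointwise level before integrating against the $g$-weight.
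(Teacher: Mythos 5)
Your proposal follows essentially the same route as the paper: the split of $T$ into its $l$-lacunary and $l$-overlapping parts, orthogonality/square-function bounds on the lacunary side, L\'epingle's inequality on the overlapping side paired with the $r'$-summability of the $a_k$, the Whitney collection $\ii J$ to localize the size of $g$, and wave-packet decay for the estimates outside $2^\ell I_T$. The only wobble is your derivation of \eqref{eq:tree-est-Lr'} ``by duality over $g\in L^r$'' --- the $g$ in the statement is fixed, so one should instead pair with a fresh $h\in L^r$ of unit norm (or, as in the reference, prove the $L^{r'}$ bound directly on each $J\in\ii J$ and sum); this is a minor fix and does not affect the overall argument.
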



\begin{remark}
From now on, we will use the definition in \eqref{eq:size-g-localized} for $\ssize_{\rr P}(g)$. We can also see that this quantity is bounded above by an $L^{r'}$ size as defined in \eqref{eq:def-L^p-size}:
\[
\ssize_{\rr P}(g)\lesssim \sssize^{r'}_{\rr P}(g).
\]
\end{remark}

\subsection{Localization results and Proof of Theorem \ref{thm:multi-var-Carleson}}
\label{sec-local-est-Var-C}
We adapt the previous notions to a local setting: let $I_0$ be a fixed dyadic interval. Then $\rr P(I_0)$ denotes a certain subcollection of multi-tiles:
\[
\rr P(I_0):=\lbrace  P \in \rr P: I_P \subseteq I_0 \rbrace.
\]

The strategy is to obtain precise estimates for $\Lambda^{F, G}_{\ic C^{var, r}; \rr P(I_0)}$ and afterwards use them to derive the vector-valued and sparse estimates.

\begin{proposition}
\label{prop:dec--var-C-f-better}
Let $I_0$ be a fixed dyadic interval and $f$ a function so that $\vert f(x) \vert \leq \one_F(x)$ for a.e. $x$ and assume that 
\[
1+\frac{\dist(\text{supp } f, I_0)}{\vert I_0  \vert} \sim 2^\kappa, \quad \text{for some   } \kappa \geq 0.
\]
 Let $\rr P$ be a collection of multi-tiles so that $I_P \subseteq I_0$ for all $P \in \rr P$ and so that $\ssize_{\rr P}(f) \leq \ic E$. Then there exists a decomposition $\rr P =\rr P' \cup \rr P''$ so that $\ssize_{\rr P'} (f) \leq \ic E/2$ and $\rr P''$ can be written as a union of trees $\ds \rr P''=\bigcup_{T \in \bbf T} T$ with the property that 
\begin{equation}
\label{eq:top-trees-f-decay}
\sum_{T \in \bbf T} \vert I_T \vert \lesssim 2^{-M \kappa} \ic E^{-2} \vert F \vert.
\end{equation}

\end{proposition}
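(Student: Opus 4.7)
The plan is to adapt the standard time-frequency decomposition argument of Lemma \ref{lemma:dec-lemma} to the variational Carleson setting, while tracking the spatial decay provided by the distance hypothesis. At each step of a greedy algorithm, I would select an $l$-overlapping subtree $T \subseteq \rr P$ whose top interval $I_T$ is maximal among those for which $\big(\frac{1}{|I_T|}\sum_{P\in T}|\langle f,\phi_P\rangle|^2\big)^{1/2} > \ic E/2$; to preserve frequency orthogonality between successive selections, one also extracts the associated $l$-lacunary tree sharing the same top datum, as is standard in the bitile literature. The union of all pairs of extracted trees forms $\rr P''$, and the leftover $\rr P'$ satisfies $\ssize_{\rr P'}(f) \leq \ic E/2$ by construction.

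To estimate $\sum_{T \in \bbf T} |I_T|$, it suffices to sum over the extracted $l$-overlapping components, since their lacunary companions share the same top. The selection rule gives
\[
(\ic E/2)^2 \sum_T |I_T| \leq \sum_T \sum_{P \in T} |\langle f, \phi_P \rangle|^2,
\]
and a $TT^*$ argument, exploiting the orthogonality of the selected overlapping trees together with the shell decomposition of $\rr R$ into the sets $2^{\kappa+1}I_0 \setminus 2^\kappa I_0$ exactly as in the proof of \eqref{eq:improved-local-sum-tree-top}, bounds the right hand side by $\ic E \cdot \|f \cdot \ci_{I_0}^M\|_2 \cdot \big(\sum_T|I_T|\big)^{1/2}$. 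Rearranging yields
\[
\sum_{T \in \bbf T} |I_T| \lesssim \ic E^{-2} \, \|f \cdot \ci_{I_0}^M\|_2^2.
\]

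The distance hypothesis then supplies the decay factor: since $\text{supp\,} f \subseteq F$ lies at distance $\sim 2^\kappa |I_0|$ from $I_0$, on this support one has $\ci_{I_0}^M(x) \lesssim 2^{-100 M \kappa}$, and therefore
\[
\|f \cdot \ci_{I_0}^M\|_2^2 \leq \int_F \ci_{I_0}^{2M}(x)\,dx \lesssim 2^{-2M\kappa}|F|.
\]
After relabeling the exponent $M$ (which is arbitrary because of the rapid decay in \eqref{eq:L^2-adapted}), we obtain the claimed bound $\sum_{T \in \bbf T}|I_T| \lesssim 2^{-M\kappa}\ic E^{-2}|F|$. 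The main technical obstacle is the orthogonality step: one must verify that the frequency intervals $\omega_u$ attached to the selected $l$-overlapping trees are essentially pairwise disjoint in the variational Carleson geometry, where every multi-tile carries the three frequency intervals $(\omega_l,\omega_u,\omega_h)$. This is handled by the simultaneous extraction of lacunary companions described in the first paragraph, which enforces disjointness of the relevant frequency supports and lets the $TT^*$ computation proceed essentially verbatim from the scalar case.
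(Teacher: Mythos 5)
Your proposal is correct and follows exactly the route the paper intends: the text gives no separate proof of this proposition, only a remark deferring it to the classical energy decomposition of Lemma \ref{lemma:dec-lemma}, whose localized version already yields $\sum_{T}|I_T| \lesssim \ic E^{-2}\|f\cdot\ci_{I_0}^M\|_2^2$ via the same tree selection and $TT^*$/shell argument you describe. Your final step, converting $\|f\cdot\ci_{I_0}^M\|_2^2$ into $2^{-M\kappa}|F|$ from the support hypothesis, is the only additional ingredient needed and is carried out correctly.
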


\begin{remark}
The proposition above corresponds to a classical ``energy decomposition" algorithm, that appeared already in Lemma \ref{lemma:dec-lemma}.
\end{remark}

Similarly, for the ``density", we have a decomposition algorithm:
\begin{proposition}
\label{prop:dec-top-trees-g-better}
Let $I_0$ be a fixed dyadic interval, $g$ a function in $L^{r'}$ and assume that 
\[
1+\frac{\dist(\text{supp }\, g, I_0)}{\vert I_0  \vert} \sim 2^\kappa, \quad \text{for some   } \kappa \geq 0.
\]
Let $\rr P$ a collection of multi-tiles so that $\ssize_{\rr P}(g) \leq \lambda$. Then there exists a decomposition $\rr P =\rr P' \cup \rr P''$ so that $\ssize_{\rr P'}(g) \leq \lambda/2$ and $\rr P ''$ can be written as a union of trees $\ds \rr P''= \bigcup_{T \in \rr T} T$ with the property that 
\begin{equation}
\label{eq:top-trees-g-decay}
\sum_{T \in \rr T} \vert  I_T \vert \lesssim 2^{- M \kappa} \lambda^{-r'} \|g\|_{r'}^{r'}.
\end{equation}

\end{proposition}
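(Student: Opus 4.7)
The plan is to mimic the classical tree-top selection algorithm used to decompose a collection of multi-tiles according to a ``density-type'' size. The lower bound per tree will follow immediately from the defining supremum of $\ssize_{\rr P}(g)$, while the global upper bound on $\sum_T |I_T|$ will come from an essentially disjoint packing of the selected tops in phase space, combined with the normalization $\sum_{k=1}^K |a_k(x)|^{r'} = 1$ coming from the linearization.

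First I would run the standard recursive algorithm: set $\rr P_0 := \rr P$ and, as long as $\ssize_{\rr P_j}(g) > \lambda/2$, pick $P \in \rr P_j$ together with an admissible tile $P' \geq P$, $I_{P'} \subseteq 9 I_P$, that realizes the supremum in the definition of $\ssize_{\rr P_j}(g)$ up to a factor of $2$. This tile provides the top datum $(I_{T_j}, \xi_{T_j}) := (I_{P'}, \xi_{P'})$ of a tree $T_j$ consisting of every multi-tile in $\rr P_j$ satisfying \eqref{eq-cond-tree} with this top; we then set $\rr P_{j+1} := \rr P_j \setminus T_j$ and iterate. Since the top of $T_j$ witnesses a size at least $\lambda/4$,
\begin{equation*}
\Bigl(\tfrac{\lambda}{4}\Bigr)^{r'} |I_{T_j}| \leq \int_{\rr R} |g(x)|^{r'} \ci_{I_{T_j}}^M(x) \sum_{k=1}^K |a_k(x)|^{r'} \one_{\omega_{T_j}}(\xi_{k-1}(x))\, dx,
\end{equation*}
and summing over $j$ and exchanging the order of integration,
\begin{equation*}
\Bigl(\tfrac{\lambda}{4}\Bigr)^{r'} \sum_{T \in \rr T} |I_T| \leq \int_{\rr R} |g(x)|^{r'} \sum_{k=1}^K |a_k(x)|^{r'} \Bigl(\sum_{T \in \rr T} \ci_{I_T}^M(x) \one_{\omega_T}(\xi_{k-1}(x))\Bigr) dx.
\end{equation*}

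The crux of the argument is the pointwise bound $\sum_{T \in \rr T} \ci_{I_T}^M(x) \one_{\omega_T}(\xi) \lesssim 1$, uniformly in $(x, \xi)$. This rests on the observation that the selected witness tiles $\lbrace I_{T_j} \times \omega_{T_j}\rbrace_j$ are pairwise incomparable in the tile order: had one been strictly below another in phase space, it would already belong to the earlier-selected tree and could not reappear as a later witness. Hence the phase-plane rectangles $I_T \times \omega_T$ are essentially disjoint; for fixed $\xi$ the same-scale tops with $\xi \in \omega_T$ have pairwise disjoint spatial intervals, and the contribution of nested scales is summable through the rapid decay of $\ci_{I_T}^M$. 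Using $\sum_k |a_k(x)|^{r'} = 1$ then gives $(\lambda/4)^{r'} \sum_{T \in \rr T} |I_T| \lesssim \int |g(x)|^{r'} \sup_T \ci_{I_T}^M(x)\, dx$. Finally, every $I_T \subseteq I_0$ while $\text{supp}\, g$ lies at distance $\sim 2^{\kappa} |I_0|$ from $I_0$, so $\ci_{I_T}^M(x) \lesssim 2^{-M\kappa}$ on $\text{supp}\, g$, yielding $\sum_T |I_T| \lesssim 2^{-M\kappa} \lambda^{-r'} \|g\|_{r'}^{r'}$.

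The main obstacle I expect is the rigorous verification of the pointwise overlap bound: while pairwise incomparability of the selected phase-plane rectangles is transparent at a single scale, combining the frequency-Whitney disjointness of same-scale tops with the spatial decay of $\ci_{I_T}^M$ across different scales requires careful bookkeeping. The structural hypothesis on $\omega_T$ from \eqref{eq-cond-tree} ensures that the selected $\omega_{T_j}$ are genuinely separated in frequency at each scale, which should make the summation feasible in essentially the same way as in the density-decomposition step for the Carleson operator in Chapter 7 of \cite{multilinear_harmonic} and in Section~4 of \cite{variational_Carleson}.
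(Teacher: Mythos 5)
Your overall strategy is the right one — the paper itself gives no proof of this proposition, deferring to the classical ``density decrement'' algorithm of \cite{variational_Carleson} and Chapter 7 of \cite{multilinear_harmonic}, and that is exactly the argument you are reconstructing: select witness tops realizing the size, use the lower bound per top, and control $\sum_T |I_T|$ by an almost-disjointness count in the phase plane, with the $2^{-M\kappa}$ factor extracted from $\ci_{I_T}^M$ on $\text{supp}\, g$ (note $I_T = I_{P'} \subseteq 9 I_P \subseteq 9 I_0$ under the localized definition \eqref{eq:size-g-localized}, so this step is sound, after splitting $\ci^M = \ci^{M/2}\cdot\ci^{M/2}$ and keeping one factor inside the integral).

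There is, however, a genuine gap in the incomparability claim, which is the crux of your counting. Your greedy, one-tree-at-a-time selection only rules out a \emph{later} witness lying strictly \emph{below} an \emph{earlier} one: if $Q_2 < Q_1$ then indeed any $P_2 \leq Q_2$ satisfies $I_{P_2} \subseteq I_{Q_1}$ and $\omega_{Q_1} \subseteq \omega_{Q_2} \subseteq \omega_m(P_2)$, so $P_2$ would already have been absorbed into $T_1$. But nothing prevents a later witness from lying strictly \emph{above} an earlier one ($I_{Q_1} \subsetneq I_{Q_2}$, $\omega_{Q_2} \subseteq \omega_{Q_1}$): the multi-tile $P_2$ generating $Q_2$ can sit spatially outside $I_{Q_1}$ and hence escape $T_1$, while its witness $Q_2$ still contains $Q_1$. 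A chain $Q_1 < Q_2 < \ldots < Q_m$ of selected tops is therefore not excluded, and along such a chain the overlap $\sum_T \ci_{I_T}^M(x)\one_{\omega_T}(\xi)$ is $\sim m$, not $O(1)$. The standard repair is to collect \emph{all} admissible tiles $P'$ whose density exceeds $(\lambda/2)^{r'}$ and which dominate some $P \in \rr P$ of large size, and then take the \emph{maximal} elements of this family as the tree tops (equivalently, run your selection from the coarsest spatial scale downward); maximal elements are pairwise incomparable by definition, every large-size $P$ sits in some resulting tree, and your phase-plane count then goes through. Even with incomparability in hand, the fixed-scale part of the overlap bound uses that the intervals $\omega_{P'}$ built from $\Xi^{\text{top}}$ have bounded overlap at each scale — a structural fact about the admissible top frequencies inherited from \cite{variational_Carleson} which you correctly flag but should state as a hypothesis you are using.
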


Using the propositions above, we are able to prove a very precise local estimate, involving an $L^1$ size of $g$.

\begin{proposition}
\label{prop:local-C-var}
Let $I_0$ be a fixed dyadic interval and $\rr P$ a collection of multi-tiles; $F$ is a set of finite measure, $f$ a function so that $|f|\leq\one_F$, and $g$ is a locally integrable function. Then 
\begin{equation}
\label{eq:ineq-lin-form}
\big| \Lambda_{\ic C^{var, r}; \rr P \left( I_0 \right)}(f, g)\big| \lesssim \big( \sssize_{\rr P\left( I_0 \right)} \one_F \big)^{\frac{1}{r'}- \epsilon} \cdot \big( \sssize_{\rr P\left( I_0 \right)} g \big) \cdot |I_0|.
\end{equation}
\begin{proof}

Obtaining an $L^1$ size of $g$ is important for the sparse estimate of Theorem \ref{thm:multi-var-Carleson} (scalar, as well as multiple vector-valued), because the weighted estimates that follow are not optimal if the $L^1$ size is replaced by an $L^{1+\epsilon}$ average. For vector-valued estimates however, an $L^{1+\epsilon}$ size of a characteristic function would suffice.

Still, $\ssize_{\rr P}(g)$ is inherently an $L^{r'}$ quantity: even though in the \emph{tree estimate} we can replace it by an $L^1$ size, for the decomposition result in Proposition \ref{prop:dec-top-trees-g-better} we need to regard it as an $L^{r'}$ quantity in order to estimate $\ds \sum_{T \in \bbf T} |I_T|$.
 
As a consequence, we write $g \in L^1$ as
\[
g = g_1 \cdot g_2, \qquad \text{where} \quad g_1 \in L^{r} \quad \text{and} \quad g_2 \in L^{r'}.
\]

This can easily be achieved by setting $g_1(x)=|g(x)|^\frac{1}{r}$ and $g_2(x)=g(x)/g_2(x)$ if $g(x) \neq 0$, $g_2(x)=0$ if $g(x)=0$.

Now we show how to attain the estimate from \eqref{eq:ineq-lin-form}. First, we decompose $f$ and $g$ as
\[
f=\sum_{\kappa_1 \geq 0} f \cdot \one_{ \lbrace\dist(x, I_0) \sim \left(  2^{\kappa_1}-1 \right) |I_0| \rbrace}:= \sum_{\kappa_1 \geq 0} f_{\kappa_1}
\]
and for $1 \leq j \leq 2$
\[
g_j=\sum_{\kappa_2 \geq 0} g_j \cdot \one_{ \lbrace\dist(x, I_0) \sim \left(  2^{\kappa_2}-1 \right) |I_0| \rbrace}:= \sum_{\kappa_2 \geq 0} g_{j,\kappa_2}.
\]
Similarly, $ \ds F_{\kappa_1}:= F \cap { \lbrace\dist(x, I_0) \sim \left(  2^{\kappa_1}-1 \right) |I_0| \rbrace}$. The functions $g_1$ and $g_2$ will be localized onto the same set, which corresponds to localizing the function $g=g_1 \cdot g_2$.

Then, for $\kappa_1$ and $\kappa_2$ fixed, we perform the decomposition of the collection $\rr P(I_0)$. Applying Proposition \ref{prop:dec--var-C-f-better} iteratively to $f_{\kappa_1}$, we obtain $\ds \rr P(I_0) =\bigcup_{n_1} \bigcup_{T \in \bbf T_{n_1, \kappa_1}} T$, where the size of each tree $\ds T \in \bbf T_{n_1, \kappa_1} \sim 2^{-n_1}$ and moreover
\begin{equation}
\label{eq:weak-L1-local}
 \sum_{T \in \bbf T_{n_1, \kappa_1}}|I_T| \lesssim 2^{-M \kappa_1} 2^{2 n_1} |F_{\kappa_1}|.
\end{equation}


Proposition \ref{prop:dec-top-trees-g-better} applied to $g_2$ yields an analogous decomposition: $\ds \rr P(I_0) =\bigcup_{n_2} \bigcup_{T \in \bbf T_{n_2, \kappa_2}} T$, where the size of each tree $\ds T \in \bbf T_{n_2, \kappa_2} \sim 2^{-n_2}$ and
\[
 \sum_{T \in \bbf T_{n_2, \kappa_2}}|I_T| \lesssim 2^{-M \kappa_2 } 2^{r' n_2}\| g_{2, \kappa_2}  \|_{r'}^{r'}.
\]

First, we estimate the bilinear form associated to $\ic C^{var, r}_{\rr P(I_0)}$: 
\begin{align*}
&\big| \int_{\rr R}\sum_{P \in \rr P \left( I_0 \right)} \langle f, \phi_P \rangle \phi_P\, a_P\, g \,dx \big| =\big| \int_{\rr R} \sum_{P \in \rr P \left( I_0 \right)} \langle f, \phi_P \rangle \phi_P a_P \, g_1 \cdot g_2\, dx \big|
\\& \lesssim \sum_{\kappa_1, \kappa_2} \sum_{n_1, n_2} \sum_{T \in \bbf T_{n_1, \kappa_1} \cap \bbf T_{n_2, \kappa_2}} \big| \int_{\rr R} \big( \sum_{P \in T} \langle f_{\kappa_1}, \phi_P \rangle \phi_P\, a_P\, g_{1,\kappa_2} \cdot g_{2,\kappa_2} \big) \cdot \one_{I_T} \,dx  \big| \\
&+ \sum_{\ell \geq 0} \sum_{\kappa_1, \kappa_2} \sum_{n_1, n_2} \sum_{T \in \bbf T_{n_1, \kappa_1} \cap \bbf T_{n_2, \kappa_2}} \big| \int_{\rr R}  \big( \sum_{P \in T} \langle f_{\kappa_1}, \phi_P \rangle \phi_P\, a_P\, g_{1,\kappa_2} \cdot g_{2,\kappa_2}\big) \cdot \one_{2^{\ell +1}I_T \setminus 2^{\ell}I_T} \,dx\big|  := (I)+(II).
\end{align*}

We will only consider the first term; for the second one, the analysis is similar, and it only employs the decay in the tree estimate outside $2^\ell I_T$, which was mentioned in Proposition \ref{prop:tree-est-Car-var}.

For now, consider $\kappa_1$ and $\kappa_2$ fixed; we apply H\"older to get
\begin{align*}
& \sum_{T \in \bbf T_{n_1, \kappa_1} \cap \bbf T_{n_2, \kappa_2}} \big|  \int_{\rr R} \big( \sum_{P \in T} \langle f_{\kappa_1}, \phi_P \rangle \phi_P \, a_P \, g_{1,\kappa_2} \cdot g_{2,\kappa_2} \big) \cdot \one_{I_T} \, dx  \big|\\
& \lesssim  \sum_{T \in \bbf T_{n_1, \kappa_1} \cap \bbf T_{n_2, \kappa_2}} \| g_{1,\kappa_2} \cdot \one_{I_T}   \|_r \cdot 
 \big\|  \big( \sum_{P \in T} \langle f_{\kappa_1}, \phi_P \rangle \phi_P  \, a_P\, g_{2,\kappa_2} \big) \cdot \one_{I_T}   \big\|_{r'}.
\end{align*}

We want to transform $\ds  \| g_{1,\kappa_2} \cdot \one_{I_T}   \|_{r} $ into some type of average or ``size" of $g_{1,\kappa_2}$ onto $I_T$. In fact, if $k_2 \neq 0$, this term is equal to $0$, but for $(II)$, we consider dilates of $I_T$ and we will have to face a similar situation.

Hence, we can see that 
\[
\frac{1}{|I_T|} \| g_{1,\kappa_2} \cdot \one_{I_T}\|_r^r \leq \frac{1}{|I_T|} \| g \cdot \one_{I_T}\|_1 \leq \sssize^1_{\rr P(I_0)}(g).
\]
Similarly, for $(II)$ we have
\[
\frac{2^{-\ell M}}{|I_T|} \| g_{1,\kappa_2} \cdot \one_{2^{\ell+1} I_T}\|_r^r \leq \frac{2^{-\ell M}}{|I_T|} \| g \cdot \one_{2^{\ell+1}I_T}\|_1 \leq \frac{1}{|I_T|} \| g \cdot \ci^M_{I_T}\|_1 \leq \sssize^1_{\rr P(I_0)}(g).
\]

The above reasoning implies that 
\[
\| g_{1,\kappa_2} \cdot \one_{I_T}   \|_{r}\lesssim \big( \sssize_{\rr P(I_0)}^1 g   \big)^\frac{1}{r} \cdot |I_T|^\frac{1}{r}.
\]

The tree estimate of Proposition \ref{prop:tree-est-Car-var}, and the stopping times in the construction of $\bbf T_{n_1, \kappa_1}$ and $\bbf T_{\n_2, \kappa_2}$ imply that 
\[
\big\|  \big( \sum_{P \in T} \langle f_{\kappa_1}, \phi_P \rangle \phi_P \, a_P \, g_{2,\kappa_2} \big) \cdot \one_{I_T}   \big\|_{r'} \lesssim 2^{-n_1} \cdot 2^{-n_2} \cdot |I_T|^\frac{1}{r'}.
\]

For the sum of the tree tops $|I_T|$ we have from Propositions \ref{prop:dec--var-C-f-better} and \ref{prop:dec-top-trees-g-better}:
\[
\sum_{T \in \bbf T_{n_1, \kappa_1} \cap \bbf T_{n_2, \kappa_2}} |I_T| \lesssim \min( 2^{-M \kappa_1} 2^{2 n_1} |F_{\kappa_1}|, 2^{-M \kappa_2} 2^{r' n_2} \|  g_{2,\kappa_2} \|_{r'}^{r'}).
\]

All of the above imply that 
\begin{align*}
& \sum_{T \in \bbf T_{n_1, \kappa_1} \cap \bbf T_{n_2, \kappa_2}} \big| \int_{\rr R}  \big( \sum_{P \in T} \langle f_{\kappa_1}, \phi_P \rangle \phi_P\,  a_P\, g_{1,\kappa_2} \cdot g_{\kappa_2} \big) \cdot \one_{I_T} dx  \big| \\
& \lesssim \big( \ssize_{\rr P(I_0)}^1(g)  \big)^\frac{1}{r} \,2^{-n_1 } 2^{-n_2 } \sum_{T \in \bbf T_{n_1, \kappa_1} \cap \bbf T_{n_2, \kappa_2}}  |I_T| \\
&\lesssim \big( \ssize_{\rr P(I_0)}^1(g)  \big)^\frac{1}{r} \, 2^{-n_1 } 2^{-n_2}  \big(   2^{-M \kappa_1} 2^{2 n_1} |F_{\kappa_1}| \big)^{\theta_1} \cdot  \big(   2^{-M \kappa_2} 2^{r' n_2} \| g_{2,\kappa_2} \|_{r'}^{r'} \big)^{\theta_2},
\end{align*}
where $0 \leq \theta_1, \theta_2 \leq 1$, and $\theta_1+\theta_2=1$.

Now we can sum in $\kappa_1$ and $\kappa_2$ to get
\begin{align}
\label{eq:this-decrease-exp}
(I) & \lesssim \big( \ssize_{\rr P(I_0)}^1(g)  \big)^\frac{1}{r} \, \| \one_F \cdot \ci_{I_0}^{\tilde M}   \|_1^{\theta_1} \cdot \| g_{2} \cdot \ci_{I_0}^{\tilde M}   \|_{r'}^{\theta_2\cdot r'} \sum_{n_1, n_2} 2^{-n_1 \left( 1 -2 \theta_1\right)} \cdot 2^{-n_2 \left( 1 - r' \theta_2  \right)}.
\end{align}

Note that $\|g_2 \cdot \ci_{I_0}^{\tilde M}   \|_{r'}^{r'} = \|  g \cdot \ci_{I_0}^{\tilde{ \tilde M}} \|_1$, so, if we can sum in $n_1$ and $n_2$, we would have that 
\begin{align*}
(I) & \lesssim \big( \sssize_{\rr P(I_0)} \one_F   \big)^{1 -\theta_1} \cdot \big( \sssize_{\rr P(I_0)}^1 g  \big) \cdot |I_0|,
\end{align*}
where we used the inequalities
\[
2^{-n_1 } \sim \ssize_{T} (f) \lesssim \sssize_{\rr P(I_0)}(f), \qquad 2^{-n_2} \sim \ssize_T \, (g) \lesssim \sssize_{\rr P(I_0)}^{r'} \, g_2= \big( \sssize_{\rr P(I_0)}^{1} \, g \big)^\frac{1}{r'}.
\]

The summation is possible only if 
\begin{equation*}
1- 2 \theta_1 >0, \qquad 1 - r' \theta_2 >0,
\end{equation*}
or equivalently, if 
\begin{equation}
\label{eq:cond-summation}
\frac{1}{2} > \theta_1> \frac{1}{r}.
\end{equation}

Upon treating the second term $(II)$ in a similar manner, we obtain that 
\[
\big| \int_{\rr R}\ic C^{var, r} _{\rr P(I_0)}(f)(x) g(x) dx \big| \lesssim \big( \sssize_{\rr P(I_0)} \one_F   \big)^{1- \theta_1} \cdot \big( \sssize_{\rr P(I_0)}^1 g   \big) \cdot |I_0|.
\]

Choosing $\theta_1=\frac{1}{r}+\epsilon$, we obtain exactly \eqref{eq:ineq-lin-form}.

\end{proof}
\end{proposition}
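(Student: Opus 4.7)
The plan is to exploit the two decomposition lemmas (Propositions \ref{prop:dec--var-C-f-better} and \ref{prop:dec-top-trees-g-better}) together with the tree estimate of Proposition \ref{prop:tree-est-Car-var}, combined with a multiplicative splitting of $g$ that lets us convert the naturally $L^{r'}$-flavored size of $g$ into an $L^1$ size. Concretely, I would write $g = g_1 \cdot g_2$ with $g_1 := |g|^{1/r}$ and $g_2 := g/g_1$ (so that $g_1\in L^r_{\loc}$, $g_2\in L^{r'}_{\loc}$, and $|g_2|^{r'} = |g|$), and then perform the two tree-decompositions in parallel: one in $f$ producing $\ds \rr P(I_0)=\bigcup_{n_1}\bigcup_{T\in\bbf T_{n_1}} T$ with $\ssize_T(f)\sim 2^{-n_1}$ and $\sum_T |I_T|\lesssim 2^{2n_1}\,(\sssize_{\rr P(I_0)}\one_F)\,|I_0|$, and one in $g_2$ producing $\ds \rr P(I_0)=\bigcup_{n_2}\bigcup_{T\in\bbf T_{n_2}} T$ with $\ssize_T(g_2)\sim 2^{-n_2}$ and $\sum_T|I_T|\lesssim 2^{r'n_2}\,(\sssize^1_{\rr P(I_0)} g)\,|I_0|$ (using $\|g_2\cdot\ci_{I_0}^M\|_{r'}^{r'} = \|g\cdot\ci_{I_0}^M\|_1$). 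I would also do the standard dyadic decomposition of space into annuli around $I_0$ to separate the supports of $f$ and $g$ and cash in the $2^{-M\kappa}$ spatial decay.

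For a tree $T\in \bbf T_{n_1}\cap \bbf T_{n_2}$, I would bound the contribution via H\"older with exponents $(r,r')$: the $L^{r'}$ tree estimate of Proposition \ref{prop:tree-est-Car-var} applied to $(f,g_2)$ yields $\|\sum_{P\in T}\langle f,\phi_P\rangle\phi_P a_P g_2\cdot\one_{I_T}\|_{r'}\lesssim 2^{-n_1}2^{-n_2}|I_T|^{1/r'}$, while $\|g_1\cdot\one_{I_T}\|_r = (\int_{I_T}|g|)^{1/r}\lesssim (\sssize^1_{\rr P(I_0)} g)^{1/r}|I_T|^{1/r}$. The per-tree bound is then $2^{-n_1}2^{-n_2}(\sssize^1 g)^{1/r}|I_T|$. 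Interpolating geometrically between the two bounds on $\sum |I_T|$ with weights $\theta_1,\theta_2\in(0,1)$, $\theta_1+\theta_2=1$, summing in $(n_1,n_2)$ requires $1-2\theta_1>0$ and $1-r'\theta_2>0$, i.e.\ $1/r<\theta_1<1/2$, which is exactly the window opened by the hypothesis $r>2$. The residual geometric series in $n_1$ and $n_2$ are summed against the a priori bounds $2^{-n_1}\lesssim \sssize_{\rr P(I_0)}\one_F$ and $2^{-n_2}\lesssim(\sssize^1 g)^{1/r'}$, producing the exponents $(\sssize\one_F)^{1-\theta_1}$ and $(\sssize^1 g)^{1/r+\theta_2+(1-r'\theta_2)/r'}=(\sssize^1 g)^1$. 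Choosing $\theta_1 = 1/r+\epsilon$ converts $1-\theta_1$ into $1/r'-\epsilon$, matching \eqref{eq:ineq-lin-form}.

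The tails outside $I_T$ are handled by writing $\one_{\rr R}=\one_{I_T}+\sum_{\ell\geq 0}\one_{2^{\ell+1}I_T\setminus 2^\ell I_T}$ and invoking the exponential-in-$\ell$ decay asserted at the end of Proposition \ref{prop:tree-est-Car-var}, together with the dyadic-shell decay $2^{-M\kappa_i}$ from Propositions \ref{prop:dec--var-C-f-better}--\ref{prop:dec-top-trees-g-better}, which makes the sums in $\ell,\kappa_1,\kappa_2$ absolutely convergent. The genuine obstacle, and the reason for the unavoidable $\epsilon$-loss on the $\sssize\,\one_F$ exponent, is the endpoint of the summability window: the strict inequality $\theta_1>1/r$ is forced by the $L^{r'}$ nature of the $g_2$-decomposition (which is why the borderline $L^1$ size of $g$ survives with the clean exponent $1$), while $\theta_1<1/2$ comes from the $L^2$-based energy control on the $f$-side. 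One has to carefully verify that when the support of $g$ leaves $I_0$ and one is really integrating on the shell $2^{\ell+1}I_T\setminus 2^\ell I_T$ in annuli around $I_0$, the ratio $\|g_1\cdot\one_{2^{\ell+1}I_T}\|_r^r/|I_T|$ is still dominated by $\sssize^1_{\rr P(I_0)} g$ via the rapidly decaying weight $\ci_{I_T}^M$; this is the one bookkeeping point that most rewards attention.
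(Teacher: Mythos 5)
Your proposal follows essentially the same route as the paper's proof: the multiplicative splitting $g=g_1\cdot g_2$ with $g_1=|g|^{1/r}$, the parallel tree decompositions in $f$ and $g_2$ combined with the annular decomposition around $I_0$, H\"older with exponents $(r,r')$ against the $L^{r'}$ tree estimate, geometric interpolation of the two tree-top counts, and the summability window $\frac{1}{r}<\theta_1<\frac{1}{2}$ with the choice $\theta_1=\frac{1}{r}+\epsilon$ producing the exponent $\frac{1}{r'}-\epsilon$. The argument is correct as presented.
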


\begin{theorem}
\label{thm:local-C-var}
Let $I_0$ be a fixed dyadic interval, and $F, G$ sets of finite measure. Then for any $p_0 >r'$, 
\begin{equation*}
\vert \Lambda_{\ic C^{var, r}; \rr P(I_0)}^{F, G}(f,g)  \vert \lesssim \big( \sssize_{\rr P \left( I_0 \right)}  \one_F \big)^{\frac{1}{r'}- \frac{1}{p_0}-\epsilon} \cdot \big( \sssize_{\rr P \left( I_0 \right)}  \one_G \big)^{\frac{1}{p_0}-\epsilon} \|f \cdot \ci_{I_0}\|_{p_0} \cdot \|g \cdot \ci_{I_0}\|_{p'_0}.
\end{equation*}
\begin{proof}
We will use the estimates of Proposition \ref{prop:local-C-var} and restricted type interpolation in the form of Proposition 10 of \cite{quasiBanachHelicoid}. Let $E_1, E_2$ be sets of finite measure and we can assume without loss of generality that $|E_2|=1$. We will construct $E_2' \subseteq E_2$ major subset and show that 
\[
 \vert \Lambda_{\ic C^{var, r}; \rr P(I_0)}^{F, G}(f,g)  \vert \lesssim \big\| \Lambda_{\ic C^{var, r}; \rr P(I_0)}\big\| \cdot |E_1|^\frac{1}{p},
 \]
for $p >r'$ in a neighborhood of $p_0$, and all functions $f, g$ with $|f(x)| \leq \one_{E_1}, |g(x)| \leq \one_{E'_2}$ for a.e. $x$.

The exceptional set is defined in a standard way:
\[
\Omega:=\lbrace x: \ic M (\one_{E_1 \cap F}) > C |E_1 \cap F|   \rbrace
\]
and we set $\ds E_2':=E_2 \setminus \Omega$.

For any $d \geq 0$, we let $\ds \rr P_d(I_0):= \lbrace P \in \rr P(I_0) : 1 +\frac{\dist(I_P, \Omega^c)}{|I_P|} \sim 2^d  \rbrace$. Another decomposition will be needed: we consider $d$ fixed, and as in \cite{vv_BHT}, we will perform a double stopping time which allows us to write
\[
\rr P_d(I_0):=\bigcup_{n_1, n_1} \bigcup_{I \in \ii I_{n_1}\cap \ii I_{n_2} } \rr P_{I}.
\]

Here $\ii I_{n_1}$ represents a collection of maximal dyadic intervals contained inside
\[
\lbrace x: \ic M(\one_{F \cap E_1})(x) > 2^{-{n_1}-1}  \rbrace.
\] 
More exactly,  $I \in \ii I_{n_1}$ if there exists at least one $P \in \rr P_{Stock}$ with $I_P \subseteq I$, and $I$ is maximal with the property that 
\[
2^{-n_1-1} < \frac{1}{|I|} \int_{\rr R} \one_{F \cap E_1} \cdot \ci_I^M dx \leq 2^{-n_1}.
\]
Then we set
 $$\rr P_I :=\lbrace P \in \rr P_{Stock}: I_P \subseteq I  \rbrace.$$ 

This implies that $\ds \sssize_{\rr P_I}(f \cdot \one_{F}) \lesssim 2^{-n_1} \lesssim \sssize_{\rr P(I_0)} \, \one_{F \cap E_1} \lesssim 2^d |E_1 \cap F|$, and also $\ds \sum_{I \in \ii I_{n_1}} |I| \lesssim 2^{n_1} |E_1 \cap F|$.

Similarly for $g$, we have $\ds \sssize_{\rr P_I}(g \cdot \one_{G}) \lesssim 2^{-n_2} \lesssim 2^{-Md}$, and $\ds \sum_{I \in \ii I_{n_2}} |I| \lesssim 2^{n_2} |E_2 \cap G|$.
We recall that 
\[
\sssize_{\rr P_I}(g \cdot \one_{G})  \lesssim \sup_{P \in \rr P_I} \sup_{J \supseteq I_P} \frac{1}{|J|} \int_J \one_{E'_2 \cap G} \cdot \ci_I^{2M} dx \lesssim 2^{-Md}.
\]
The decay in $2^{-Md}$ is a consequence of integrating $\ci_I^M$ over $E_2' \subset \Omega^c$.

Then we obtain, using Proposition \ref{prop:local-C-var} with $F$ replaced by $E_1 \cap F$ and $g$ by $\one_{E'_2 \cap G}$:
\begin{align*}
\big| \Lambda_{\ic C^{var, r}; \rr P(I_0)}^{F, G}(f,g) \big|& \lesssim \sum_{d \geq 0} \sum_{n_1, n_2} \sum_{I \in \ii I_{n_1 \cap \ii I_{n_2}}} \big|   \Lambda_{\ic C^{var, r}; \rr P_I}^{F, G}(f,g)\big|\\
&\lesssim \sum_{d \geq 0} \sum_{n_1, n_2} \sum_{I \in \ii I_{n_1} \cap \ii I_{n_2}} \big( \sssize_{\rr P_I} \one_{E_1 \cap F} \big)^{\frac{1}{r'}-\epsilon} \cdot \big( \sssize_{\rr P_I} \one_{E'_2 \cap G} \big) \cdot |I|.
\end{align*}

We note that
\[
\sssize_{\rr P_I} \one_{E_1 \cap F} \leq \min\big( \sssize_{\rr P(I_0)} \one_{ F}, 2^{-n_1}  \big), \quad \sssize_{\rr P_I} \one_{E'_2 \cap G } \leq \min\big( \sssize_{\rr P(I_0)} \one_{ G}, 2^{-n_2}  \big).
\]

Hence, if $0< \gamma_1 <\dfrac{1}{r'}$ and $0 < \gamma_2< 1$, we have
\begin{align*}
\big| \Lambda_{\ic C^{var, r}; \rr P(I_0)}^{F, G}(f,g) \big|  & \lesssim \big( \sssize_{\rr P(I_0)} \one_{ F}  \big)^{\frac{1}{r'} -\gamma_1 -\epsilon} \cdot \big( \sssize_{\rr P(I_0)} \one_{ G}  \big)^{1 -\gamma_2 -\epsilon} \\ &  \cdot \sum_{d \geq 0} \sum_{n_1, n_2} 2^{-n_1  \gamma_1} \cdot  2^{-n_2  \gamma_2} \left( 2^{n_1} |E_1 \cap F|  \right)^{\theta_1}  \left( 2^{n_2} |E_2 \cap G|  \right)^{\theta_2},
 \end{align*}
where $0 \leq \theta_1 , \theta_2 \leq 1$, $\theta_1+\theta_2= 1$. On the last line we interpolate between the estimate for $\ds \sum_{I \in \ii I_{n_1}}|I|$ and the one for $\ds \sum_{I \in \ii I_{n_2}}|I|$.

We want to sum in $n_1$ and $n_2$, and this is possible provided $\gamma_1+\gamma_2 > \theta_1+\theta_2=1$. We can choose $\gamma_1=\dfrac{1}{p}$ and $\gamma_2=\dfrac{1}{p'}+\epsilon$, where $p>r'$ is arbitrarily close to $p_0$. Summation in $d$ presents no problem due to the fast decay. In the end, all of the above imply that 
\begin{align*}
& \big| \Lambda_{\ic C^{var, r}; \rr P(I_0)}^{F, G}(f,g) \big| \lesssim \big( \sssize_{\rr P(I_0)} \one_{ F}  \big)^{\frac{1}{r'} -\frac{1}{p_0} -\epsilon} \cdot \big( \sssize_{\rr P(I_0)} \one_{ G}  \big)^{1 -\frac{1}{p'_0} -\epsilon}  |E_1|^{\frac{1}{p}},
\end{align*}
which is precisely what we wanted. (The assumption that $|E_2|=1$ is only used for simplifying the computations.)
\end{proof}
\end{theorem}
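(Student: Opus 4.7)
The plan is to combine the $L^1$-sized local estimate of Proposition~\ref{prop:local-C-var} with a restricted-type interpolation argument that also exploits a double stopping time, in the spirit of the helicoidal method from Section~\ref{sec:review-hel-mthd}. Since $p_0 > r'$ is given, I will produce for every pair $E_1, E_2$ of sets of finite measure a major subset $E_2' \subseteq E_2$ and verify the restricted-type bound
\[
|\Lambda_{\ic C^{var,r}; \rr P(I_0)}^{F,G}(f,g)| \lesssim (\sssize_{\rr P(I_0)}\one_F)^{\frac{1}{r'}-\frac{1}{p_0}-\epsilon}\,(\sssize_{\rr P(I_0)}\one_G)^{\frac{1}{p_0}-\epsilon}\,|E_1|^{1/p}
\]
for all $p > r'$ in a neighbourhood of $p_0$ and all $|f|\le \one_{E_1}$, $|g|\le \one_{E_2'}$; the theorem then follows by restricted-type interpolation (Proposition~10 of \cite{quasiBanachHelicoid}), with a final application of H\"older to replace $|E_1|^{1/p}$ and the implicit $|E_2|^{1/p'}$ by $\|f\cdot \ci_{I_0}\|_{p_0}\|g\cdot \ci_{I_0}\|_{p_0'}$ at the cost of a tiny $\epsilon$-loss in the size exponents.

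Following the recipe of Section~\ref{sec:stopping-time-helicoidal-method}, the major subset is $E_2' := E_2 \setminus \Omega$, where $\Omega = \{\mathcal M(\one_{E_1\cap F}) > C |E_1\cap F|/|E_2|\}$. Partition $\rr P(I_0) = \bigsqcup_{d\ge 0}\rr P_d(I_0)$ according to $1+\dist(I_P,\Omega^c)/|I_P|\sim 2^d$, so that the size of $\one_{E_2'\cap G}$ on $\rr P_d(I_0)$ automatically inherits a decay $\lesssim 2^{-Md}$ coming from the weight $\ci_{I_P}^M$ integrated against a function supported outside $\Omega$. On each $\rr P_d(I_0)$ I perform the two standard stopping times of Section~\ref{sec:stopping-time-helicoidal-method}: one produces a collection $\ii I_{n_1}$ of maximal dyadic intervals controlling $\sssize\,\one_{E_1\cap F}$ at level $2^{-n_1}$, the other produces $\ii I_{n_2}$ controlling $\sssize\,\one_{E_2'\cap G}$ at level $2^{-n_2}$. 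The key disjointness/maximality facts give the population bounds $\sum_{I\in \ii I_{n_1}}|I|\lesssim 2^{n_1}|E_1\cap F|$ and $\sum_{I\in \ii I_{n_2}}|I|\lesssim 2^{n_2}|E_2\cap G|$, and set $\rr P_I := \{P\in \rr P_d(I_0): I_P\subseteq I\}$ for every $I=I_1\cap I_2$ with $I_j\in \ii I_{n_j}$.

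On each such $\rr P_I$, Proposition~\ref{prop:local-C-var} (applied with $F\rightsquigarrow E_1\cap F$ and $g\rightsquigarrow \one_{E_2'\cap G}$) delivers
\[
|\Lambda_{\ic C^{var,r};\rr P_I}^{F,G}(f,g)| \lesssim (\sssize_{\rr P_I}\one_{E_1\cap F})^{\frac{1}{r'}-\epsilon}\,(\sssize_{\rr P_I}\one_{E_2'\cap G})\cdot |I|.
\]
I then split each of the two sizes via $\sssize_{\rr P_I}\one_{E_1\cap F}\le \min(\sssize_{\rr P(I_0)}\one_F,\,2^{-n_1})$ (and analogously for $G$): one factor $(\sssize_{\rr P(I_0)}\one_F)^{1/r'-\gamma_1-\epsilon}(\sssize_{\rr P(I_0)}\one_G)^{1-\gamma_2-\epsilon}$ is pulled out as the prospective operatorial norm, while the residual factors $2^{-n_1\gamma_1}2^{-n_2\gamma_2}$ absorb the geometric decay produced by the stopping time. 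Summing the $|I|$ over $I\in \ii I_{n_1}\cap \ii I_{n_2}$ by geometric interpolation between the two population bounds with weights $\theta_1+\theta_2=1$, and choosing $\gamma_1 = 1/p$, $\gamma_2 = 1/p'+\epsilon$, makes the $(n_1,n_2)$-series summable, and the $d$-series summable too thanks to the $2^{-Md}$ gain; what survives is exactly the desired estimate, with the exponents $\frac{1}{r'}-\frac{1}{p_0}-\epsilon$ and $\frac{1}{p_0}-\epsilon$ chosen by letting $p\to p_0$.

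The main obstacle I anticipate is the interplay between the two sizes during the summation: because $(\sssize_{\rr P_I}\one_{E_2'\cap G})$ appears with exponent $1$ in Proposition~\ref{prop:local-C-var} (not with a small loss), one must carefully verify that the stopping-time threshold $2^{-n_2}$ can indeed be split into the two factors $(\sssize_{\rr P(I_0)}\one_G)^{1-\gamma_2-\epsilon}$ and $2^{-n_2\gamma_2}$ while still yielding $\gamma_1+\gamma_2>1$ for summability, and that the $\epsilon$-losses in each step can be absorbed into a single $\epsilon$-loss in the final exponents. Everything else -- the use of the exceptional set to secure the $2^{-Md}$ decay, the restricted-type interpolation to upgrade to general $f,g$, and the conversion of $|E_1|^{1/p}|E_2|^{1/p'}$ into $\|f\cdot \ci_{I_0}\|_{p_0}\|g\cdot \ci_{I_0}\|_{p_0'}$ -- is routine once this balancing is done correctly.
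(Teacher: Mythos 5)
Your proposal is correct and follows essentially the same route as the paper's own proof: the same exceptional set and major subset, the same decomposition into $\rr P_d(I_0)$ with the $2^{-Md}$ gain, the same double stopping time with population bounds $\sum |I| \lesssim 2^{n_j}|E_j \cap \cdot|$, the same application of Proposition \ref{prop:local-C-var} on each $\rr P_I$, and the same splitting of the sizes with $\gamma_1 = 1/p$, $\gamma_2 = 1/p'+\epsilon$ to make the $(n_1,n_2)$-series summable. The balancing issue you flag is resolved exactly as you describe, so no further changes are needed.
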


\subsection{$L^1$ sizes for the multiple vector-valued sparse domination}~\\
\label{sec:$L^1$ sizes for the multiple vector-valued sparse domination}
The local estimate for the variational Carleson $\ic C^{var, r}$ from Proposition \ref{prop:local-C-var}, which is the equivalent of Lemma \ref{lemma:local-Carleson} for the Carleson operator, implies, as described in Section \ref{sec:local->sparse}, the following sparse domination result: \emph{given $\rr P$ a finite collection of multi-tiles,  $f$ and $g$ locally integrable functions and $\epsilon >0$, there exists a sparse family of dyadic intervals $\ic S$ so that 
\[
\big|  \Lambda_{\ic C^{var, r}; \rr P} (f, g) \big| \lesssim_{\epsilon} \sum_{Q \in \ic S} \big( \frac{1}{\vert Q \vert} \int_{\rr R} \vert f(x) \vert^{r'+\epsilon} \cdot \ci_{Q}^M(x) dx \big)^{\frac{1}{r'+\epsilon}} \cdot \big( \frac{1}{\vert Q \vert} \int_{\rr R} \vert g(x) \vert \cdot \ci_{Q}^M(x) dx \big) \cdot \vert Q \vert.
\]}
 
Using the helicoidal method, we obtain a similar result for multiple vector-valued functions $\vec f, \vec g$, but with the $L^1$ average of $\vec g$ replaced by an $L^{1+\epsilon}$ average: \emph{given $\epsilon>0$, $\rr P$ a finite collection of multi-tiles,  $\vec f$ and $ \vec g$ vector-valued functions so that $\| f(x, \cdot) \|_{L^R(\ii W, \mu)}$ and $\| g(x, \cdot) \|_{L^{R'}(\ii W, \mu)}$ are locally integrable functions, there exists a sparse family of dyadic intervals $\ic S$ so that
{\fontsize{9}{10}\[
\vert \Lambda_{\ic C^{var, r};{\rr P}}(\vec f, \vec g)  \vert \lesssim_{\epsilon} \sum_{Q \in \ic S} \big( \frac{1}{\vert Q \vert} \int_{\rr R} \| f(x, \cdot) \|_{L^R(\ii W, \mu)}^{r'+\epsilon} \cdot \ci_{Q}^M(x) dx \big)^{\frac{1}{r'+\epsilon}} \cdot \big( \frac{1}{\vert Q \vert} \int_{\rr R} \| g(x, \cdot) \|_{L^{R'}(\ii W, \mu)}^{1+\epsilon} \cdot \ci_{Q}^M(x) dx \big)^{\frac{1}{1+\epsilon}} \cdot \vert Q \vert.
\]
}}

However, we can improve on this result, namely by proving that we can indeed have an $L^1$ average of $\| g(x, \cdot) \|_{L^{R'}(\ii W, \mu)}$. We will only show this in the case $\ds L^{R}(\ii W, \mu) = \ell^s$, for some $s>r'$, but our proof can be adapted to the general setting.

In Proposition \ref{prop:local-C-var}, we obtained an $L^1$ size of $g$ by splitting the function as $g=g_1 \cdot g_2$, where $g_1 \in L^r$ and $g_2 \in L^{r'}$. Because we want to obtain an $L^1$ average for $g$, we cannot afford to use restricted-type functions (that is, we cannot assume that $|g(x)| \leq \one_{G}(x)$ for some finite set $G \subset \rr R$), and instead we use the above decomposition of the function. The situation will be similar for the vector-valued estimate, in the sense that we will need to find a ``good" splitting of $\vec g$.

\begin{proposition}
\label{prop:local-ell^s-aver-L^1}
Let $s>r'$, $\rr P$ a finite collection of multi-tiles, $I_0$ a fixed dyadic interval, and $F$ a set of finite measure. Let $\vec f = \lbrace f_k \rbrace_k$ and $\vec g =\lbrace g_k  \rbrace_k$ be vector-valued functions so that $\ds \| \vec f (x) \|_{\ell^s} \leq \one_F(x)$ for a.e. $x$ and $\ds \|  \vec g(x) \|_{\ell^{s'}}$ is a locally integrable function on $\rr R$. Then 
\begin{equation}
\label{eq:local-vv-est-var-C}
\big| \sum_k \Lambda_{\ic C^{var, r}; \rr P(I_0)}(f_k, g_k) \big| \lesssim \big( \sssize_{\rr P(I_0)} \one_F   \big)^{\frac{1}{r'}-\epsilon} \cdot \big(  \sssize_{\rr P(I_0)}^1 \| \vec g \|_{\ell^{s'}} \big) \cdot |I_0|.
\end{equation}
As a consequence, we obtain the following sparse domination result: given $\vec f$ and $ \vec g$ vector-valued functions so that $\| \vec f(x) \|_{\ell^s}$ and $\| \vec g(x) \|_{\ell^{s'}}$ are locally integrable functions and $\epsilon >0$, there exists a sparse family $\ic S$ of dyadic intervals  so that
\[
\vert \Lambda_{\ic C^{var, r};{\rr P}}(\vec f, \vec g)  \vert \lesssim_{\epsilon} \sum_{Q \in \ic S} \big( \frac{1}{\vert Q \vert} \int_{\rr R} \| \vec f(x) \|_{\ell^s}^{r'+\epsilon} \cdot \ci_{Q}^M(x) dx \big)^{\frac{1}{r'+\epsilon}} \cdot \big( \frac{1}{\vert Q \vert} \int_{\rr R} \| \vec g(x) \|_{\ell^{s'}} \cdot \ci_{Q}^M(x) dx \big)\cdot \vert Q \vert.
\]
\end{proposition}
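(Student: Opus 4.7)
Following the strategy of Proposition \ref{prop:local-C-var}, I would adapt the scalar splitting trick to the vector-valued setting. Let $G(x) := \|\vec g(x)\|_{\ell^{s'}}$ and decompose each coordinate as $g_k(x) = g_k^{(2)}(x) \cdot g^{(1)}(x)$ with $g^{(1)}(x) := G(x)^{1/r}$ a scalar function in $L^r_{\loc}$ and $g_k^{(2)}(x) := g_k(x)/g^{(1)}(x)$ (set to zero on $\{g^{(1)} = 0\}$). Then $\|\vec g^{(2)}\|_{\ell^{s'}} = G^{1/r'} \in L^{r'}_{\loc}$, and the crucial identity
\[
\|g^{(1)}\|_{L^r(I)} \cdot \|G^{1/r'}\|_{L^{r'}(I)} = \|G\|_{L^1(I)}
\]
is what ultimately manufactures the $L^1$ size of $\|\vec g\|_{\ell^{s'}}$ in the local estimate, in direct analogy with the scalar $g = g_1 \cdot g_2$ splitting.

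Next, I would imitate the decomposition in the proof of Proposition \ref{prop:local-C-var}: split $\vec f$ and $\vec g^{(2)}$ into dyadic distance layers $\kappa_1, \kappa_2$ around $I_0$, and apply the vector-valued analogs of Propositions \ref{prop:dec--var-C-f-better} and \ref{prop:dec-top-trees-g-better} using the ambient controls $\|\vec f\|_{\ell^s} \leq \one_F$ and $\|\vec g^{(2)}\|_{\ell^{s'}} = G^{1/r'} \in L^{r'}_{\loc}$. This yields tree collections $\bbf T_{n_1, \kappa_1}$ and $\bbf T_{n_2, \kappa_2}$ satisfying the standard top bounds
\[
\sum_{T \in \bbf T_{n_1, \kappa_1}} |I_T| \lesssim 2^{-M \kappa_1} 2^{2 n_1} |F_{\kappa_1}|, \qquad \sum_{T \in \bbf T_{n_2, \kappa_2}} |I_T| \lesssim 2^{-M \kappa_2} 2^{r' n_2} \|G_{\kappa_2}\|_{L^1}.
\]
For each tree $T$ in the intersection, I would estimate the contribution
\[
\Big| \sum_k \int \sum_{P \in T} \langle f_k, \phi_P \rangle \phi_P a_P \, g_k^{(2)} g^{(1)} \one_{I_T} \, dx \Big|
\]
by H\"older in space with exponents $r, r'$, which peels off the scalar factor $\|g^{(1)} \one_{I_T}\|_{L^r} = \|G\|_{L^1(I_T)}^{1/r}$ and leaves an $L^{r'}$-norm of the residual $\sum_k T_k g_k^{(2)}$ (where $T_k := \sum_{P \in T} \langle f_k, \phi_P\rangle \phi_P a_P$). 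The residual is handled via pointwise H\"older in $k$ with $\ell^s, \ell^{s'}$ combined with the scalar $L^{r'}$ tree estimate \eqref{eq:tree-est-Lr'} pasted together coordinate-wise, giving the bound $2^{-n_1} 2^{-n_2} \|G\|_{L^1(I_T)}^{1/r'} |I_T|^{1/r'}$. Reassembling produces a per-tree contribution $\lesssim 2^{-n_1} 2^{-n_2} \|G\|_{L^1(I_T)}$, and the summation over $n_1, n_2, \kappa_1, \kappa_2$ then follows verbatim the template \eqref{eq:this-decrease-exp}, yielding the desired bound $(\sssize_{\rr P(I_0)} \one_F)^{\frac{1}{r'}-\epsilon} \cdot \sssize^1_{\rr P(I_0)}(G) \cdot |I_0|$.

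The main obstacle is the vector-valued tree estimate: the scalar Proposition \ref{prop:tree-est-Car-var} controls $\|T_k g_k^{(2)}\|_{L^{r'}(I_T)}$ only in terms of the individual coordinate sizes $\ssize_T(f_k) \cdot \ssize_T(g_k^{(2)})$, so summing via H\"older in $k$ requires showing that the resulting $\ell^s$- and $\ell^{s'}$-sums of individual sizes are controlled by the ambient vector-valued quantities ($\one_F$ and $G^{1/r'}$) that the decomposition algorithms actually stop on. This compatibility can be achieved either by a direct wave-packet argument in the spirit of Proposition \ref{prop:paraprod-UMD} (using the vector-valued Carleson/square function and L\'epingle's inequality on $\ell$-valued sequences), or by localizing the $\ell^s$-valued variational Carleson bound of Theorem \ref{thm:OSTTW} via the operatorial-norm framework of Theorem \ref{thm:local-C-var}. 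Once \eqref{eq:local-vv-est-var-C} is established, the sparse domination is an immediate consequence of the stopping-time construction of Section \ref{sec:local->sparse} (cf.~Theorem \ref{thm:local->sparse}), where the stopping criteria are the $L^{r'+\epsilon}$ maximal average of $\|\vec f\|_{\ell^s}$ (the $\epsilon$-loss coming from mock interpolation, Proposition \ref{prop:averages-restr-gen}, applied to the exponent $1/r' - \epsilon$) and the $L^1$ maximal average of $\|\vec g\|_{\ell^{s'}}$.
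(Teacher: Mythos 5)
Your high-level plan (factor the scalar quantity $\g:=\|\vec g\|_{\ell^{s'}}$ out of $\vec g$ so that an $L^1$ average of $\g$ can be manufactured, then conclude by the stopping time of Section \ref{sec:local->sparse}) points in the right direction, and the final sparse-domination step is indeed routine once \eqref{eq:local-vv-est-var-C} is in hand. But the core of your argument has a genuine gap, which you yourself flag as ``the main obstacle'' and then do not close. You propose to rerun the tree decomposition of Proposition \ref{prop:local-C-var} at the vector-valued level, stopping on the ambient quantities $\one_F$ and $G^{1/r'}=\|\vec g^{(2)}\|_{\ell^{s'}}$, and then to control each tree by pasting the scalar tree estimate \eqref{eq:tree-est-Lr'} coordinate-wise and applying H\"older in $k$. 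After H\"older you are left with $\bigl(\sum_k \ssize_T(f_k)^s\bigr)^{1/s}\bigl(\sum_k \ssize_T(g_k^{(2)})^{s'}\bigr)^{1/s'}$, and nothing in the decomposition algorithms controls these $\ell^s$- and $\ell^{s'}$-sums of individual coordinate sizes by the thresholds $2^{-n_1},2^{-n_2}$ attached to the vector quantities. Making that work would require a vector-valued Bessel/energy inequality for the tree-top count in Proposition \ref{prop:dec--var-C-f-better} (the $TT^*$ argument behind \eqref{eq:top-trees-f-decay} is intrinsically $L^2$, not $\ell^s$-valued) and a vector-valued L\'epingle-type tree estimate for the $g$-side. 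Both of your suggested fixes --- a direct $\ell^s$-valued wave-packet argument, or ``localizing'' Theorem \ref{thm:OSTTW} --- are precisely the missing mathematics, not a reduction to known results.

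The paper avoids this entirely: no vector-valued tree estimate is ever proved. Instead one first establishes the \emph{scalar} weighted local estimate of Proposition \ref{prop:local-est-var-C-L^q-size} (via a double stopping time, on $\one_F$ in $L^{q'-\epsilon}$ and on $g_2$ in $L^q$, reducing to Proposition \ref{prop:local-C-var}), applies it coordinate-wise with $g_1=\g^{1/q'}$ and $g_2=\g^{1/q}\tilde g_k$ where $g_k=\tilde g_k\cdot \g$, reads the result as the statement that $f\mapsto \ic C^{var,r}_{\rr P(I_0)}(f\cdot\one_F)$ maps $L^{q',1}$ into $L^{q'}(\g)$ with operatorial norm $(\sssize_{\rr P(I_0)}\one_F)^{1/r'-1/q'-\epsilon}(\sssize^1_{\rr P(I_0)}\g)^{1/q'}$, and then interpolates in $q'$ near $s$ (the log-convexity of the operator norm producing exactly the power $(\sssize^1\g)^{1/s}$) to get a strong-type bound against $\|f_k\cdot\ci_{I_0}^M\|_s\,\|\tilde g_k\cdot\ci_{I_0}^M\|_{L^{s'}(\g)}$. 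Only at that point is H\"older in $k$ applied, to coordinate Lebesgue norms rather than to tree sizes, and these sum up exactly because $\|\vec f\|_{\ell^s}\le\one_F$ and $\|\vec{\tilde g}\|_{\ell^{s'}}=1$ on $\{\g\neq 0\}$. This is the point of the helicoidal method: the vector-valued difficulty is pushed into the choice of the weight $\g$ and a scalar interpolation argument, not into the time-frequency decomposition. To repair your proof you should replace your per-tree vector-valued estimate by this two-step scheme (scalar weighted local estimate with recorded operatorial norm, then interpolation and H\"older in $k$); also note that your normalization $g^{(1)}=G^{1/r}$ does not produce the right exponents --- the correct split is by the full $\ell^{s'}$ norm, with the further factorization $\g=\g^{1/q'}\cdot\g^{1/q}$ happening inside the scalar estimate.
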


The proof of estimate \eqref{eq:local-vv-est-var-C} relies on the following localization result:
\begin{proposition}
\label{prop:local-est-var-C-L^q-size}
Let $q'>r'$, let $\rr P$ be a finite collection of multi-tiles, $I_0$ a fixed dyadic interval, $F$ a set of finite measure and $f, g_1, g_2$ locally integrable functions, with the additional property that $|f(x)|\leq \one_F(x)$ for a.e. $x$. Then 
\begin{equation}
\label{eq:local-L^q-aver-var-C}
\big| \Lambda_{\ic C^{var, r}; \rr P(I_0)}(f, g_1 \cdot g_2) \big| \lesssim \big( \sssize_{\rr P(I_0)} \one_F  \big)^{\frac{1}{r'}-\frac{1}{q'}-\epsilon} \left(  \sssize_{\rr P(I_0)}^{q'} \,g_1 \right) \cdot |F|^{\frac{1}{q'}} \cdot \|g_2 \cdot \ci_{I_0}^M\|_q.
\end{equation}
\end{proposition}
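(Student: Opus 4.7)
The plan is to adapt the proof of Proposition \ref{prop:local-C-var}, with the roles of $g_1$ and $g_2$ swapped: here $g_1$ plays the role of the previous $L^{r'}$-factor (with $L^{r'}$ upgraded to $L^{q'}$) and is subjected to a size stopping time, while $g_2$ plays the role of the previous $L^r$-factor (with $L^r$ replaced by $L^q$) and is separated via H\"older's inequality before the tree estimate. The new technical ingredient is an $L^{q'}$ version of the tree estimate \eqref{eq:tree-est-Lr'}, which holds for $q' > r'$ by L\'epingle's variational inequality in $L^{q'}$.

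First I would decompose $f$ and $g_2$ by distance from $I_0$, writing $f = \sum_{\kappa_1} f_{\kappa_1}$ and $g_2 = \sum_{\kappa_2} g_{2,\kappa_2}$, with $F_{\kappa_1}:= F \cap \{\dist(\cdot, I_0) \sim (2^{\kappa_1}-1)|I_0|\}$. For each pair $(\kappa_1, \kappa_2)$, Proposition \ref{prop:dec--var-C-f-better} applied to $f_{\kappa_1}$ yields $\rr P(I_0) = \bigcup_{n_1} \bigcup_{T \in \bbf T_{n_1, \kappa_1}} T$ with $\ssize_T(f_{\kappa_1}) \lesssim 2^{-n_1}$ and $\sum_T |I_T| \lesssim 2^{-M\kappa_1} 2^{2n_1} |F_{\kappa_1}|$, while the natural $L^{q'}$-analog of Proposition \ref{prop:dec-top-trees-g-better} applied to $g_1$ yields $\rr P(I_0) = \bigcup_{n_2} \bigcup_{T \in \bbf T_{n_2}} T$ with $\sssize^{q'}_T(g_1) \lesssim 2^{-n_2}$ and $\sum_T |I_T| \lesssim 2^{q'n_2} \|g_1 \cdot \ci_{I_0}^M\|_{q'}^{q'}$. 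On each tree in the intersection $\bbf T_{n_1, \kappa_1} \cap \bbf T_{n_2}$ (whose tops are disjoint by maximality of the stopping times), H\"older's inequality with exponents $(q, q')$ combined with the $L^{q'}$-tree estimate
\[
\bigl\|\sum_{P \in T} \langle f, \phi_P \rangle \phi_P a_P g_1\bigr\|_{q'} \lesssim \ssize_T(f) \cdot \sssize_T^{q'}(g_1) \cdot |I_T|^{1/q'}
\]
bounds the tree-localized contribution by $\|g_{2, \kappa_2} \one_{I_T}\|_q \cdot 2^{-n_1} \cdot 2^{-n_2} \cdot |I_T|^{1/q'}$.

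Summing over the trees with the help of $\sum_T \one_{I_T} \leq 1$ and a final H\"older on the index set gives
\[
\sum_T \|g_{2, \kappa_2} \one_{I_T}\|_q \cdot |I_T|^{1/q'} \lesssim \|g_{2, \kappa_2}\|_q \cdot \Bigl(\sum_T |I_T|\Bigr)^{1/q'}.
\]
Interpolating the two bounds on $\sum_T |I_T|$ with weights $\theta_1 + \theta_2 = 1$ produces the aggregate estimate
\[
2^{-n_1(1 - 2\theta_1/q')} \, 2^{-n_2(1 - \theta_2)} \, 2^{-M\kappa_1 \theta_1/q'} |F_{\kappa_1}|^{\theta_1/q'} \|g_1 \cdot \ci_{I_0}^M\|_{q'}^{\theta_2} \|g_{2, \kappa_2}\|_q.
\]
Summing the geometric series in $n_1, n_2$ (valid for $\theta_1 \in (0, q'/2)$, which is compatible with $q' > r'$ after absorbing $\epsilon$-losses into the sizes), using $\|g_1 \cdot \ci_{I_0}^M\|_{q'} \leq \sssize^{q'}_{\rr P(I_0)^+}(g_1) \cdot |I_0|^{1/q'}$, and trading the excess factor $|I_0|^{\theta_2/q'}$ for $|F|^{\theta_2/q'} \cdot (\sssize \one_F)^{-\theta_2/q'}$ via $|F|/|I_0| \lesssim \sssize_{\rr P(I_0)}(\one_F)$, an appropriate choice of $\theta_1$ then produces the target exponents $\frac{1}{r'} - \frac{1}{q'} - \epsilon$ on $\sssize(\one_F)$, $1$ on $\sssize^{q'}(g_1)$, and $\frac{1}{q'}$ on $|F|$. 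Summation over $\kappa_1$ is absorbed by the $2^{-M\kappa_1 \theta_1/q'}$ decay; summation over $\kappa_2$, combined with a parallel analysis on the dyadic shells $2^{\ell+1} I_T \setminus 2^\ell I_T$ that exploits the fast decay of the tree estimate in Proposition \ref{prop:tree-est-Car-var}, recovers the global $\|g_2 \cdot \ci_{I_0}^M\|_q$.

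The principal technical obstacle is establishing the $L^{q'}$-tree estimate, since \eqref{eq:tree-est-Lr'} is naturally at the $L^{r'}$-endpoint forced by L\'epingle's inequality. However, L\'epingle's variational inequality extends to all $L^p$ with $p > 1$, so a direct modification of the proof of Proposition \ref{prop:tree-est-Car-var} -- applying L\'epingle in $L^{q'}$ for $l$-overlapping trees and standard $L^{q'}$ square function bounds for $l$-lacunary trees -- yields the strengthened estimate.
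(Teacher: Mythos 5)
Your proposal takes a genuinely different route from the paper: the paper does not revisit the tree machinery at all, but simply runs two stopping times (on $\one_F$ with respect to $L^{q'-\epsilon}$ averages and on $g_2$ with respect to $L^q$ averages), applies the already-proved Proposition \ref{prop:local-C-var} as a black box on each stopping collection $\rr P_I$, factors $\sssize_{\rr P_I}(g_1\cdot g_2)\lesssim \big(\sssize^{q'}_{\rr P_I}g_1\big)\big(\sssize^{q}_{\rr P_I}g_2\big)$ by H\"older, and reduces everything to the counting estimate $\sum_{n_1,n_2}\sum_I 2^{-n_1}2^{-n_2}|I|\lesssim |F|^{1/q'}\|g_2\cdot\ci_{I_0}^M\|_q$. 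Your route, by contrast, requires an $L^{q'}$ tree estimate and an $L^{q'}$ density decomposition that are nowhere established (and whose proof is not a routine modification, since the exponent $r'$ in \eqref{eq:tree-est-Lr'} is tied to the $\ell^{r'}$ normalization of the coefficients $a_k$, not merely to the exponent in L\'epingle's inequality); it also invokes disjointness of the tree tops produced by the energy selection, which is false in general (only $\sum_T|I_T|$ is controlled, not $\sum_T\one_{I_T}$).

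The decisive gap, however, is in the final bookkeeping. To reach the exponent $\frac{1}{r'}-\frac{1}{q'}$ on $\sssize\,\one_F$ your interpolation weights must satisfy $\theta_1=q'/r$, which exceeds $1$ as soon as $q'>r$ — so the convex combination you need does not exist in the full range $q'>r'$ allowed by the proposition. This is exactly why the paper's proof states explicitly that it does \emph{not} use $\min(A,B)\le A^{\theta}B^{1-\theta}$ and instead splits the sum over $(n_1,n_2)$ into the two regimes $2^{n_1(q'-\epsilon)}|F|\lessgtr 2^{n_2q}\|g_2\cdot\ci_{I_0}^M\|_q^q$ and sums each geometric series separately; that case analysis is strictly stronger than the geometric mean and is what produces $|F|^{1/q'}\|g_2\cdot\ci_{I_0}^M\|_q$ with the correct powers. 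Your attempt to repair the exponents by "trading" $|I_0|^{\theta_2/q'}$ for $|F|^{\theta_2/q'}\big(\sssize\,\one_F\big)^{-\theta_2/q'}$ also fails: the inequality $|I_0|\cdot\sssize_{\rr P(I_0)}\one_F\lesssim |F|$ is false (take $F$ a tiny interval deep inside $I_0$ and a tile $P$ with $I_P$ comparable to $F$, so that the size is $\sim 1$ while $|F|/|I_0|$ is arbitrarily small); only the average over $I_0$ itself, not the supremum over all $I_P\subseteq I_0$, is controlled by $|F|/|I_0|$.
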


We postpone the proof of Proposition \ref{prop:local-est-var-C-L^q-size} in order to show how it implies Proposition \ref{prop:local-ell^s-aver-L^1}:
\begin{proof}[Proof of Proposition \ref{prop:local-ell^s-aver-L^1}]
We know that $\ds \| \vec f (x) \|_{\ell^s} \leq \one_F(x)$ and we want to estimate
\[
\big|  \Lambda_{\ic C^{var, r}; \rr P(I_0)} (f_k, g_k)  \big|=\big|  \Lambda_{\ic C^{var, r}; \rr P(I_0)} (f_k \cdot \one_F, g_k)  \big|:=\big|  \Lambda^F_{\ic C^{var, r}; \rr P(I_0)} (f_k, g_k)  \big|.
\]
We write $\ds g_k:= \tilde g_k \cdot \mathfrak{g}$, where $\g:=\| \vec g  \|_{\ell^{s'}}$ (if $\g (x)=0$, we simply set $\tilde g_k(x)=0$). Then we apply the result of Proposition \ref{prop:local-est-var-C-L^q-size} to $\Lambda_{\ic C^{var, r}; \rr P(I_0)} (f_k \cdot \one_F, g_k)$, under the assumption that $|f_k(x)| \leq \one_{E_1}$. In fact, we have $|f_k(x)| \leq \one_{E_1 \cap F}$ (because $f_k$ is supported inside $F$), and we can ameliorate the estimate of Proposition \ref{prop:local-est-var-C-L^q-size}. Here we prove the inequality in the case of restricted-type functions, and as a consequence of interpolation theory, we will deduce the general case:
\begin{align*}
\big| \Lambda_{\ic C^{var, r}; \rr P(I_0)} (f_k \cdot \one_F, \g^\frac{1}{q'} \cdot \g^\frac{1}{q} \tilde g_k) \big| &\lesssim \big( \sssize_{\rr P(I_0)} \one_F  \big)^{\frac{1}{r'}-\frac{1}{q'}-\epsilon} \left(  \sssize_{\rr P(I_0)}^{q'} \, \g^\frac{1}{q'} \right) \cdot |E_1|^{\frac{1}{q'}} \cdot \| \g^\frac{1}{q} \tilde g_k \|_q \\
&\lesssim \big( \sssize_{\rr P(I_0)} \one_F  \big)^{\frac{1}{r'}-\frac{1}{q'}-\epsilon} \left(  \sssize_{\rr P(I_0)}^{1} \, \g \right)^\frac{1}{q'} \cdot |E_1|^{\frac{1}{q'}} \| \tilde g_k \|_{L^q(\g )}.
\end{align*}

Above, we applied Proposition \ref{prop:local-est-var-C-L^q-size}, with $F$ a fixed set of finite measure and $\g$ a fixed locally integrable function, with $\g \geq 0$, to obtain that the operator defined by
\[
f \mapsto \ic C_{\rr P}^{var, r} (f \cdot \one_F) 
\]
is bounded from $L^{q', 1}$ into $L^{q'}(\g )$ with an operatorial norm equal to $$\ds \big( \sssize_{\rr P(I_0)} \one_F  \big)^{\frac{1}{r'}-\frac{1}{q'}-\epsilon} \left(  \sssize_{\rr P(I_0)}^{1} \, \g \right)^\frac{1}{q'}.$$ 
Upon using restriction onto the sets $\lbrace x: \dist(x, I_0) \sim (2^\kappa -1) |I_0|\rbrace$ (we can perform a decomposition of $f$ as in the proof of Proposition \ref{prop:local-C-var}), which entails a decaying factor $2^{- \kappa M}$,and interpolation ($q'$ is arbitrary, and it can vary in a small neighborhood of $s>r'$), we deduce that 
\[
\big| \Lambda_{\ic C^{var, r}; \rr P(I_0)} (f_k \cdot \one_F, \g^\frac{1}{s} \cdot \g^\frac{1}{s'} \tilde g_k) \big| \lesssim \big( \ssize_{\rr P(I_0)} \one_F  \big)^{\frac{1}{r'}-\frac{1}{s}-\epsilon} \left(  \sssize_{\rr P(I_0)}^{1} \, \g \right)^\frac{1}{s} \cdot \| f_k \cdot \ci_{I_0}^M \|_{s} \cdot \| \tilde g_k \cdot \ci_{I_0}^M \|_{L^{s'}(\g )}.
\]

It is due to the log-convexity in the interpolation (for this, see Theorem 1.2.19 of \cite{grafakos-book}) that we can obtain the suitable exponent for $\sssize_{\rr P(I_0)}^{1} \, \g$.

From here on, the proof is standard: we sum in $k$ to get that 
\begin{align*}
\big| \sum_k \Lambda_{\ic C^{var, r}; \rr P(I_0)}(f_k, g_k) \big| &\lesssim  \big( \ssize_{\rr P(I_0)} \one_F  \big)^{\frac{1}{r'}-\frac{1}{s}-\epsilon} \left(  \sssize_{\rr P(I_0)}^{1} \, \g \right)^\frac{1}{s} \cdot \| \one_F \cdot \ci_{I_0}^M\|_{s} \cdot \big \| \big( \sum_k |\tilde g_k|^{s'}\big)^{\frac{1}{s'}} \cdot \ci_{I_0}^M \big\|_{L^{s'}(\g )} \\
& \lesssim  \big( \ssize_{\rr P(I_0)} \one_F  \big)^{\frac{1}{r'}-\epsilon} \left(  \sssize_{\rr P(I_0)}^{1} \, \g \right)\cdot  |I_0|,
 \end{align*}
where we used that $\ds \big( \sum_k |\tilde g_k|^{s'} \big)^\frac{1}{s'}=1$ on the set where $\g(x) \neq 0$ and hence
\[
\| \big( \sum_k |\tilde g_k|^{s'} \big)^\frac{1}{s'} \cdot \ci_{I_0}^M  \|_{L^{s'}(\g )} = \| \g \cdot \ci_{I_0}^{\tilde M} \|_1^\frac{1}{s'}.
\]

If we were allowed to use restricted-type functions, the ``weight" $\g$ would be replaced by $\one_G$, as in the previous analysis of vector-valued inequalities from \cite{vv_BHT}.
\end{proof}

We end by providing a proof for Proposition \ref{prop:local-est-var-C-L^q-size}; the case when $q'=r'$ and $g_1, g_2$ are particular functions (we recall that $\|g_1\|_r^r=\|g_2\|_{r'}^{r'}=\|g\|_1$) was already presented in Proposition \ref{prop:local-C-var}.

\begin{proof}[Proof of Proposition \ref{prop:local-est-var-C-L^q-size}]
In order to prove the estimate \eqref{eq:local-L^q-aver-var-C}, we will make use of Proposition \ref{prop:local-C-var}. As in the proof of Theorem \ref{thm:local-C-var}, we perform a stopping time for $\one_F$ with respect to $L^{q'-\epsilon}$ averages, and a stopping time for $g_2$ with respect to $L^q$ averages.

More exactly, for every $n_1$ so that  $2^{-n_1} \leq \sssize_{\rr P(I_0)}^{q'-\epsilon} \one_F \lesssim 1$, we construct a collection $\ii I^{n_1}$ of maximal intervals $I$ for which 
\[
2^{-n_1} \leq \big( \frac{1}{|I|} \int_{\rr R} \one_F \cdot \ci_I^M dx  \big)^{\frac{1}{q'-\epsilon}} \leq 2^{-n_1+1},
\]
and for every such interval we have associated a non-empty collection of multi-tiles $\rr P_I$ so that $\sssize_{\rr P_I}^{q' -\epsilon} \one_F \lesssim 2^{-n_1}$. In particular, we note that 
\[
\sum_{I \in \ii I^{n_1}} |I| \lesssim 2^{n_1 \left( q'-\epsilon\right)}|F|.
\]

Similarly, for every $n_2$ so that $ 2^{-n_2} \leq \sssize_{\rr P(I_0)}^q g_2$, we construct a collection $\ii I^{n_2}$ of maximal intervals $I$ so that 
\[
2^{-n_2} \leq \big( \frac{1}{|I|} \int_{\rr R} |g_2(x)|^q \cdot \ci_{I_0}^{Mq} \cdot \ci_I^M dx  \big)^{\frac{1}{q}} \leq 2^{-n_2+1}.
\]
For the selected intervals $I \in \ii I^{n_2}$, we assume there exists a multi-tile $P \in \rr P_{Stock}$ with $I_P \subseteq I$, and consequently a nonempty subcollection of multi-tiles $\rr P_{I} \subseteq \rr P$ so that $\sssize_{\rr P_I}^q g_2 \lesssim 2^{-n_2}$. We also obtain that
\[
\sum_{I \in \ii I^{n_2}} |I| \lesssim 2^{n_2 q} \| g_2  \cdot \ci_{I_0}^M \|_q^q.
\]

Once these arrangements are made, we are ready to prove \eqref{eq:local-L^q-aver-var-C}:
\begin{align*}
\big|  \Lambda_{\ic C^{var, r}; \rr P(I_0)}(f, g_1 \cdot g_2) \big| & \lesssim \sum_{n_1, n_2} \sum_{I \in \ii I^{n_1} \cap \ii I^{n_2}} \big|  \Lambda_{\ic C^{var, r}; \rr P_I}(f, g_1 \cdot g_2) \big|  \\
&\lesssim \sum_{n_1, n_2} \sum_{I \in \ii I^{n_1} \cap \ii I^{n_2}}  \big( \sssize_{\rr P_I} \one_F \big)^{\frac{1}{r'}- \epsilon} \cdot \big( \sssize_{\rr P_I} g_1 \cdot g_2 \big) \cdot |I| \\
&\lesssim \big( \sssize_{\rr P(I_0)} \one_F \big)^{\frac{1}{r'}- \frac{1}{q'-\epsilon}- \epsilon} \cdot \big( \sssize_{\rr P(I_0)}^{q'} g_1\big) \sum_{n_1, n_2} \sum_{I \in \ii I^{n_1} \cap \ii I^{n_2}}  2^{-n_1} 2^{-n_2} |I|,
\end{align*}
and it only remains to prove that 
\begin{equation}
\label{eq:equation-to-prove}
\sum_{n_1, n_2} \sum_{I \in \ii I^{n_1} \cap \ii I^{n_2}}  2^{-n_1} 2^{-n_2} |I| \lesssim |F|^\frac{1}{q'} \|g_2  \cdot \ci_{I_0}^M\|_q.
\end{equation}

We note that 
\begin{align*}
\sum_{n_1, n_2} \sum_{I \in \ii I^{n_1} \cap \ii I^{n_2}}  2^{-n_1} 2^{-n_2} |I| \lesssim \sum_{n_1} \sum_{n_2}  2^{-n_1} 2^{-n_2} \cdot \min \big( 2^{n_1 \left( q'-\epsilon\right)}|F|,  2^{n_2 q} \| g_2  \cdot \ci_{I_0}^M \|_q^q \big).
\end{align*}

We do not use the interpolation estimate $\min(A, B) \leq A^\theta \cdot B^{1-\theta}$; instead, we analyze the two possibilities:
\begin{itemize}
\item[(i)]
if $\ds 2^{n_1 \left( q'-\epsilon\right)}|F| \leq 2^{n_2 q} \| g_2  \cdot \ci_{I_0}^M\|_q^q$, then $\ds 2^{-n_2} \leq 2^{- \frac{n_1 \left( q'-\epsilon \right)}{q}} |F|^\frac{-1}{q}\|g_2  \cdot \ci_{I_0}^M\|_q$ and 
\begin{align*}
& \sum_{n_1} \sum_{n_2}  2^{-n_1} 2^{-n_2} \cdot \min \big( 2^{n_1 \left( q'-\epsilon\right)}|F|,  2^{n_2 q} \| g_2  \cdot \ci_{I_0}^M \|_q^q \big) \lesssim \sum_{n_1} 2^{-n_1} 2^{n_1 \left( q'-\epsilon\right)}|F| \sum_{n_2} 2^{-n_2} \\
&\lesssim \sum_{n_1} 2^{-n_1} 2^{n_1 \left( q'-\epsilon\right)}|F|  \cdot 2^{-\frac{n_1 \left( q'-\epsilon \right)}{q}} |F|^\frac{-1}{q}\|g_2  \cdot \ci_{I_0}^M\|_q \lesssim \sum_{n_1} 2^{-n_1 \left( 1-\frac{q'-\epsilon}{q'} \right)}\cdot |F|^\frac{1}{q'} \cdot \|g_2  \cdot \ci_{I_0}^M\|_q.
\end{align*}

\item[(ii)] on the other hand, if $\ds 2^{n_1 \left( q'-\epsilon\right)}|F| \geq 2^{n_2 q} \| g_2 \cdot \ci_{I_0}^M \|_q^q$, we obtain that 
\begin{align*}
& \sum_{n_1} \sum_{n_2}  2^{-n_1} 2^{-n_2} \cdot \min \big( 2^{n_1 \left( q'-\epsilon\right)}|F|,  2^{n_2 q} \| g_2  \cdot \ci_{I_0}^M\|_q^q \big) \lesssim \sum_{n_1}  2^{-n_1} \sum_{n_2} 2^{-n_2} 2^{n_2 q} \| g_2  \cdot \ci_{I_0}^M\|_q^q \\
&\lesssim \sum_{n_1}  2^{-n_1} \sum_{n_2} 2^{n_2 \left( q-1 \right)} \|g_2  \cdot \ci_{I_0}^M\|_q^q \lesssim \sum_{n_1}  2^{-n_1} 2^{\frac{n_1 \left( q'-\epsilon  \right) \left(  q-1  \right)  }{q} } |F|^{\frac{q-1}{q}} \|g_2  \cdot \ci_{I_0}^M\|_q^{-\left( q-1 \right)} \cdot \|g_2  \cdot \ci_{I_0}^M\|_q^q  \\
&\lesssim  \sum_{n_1}  2^{-n_1 \left( 1-\frac{q'-\epsilon}{q'} \right)}  |F|^{\frac{1}{q'}} \cdot \|g_2  \cdot \ci_{I_0}^M\|_q.
\end{align*}

In either case, the series in $n_1$ are summable and we obtain \eqref{eq:equation-to-prove}.
\end{itemize}
\end{proof}

\begin{remark}
The same analysis applies in the case of Carleson operator; the statement of Proposition \ref{prop:local-est-var-C-L^q-size} becomes: \emph{for any $q'>1$, and for any finite collection of tiles $\rr P(I_0)$, any set of finite measure $F$ and any locally integrable functions $f, g_1, g_2$ so that $|f(x)|\leq \one_F(x)$ for a.e. $x$, we have
\[
\big| \Lambda_{\ic C; \rr P(I_0)}(f, g_1 \cdot g_2) \big| \lesssim \big( \sssize_{\rr P(I_0)} \one_F \big)^{1-\frac{1}{q'}-\epsilon} \cdot \big(  \sssize_{\rr P(I_0)}^{q'} g_1 \big) \cdot |F|^\frac{1}{q'} \cdot \|g_2 \cdot \ci_{I_0}^M\|_q.
\]
}
And this implies, for any $1<s<\infty$, and any $\vec f=\lbrace f_k\rbrace_k$, $\vec g=\lbrace g_k\rbrace_k$ with $\ds \| \vec f\|_{\ell^s} \leq \one_F(x)$:
\begin{equation}
\big| \sum_k \Lambda_{\ic C;\rr P(I_0)}(f_k, g_k) \big| \lesssim \big( \sssize_{\rr P(I_0)} \one_F   \big)^{1-\epsilon} \cdot \big(  \sssize_{\rr P(I_0)}^1 \| \vec g \|_{\ell^{s'}} \big) \cdot |I_0|.
\end{equation}
\end{remark}

\begin{remark}
In the general case of iterated vector spaces, the corresponding inductive statement for the variational Carleson operator is: \emph{
for any $q'>r'$, and for any finite collection of multi-tiles $\rr P(I_0)$,   any sets of finite measure $F$ and $E_1$, and any vector-valued functions $\vec f, \vec g_1, \vec g_2$ so that $\| f(x, \cdot)\|_{L^{R}(\ii W, \mu)}, \| g_1(x, \cdot)\|_{L^{R'}(\ii W, \mu)}$ and $\| g_2(x, \cdot)\|_{L^{R'}(\ii W, \mu)}$ are locally integrable functions with $\| f(x, \cdot)\|_{L^{R}(\ii W, \mu)}\leq \one_{E_1}(x)$ for a.e. $x$, we have
{\fontsize{9}{10}\[
\big| \Lambda^F_{\ic C^{var, r};\rr P(I_0)}(\vec f, \overrightarrow{ (g_1 \cdot g_2)}) \big| \lesssim \big( \sssize_{\rr P(I_0)} \one_F \big)^{1-\frac{1}{q'}-\epsilon} \cdot \big(  \sssize_{\rr P(I_0)}^{q'} \| g_1(x, \cdot)\|_{L^{R'}(\ii W, \mu)} \big) \cdot |E_1|^\frac{1}{q'} \cdot \big \| \big\| g_2(x, \cdot)\big\|_{L^{R'}(\ii W, \mu)}  \cdot \ci_{I_0}^M \big\|_{L^q_x}.
\]
}}
Here $\ds \overrightarrow{ (g_1 \cdot g_2)}$ denotes the vector-valued function written component-wise as $ \ds \overrightarrow{ (g_1 \cdot g_2)}(x, w)=\vec g_1(x, w) \cdot \vec g_2(x, w)$ for a.e. $(x, w) \in \rr R \times \ii W$.
\end{remark}

\bibliographystyle{alpha}
\bibliography{HelicoidalMethodVVandSparse.bbl}

\end{document}